\documentclass[11pt,letterpaper]{amsart}
\usepackage{amsmath,amssymb,amsthm,bm}
\usepackage{tikz,caption,subcaption}
\usepackage{genyoungtabtikz}
\usepackage{ytableau,tikz,varwidth,cases}
\usetikzlibrary{calc}

\usetikzlibrary{calc}
\usepackage{ytableau}
\usepackage{dsfont}
\usepackage{multicol}
\captionsetup[sub]{font=small,labelfont={sf}}

\usepackage{here}
  \usepackage{soul}
\setstcolor{green}

\usepackage[final]{hyperref}   
\hypersetup{
    linktoc=page,	
    linkcolor=blue,          
    citecolor=green,        
    filecolor=blue,      
    urlcolor=cyan,
   colorlinks=true      }     

\urlstyle{same}

\def\bbbr{\mathbb R}

\def\bbbz{\mathbb Z}
\def\bbbn{\mathbb N}

\def\cch{\mathcal H}
\def\ccsc{\mathcal {SC}}
\def\ccdd{\mathcal {DD}}
\def\ccp{\mathcal P}

\DeclareMathOperator{\core}{core}
\DeclareMathOperator{\quot}{quot}

\def\det{{\rm det}}

\def\sgn{{\rm sgn}}



%

\newtheorem{df}{Definition}[section]
\newtheorem{prop}[df]{Proposition}		

\newtheorem{thm}[df]{Theorem}
\newtheorem{lm}[df]{Lemma} 
\newtheorem{cor}[df]{Corollary}

\newtheorem{rk}[df]{Remark}

\theoremstyle{remark}

\DeclareMathOperator{\Id}{Id}

\DeclareMathOperator{\Sp}{Sp}
\DeclareMathOperator{\GL}{GL}
\DeclareMathOperator{\OO}{SO}
\renewcommand{\OE}{\mathrm{O}}
\DeclareMathOperator{\I}{I}

\newcommand{\Z}{\mathbb{Z}}
\newcommand{\N}{\mathbb{N}}

\newcommand{\os}[2]{\overset{#1}{#2}}

\renewcommand{\sp}{\mathrm{sp}}
\DeclareMathOperator{\oo}{so}
\renewcommand{\oe}{\mathrm{o}^{\text{even}}}

\usepackage{mathtools, stmaryrd}
\usepackage{xparse} 
\DeclarePairedDelimiterX{\Iintv}[1]{\llbracket}{\rrbracket}{\iintvargs{#1}}
\NewDocumentCommand{\iintvargs}{>{\SplitArgument{1}{,}}m}
{\iintvargsaux#1} %
\NewDocumentCommand{\iintvargsaux}{mm} {#1\mkern1.5mu..\mkern1.5mu#2}


\captionsetup[sub]{font=small,labelfont={sf}}

\usepackage[margin=1.5in]{geometry}
\numberwithin{equation}{section}
\numberwithin{df}{section}

\begin{document}

\pagenumbering{arabic} 
\title[From Macdonald identities to Nekrasov--Okounkov type formulas]{Some combinatorial interpretations of the Macdonald identities for affine root systems and Nekrasov--Okounkov type formulas}


\author{David Wahiche}\address{Univ. de Tours, UMR CNRS 7013, Institut Denis Poisson, France}
\email{wahiche@univ-tours.fr}





\maketitle


\begin{abstract}
We explore some connections between vectors of integers and integer partitions seen as bi-infinite words. On the one hand, this methodology enables us to obtain enumerations connecting products of hook lengths and vectors of integers.
On the other hand, this yields a combinatorial interpretation of the Macdonald identities for affine root systems of the $7$ infinite families in terms of Schur functions, symplectic and special orthogonal Schur functions with respect to the type of the considered root system. From these results, we are able to derive $q$-Nekrasov--Okounkov formulas associated to each type. The latter for limit cases of $q$ yield Nekrasov--Okounkov type formulas corresponding to all the specializations given by Macdonald. When $q$ goes to $1$, one can derive combinatorial developments of Euler product, answering an open problem from Han.

\smallskip
\noindent\textbf{Keywords.} Integer partitions, Macdonald identities for affine root systems, Littlewood decomposition, Nekrasov--Okounkov formulas.
\end{abstract}



\section{Introduction and notations}\label{chap2:partitions}

Formulas involving hook lengths abound in combinatorics and representation theory. One illustrative example is the hook-length formula discovered in 1954 by Frame, Robinson and Thrall \cite{FRT}. It states the equality between the number $f^\lambda$ of standard Young tableaux of shape $\lambda$ and size $n$, and the number of permutations of $\lbrace 1,\dots,n\rbrace$ divided by the product of the elements of the hook lengths multiset $\cch(\lambda)$ of $\lambda$, namely:
\begin{equation}\label{chap2:eqSYT}
f^\lambda=\frac{n!}{\displaystyle\prod_{h\in\cch(\lambda)}h}\cdot
\end{equation}
Hook lengths formulas such as \eqref{chap2:eqSYT} are useful enumerative tools bridging combinatorics with other fields such as representation theory, probability, gauge theory or algebraic geometry. A much more recent identity is the Nekrasov--Okounkov formula.  It was discovered independently by Nekrasov and Okounkov in their work on random partitions and Seiberg--Witten theory \cite{NO}, and by Westbury \cite{WeW} in his work on universal characters for $\mathfrak{sl}_n$. This formula is commonly stated as follows:
\begin{equation}\label{NOdebut}
\sum_{\lambda\in\ccp}T^{\lvert \lambda\rvert}\prod_{h\in\cch(\lambda)}\left(1-\frac{z}{h^2}\right)=\prod_{k\geq 1}\left(1-T^k\right)^{z-1},
\end{equation}
where $z$ is a fixed complex number.
This identity was later obtained independently by Han \cite{Ha}, based on one of the identities for affine type $A^{(1)}_{t-1}$~in \cite[Appendix $1$]{Mac} and a polynomiality argument. The proof lead the author to call \eqref{NOdebut} in \cite{Hancon} as an \textit{indiscretization} analogue of the Macdonald identity for affine type $A^{(1)}_{t-1}$. 
 The existence of similar identities for other types is a natural question that already arises in Han's paper \cite[Problem $6.4$]{Hancon}. We give a more general answer for all infinite affine root systems and the identities tackling the question from Han can be found in Subsection \ref{sec:macident}.
The approach detailed here should actually lead to an interpretation of Macdonald identities for exceptional types as hook length formulas, however in these cases the ranks are bounded, so one cannot lift these identities to an \textit{indiscretization} analogue as Han calls it. Therefore we do not deal with the exceptional affine types in this paper.
Before going any further, let us introduce some notations. For formal variables, the $T$-Pochhammer symbols are defined as $(a;T)_0=1$ and for any integer $n\geq 1$, as
\begin{flalign*}
&&(a;T)_n&:= (1-a)(1-aT) \ldots (1-aT^{n-1}),&&\\
&& \displaystyle (a;T)_\infty &:= \prod_{j\geq 0} (1-aT^j),\\
\text{and}  &&  \displaystyle (a_1,\dots,a_n;T)_\infty &:=(a_1;T)_\infty \dots (a_n;T)_\infty .
\end{flalign*}
We also introduce the three following shorthand notations:
\begin{align*}
(Ta_1^{\pm};T)_\infty &:=(Ta_1,Ta_1^{-1};T)_\infty ,\\
(Ta_1^{\pm 2};T)_\infty &:=(Ta_1^2,Ta_1^{-2};T)_\infty ,\\
(Ta_1^{\pm}a_2^{\pm};T)_\infty &:=(Ta_1^{-1}a_2,Ta_1a_2^{-1},Ta_1a_2,Ta_1^{-1}a_2^{-1};T)_\infty .
\end{align*}

A \textit{partition} $\lambda$ of a positive integer $n$ is a non-increasing sequence of positive integers $\lambda=(\lambda_1,\lambda_2,\dots,\lambda_\ell)$ such that $\lvert \lambda \rvert := \lambda_1+\lambda_2+\dots+\lambda_\ell = n$. The $\lambda_i$'s are the \textit{parts} of $\lambda$, the number $\ell$ of parts being the \textit{length} of $\lambda$, denoted by $\ell(\lambda)$. The well-known generating series for $\ccp$ can also be derived from \eqref{NOdebut} with $z=0$:
\begin{equation}\label{gspartitions}
\sum_{\lambda\in\ccp}T^{\vert \lambda\vert}=\left(T;T\right)_\infty^{-1}.
\end{equation}

Each partition can be represented by its Ferrers diagram, which consists of a finite collection of boxes arranged in left-justified rows, with the row lengths in non-increasing order. The \textit{Durfee square} of $\lambda$ is the maximal square fitting in the Ferrers diagram. Its diagonal, denoted by $\Delta$, will be called the main diagonal of $\lambda$. Its size will be denoted $d=d_\lambda:=\max(s | \lambda_s\geq s)$. As an example, in Figure~\ref{fig:ferrers}, the Durfee square of $\lambda=(4,3,3,2)$, which is a partition of $12$ of length $4$, is colored in red.
 The partition $\lambda'=(\lambda_1',\lambda_2',\dots,\lambda_{\lambda_1}')$ is the \textit{conjugate} of $\lambda$, where $\lambda_j'$ denotes the number of boxes in the column $j$.

%

\begin{figure}[H]
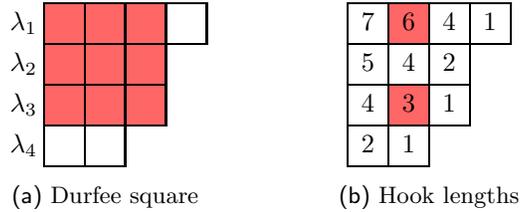

\centering
\begin{subfigure}[t]{.3\textwidth}
\centering
%
\begin{ytableau}
  \none[\lambda_1]  &  *(red!60)  &*(red!60)  & *(red!60) & \\
       \none[\lambda_2] & *(red!60) &*(red!60) &*(red!60) & \none \\
\none[\lambda_3] & *(red!60) &*(red!60)  &*(red!60) &\none \\
\none[\lambda_4] &  &  &\none & \none 
\end{ytableau}
\caption{Durfee square}
\label{fig:ferrers}
\end{subfigure}
\begin{subfigure}[t]{.3\textwidth}
\centering
\begin{ytableau}
  7  &*(red!60) 6 & 4 & 1\\
 5 &4 &2 & \none \\
 4 &*(red!60)3  &1 &\none \\
 2 & 1 &\none & \none \\
\end{ytableau}
\caption{Hook lengths}
\label{fig:hooks}
\end{subfigure}
\caption{Ferrers diagram and some partition statistics.}
\label{fig:fig1}
\end{figure}

For each box $v$ in the Ferrers diagram of a partition $\lambda$ (for short we will say for each box $v$ in $\lambda$), one defines the \textit{arm-length} (respectively \textit{leg-length}) as the number of boxes in the same row (respectively in the same column) as $v$ strictly to the right of (respectively strictly below) the box $v$. The \textit{hook length} of $v$, denoted by $h_v(\lambda)$ or $h_v$, is the number of boxes $u$ such that either $u=v$, or $u$ lies strictly below (respectively to the right) of $v$ in the same column (respectively row).
The \textit{hook lengths multiset} of $\lambda$, denoted by $\mathcal{H}(\lambda)$, is the multiset of all hook lengths of $\lambda$. For any positive integer $t$, the multiset of all hook lengths that are congruent to $0 \pmod t$ is denoted by $\mathcal{H}_t(\lambda)$. Notice that $\mathcal{H}(\lambda)=\mathcal{H}_1(\lambda)$. A partition $\omega$ is a \textit{$t$-core} if $\cch_t(\omega)=\emptyset$. For any $\mathcal{A}\subset\ccp$, we denote by $\mathcal{A}_{(t)}$ the subset of elements of $\mathcal{A}$ that are $t$-cores. For example, the only $2$-cores are the ``staircase'' partitions $(k,k-1,\dots,1)$, where $k$ is any positive integer. In Figure \ref{fig:hooks}, the hook lengths of all boxes for the partition $\lambda=(4,3,3,2)$ are written in their corresponding boxes, the boxes associated with $\mathcal{H}_3(\lambda)$ are shaded in red. The hook-length multiset $\mathcal{H}(\lambda)$ is here given by $\lbrace 2,1,4,3,1,5,4,2,7,6,4,1\rbrace$ and $\mathcal{H}_3(\lambda)=\lbrace 3,6\rbrace$.
Formula \eqref{NOdebut} was first generalized with two parameters in \cite[Equation $(3.7)$]{IKS}. Later, simultaneously both Rains--Warnaar \cite{RW}, by using refined skew Cauchy-type identities for Macdonald polynomials, and Carlsson--Rodriguez Villegas \cite{CRV}, by means of vertex operators and the plethystic exponential, discovered a $(q,t)$-extension of the Nekrasov--Okounkov formula to settle a conjecture from Hausel--Rodriguez Villegas \cite[Conjecture $4.3.2$]{HRV} in their study of Mixed Hodge polynomials of character varieties with genus $1$. 
\begin{thm}\cite[Theorem $1.0.2$]{CRV},\cite[Theorem $1.3$]{RW}\label{qtNO}
We have
\begin{multline*}
\sum_{\lambda\in\ccp}T^{\lvert \lambda\rvert}\prod_{s\in\lambda}\frac{(1-uq^{a(s)+1}t^{l(s)})(1-u^{-1}q^{a(s)}t^{l(s)+1})}{(1-q^{a(s)+1}t^{l(s)})(1-q^{a(s)}t^{l(s)+1})}\\
=\prod_{i,j,k\geq 1}\frac{(1-uq^it^{j-1}T^k)(1-u^{-1}q^{i-1}t^{j}T^k)}{(1-q^{i-1}t^{j-1}T^k)(1-q^{i}t^jT^k)}.
\end{multline*}
\end{thm}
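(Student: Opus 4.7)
My plan is to establish the identity through the theory of Macdonald symmetric functions, following the Rains--Warnaar strategy. The key observation is that the denominator of the summand on the left-hand side factors as $c_\lambda(q,t)c'_\lambda(q,t)$, where
$$c_\lambda(q,t):=\prod_{s\in\lambda}(1-q^{a(s)}t^{l(s)+1}), \qquad c'_\lambda(q,t):=\prod_{s\in\lambda}(1-q^{a(s)+1}t^{l(s)}),$$
are the Macdonald normalization constants appearing in the dual basis relation $Q_\lambda=(c_\lambda/c'_\lambda)P_\lambda$, with $P_\lambda$ and $Q_\lambda$ the standard Macdonald symmetric functions.

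The first main step would be to recognise both the numerator and the denominator of the summand as products arising from plethystic specializations of Macdonald polynomials. Using the classical evaluation formulas of Macdonald for $P_\lambda$ at alphabets of the form $(1-u)/(1-t)$, together with their conjugates under the $q\leftrightarrow t$ symmetry, after a short calculation the whole summand can be rewritten as
$$\prod_{s\in\lambda}\frac{(1-uq^{a(s)+1}t^{l(s)})(1-u^{-1}q^{a(s)}t^{l(s)+1})}{c_\lambda(q,t)c'_\lambda(q,t)}=P_\lambda[X;q,t]\,Q_\lambda[Y;q,t]$$
for two explicit plethystic alphabets $X=X(u,q,t)$ and $Y=Y(u,q,t)$ built out of geometric progressions in $q$ and $t$ together with the factors $(1-u^{\pm 1})$.

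Next, I would absorb the variable $T$ into a rescaling of one of the alphabets and apply the Macdonald--Cauchy identity
$$\sum_{\lambda\in\ccp}P_\lambda[X;q,t]\,Q_\lambda[Y;q,t]=\prod_{i,j\geq 1}\frac{(tx_iy_j;q)_\infty}{(x_iy_j;q)_\infty}.$$
Expanding plethystically with the $X$ and $Y$ from the previous step, one obtains on the right-hand side a fourfold infinite product in which the index $k\geq 1$ arises from the geometric series in $T^k$, while the indices $i,j\geq 1$ come from the expansions of the $1/((1-q)(1-t))$ factors in $X$ and $Y$. Collecting terms yields precisely the product stated in the theorem.

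The main obstacle will be the bookkeeping in the second step: one must verify that a \emph{single} pair of plethystic alphabets $X,Y$ reproduces \emph{both} of the arm--leg factors $(1-uq^{a+1}t^l)$ and $(1-u^{-1}q^a t^{l+1})$ simultaneously, against the correct normalization $c_\lambda c'_\lambda$. This requires carefully tracking the interaction between the $q\leftrightarrow t$ symmetry of Macdonald polynomials, the inversion $u\leftrightarrow u^{-1}$, and the distinction between the $P$- and $Q$-normalizations. Once this identification is in place, converting the output of Cauchy into the stated quadruple product is a routine manipulation of $(q,t)$-Pochhammer symbols.
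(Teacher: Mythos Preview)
The paper does not prove this theorem at all: it is quoted in the introduction as a known result, with the proof attributed to Carlsson--Rodriguez Villegas (via vertex operators and the plethystic exponential) and to Rains--Warnaar (via refined skew Cauchy-type identities for Macdonald polynomials). The paper's own work concerns the other affine types, and the $q=t$ specialization \eqref{Hande} of this theorem serves only as motivation.

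Your outline follows the Rains--Warnaar route that the paper cites. The plan is essentially correct in spirit, but your first step is stated imprecisely. The principal specialization formulas for $P_\lambda$ and $Q_\lambda$ at an alphabet of the form $(1-u)/(1-t)$ produce products over the cells of $\lambda$ involving the \emph{coarm} and \emph{coleg} $a'(s),l'(s)$ in the numerator, not the arm and leg $a(s),l(s)$ that appear in the theorem. To match the stated summand you must either invoke an additional identity converting the $(a',l')$-product into an $(a,l)$-product (this is where Rains--Warnaar's refined Cauchy/Littlewood machinery, or an equivalent Pieri-type argument, enters), or else pass through the generalized $b_\lambda$-function and its arm--leg factorization. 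Simply citing ``the classical evaluation formulas of Macdonald'' will not directly give you factors of the shape $(1-uq^{a(s)+1}t^{l(s)})$, so the bookkeeping you flag as the main obstacle is genuinely nontrivial and deserves a precise reference or a worked-out lemma.
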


As mentioned in \cite{CRV,RW}, the special case $q=t$ is a reformulation of a result due to Dehaye--Han \cite{HD}, using a specialization of the Macdonald identity for type $A^{(1)}_{t-1}$, and Iqbal--Nazir--Raza--Saleem \cite{INRS}, using the refined topological vertex. It can be written as follows:
\begin{equation}\label{Hande}
\sum_{\lambda\in\ccp}T^{\lvert \lambda\rvert}\prod_{h\in\cch(\lambda)}\frac{(1-uq^h)(1-u^{-1}q^h)}{(1-q^h)^2}=\prod_{k,r\geq 1}\frac{(1-uq^rT^k)^r(1-u^{-1}q^rT^k)^r}{(1-q^{r-1}T^k)^r(1-q^{r+1}T^k)^r}.
\end{equation}

Note that taking $u=q^n$  for a strictly positive integer $n$ and letting $q\rightarrow 1$ in \eqref{Hande} yields \eqref{NOdebut} for $z=n^2$. Note that for any positive integer $k$, the coefficients in $T^k$ on both sides of \eqref{NOdebut} are polynomials in $z$. It is obvious on the left-hand side whereas for the right-hand side, one uses, as in \cite[$(3.10)$]{Han08}, the following relation:
$$ \left(T;T\right)_\infty^{z-1}=\exp\left(-(z-1)\sum_{k\geq 1}\frac{x^k}{k(1-x^k)}\right).$$
Therefore we can use the polynomiality argument which tells us that the identity holds for any complex number $z$.

The methods used by Han to prove \eqref{NOdebut}, and Dehaye--Han for \eqref{Hande}, both start from the type $A^{(1)}_{t-1}$ Macdonald's identity. Nevertheless one needs to keep track of the variables to derive \eqref{Hande}, whereas \eqref{NOdebut} is obtained by specializing all the variables to $1$. In order to  do this for our purpose, one needs some reformulation dispensing of the more abstract sum over the affine Weyl group, as done for instance in the work of Gustafson through his $_6\psi_6$ identities on root systems \cite{Gustafson}. In our case, it is convenient to use the formulation given by Rosengren--Schlosser \cite{RS} (see also Stanton's reformulation in \cite{Stanton}). The next step in \cite{HD} for proving \eqref{Hande} needs new combinatorial notions such as exploded tableaux and a $V_{t}$-coding. In this paper, we extend this last notion to $V_{g,t}$-coding (see Definition \ref{def:vcoding}) adapted from Garvan--Kim--Stanton \cite{GKS}, which comes naturally from the study of the Littlewood decomposition described in Section \ref{sec:lit}.

A natural question is then what happens when one tries to derive Nekrasov--Okounkov formulas for other affine types. In the process of finding such interpretations, some subsets of partitions appear and are introduced hereafter. A partition $\lambda$ is \textit{self-conjugate} if its Ferrers diagram is symmetric along the main diagonal. Let $\mathcal{SC}$ be the set of self-conjugate partitions. The set of doubled distinct partitions, denoted by $\ccdd$, is that of all partitions $\lambda$ of Durfee square size $d$ such that $\lambda_i=\lambda_i'+1$ for all $i\in\lbrace 1,\dots,d\rbrace$. We also define the set of conjugates of doubled distinct partitions $\ccdd':=\lbrace \lambda'\mid \lambda\in\ccdd\rbrace$. For instance, in Figure \ref{fig:varsc} $\lambda=(5,3,3,1,1)\in\ccsc$ has its main diagonal $\Delta$ shaded in green while in Figure \ref{fig:vardd} $\lambda=(6,4,4,1,1)\in\ccdd$ has its main diagonal shaded in green as for the strip shaded in yellow, it corresponds to the boxes added to a self-conjugate partition to obtain a doubled distinct partition. The conjugate of a doubled distinct partition is also illustrated in Figure \ref{fig:varddprime}. The subsets $\ccsc$ and $\ccdd$ appeared within the works of Littlewood \cite{Little}, Macdonald \cite[p.78--79]{Macbook} and of  Koike--Terada \cite{KoikeTerada} in their study of Young-diagrammatic representation of the classical Lie groups $B_n$, $C_n$ and $D_n$, or in the work of King \cite{King} and more recently within the works of Ayyer--Kumari \cite{AyyerKumari} and Albion \cite{Albion}. They are of particular interest in the papers by P\'etr\'eolle \cite{Pe,MP} where two Nekrasov--Okounkov type formulas for affine root systems of types $C^{(1)}$ and $A^{(2)}_{odd}$ are derived. For instance, in~~\cite{Pe,MP}, the author proves the following $\ccsc$ Nekrasov--Okounkov type formula, similar to~\eqref{NOdebut}, giving a partial answer to \cite[Problem $6.4$]{Hancon}, and which stands, for any complex number $z$, as: 
\begin{equation}\label{eq:nospecc}
\sum_{\lambda\in\ccdd}(-1)^{d_\lambda}T^{\lvert \lambda\rvert/2}\prod_{s\in\lambda}\left(1-\frac{(2z+2)\varepsilon_s}{h_s}\right)=\left(T;T\right)_{\infty}^{2z^2+z}.
\end{equation}
Here for a box $s$ of $\lambda$, \textit{$\varepsilon_s$} is defined as $-1$ if $s$ is strictly below the main diagonal of the Ferrers diagram of $\lambda$ and as $1$ otherwise, as depicted in Figure \ref{fig:var}. This signed statistic already appears algebraically within the work of King \cite{King} and combinatorially within the work of Pétréolle \cite{MP}.

A \emph{rim hook} (or border strip, or ribbon) is a connected skew shape containing no $2\times2$ square. The length of a rim hook is the number of boxes in it, and its height is one less than its number of rows. By convention, the height of an empty rim hook is zero. The Littlewood decomposition is a classical bijection in partition theory (see for instance \cite[Theorem 2.7.17]{JK}). Roughly speaking, for any fixed positive integer $t$, this transforms $\lambda\in\ccp$ into two components: the $t$-core $\omega$, which is obtained by removing sequentially all the rim hooks of length $t$ of $\lambda$, and the $t$-quotient $\underline{\nu}$ (see Section \ref{sec:lit} for precise definitions and properties):
$$
\lambda\in\ccp\mapsto \left(\omega,\underline{\nu}\right)\in\ccp_{(t)}\times \ccp^t.
$$

\begin{figure}[H]
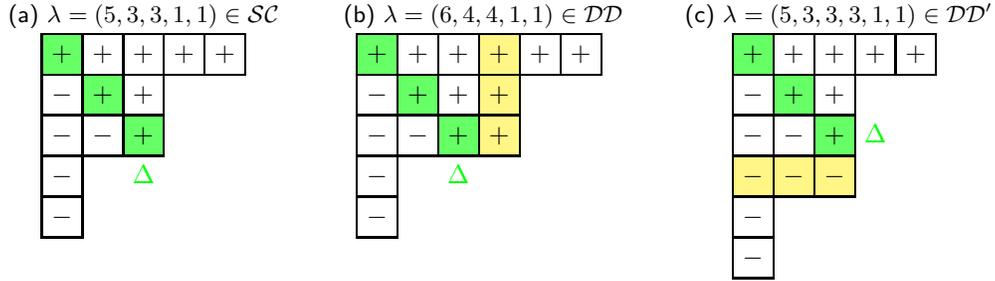

\centering
\begin{subfigure}[t]{.3\textwidth}
\caption{$\lambda=(5,3,3,1,1)\in\ccsc$}

\centering

\begin{ytableau}
 *(green!60)+  &+ & + &+ & +\\
 - &*(green!60)+ &+ & \none \\
 - &-  &*(green!60)+&\none \\
  -&\none &\none[\textcolor{green}{\Delta}] &\none  \\
 - &\none &\none &\none \\
\end{ytableau}

\label{fig:varsc}
\end{subfigure}
\begin{subfigure}[t]{.33\textwidth}
\caption{$\lambda=(6,4,4,1,1)\in\ccdd$}

\centering
\begin{ytableau}
 *(green!60)+  &+ & + &*(yellow!60)+&+ & +\\
 - &*(green!60)+ &+ &*(yellow!60)+ & \none \\
 - &-  &*(green!60)+&*(yellow!60)+ &\none \\
  -&\none &\none[\textcolor{green}{\Delta}] &\none & \none \\
 - &\none &\none &\none & \none \\
\end{ytableau}
\label{fig:vardd}
\end{subfigure}
\begin{subfigure}[t]{.33\textwidth}
\caption{$\lambda=(5,3,3,3,1,1)\in\ccdd'$}
\centering
\begin{ytableau}
 *(green!60)+  &+ & + &+&+ \\
 - &*(green!60)+ &+ & \none \\
 - &-  &*(green!60)+&\none[\textcolor{green}{\Delta}] \\
 *(yellow!60)-&*(yellow!60)-&*(yellow!60)-&\none\\
  -&\none &\none &\none & \none \\
 - &\none &\none &\none & \none \\
\end{ytableau}
\label{fig:varddprime}
\end{subfigure}
\caption{A self-conjugate partition, a doubled distinct partition and its conjugate filled with $\varepsilon$.}
\label{fig:var}
\end{figure}


Let $n$ be a positive integer and let $\mathbf{x}=(x_1,\dots,x_n)$ be a set of any independent variables.
Recall the Weyl denominator formula for a reduced root system $R$ (see for instance p.185 of \cite{Bourbaki})
\begin{equation}\label{chap2:macdodenom}
\sum_{\sigma\in W}\sgn(\sigma)e^{\sigma(\rho)-\rho}=\prod_{\alpha\in R_+}\left(1-e^{-\alpha}\right),
\end{equation}
where $W$ is the Weyl group associated with $R$, $R_+$ is the set of positive roots, $e^\alpha$ is a formal exponential and $\rho= \sum_{\alpha\in R_+}\alpha/2$. 

 In type $A_{n-1}$, it corresponds to the \textit{Vandermonde determinant} $\Delta_A(\textbf{x})$ following notations of Rosengren--Schlosser in \cite{RS}:
\begin{equation}\label{chap2:delta}
\Delta_A(\mathbf{x}):=\underset{1\le i,j\le n}{\det}\left(x_i^{j-1}\right)=\prod_{1\le i<j\le n}(x_j-x_i).
\end{equation}
Following \cite{RS}, the Weyl denominator formula in type $B_n$ is:
\begin{align}\label{def:deltab}
\Delta_B(\mathbf{x}):&=\underset{1\le i,j\le n}{\det}\left(x_i^{j-n}-x_i^{-(j-n)+1}\right)\notag\\
&=\prod_{1\le i \le n}x_i^{1-n}(1-x_i)\prod_{1\le i<j\le n}(x_j-x_i)(1-x_ix_j),
\end{align}
and in type $C_n$ it is:
\begin{align}\label{def:deltac}
\Delta_C(\mathbf{x}):&=\underset{1\le i,j\le n}{\det}\left(x_i^{j-n-1}-x_i^{-(j-n-1)}\right)\notag
\\&=\prod_{1\le i \le n}x_i^{-n}(1-x_i^2)\prod_{1\le i<j\le n}(x_j-x_i)(1-x_ix_j),
\end{align}
while in type $D_n$ it is:
\begin{align}\label{def:deltad}
\Delta_D(\mathbf{x})&:=\underset{1\le i,j\le n}{\det}\left(x_i^{j-n}+x_i^{-(j-n)}\right)=2\prod_{1\le i \le n}x_i^{1-n}\prod_{1\le i<j\le n}(x_j-x_i)(1-x_ix_j).
\end{align}

 Let $\bbbz[\mathbf{x}]:=\bbbz[x_1,\dots,x_n]$ be the ring of polynomials in $x_1,\dots,x_n$ with integer coefficients. A polynomial $P(\mathbf{x})$ in $\bbbz[\mathbf{x}]$ is \textit{symmetric} if it is invariant under permutations of its variables, i.e. for every $\sigma\in S_n$:
$$P(x_1,\dots,x_n)=P\left(x_{\sigma(1)},\dots,x_{\sigma(n)}\right).$$

The set of all symmetric polynomials in $n$ variables, denoted by $\Lambda_n$, is a $\bbbz$-module over $\bbbz[\textbf{x}]$. For a partition $\lambda=(\lambda_1,\dots,\lambda_n)$, the \emph{Schur polynomial} is given by
\begin{equation}
\label{gldef}
s_\lambda(\mathbf{x})=\frac{\underset{1\le i,j\le n}{\det}\left(x_i^{\lambda_j+n-j}\right)}
{\underset{1\le i,j\le n}{\det}\left(x_i^{n-j}\right)}.
\end{equation}

Schur polynomials have been an object of extensive studies; they can be thought of an orthonormal basis of $\Lambda_n$ for the Hall inner product (see for instance \cite{Macbook}). By the alternating property of the determinant, a Schur polynomial is invariant by any $\sigma\in S_n$ as a quotient of determinants. One can also think of a Schur polynomial as a \emph{character} of the polynomial representations of $\GL_n$, or as a sum over semi-standard Young tableaux (see \cite{FH}).

If one thinks of the Schur polynomials as invariant by the action of any element in $S_n$, one can introduce the analogues for other groups of signed permutations (see \cite{AyyerKumari,FH,Kradet1,Kradet2} for more details and background).
We write down the explicit Weyl character formulas for the infinite families of classical groups at the representation indexed by $\lambda = (\lambda_1,\dots,\lambda_n)$.

The \emph{odd orthogonal (type B) character} of the group $\OO(2n+1)$ is given by
\begin{align}
\label{oodef}
\oo_\lambda(\mathbf{x})&:=
\frac{\underset{1\le i,j\le n}{\det}\left(x_i^{\lambda_j+n-j+1/2}-x_i^{-(\lambda_j+n-j+1/2)}\right)}
{\underset{1\le i,j\le n}{\det}\left(x_i^{n-j+1/2}-x_i^{n-j+1/2}\right)} \notag\\
&= \frac{\underset{1\le i,j\le n}{\det}\left(x_i^{\lambda_j+n-j+1}-x_i^{-(\lambda_j+n-j)}\right)}
{\underset{1\le i,j\le n}{\det}\left(x_i^{n-j+1}-x_i^{-(n-j)}\right)}.
\end{align}

The \emph{symplectic (type C) character} of the group $\Sp(2n)$ is given by
\begin{equation}
\label{spdef}
\sp_\lambda(\mathbf{x}):=
\frac{\underset{1\le i,j\le n}{\det}\left(x_i^{\lambda_j+n-j+1}-x_i^{-(\lambda_j+n-j+1)}\right)}
{\underset{1\le i,j\le n}{\det}\left(x_i^{n-j+1}-x_i^{-(n-j+1)}\right)}.
\end{equation}

Lastly, the \emph{even orthogonal (type D) character} of the group $\OE(2n)$ is given by
\begin{equation}
\label{oedef}
\oe_\lambda(\mathbf{x}):=
\frac{2\underset{1\le i,j\le n}{\det}\left(x_i^{\lambda_j+n-j}+x_i^{-(\lambda_j+n-j)}\right)}
{\displaystyle (1+\delta_{\lambda_n,0})
\underset{1\le i,j\le n}{\det}\left(x_i^{n-j}+x_i^{-(n-j)}\right)},
\end{equation}
where $\delta$ is the Kronecker delta. 
There is an extra factor in the denominator because the last column becomes $2$ if $\lambda_n = 0$.


%

The Macdonald identity for affine root systems can be thought of as a generalization of the Weyl denominator formulas: it connects a sum depending on the associated Weyl group $W$, which is a group of (signed) permutations acting on the affine root system, and a parameter $g$, to a product over the positive roots. 

As we will not need it here, we do not give the definitions of (affine) root systems. Nevertheless we refer an interested reader, who wants to read a proper introduction and the definitions of finite and affine root systems, to \cite[Part $I$ Chapter $2$]{Bartphd}. Using the notations of Macdonald \cite{Mac}, there are $7$ infinite reduced affine root systems: $\tilde{A}_{t-1}$,$\tilde{B}_t$, $\tilde{B}^{\vee}_t$, $\tilde{C}_t$, $\tilde{C}^{\vee}_t$, $\tilde{D}_t$ and $\widetilde{BC}_t$. Note that the notations of Kac in his book \cite[p.80]{KacB} slightly differ, $\tilde{X}_t$ becomes $X_t^{(1)}$ for $X$ in $A,B,C,D$, and $\tilde{B}^{\vee}_t$, $\tilde{C}^{\vee}_t$, and $\widetilde{BC}_t$ are respectively denoted by $A_{2t-1}^{(2)},D_{t+1}^{(2)}$ and $A_{2t}^{(2)}$.


Section \ref{chap3:quad} is devoted to the study of the links between some quadratic forms over $\bbbz^t$ that appear in the Macdonald identities and some subsets of partitions whose description is given by the Littlewood decomposition. To do so, we explore an extension of a bijection already studied by Garvan--Kim--Stanton in \cite{GKS} between vector of integers and $t$-cores: we study these extensions to $7$ subsets of partitions, whose weight can be seen as a quadratic form that appears in the Macdonald identities for the $7$ infinite affine root systems. Moreover we derive enumerative results on the products of hook lengths of these subsets of partitions and vectors of integers where the $V_{g,t}$-coding is a key tool (see Definition \ref{def:vcoding}).
 These enumerative results will be necessary first to rewrite Macdonald identities for the $7$ infinite affine root systems in Section \ref{chap4:macdo}, and then to derive $q$-Nekrasov--Okounkov type formulas in Section \ref{sec:NO}. As a first consequence of this combinatorial analysis, we are able to rewrite the sum part of Macdonald identities for all $7$ infinite root systems as a sum over symplectic, special orthogonal and even orthogonal Schur functions. Indeed the type of enumeration used to prove results in Section \ref{chap3:hooks} enables to also connect the signature of a signed permutation of the Weyl group with respect to some characteristics of $\lambda$ such as the size of the Durfee square, the cardinal of the set of boxes of $\lambda$ having hook lengths less than $g$ and the main diagonal $\Delta$ of $\lambda$.
The rewriting of the Macdonald identity \eqref{eq:macdoAfin} for type $A^{(1)}_{t-1}$ gives the following result.
\begin{thm}\label{prop:MacdotypeA}
Set $t\in\mathbb{N}^*$. The Macdonald identity for type $A^{(1)}_{t-1}$ can be rewritten as follows:
\begin{equation}\label{eq:macdoa}
\sum_{\omega\in\ccp_{(t)}} (-1)^{\lvert H_{< t} \rvert}T^{\lvert\omega\rvert}  s_{\mu}({\bf x})\prod_{i=1}^t x_i^{-\ell(\omega)}
=\left(T;T\right)_\infty^{t-1}\prod_{1\le i<j\le t}\left(T(x_ix_j^{-1})^{\pm};T\right)_\infty.
\end{equation}
where $H_{< t}:=\lbrace s\in\omega\mid h_s<t\rbrace$, $\mu\in\ccp$ is such that $\mu_i:=v_i-v_t+i-t$ for all $1\leq i \leq t$ and $\mathbf{v}$ is the $V_{t,t}-$coding corresponding to $\omega$ (see Definition \ref{def:vcoding}).
\end{thm}

 For a positive integer $t$, let us introduce the compact notation:

\begin{equation}\label{eq:kt}
 K_T(t,\mathbf{x}):=\prod_{1\leq i<j\leq t}\left(Tx_i^{\pm} x_j^{\pm};T\right)_\infty.
\end{equation}

We prove similar results as Theorem \ref{prop:MacdotypeA} for the $6$ remaining infinite families of affine root systems. For example, in type $C^{(1)}_{t}$, we get the following result
\begin{thm}\label{prop:MacdotypeC}
Set $t\in\mathbb{N}^*$ such that $t\geq 2$ and $g=2t+2$. The Macdonald identity for type $C^{(1)}_{t}$ can be rewritten as follows:
\begin{equation}\label{eq:macdoc}
\sum_{\omega\in\ccdd_{(g)}} (-1)^{d_{\omega}+\lvert H_{<g,+}\rvert}T^{\lvert\omega\rvert/2}\sp_{\mu}({\bf x})=
\left(T;T\right)_\infty^tK_T(t,{\bf x})
\prod_{i=1}^t \left(Tx_i^{\pm 2};T\right)_\infty ,
\end{equation}
where $H_{<g,+}:=\lbrace s\in\omega, h_s<g, \varepsilon_s=1\rbrace$, $d_\omega$ is the length of the main diagonal of $\omega$, $\mu\in\ccp$ is such that
$\mu_i:=v_i+i-g$ for all $1\leq i \leq t$ and $\mathbf{v}$ is the $V_{g,t}$-coding corresponding to $\omega$ (see Definition \ref{def:vcoding}).
\end{thm}

Table \ref{tableau} at the end of Section \ref{sec:Macdo} summarizes the families of partitions and their corresponding Schur functions associated with the $7$ infinite affine types.

Actually the above theorems admit a uniform statement for all types as it is explained in \cite{LW} formulated in a setting closer to the one exposed in Macdonald \cite{Mac}. Ever since the present work revolves more around some interpretations of Macdonald identities seen as multivariate power series (following the works of \cite{Stanton,RS}), we mention here the reformulation obtained in \cite{LW} for a reader coming with a different point of view.
For the following paragraph, one refers an interested reader to \cite{carter_affine,KacB} for the notations and to \cite{LW} for the proof.
Let $A$ be a generalized Cartan matrix of a classical affine root system and $\os{\ \circ}{A}$ is a Cartan matrix of finite type obtained from $A$ by deleting the row and the column corresponding to $0$.
Let $R_+$ the set of positive affine roots and $\mathring{R}_{+}$ the set of positive (non affine) roots. Let $\mathring{\rho}$ denote the sum of positive (non affine) roots.

Let $W_a$ and $W$ respectively denote the affine Weyl group and the Weyl group. Let $M^{*}$ be the lattice in \cite[Section 20]{carter_affine}, then one has $$W_a\simeq W\ltimes M^{*}.$$ 

Moreover recall that for any classical weight $\gamma\in \mathring{P}$,
$$a_\gamma=\sum_{w\in W}\varepsilon(w)e^{w(\gamma)}.$$
From the Weyl character formula, one can define the virtual characters for any $\gamma\in \mathring{P}$,
$$s_\gamma=\frac{a_{\gamma+\mathring{\rho}}}{a_{\mathring{\rho}}}.$$

The affine grassmannian elements are the elements of minimal length in the cosets of $W_{a}/W$. We shall denote by $W_{a}^{0}$ these affine grassmannian elements. 
Write as usual $W^{\nu}$ for the set of minimal length elements in the cosets of
$W/W_{\nu}$, where $W_{\nu}$ is the stabilizer of $\nu$ under the action of the finite Weyl group and $t_\nu$ for $u\in M^{*}$ as defined in \cite[p.485]{carter_affine}.

	With the previous notations, as shown in \cite{LW}, the Macdonald identity can	be written
	\[
	\prod_{\alpha\in R_{+}\setminus\mathring{R}_{+}}(1-e^{-\alpha
})^{m_{\alpha}}=\sum_{c\in W_{a}^{0}}\varepsilon(c)q^{\left\vert c\right\vert }%
	s_{-\eta^{\vee}\nu+u^{-1}(\mathring{\rho})-\mathring{\rho}}%
	\]
	where $m_{\alpha}$ is the multiplicity of $\alpha$, $\eta^{\vee}$ is the dual Coxeter number, $Q$ is the root lattice, and where we set $c=ut_{\nu}$ with $\nu\in Q\cap(-P_{+})$ and $u\in W^{\nu}$ for
	any element of the affine grassmannian $W_{a}^{0}$. Here $\left\vert c\right\vert$ corresponds to the dual of the so-called atomic length as introduced by Chapelier-Laget and Gerber in \cite{CGT}. Moreover, each weight
	$-\eta^{\vee}\nu+u^{-1}(\mathring{\rho})-\mathring{\rho}$ belongs to $P_{+}$,
	the set of dominants weights for $\mathring{A}$.

%
As a second consequence, these enumerative results allow us to derive $q$-Nekrasov--Okounkov type formulas, such as \eqref{Hande} in type $A^{(1)}_{t-1}$, but also $6$ new results corresponding to the $6$ other infinite types. These formulas are derived from a specialisation of the rewriting of Macdonald identities obtained in Section \ref{sec:Macdo} together with the theorems from Section \ref{chap3:hooks}. For instance, setting $x_i=q^i$ in Theorem \ref{prop:MacdotypeC}, and $\tau(x)=1-q^x$ in Theorem \ref{thm:DDpair}, with a polynomiality argument, we derive the following $q$-Nekrasov--Okounkov formula for type $C^{(1)}_{t}$.
\begin{thm}[A $q$-Nekrasov--Okounkov formula for type $C^{(1)}_t$] \label{NOC}
For formal variables $T$, $q$ and $u$, we have:
\begin{multline}
\sum_{\lambda\in\ccdd} (-u)^{d_\lambda}T^{\lvert \lambda\rvert/2}\prod_{s\in\lambda}\frac{1-u^{-2\varepsilon_s}q^{h_s}}{1-q^{h_s}}\prod_{s\in\Delta}\frac{1+uq^{h_s/2}}{1+u^{-1}q^{h_s/2}}\\=
\prod_{m,r\geq 1}\frac{1+uq^{r-1}T^m}{1+u^{-1}q^{r}T^m}\frac{\left(1-u^{-2}q^{r+2}T^m\right)^{r-\lfloor r/2\rfloor}\left(1-u^2q^{r-1}T^m\right)^{\lceil r/2 \rceil}}{\left(1-q^{r}T^m\right)^{\lceil r/2 \rceil}\left(1-q^{r+1}T^m\right)^{\lceil r/2 \rceil}}.\label{eq:noc}
\end{multline}
\end{thm}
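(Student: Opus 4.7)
The plan is to adapt the Dehaye--Han strategy that derives \eqref{Hande} from the type $\tilde{A}$ Macdonald identity to the present type $\tilde{C}_t$ setting. Concretely, I specialize Theorem \ref{prop:MacdotypeC} at $x_i = q^i$, apply Theorem \ref{thm:DDpair} with $\tau(x) = 1 - q^x$ to convert the symplectic-character contribution into a hook-length product over $\ccdd_{(g)}$, and finally combine the Littlewood decomposition with a polynomiality argument in $u$ to extend the resulting finite-$t$ identity into the infinite-product form \eqref{eq:noc}.

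First, setting $x_i = q^i$ in \eqref{eq:macdoc}, I rewrite the factors $K_T(t, \mathbf{x})$ and $\prod_i(Tx_i^{\pm 2}; T)_\infty$ as finite $q$-Pochhammer products, and I evaluate the specialized symplectic character $\sp_\mu(q, q^2, \ldots, q^t)$ via its Weyl definition \eqref{spdef} together with the Vandermonde-type factorization underlying \eqref{def:deltac}. The resulting ratio decomposes into a product over pairs $(i, j)$ in the shifted variables $r_i = v_i - g/2$, whose shape matches exactly the right-hand side of \eqref{eq:thmdd} with $\tau(x) = 1 - q^x$. Invoking Theorem \ref{thm:DDpair} then converts $\sp_\mu({\bf x})$ into $\prod_{s\in\omega}(1-q^{h_s-\varepsilon_s g})/(1-q^{h_s})$, up to a monomial prefactor in $q^{|\omega|}$ which is absorbed via the weight formula \eqref{eq:ddpoids} into $T^{|\omega|/2}$.

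Next, I pass from the sum over $\omega \in \ccdd_{(g)}$ to a sum over all of $\ccdd$ using the Littlewood decomposition $\lambda \leftrightarrow (\omega, \underline{\nu}) \in \ccdd_{(g)} \times \ccp^t$. Under this bijection, $|\lambda| = |\omega| + g \sum_i |\nu^{(i)}|$, $d_\lambda = d_\omega$, and the hook multiset of $\lambda$ splits into hooks strictly less than $g$ (living in $\omega$) and hooks that are multiples of $g$ (in bijection with individual hook lengths of the components $\nu^{(i)}$). Summing the quotient contribution via iterated application of \eqref{gspartitions} produces the infinite products over $m \ge 1$ on the right-hand side of \eqref{eq:noc}, while the $t$-truncated product of Pochhammers arising from the product side of \eqref{eq:macdoc} combines with it to give the claimed infinite product. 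To introduce the parameter $u$, I then invoke a polynomiality argument: for each fixed $n$ the coefficient of $T^n$ on both sides of \eqref{eq:noc} is a Laurent polynomial in $u$ of bounded degree, and varying $t$ in the previously established finite-$t$ identity provides infinitely many specializations at which equality holds. Standard polynomial interpolation then extends the identity to arbitrary complex $u$.

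The main obstacle will be the careful bookkeeping of the signs $(-u)^{d_\lambda}$ and of the diagonal factor $\prod_{s\in\Delta}(1+uq^{h_s/2})/(1+u^{-1}q^{h_s/2})$, which is specific to doubled distinct partitions. Capturing this diagonal contribution requires using \eqref{eq:thmdd} and \eqref{eq:thmddprime} in tandem, together with the identification of the main-diagonal hooks with the factors $\tau(2r_i)/\tau(2i)$ and $\tau(r_i \pm (t+1))/\tau(i \pm (t+1))$ appearing on the right of \eqref{eq:thmddprime}. A second delicate point is the polynomiality step in $u$: one must verify the bounded-degree property of the $T^n$-coefficient and exhibit enough concrete values of $t$ at which the Macdonald-derived identity provides a specialization of \eqref{eq:noc} under which interpolation applies.
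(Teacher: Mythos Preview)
Your overall architecture matches the paper's: specialize the rewritten Macdonald identity \eqref{eq:macdoc} at $x_i=q^i$, convert the symplectic character into a hook-length product via Theorem~\ref{thm:DDpair} with $\tau(x)=1-q^x$ (the paper packages this step as Lemma~\ref{lem:schurinter}, whose proof is precisely the induction of Theorem~\ref{thm:DDpair} augmented by the diagonal bookkeeping you anticipate), rewrite the product side with Lemma~\ref{lem:lemtech}, and conclude by Laurent polynomiality in $u$.

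There is, however, a genuine gap in your middle step. You propose to pass from the sum over $\omega\in\ccdd_{(g)}$ to the sum over all $\lambda\in\ccdd$ via the Littlewood decomposition, summing the quotient contributions using \eqref{gspartitions}. This does not work: under $\Phi_g$ one does \emph{not} have $d_\lambda=d_\omega$ in general, and the weight $\prod_{s\in\lambda}\frac{1-u^{-2\varepsilon_s}q^{h_s}}{1-q^{h_s}}$ does not factor as a core piece times a quotient piece (the hooks in $\cch_g(\lambda)$ carry nontrivial $u$-dependent factors, not $1$). The paper avoids this entirely by running the argument in the \emph{opposite} direction. At the specific value $u=q^{t+1}$ one has $1-u^{-2\varepsilon_s}q^{h_s}=1-q^{h_s-\varepsilon_s g}$, which vanishes whenever $h_s=g$; hence every $\lambda\in\ccdd$ that is not a $g$-core contributes $0$ to the left-hand side of \eqref{eq:noc}, and the full $\ccdd$-sum \emph{collapses} to the $\ccdd_{(g)}$-sum coming from Macdonald. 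No Littlewood expansion is needed: equality of the two sides of \eqref{eq:noc} is checked directly at $u=q^{t+1}$ for every $t\ge 2$, and the Laurent-polynomiality argument you describe then promotes this to arbitrary $u$. Replace your Littlewood step with this collapse observation and the proof goes through.
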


Finally, from these $q$-Nekrasov--Okounkov type formulas, we are able to derive 13 Nekrasov--Okounkov formulas corresponding to all of Macdonald's specializations in \cite[Appendix $1$]{Mac}, where we discuss some subtleties in type $A^{(2)}_{2t}$. For example, regarding the only specialization in type $D^{(1)}_t$, the corresponding Nekrasov--Okounkov formula is as follows:
\begin{align}\label{eq:nospecd}
\sum_{\lambda\in\ccdd'}(-1)^{d_\lambda}T^{\lvert \lambda\rvert/2}\prod_{s\in\lambda}\left(1-\frac{(2z-2)\varepsilon'_s}{h_s}\right)&=\left(T;T\right)_{\infty}^{2z^2-z}\\
&=\sum_{\lambda\in\ccdd}(-1)^{d_\lambda}T^{\lvert \lambda\rvert/2}\prod_{s\in\lambda}\left(1+\frac{(2z-2)\varepsilon_s}{h_s}\right),\notag
\end{align}
where $z$ is any complex number, and $\varepsilon'_s$ equals $1$ if $s$ is strictly above the main diagonal $\Delta$ and $-1$ otherwise. Formulas \eqref{NOdebut} and \eqref{eq:nospecc} are also derived through our approach.


The organization of this paper is as follows. The first section is devoted to the exploration of connections between binary bi-infinite words and statistics of boxes of integer partitions, in particular their hook lengths, which satisfy some constraints under the Littlewood decomposition. This notion enables us to compute explicitly the Littlewood decomposition as described in \cite{HJ}. This way of looking has been used with different names in the literature such as abaci (see for instance \cite{BN}), fermionic viewpoint (see for instance \cite{Johnson}), or Maya diagrams.
Then we show in Section \ref{sec:Macdointer} the connections between Macdonald identities as written in \cite[Proposition $6.1$]{RS} and quadratic forms. In the following section, we exploit the Littlewood decomposition and its interpretation as words to obtain a different way of computing \cite[Bijection $2$]{GKS} which maps a $t$-core partition to a vector of $t$ integers. In Section \ref{chap4:macdo}, we explain how using these techniques allows us not only to explicitly get a connection between the $7$ quadratic forms of Section \ref{sec:Macdointer} and integer partitions, but also to enumerate the products of hook lengths such as in Theorems \ref{thm:HD} and \ref{thm:DDpair} for all these $7$ families of partitions. In particular, note that the results of Section \ref{chap4:sign} 
Finally in Section \ref{sec:NO}, we use these connections to rewrite the Macdonald identities for the $7$ infinite root systems as $q$-Nekrasov--Okounkov formulas and then discuss how one can derive Nekrasov--Okounkov formulas for all the $13$ specializations given by Macdonald.

%
%

\section{Combinatorial properties of the Littlewood decomposition on some subsets of partitions} \label{sec:lit}

This paper in particular aims to provide a transcription of Macdonald identities introduced in Proposition \ref{prop:RS} in terms of integer partitions. From the discussion of the introduction, this requires to establish a connection between vectors of integers and integer partitions. Such a connection can be obtained by considering integer partitions through their bijective correspondence with binary bi-infinite words. This bijection is connected to many tools used within different theories such as beta sets (see for instance \cite{Albion,BN}) or vertex operators.
In this section, we first introduce this correspondence. Then we discuss the Littlewood decomposition, which is the main tool of this paper. As recalled with some history and context in \cite[Section $2.2$]{Albion}, this decomposition has many equivalent descriptions, see for instance \cite{GKS,HJ,WaW}.

We use here the formalism of Han--Ji of \cite{HJ}, as already used by the author in \cite{Wmult,WFpsac}. Let $\partial \lambda$ be the border of the Ferrers diagram of $\lambda$. Each step on $\partial\lambda$ is either horizontal or vertical. Encode the walk along the border from the South-West to the North-East as depicted in Figure \ref{fig:word}: take ``$0$'' for a vertical step and ``$1$'' for a horizontal step. This yields a $0/1$ sequence. The resulting word over the $\lbrace 0,1\rbrace$ alphabet:
\begin{itemize}
\item contains infinitely many ``$0$'''s at the beginning (respectively ``$1$'''s at the end),
\item is indexed by $\bbbz$,
\item and is written $(c_k)_{k\in\mathbb{Z}}$.
\end{itemize}
 
This writing as a sequence is not unique (since for any $k$ sequences $(c_{k+i})_{i\in\mathbb{Z}}$ encode the same partition, note that this is related to the notion of charge in fermionic Fock space, see for instance Section $3$ in \cite{BBNV}), hence the necessity to set the index $0$ uniquely for that encoding to be bijective. To tackle that issue, we set the index $0$ when the number of ``$0$'''s to the right of that index is equal to the number of ``$1$'''s to the left. In other words, the number of horizontal steps along $\partial\lambda$ corresponding to a ``$1$'' of negative index in $(c_k)_{k\in\bbbz}$ must be equal to the number of vertical steps corresponding to ``$0$'''s of nonnegative index in $(c_k)_{k\in\bbbz}$ along $\partial\lambda$. The delimitation between the letter of index $-1$ and that of index $0$ is called the \textit{median} of the word, marked by a $\mid$ symbol. The size of the Durfee square is then equal to the number of ``$1$'''s of negative index. Hence the application denoted by $\psi$ bijectively associates a partition to the word:
\begin{align*}
\psi(\lambda):=(c_k)_{k\in\mathbb{Z}}=\left(\ldots c_{-2}c_{-1}|c_0c_1c_2\ldots\right), \intertext{where $c_k\in\lbrace 0,1\rbrace$ for any $k\in\bbbz$, and such that}
\#\{k\leq-1,c_k=1\}= \#\{k\geq0,c_k=0\}.
\end{align*}

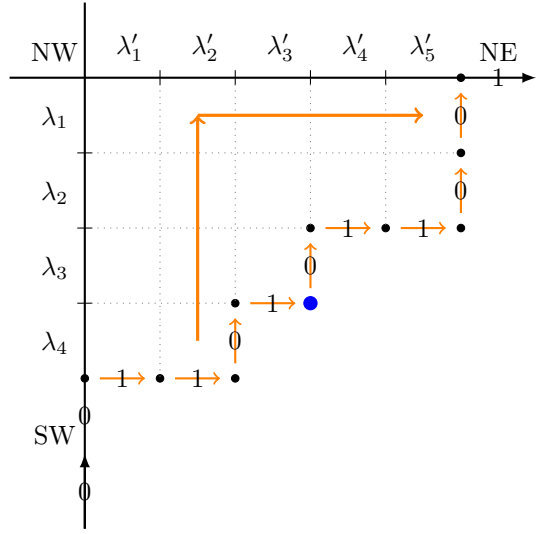
\begin{figure}[h!]
\centering
\begin{tikzpicture}
    [
        dot/.style={circle,draw=black, fill,inner sep=1pt},
    ]

\foreach \x in {0,...,2}{
    \node[dot] at (\x,-4){ };
}

\foreach \x in {.2,1.2}
    \draw[->,thick,orange] (\x,-4) -- (\x+.6,-4);
\foreach \x in {3.2,4.2}
    \draw[->,thick,orange] (\x,-2) -- (\x+.6,-2);
\foreach \y in {1.8,.8}
    \draw[->,thick,orange] (5,-\y) -- (5,-\y+.6);
\draw[->,thick,orange] (2,-3.8) -- (2,-3.8+.6);
\draw[->,thick,orange] (3,-2.8) -- (3,-2.8+.6);
\draw[->,thick,orange] (2.2,-3) -- (2.2+.6,-3);

\node[dot] at (2,-3){};
\foreach \x in {3,...,5}
    \node[dot] at (\x,-2){};
\foreach \x in {1,...,4}
    \draw (\x,-.1) -- node[above,xshift=-0.4cm,yshift=1mm] {$\lambda_\x'$} (\x,+.1);

\node[above,xshift=0.5cm,yshift=1mm] at (4,0) {$\lambda_5'$};
\node[above,xshift=0.5cm,yshift=1mm] at (5,0) {NE};
\node[above,xshift=-4mm,yshift=1mm] at (0,0) {NW};

\foreach \y in {1,...,3}
    \draw (.1,-\y) -- node[above,xshift=-4mm,yshift=0.2cm] {$\lambda_\y$} (-.1,-\y);
\node[above,xshift=-4mm,yshift=2mm] at (0,-4) {$\lambda_4$};
\node[above,xshift=-4mm] at (0,-5) {SW};
\node at (0,-4.5) {0};  
\node at (0,-5.5) {0};
\node at (2,-3.5) {0};
\node at (3,-2.5) {0};
\node at (5,-1.5) {0};
\node at (5,-0.5) {0};

\node at (0.5,-4) {1};
\node at (1.5,-4) {1};   
\node at (4.5,-2) {1};
\node at (3.5,-2) {1};   
\node at (5.5,-0) {1};
\node at (2.5,-3) {1};   

\draw[dotted,gray] (0,-1)--(5,-1);
\draw[dotted,gray] (0,-2)--(3,-2);
\draw[dotted,gray] (0,-3)--(2,-3);

\draw[dotted,gray] (1,0)--(1,-4);
\draw[dotted,gray] (2,0)--(2,-3);
\draw[dotted,gray] (3,0)--(3,-2);
\draw[dotted,gray] (4,0)--(4,-2);

\draw[->,very thick, orange](1.5,-3.5)--(1.5,-0.5);
\draw[->,very thick, orange](1.5,-0.5)--(4.5,-0.5);

\node[dot] at (5,-1){};  
\node[dot] at (5,0){};
\draw[->,thick,-latex] (0,-6) -- (0,-5);
\draw[thick] (0,-6) -- (0,1);
\draw[->,thick,-latex] (-1,0) -- (6,0);
\node[circle,draw=blue,fill=blue,inner sep=0pt,minimum size=5pt] at (3,-3){};

\end{tikzpicture}
\caption{$\partial\lambda$ and its binary correspondence for $\lambda=(5,5,3,2)$ with a hook.}
\label{fig:word}
\end{figure}

In Figure \ref{fig:word}, note that the hook corresponds to the collection of boxes of $\lambda$ as the set of boxes belonging both to $\lambda$ and to the lattice coming from the south of the diagram in parallel to the ordinate axis and exiting in parallel to the $x$-axis. This remark illustrates the following lemma.
\begin{lm}\label{lem:indices}
The application $\psi$ maps bijectively a box $s$ of hook length $h_s$ of the Ferrers diagram of $\lambda$ to a pair of indices $(i_s,j_s)\in\bbbz^2$ of the word $\psi(\lambda)$ such that
\begin{enumerate}
\item $i_s<j_s$,
\item $c_{i_s}=1$, $c_{j_s}=0$,
\item $j_s-i_s=h_s$,
\item $s$ is a box above the main diagonal in the Ferrers diagram of $\lambda$ if and only if the number of letters ``$1$'' with negative index greater than $i_s$ is lower than the number of letters ``$0$'' with nonnegative index lower than $j_s$.
\end{enumerate}
\end{lm}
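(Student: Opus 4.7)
The plan is to identify, for each box $s$ at row $i$ and column $j$ of $\lambda$, the pair $(i_s, j_s)$ as the indices in $\psi(\lambda)$ of two specific border steps: $i_s$ is the index of the horizontal step at the bottom of column $j$ (the step from $(j-1, -\lambda_j')$ to $(j, -\lambda_j')$), and $j_s$ is the index of the vertical step at the right of row $i$ (the step from $(\lambda_i, -i)$ to $(\lambda_i, -i+1)$). Properties (1) and (2) follow immediately: the encoding gives $c_{i_s}=1$ and $c_{j_s}=0$; and since $(i,j) \in \lambda$ forces $\lambda_j' \geq i$, the bottom of column $j$ lies at height $-\lambda_j' \leq -i$, strictly south of the vertical step defining $j_s$, so the monotone SW-to-NE traversal of the border yields $i_s < j_s$.

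For (3), I would carry out a counting argument on the border. The steps strictly between $i_s$ (exclusive) and $j_s$ (inclusive) split into horizontal steps at the bottoms of columns $k$ with $j < k \leq \lambda_i$ (contributing $\lambda_i - j$ steps) and vertical steps at the rights of rows $l$ with $i \leq l \leq \lambda_j'$ (contributing $\lambda_j' - i + 1$ steps, including $j_s$ itself). All other column-bottoms ($k \leq j$ or $k > \lambda_i$) and row-rights ($l < i$ or $l > \lambda_j'$) are forced to lie outside $(i_s, j_s]$ by the monotonicity of the border and the non-increasing property of the conjugate sequence $\lambda_k'$. Summing, $j_s - i_s = (\lambda_i - j) + (\lambda_j' - i + 1) = h_s$.

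For (4), let $\alpha_k$ and $\beta_l$ denote the indices in $\psi(\lambda)$ of the bottom of column $k$ and the right of row $l$, respectively; these are strictly increasing in $k$ and strictly decreasing in $l$. The median condition places $\alpha_k < 0$ iff $k \leq d$, and symmetrically $\beta_l \geq 0$ iff $l \leq d$, where $d = d(\lambda)$. Setting $i_s = \alpha_j$ and $j_s = \beta_i$, we get
\[
\#\{k : i_s < \alpha_k < 0\} = \max(d-j,\, 0), \quad \#\{l : 0 \leq \beta_l < j_s\} = \max(d-i,\, 0).
\]
A short case analysis shows $\max(d-j, 0) < \max(d-i, 0)$ iff $j > i$. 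The potentially degenerate case $i, j > d$ cannot occur for a box of $\lambda$: if $i > d$, then $\lambda_i < i$, forcing $j \leq \lambda_i < i$, placing $s$ strictly below the diagonal. This settles (4).

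The main obstacle is the bookkeeping in step (3), where one must precisely identify which column-bottom and row-right steps fall within $(i_s, j_s]$; the rest reduces to elementary monotonicity arguments and a routine inequality analysis using the characterization of $d$ in terms of the median.
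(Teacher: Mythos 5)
Your identification of $(i_s,j_s)$ with the bottom step of column $j$ and the right step of row $i$, together with the counting argument for point $(3)$, is correct and considerably more explicit than the paper's treatment, which declares points $(1)$--$(3)$ immediate and only argues $(4)$. For $(4)$ your strategy (locating the two steps relative to the median, which sits at the corner of the Durfee square) is also the paper's. However, your concluding step contains a genuine gap: the asserted equivalence $\max(d-j,0)<\max(d-i,0)\iff j>i$ is false when $i=d$ and $j>d$. Such boxes exist whenever $\lambda_d>d$; for instance for $\lambda=(2)$ one has $d=1$ and the box $s=(1,2)$ lies strictly above the diagonal, yet both of your counts vanish: here $i_s=\alpha_2=0$ is nonnegative, so no letter ``$1$'' of negative index exceeds it, and no letter ``$0$'' of nonnegative index is smaller than $j_s=\beta_1$. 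Your degenerate-case check only excludes $i>d$ and $j>d$ simultaneously, which is indeed impossible, but the case $i=d<j$ is not excluded and breaks the claimed equivalence.

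The defect comes from how the boundary steps at $i_s$ and $j_s$ themselves are counted. If you count inclusively, comparing $\#\lbrace k:\alpha_k<0,\ \alpha_k\ge i_s\rbrace=\max(d-j+1,0)$ with $\#\lbrace l:\beta_l\ge 0,\ \beta_l\le j_s\rbrace=\max(d-i+1,0)$, then the strict inequality between these two quantities is equivalent to $j>i$ in every case, including $i=d<j$. Alternatively, you can follow the paper's proof and dispose of the case $i_s\ge 0$ (equivalently $j>d$, forcing $i\le\lambda_j'\le d<j$) by the direct geometric observation that the box lies to the right of the Durfee square and hence above the diagonal, and likewise dispose of $j_s<0$ (equivalently $i>d$, forcing the box below the diagonal), reserving the count comparison for the mixed-sign case $i_s<0\le j_s$, where your formulas $d-j$ and $d-i$ with $i,j\le d$ do give the right answer. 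Either repair closes the gap; as written, the final case analysis in your argument for $(4)$ does not go through.
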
  
\begin{proof}
The first three points are immediate. We now prove the last point. 
Let $s$ be a box in $\lambda$ and $(i_s,j_s)\in\bbbz^2$ the corresponding indices in $\psi(\lambda)=(c_k)_{k\in\bbbz}$ such that $c_{i_s}=1$ and $c_{j_s}=0$. If $i_s$ and $j_s$ share the same sign, this is equivalent to the fact that the hook defined by the sequence $c_{i_s}\dots c_{j_s}$ begins and ends on the same side of the median of $\psi(\lambda)$.
Then the box $s$ associated with this hook is either below the Durfee square or to its right. Hence $s$ is below when $i_s$ and $j_s$ are negative as we also know that $i_s<j_s$, which concludes the proof of this case. Now we assume that $i_s$ is negative and $j_s$ is nonnegative. The number of letters ``$1$'' with negative index greater than $i_s$ corresponds to the number of horizontal steps before the Durfee square and the number of letters ``$0$'' with nonnegative index lower than $j_s$ corresponds to the number of vertical steps after the Durfee square. By definition of the Durfee square, this concludes the proof.
\end{proof}



\begin{rk}\label{rem:conju}
Let $\lambda$ be a partition and $\psi(\lambda)=(c_k)_{k\in\bbbz}$ be its corresponding word. Let $\lambda'$ be the conjugate of $\lambda$ and $\psi(\lambda')=(c'_i)_{i\in\bbbz}$. We have
$$\forall k\in\bbbz, c'_k=1-c_{-k-1}.$$
\end{rk}

Given the properties of symmetries of self-conjugate and doubled distinct partitions, they admit a similar characterization:
\begin{equation}\label{eq:motdd}
\lambda\in\ccdd \iff \psi(\lambda)=(c_k)_{k\in\bbbz} \mid c_0=1 \text{ and } \forall k \in \bbbn^*, c_{-k}=1-c_k,
\end{equation}

and
\begin{equation}\label{eq:motsc}
\lambda\in\ccsc \iff \psi(\lambda)=(c_k)_{k\in\bbbz} \mid  \forall k \in \bbbn, c_{-k-1}=1-c_k.
\end{equation}
From Remark \ref{rem:conju} and \eqref{eq:motdd}, the set $\ccdd'$ also admits a similar characterization:
\begin{equation}\label{eq:motddprime}
\lambda\in\ccdd' \iff \psi(\lambda)=(c_k)_{k\in\bbbz} \mid c_{-1}=0 \text{ and } \forall k \in \bbbn, c_{-k-2}=1-c_k.
\end{equation}

From \eqref{eq:motdd}--\eqref{eq:motddprime} and Lemma \ref{lem:indices}$(4)$, we derive the following result.
\begin{cor}\label{cor:position}
Set $\lambda\in  \ccsc\cup\ccdd\cup\ccdd'$ and $\psi(\lambda)$ its corresponding word. Let $s$ be a box of the Ferrers diagram of $\lambda$. Let $(i_s,j_s)\in\bbbz^2$ be the indices in $\psi(\lambda)$ associated with $s$. Then $s$ is a box on or above the main diagonal in the Ferrers diagram of $\lambda$ if and only if $i_s\geq 0$ or if $i_s<0\leq j_s$ and
$$\begin{cases}
\lvert i_s\rvert\leq \lvert j_s\rvert \text{ if } \lambda\in \ccdd,\\
\lvert i_s+1\rvert\leq \lvert j_s\rvert \text{ if } \lambda\in \ccsc,\\
\lvert i_s+2\rvert\leq \lvert j_s\rvert \text{ if } \lambda\in \ccdd'.
\end{cases}$$
\end{cor}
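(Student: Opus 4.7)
The plan is to combine Lemma \ref{lem:indices}(4) with the class-specific symmetries \eqref{eq:motdd}, \eqref{eq:motsc}, and \eqref{eq:motddprime} to translate the count condition of Lemma \ref{lem:indices}(4) into the desired absolute value inequality on $(i_s, j_s)$. The first observation is that the ``on or above'' version stated in the corollary corresponds to allowing equality in the strict inequality of Lemma \ref{lem:indices}(4), since the equality case is precisely when $s$ lies on the main diagonal.

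Next I would apply the symmetry within each class. For $\lambda \in \ccsc$, substituting $k = -m - 1$ (using $c_{-k-1} = 1 - c_k$) in the count of $1$'s at negative indices greater than $i_s$ yields
\[
|\{k : i_s + 1 \le k \le -1,\ c_k = 1\}| = |\{m : 0 \le m \le -i_s - 2,\ c_m = 0\}|.
\]
The right-hand side of Lemma \ref{lem:indices}(4) is already $|\{k : 0 \le k \le j_s - 1,\ c_k = 0\}|$, so the comparison reduces to comparing the nested intervals $\{0,\dots,-i_s-2\}$ and $\{0,\dots,j_s-1\}$. One direction is interval containment, which is exactly $-i_s - 2 \le j_s - 1$, i.e.\ $|i_s + 1| \le |j_s|$; conversely, when the containment fails, the fact that $c_{j_s}=0$ exhibits an ``extra'' zero in the larger interval, forcing the count of zeros on the left to exceed that on the right. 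Identical arguments with shifts $\sigma=0$ and $\sigma=2$ cover $\ccdd$ and $\ccdd'$ once the fixed-position conditions $c_0=1$ and $c_{-1}=0$ are absorbed into the count (for $\ccdd$ the interval on the left becomes $\{0,\dots,-i_s-1\}$, since $c_0=1$ does not contribute; for $\ccdd'$ the interval becomes $\{-1,0,\dots,-i_s-3\}$ where the forced zero at $-1$ accounts for the shift by $2$).

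The main obstacle will be the case analysis on the signs of $i_s$ and $j_s$. In the generic case where the indices straddle the median, $s$ lies inside the Durfee square and the count argument above is clean. When both indices are negative (respectively both nonnegative), the box sits entirely below (respectively to the right of) the Durfee square, as recalled in the proof of Lemma \ref{lem:indices}(4); one of the two counts then vanishes and the equivalence must be verified by a direct inspection of these geometric regions, together with the fixed-position conventions that distinguish $\ccsc$, $\ccdd$ and $\ccdd'$. This last verification, although elementary, is what ensures that the absolute-value inequality remains equivalent to ``on or above the main diagonal'' in every regime, not only for boxes inside the Durfee square.
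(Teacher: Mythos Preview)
Your approach is essentially the paper's: the paper gives only a one-line justification (``from \eqref{eq:motdd}--\eqref{eq:motddprime} and Lemma~\ref{lem:indices}(4)''), and you have correctly unpacked this by using the class symmetries to convert the count of $1$'s at negative indices into a count of $0$'s at nonnegative indices, then reducing Lemma~\ref{lem:indices}(4) to an interval-containment condition that yields the stated absolute-value inequality. Your flagging of the same-sign case as the place requiring separate direct verification is apt and matches the paper's own treatment in the proof of Lemma~\ref{lem:indices}(4).
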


Now we recall the following classical map, often called the Littlewood decomposition (see for instance \cite{GKS,HJ}).

\begin{df}\label{defphi}{\em
 Let $t \geq 2$ be an integer and consider:\\
$$\begin{array}{l|rcl}
\Phi_t: & \mathcal{P} & \to & \mathcal{P}_{(t)} \times \mathcal{P}^t \\
& \lambda & \mapsto & (\omega,\nu^{(0)},\ldots,\nu^{(t-1)}),
\end{array}$$
where if we set $\psi(\lambda)=\left(c_i\right)_{i\in\bbbz}$, then for all $k\in\lbrace 0,\dots,t-1\rbrace$, one has \linebreak $\nu^{(k)}:=\psi^{-1}\left(\left(c_{ti+m_i+k}\right)_{i\in\bbbz}\right)$, where $m_i=\#\lbrace i\in\N \mid c_{ti+k}=1\rbrace-\#\lbrace i\in\N^* \mid c_{-ti+k}=1\rbrace$. The tuple $\underline{\nu}=\left(\nu^{(0)},\ldots,\nu^{(t-1)}\right)$ is called the $t$-quotient of $\lambda$ and is denoted by \textit{$\quot_t(\lambda)$}, while $\omega$ is the $t$-core of $\lambda$ denoted by \textit{$\core_t(\lambda)$}.}
\end{df}
Obtaining the $t$-quotient is straightforward from $\psi(\lambda)=\left(c_i\right)_{i\in\bbbz}$: we just look at subwords with indices congruent to the same values modulo $t$. The sequence $10$ within these subwords are replaced iteratively by $01$ until the subwords are all the infinite sequence of ``$0$'''s before the infinite sequence of ``$1$'''s (in fact it consists in removing all rim hooks in $\lambda$ of length congruent to $0\pmod t$). Then $\omega$ is the partition corresponding to the word which has the subwords$\pmod t$ obtained after the removal of the $10$ sequences.
For example, if we take $\lambda = (4,4,3,2) \text{ and } t=3$, then $\psi(\lambda)=\ldots \color{red}{0} \color{blue}{0} \color{green}{1} \color{red}{1} \color{blue}{0}\color{green}{1} \color{black}| \color{red}{0} \color{blue}{1} \color{green}{0} \color{red}{0} \color{blue}{1} \color{green}{1}\color{black}\ldots$
\begin{align*}
\begin{array}{rc|rcl}
\psi\left(\nu^{(0)}\right)=\ldots \color{red} 001 \color{black}| \color{red}001\color{black}\ldots& &\psi\left(w_{0}\right)=\ldots \color{red} 000 \color{black}| \color{red}011\color{black}\ldots,\\
 \psi\left(\nu^{(1)}\right)=\ldots \color{blue} 000 \color{black}| \color{blue}111\color{black}\ldots& \longmapsto& \psi\left(w_{1}\right)=\ldots \color{blue} 000 \color{black}| \color{blue}111\color{black}\ldots , \\
 \psi\left(\nu^{(2)}\right)=\ldots \color{green} 011 \color{black}| \color{green}011\color{black}\ldots & & \psi\left(w_{2}\right)=\ldots \color{green} 001 \color{black}| \color{green}111\color{black}\ldots .
\end{array}\\
\end{align*}
Thus
\begin{center}
$\psi(\omega)=\ldots \color{red}{0} \color{blue}{0} \color{green}{0} \color{red}{0} \color{blue}{0}\color{green}{1} \color{black}| \color{red}{0} \color{blue}{1} \color{green}{1} \color{red}{1} \color{blue}{1} \color{green}{1}\color{black}\ldots $
\end{center}
and
$$
\quot_3(\lambda)=\left(\nu^{(0)},\nu^{(1)},\nu^{(2)}\right)=\left((1,1),\emptyset,(2)\right),\ \core_3(\lambda)= \omega=(1).
$$

The following properties of the Littlewood decomposition are given in \cite{HJ}.

\begin{prop}\cite[Theorem 2.1]{HJ}
\label{Littlewood} Let $t$ be a positive integer. The Littlewood decomposition $\Phi_t$ maps bijectively a partition $\lambda$ to $\left(\omega,\nu^{(0)},\dots,\nu^{(t-1)}\right)$ such that:
\begin{align*}
&(P1)\quad \omega \text{ is a $t$-core and }\nu^{(0)},\dots,\nu^{(t-1)} \text{are partitions},\\
&(P2) \quad |\lambda|=|\omega|+t\sum_{i=0}^{t-1} |\nu^{(i)}|,\\
&(P3)\quad \mathcal{H}_t(\lambda)=t\mathcal{H}(\underline{\nu}),
\text{ where for a multiset $S$, } tS:=\{ts,s\in S\}\text{ and } \mathcal{H}(\underline{\nu}):=\bigcup\limits_{i=0}^{t	-1}\cch(\nu^{(i)}).\notag
\end{align*}
\end{prop}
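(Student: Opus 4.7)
The plan is to work with the bi-infinite binary word encoding $\psi$ throughout, and to exploit the fact that $\Phi_t$ is, at the word level, the splitting $(c_k)_{k\in\mathbb{Z}} \mapsto ((c_{ti+k})_{i\in\mathbb{Z}})_{0 \leq k < t}$ followed by renormalizing each subword's median. First I would argue that $\Phi_t$ is well-defined and bijective: the subword $w^{(k)} = (c_{ti+k})_{i\in\mathbb{Z}}$ has only finitely many $1$'s of negative index and only finitely many $0$'s of nonnegative index, so after shifting its median by a unique integer $n_k$ it lies in the image of $\psi$ and encodes a unique partition $\nu^{(k)}$. The shifts $(n_0, \ldots, n_{t-1}) \in \mathbb{Z}^t$ are constrained by $\sum_k n_k = 0$ (since $\psi(\lambda)$ itself is normalized), and the data of these shifts is in bijection with $t$-cores $\omega$ via the interleaving of the sorted subwords; the inverse of $\Phi_t$ is obtained by undoing the median shifts and reinterleaving.

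To establish (P3) I would invoke Lemma \ref{lem:indices}: a box $s$ of $\lambda$ with $h_s \in t\mathbb{Z}$ corresponds to a pair $(i_s,j_s) \in \mathbb{Z}^2$ with $i_s < j_s$, $c_{i_s}=1$, $c_{j_s}=0$ and $i_s \equiv j_s \pmod{t}$, so both indices lie in the same subword $w^{(k)}$ with $k = i_s \bmod t$. After the median shift defining $\nu^{(k)}$, the pair becomes a pair of indices in $\psi(\nu^{(k)})$ with letters $1$ and $0$ and difference $(j_s - i_s)/t$, giving by Lemma \ref{lem:indices} a unique box of $\nu^{(k)}$ with that hook length. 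This correspondence is a bijection, hence $\mathcal{H}_t(\lambda) = t \mathcal{H}(\underline{\nu})$. Property (P1) is then immediate: by construction $\omega$ is interleaved from $t$ subwords each of the shape ``all $0$'s then all $1$'s'' (up to shift), so its $t$-quotient is the $t$-tuple of empty partitions, whence by the argument just given $\mathcal{H}_t(\omega) = \emptyset$.

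For (P2) I would use the standard fact that swapping $c_j c_{j+t}$ from $10$ to $01$ in $\psi(\lambda)$ corresponds to removing a rim hook of length $t$ from $\lambda$, which can be checked directly from the construction of $\partial\lambda$ (the swap replaces a vertical-then-horizontal staircase segment at scale $t$ by a horizontal-then-vertical one). Restricted to the subword $w^{(k)}$, these swaps are the adjacent inversions that sort $w^{(k)}$ into the normal form representing $\emptyset$; their total number equals $|\nu^{(k)}|$ by the classical reading of a partition's size as the count of $(1,0)$-inversions in its bi-infinite word (equivalently, as the number of boxes, each contributing one $10$-pair). Summing over $k$ and noting that each swap decreases $|\lambda|$ by exactly $t$ yields $|\lambda| = |\omega| + t \sum_{k=0}^{t-1} |\nu^{(k)}|$.

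The main obstacle is the bookkeeping around the shifts $n_k$: because the subwords $w^{(k)}$ are not themselves normalized, one must carefully verify both that the renormalization is well-defined and consistent across the $t$ subwords, and that the sorted-and-reinterleaved word really encodes a $t$-core which, together with the $\nu^{(k)}$'s, recovers $\lambda$ uniquely. Once this index-tracking is in place, the rest reduces via Lemma \ref{lem:indices} to elementary word-combinatorial manipulations and the rim-hook/swap dictionary described above.
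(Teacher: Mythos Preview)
The paper does not actually prove this proposition: it is stated with citation to \cite[Theorem 2.1]{HJ} and introduced with ``The following properties of the Littlewood decomposition are given in \cite{HJ}.'' Your sketch is correct and is precisely the word-level argument underlying that reference, carried out in the same $\psi$/Lemma~\ref{lem:indices} framework the paper sets up; there is nothing to compare against in the paper itself.
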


%
\begin{rm}
Proposition \ref{Littlewood} $(P3)$ in particular implies that $\lambda$ is a $t$-core if and only if $\lambda$ has no hook of length $t$. Indeed if there exists $i\in\lbrace 1,\dots,t\rbrace$ such that $\lvert \nu^{(i)}\rvert\neq \varnothing$, then $1$ always belongs to the multiset $\cch(\nu^{(i)})$ and then there always exists a hook of $\lambda$ whose length is equal to $t$.
\end{rm}

Now we discuss the Littlewood decomposition for $\ccsc$. Let $t$ be a positive integer, take $\lambda\in \ccsc$, and set $\psi(\lambda)=(c_i)_{i \in\mathbb{Z}}\in \lbrace 0,1\rbrace^\bbbz$ and $(\omega,\underline{\nu})=\left(\core_t(\lambda),\quot_t(\lambda)\right)$. Using \eqref{eq:motsc}, one has the equivalence  (see for instance \cite{GKS,Pe}):
\begin{eqnarray}
\lambda\in \ccsc &\iff & \forall i_0 \in \lbrace 0,\ldots,t-1\rbrace,\forall j \in \mathbb{N},c_{i_0+jt}=1-c_{-i_0-jt-1}\notag\\
&\iff & \forall i_0 \in \lbrace 0,\ldots,t-1\rbrace,\forall j \in \mathbb{N},c_{i_0+jt}=1-c_{t-(i_0+1)-t(j-1)}\label{mot}\notag\\
&\iff &\forall i_0 \in \left\lbrace 0,\ldots,t-1 \right\rbrace, \nu^{(i_0)}=\left(\nu^{(t-i_0-1)}\right)'\quad \text{and}\quad \omega \in \ccsc_{(t)}\notag .\end{eqnarray}

Therefore $\lambda$ is uniquely defined if its $t$-core is known as well as the $\left\lfloor t/2\right\rfloor$ first elements of its $t$-quotient, which are partitions without any constraints. It implies that if $t$ is even, there is a one-to-one correspondence between a self-conjugate partition and a pair made of one $\ccsc$ $t$-core and $t/2$ generic partitions. If $t$ is odd, the Littlewood decomposition is a one to one correspondence between a self-conjugate partition and a triple made of one $\ccsc$ $t$-core, $(t-1)/2$ generic partitions and a self-conjugate partition $\mu=\nu^{((t-1)/2)}$.
Hence the analogues of the above propositions when applied to self-conjugate partitions are as follows.

\begin{prop}\cite[Lemma 4.7]{MP}\label{SCLittlewood}
Let $t$ be a positive integer. The Littlewood decomposition $\Phi_t$ maps a self-conjugate partition $\lambda$ to $\left(\omega,\nu^{(0)},\dots,\nu^{(t-1)}\right)=(\omega,\underline{\nu})$ such that:
\begin{align*}
&(SC1)\quad \text{the first component } \omega \text{ is a $\ccsc$ $t$-core and }\nu^{(0)},\dots,\nu^{(t-1)} \text{are partitions},\\
&(SC2) \quad \forall j \in \left\lbrace 0,\dots,\left\lfloor t/2 \right\rfloor-1\right\rbrace, \nu^{(j)}=\left(\nu^{(t-1-j)}\right)',\\
&(SC'2)\quad  \text{if t is odd, } \nu^{\left((t-1)/2\right)}=\left(\nu^{\left((t-1)/2\right)}\right)', \\
&(SC3) \quad 
\lvert\lambda\rvert=\begin{cases}\displaystyle\lvert\omega\rvert+2t\sum_{i=0}^{(t-3)/2} \lvert\nu^{(i)}\rvert+t\lvert \mu\rvert \quad \text{if t is odd},
\\
\displaystyle\lvert\omega\rvert +2t\sum_{i=0}^{t/2-1} \lvert\nu^{(i)}\rvert \quad
 \text{if t is even},
\end{cases}\\
&(SC4)\quad \mathcal{H}_t(\lambda)=t\mathcal{H}(\underline{\nu}).
\end{align*}
\end{prop}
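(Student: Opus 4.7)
The plan is to work entirely at the level of the binary bi-infinite words $\psi(\lambda)$, exploiting the characterization \eqref{eq:motsc}, which recasts self-conjugacy as the word-level symmetry $c_{-k-1} = 1 - c_k$ for every $k \in \bbbn$ (and hence, by substitution, for every $k \in \bbbz$). Combining this with the residue-class subword description of $\Phi_t$ in Definition \ref{defphi} should simultaneously produce the conjugacy relations between the components of $\underline{\nu}$ and the self-conjugacy of the $t$-core $\omega$. Properties $(P1)$--$(P3)$ of Proposition \ref{Littlewood} will then take care of the parts of the statement which do not depend on the extra symmetry.

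For $(SC2)$ and $(SC'2)$, I would fix a residue $i_0 \in \{0,\dots,t-1\}$ and rewrite the index $-(ti+i_0)-1$ as $t(-i-1) + (t-1-i_0)$. Feeding this into the symmetry relation gives
\[
c_{t(-i-1)+(t-1-i_0)} \;=\; 1 - c_{ti+i_0},
\]
which says that the residue-$(t-1-i_0)$ subword, read at index $-i-1$, is the bitwise complement of the residue-$i_0$ subword read at index $i$. By Remark \ref{rem:conju}, this word-level operation is precisely conjugation of the underlying partitions, so $\nu^{(i_0)} = \left(\nu^{(t-1-i_0)}\right)'$. Letting $i_0$ range over $\{0,\dots,\lfloor t/2\rfloor-1\}$ yields $(SC2)$, and in the odd case the middle residue $(t-1)/2$ is paired with itself, which collapses to $(SC'2)$. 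The same complementation is preserved by the core-reduction step (which acts independently on each residue subword by sliding all $10$ patterns to the right), so the word of $\omega$ still satisfies \eqref{eq:motsc}; combined with Proposition \ref{Littlewood} $(P1)$ this gives $(SC1)$, while $(SC4)$ is an immediate consequence of $(P3)$.

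Finally, for $(SC3)$ I would substitute the pairing from $(SC2)$ and $(SC'2)$ into $|\lambda| = |\omega| + t\sum_{i=0}^{t-1}|\nu^{(i)}|$ from $(P2)$. Since conjugation preserves size, $|\nu^{(j)}| = |\nu^{(t-1-j)}|$, so the $t$ summands collapse into $\lfloor t/2\rfloor$ pairs each contributing $2t|\nu^{(j)}|$, plus one unpaired term $t|\mu|$ when $t$ is odd, matching the two cases of $(SC3)$. The main subtlety I anticipate is the bookkeeping around the median of each residue-class subword: one must check that the re-indexing $i\mapsto -i-1$ is compatible with the balance convention used to define $s^{-1}$ in Definition \ref{defphi}, and that the global median constraint on $\psi(\lambda)$ forces the analogous balance on the core and each of the quotient subwords. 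Once this is set up correctly, the argument reduces to a careful index-tracking computation rather than a new structural statement.
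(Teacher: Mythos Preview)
Your proposal is correct and follows essentially the same route as the paper: the chain of equivalences displayed just before Proposition~\ref{SCLittlewood} rewrites $c_{i_0+jt}=1-c_{-(i_0+jt)-1}$ with the index $-(i_0+jt)-1$ expressed in the residue class $t-1-i_0$, then invokes Remark~\ref{rem:conju} to read this off as $\nu^{(i_0)}=(\nu^{(t-1-i_0)})'$ together with $\omega\in\ccsc_{(t)}$, and finally derives $(SC3)$ and $(SC4)$ from $(P2)$ and $(P3)$. The median/balance issue you flag is indeed the only bookkeeping point, and the paper handles it implicitly by citing \cite{GKS,Pe} and \cite{MP} rather than spelling it out.
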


Using similar properties as \eqref{eq:motdd} and \eqref{eq:motddprime}, this way of computing the Littlewood decomposition enables to prove the following similar result for $\ccdd$.
\begin{lm}\cite[Lemma 4.3]{MP}\label{lem:DDLittlewood}
Let $t$ be a positive integer. The Littlewood decomposition $\Phi_t$ maps a doubled distinct partition $\lambda$ to $\left(\omega,\nu^{(0)},\dots,\nu^{(t-1)}\right)=(\omega,\underline{\nu})$ such that:
\begin{align*}
&(DD1)\quad \text{the first component } \omega \text{ is a $\ccdd$ $t$-core and }\nu^{(0)},\dots,\nu^{(t-1)} \text{are partitions},\\
&(DD2) \quad \forall j \in \left\lbrace 1,\dots,\left\lfloor t/2 \right\rfloor\right\rbrace, \nu^{(j)}=\left(\nu^{(t-j)}\right)',\nu^{(0)}\in\ccdd,
\\&\quad \quad\quad \quad \text{and if t is even, } \nu^{\left(t/2\right)}=\left(\nu^{\left(t/2\right)}\right)'\in\ccsc,\\
&(DD3) \quad |\lambda|=\begin{cases}\displaystyle|\omega|+2t\sum_{i=1}^{(t-1)/2} \lvert\nu^{(i)}\rvert +t\lvert \nu^{(0)}\rvert\quad
 \text{if t is odd},\\
\displaystyle|\omega|+2t\sum_{i=1}^{t/2-1} \lvert\nu^{(i)}\rvert+t\lvert \nu^{(0)}\rvert+t\lvert \nu^{\left(t/2\right)}\rvert \quad \text{if t is even},
\end{cases}\\
&(DD4)\quad \mathcal{H}_t(\lambda)=t\mathcal{H}(\underline{\nu}).
\end{align*}
\end{lm}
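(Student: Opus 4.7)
The approach is to transfer the word characterization \eqref{eq:motdd} of doubled distinct partitions, namely $c_0=1$ and $c_{-k}=1-c_k$ for all $k\geq 1$, through the Littlewood decomposition by analyzing the subwords $(c_{ti+k})_{i\in\bbbz}$ for $k=0,\ldots,t-1$, in direct analogy with the proof of Proposition \ref{SCLittlewood} for self-conjugate partitions.

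I would first establish $(DD2)$. For $k=0$, the subword $(c_{ti})_{i\in\bbbz}$ satisfies $c_0=1$ and $c_{-ti}=1-c_{ti}$ for all $i\geq 1$, and a direct count pairing $i\leq -1$ with $-i\geq 1$ shows that it is already balanced at $i=0$; these are exactly the conditions \eqref{eq:motdd}, whence $\nu^{(0)}\in\ccdd$. For $k\in\{1,\ldots,t-1\}$, setting $a_j=c_{tj+k}$ and $b_j=c_{tj+(t-k)}$ and applying the symmetry $c_{-m}=1-c_m$ with $m=t(j+1)-k\neq 0$ yields
\begin{equation*}
b_j=c_{t(j+1)-k}=1-c_{-t(j+1)+k}=1-a_{-j-1}.
\end{equation*}
By Remark \ref{rem:conju} this is precisely the encoding relation between a partition and its conjugate, so $\nu^{(t-k)}=(\nu^{(k)})'$. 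Specializing this computation to $k=t/2$ when $t$ is even gives $a_j=1-a_{-j-1}$, which is the characterization \eqref{eq:motsc} of self-conjugate partitions, so $\nu^{(t/2)}\in\ccsc$.

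Next I would complete $(DD1)$ by showing $\omega\in\ccdd$. The $t$-core $\omega$ has word $(c'_i)_{i\in\bbbz}$ whose subword of class $k\pmod{t}$ is monotone (all zeros then all ones), switching at the index $s_k$ equal to the opposite of the balance of $(c_{ti+k})_i$. The previous step yields $s_0=0$ (the $k=0$ subword is balanced) and $s_{t-k}=-s_k$ (the relation $b_j=1-a_{-j-1}$ reverses the balance). A short case analysis depending on whether $k\equiv 0\pmod{t}$ or not then gives $c'_0=1$ and $c'_{-k}=1-c'_k$ for every $k\geq 1$, so $\omega\in\ccdd$.

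Finally, $(DD4)$ is just the restriction of $(P3)$ of Proposition \ref{Littlewood} to $\lambda\in\ccdd$, and $(DD3)$ follows by inserting the identities $|\nu^{(j)}|=|\nu^{(t-j)}|$ (with $j=t/2$ self-paired when $t$ is even) into $(P2)$ and regrouping the sum. The main subtlety throughout is that for $k\neq 0,t/2$ the subwords $(c_{ti+k})_{i\in\bbbz}$ are not individually centered at $i=0$, so $\psi^{-1}$ implicitly re-centers them by opposite shifts; because $s_{t-k}=-s_k$, the conjugation relation $b_j=1-a_{-j-1}$ survives this balancing, and the identifications of $(DD2)$ remain valid at the level of partitions.
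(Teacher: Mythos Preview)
Your proposal is correct and follows essentially the same approach as the paper: the paper does not spell out a proof but explicitly indicates that the result is obtained ``using similar properties as \eqref{eq:motdd}'' in the same way as the self-conjugate case displayed just before Proposition~\ref{SCLittlewood}, i.e., by pushing the word symmetry $c_0=1$, $c_{-k}=1-c_k$ through the residue-class subwords. Your handling of the re-centering subtlety for $k\neq 0,t/2$ (the opposite charges $s_{t-k}=-s_k$ preserving the conjugation relation) makes explicit a point the paper leaves implicit.
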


By \eqref{eq:motddprime} and Lemma \ref{lem:DDLittlewood}, one gets the following similar result for partitions in $\ccdd'$.
\begin{lm}\label{lem:DDprimeLittlewood}
Let $t$ be a positive integer. The Littlewood decomposition $\Phi_t$ maps a conjugate doubled distinct partition $\lambda$ to $\left(\omega,\nu^{(0)},\dots,\nu^{(t-1)}\right)=(\omega,\underline{\nu})$ such that:
\begin{align*}
&(DD'1)\quad \text{the first component } \omega \text{ is a $\ccdd'$ $t$-core and }\nu^{(0)},\dots,\nu^{(t-1)} \text{are partitions},\\
&(DD'2) \quad \forall j \in \left\lbrace 0,\dots,\left\lfloor t/2 \right\rfloor-1\right\rbrace, \nu^{(j)}=\left(\nu^{(t-2-j)}\right)',\nu^{(t-1)}\in\ccdd',
\\&\quad \quad \quad \quad \text{and if t even, } \nu^{\left(t/2-1\right)}=\left(\nu^{\left(t/2-1\right)}\right)'\in\ccsc, \\
&(DD'3) \quad |\lambda|=\begin{cases}\displaystyle|\omega|+2t\sum_{i=0}^{(t-3)/2} \lvert\nu^{(i)}\rvert +t\lvert \nu^{(t-1)}\rvert\quad
 \text{if t is odd},\\
\displaystyle|\omega|+2t\sum_{i=0}^{t/2-2} \lvert\nu^{(i)}\rvert+t\lvert \nu^{(t-1)}\rvert+t\lvert \nu^{\left(t/2-1\right)}\rvert \quad \text{if t is even},
\end{cases}\\
&(DD'4)\quad \mathcal{H}_t(\lambda)=t\mathcal{H}(\underline{\nu}).
\end{align*}
\end{lm}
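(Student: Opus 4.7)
The plan is to prove Lemma \ref{lem:DDprimeLittlewood} by reducing to Lemma \ref{lem:DDLittlewood} via the conjugation bijection $\lambda\mapsto\lambda'$ between $\ccdd'$ and $\ccdd$. First, I would establish how the Littlewood decomposition $\Phi_t$ interacts with conjugation at the level of binary bi-infinite words. By Remark \ref{rem:conju}, if $\psi(\lambda)=(c_k)_{k\in\bbbz}$ then $\psi(\lambda')=(1-c_{-k-1})_{k\in\bbbz}$. For fixed $k\in\{0,\dots,t-1\}$, the $k$-th subword of $\psi(\lambda')$ is $(1-c_{-ti-k-1})_{i\in\bbbz}$; the reindexing $j=-i-1$ turns this into $(1-c_{tj+(t-k-1)})_{j\in\bbbz}$ read in reverse order. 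Since reversal together with complementation is precisely the conjugation operation on $\psi$, this yields $\core_t(\lambda')=(\core_t(\lambda))'$ and $\quot_t(\lambda')=\bigl((\nu^{(t-1)})',(\nu^{(t-2)})',\dots,(\nu^{(0)})'\bigr)$, where $\quot_t(\lambda)=(\nu^{(0)},\dots,\nu^{(t-1)})$.

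Next, I would apply Lemma \ref{lem:DDLittlewood} to $\lambda'\in\ccdd$ and translate each resulting condition back via the rule $\nu'^{(k)}=(\nu^{(t-1-k)})'$ from the previous step. The condition $\nu'^{(0)}\in\ccdd$ becomes $\nu^{(t-1)}\in\ccdd'$. The symmetry $\nu'^{(j)}=(\nu'^{(t-j)})'$ for $j\in\{1,\dots,\lfloor t/2\rfloor\}$, after setting $k=j-1$, becomes $\nu^{(k)}=(\nu^{(t-2-k)})'$ for $k\in\{0,\dots,\lfloor t/2\rfloor-1\}$. In the even case, $\nu'^{(t/2)}\in\ccsc$ gives $\nu^{(t/2-1)}\in\ccsc$ because $\ccsc$ is stable under conjugation. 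The claim that $\core_t(\lambda)$ is a $\ccdd'$ $t$-core follows because the conjugate of a $\ccdd$ $t$-core is a $\ccdd'$ $t$-core.

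The size formula $(DD'3)$ is then obtained from $(DD3)$ applied to $\lambda'$ using $|\lambda|=|\lambda'|$ and $|\nu'^{(k)}|=|\nu^{(t-1-k)}|$: this permutation of summation indices sends the distinguished index $0$ (respectively $t/2$) in $(DD3)$ to $t-1$ (respectively $t/2-1$), producing exactly the claimed formula. The hook-length statement $(DD'4)$ is immediate from $(DD4)$ applied to $\lambda'$ together with the fact that conjugation preserves the hook-length multiset of every partition involved. The main technical obstacle is the first step: one must verify carefully that the reindexing $j=-i-1$ respects the median position within each subword, so that the reverse-complement correspondence between subwords of $\psi(\lambda')$ and $\psi(\lambda)$ holds exactly and not merely up to an overall shift. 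Once this bookkeeping is settled, the rest of the argument is a routine index translation.
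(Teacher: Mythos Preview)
Your proposal is correct and matches the paper's approach. The paper gives only a one-line justification, citing \eqref{eq:motddprime} together with Lemma~\ref{lem:DDLittlewood}; since \eqref{eq:motddprime} is itself obtained from \eqref{eq:motdd} via Remark~\ref{rem:conju}, this is exactly the conjugation reduction you spell out in detail, and your index translations $(\nu'^{(k)}=(\nu^{(t-1-k)})'$, $0\mapsto t-1$, $t/2\mapsto t/2-1)$ are all correct.
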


\section{Macdonald identities as multivariate series}\label{sec:Macdointer}

The goal of this section is to make explicit the connection between Macdonald identities seen as multivariate series on the one hand and some quadratic forms appearing to the exponent of a formal variable $T$. This connection will motivate the study in Section \ref{sec:hook}. Stanton in \cite{Stanton} and Rosengren--Schlosser in \cite{RS} prove the Macdonald identities in all these $7$ types by considering \eqref{chap2:macdodenom} as multivariate power series. To illustrate this process, we give here the examples of the right-hand sides of \eqref{chap2:macdodenom} in types $A^{(1)}_{t-1}$ and $C^{(1)}_{t}$. Using the set of positive roots for $A^{(1)}_{t-1}$, the right-hand side of \eqref{chap2:macdodenom} becomes 
\begin{equation}\label{chap2:macdoA}
\prod_{1\leq i<j \leq t} \prod_{k\geq 0} \left(1-e^{-(k+\varepsilon_i-\varepsilon_j)}\right)\left(1-e^{-(k+1+\varepsilon_j-\varepsilon_i)}\right),
\end{equation}
and for $C^{(1)}_{t}$, it is:
\begin{multline}\label{chap2:macdoC}
\prod_{1\leq i \leq t} \prod_{k\geq 0} \left(1-e^{-(k+2\varepsilon_i)}\right)\left(1-e^{-(k+1-2\varepsilon_i)}\right)
\\\times\prod_{1\leq i<j \leq t} \prod_{k\geq 0} \left(1-e^{-(k+\varepsilon_i+\varepsilon_j)}\right)\left(1-e^{-(k+\varepsilon_i-\varepsilon_j)}\right)
 \left(1-e^{-(k+1+\varepsilon_j+\varepsilon_i)}\right)\left(1-e^{-(k+1-\varepsilon_j-\varepsilon_i)}\right).
\end{multline}

Then setting $T=e^{-1}$ and $x_i=T^{-\varepsilon_i}$, the product \eqref{chap2:macdoA} becomes
$$\prod_{1\leq i<j \leq t}\left(x_ix_j^{-1},x_i^{-1}x_jT;T\right)_\infty,$$
and this rewriting yields for \eqref{chap2:macdoC} 
$$\prod_{i=1}^t \left(x_i^2,Tx_i^{-2};T\right)_\infty\prod_{1\leq i<j \leq t}\left(x_ix_j^{-1},x_ix_j,Tx_i^{-1}x_j,Tx_i^{-1}x_j^{-1};T\right)_\infty.$$

In \cite[Corollary $6.2$]{RS}, Macdonald identities are written with $\theta$ functions. We state here this result in a slightly different way writing the $\theta$ functions as infinite products in which we isolate the terms without any power of $T$. Moreover for types $B^{(1)}_{t}, A^{(2)}_{2t-1}$ and $D^{(1)}_{t}$, Macdonald identities are stated as a sum of the sublattice $\lbrace \mathbf{m}\in\bbbz^t\mid \sum_{i=1}^t m_i\equiv 0 \pmod{2}\rbrace$. As remarked in the proof of \cite[Corollary $6.2$]{RS}, the sum can be replaced as half the sum over $\bbbz^t$. The last modification from the statement of \cite[Corollary $6.2$]{RS} is the multiplication by $2$ on both sides of the identity in type $D_t^{(1)}$ so that the term $\Delta_D(\mathbf(x))$ now appears.
\begin{prop}\cite[Corollary $6.2$]{RS}\label{prop:RS}
The Macdonald identities are
\begin{enumerate}
\item For type $A^{(1)}_{t-1}$ with $t\geq 2$
\begin{multline}\label{eq:macdoAstart}
\sum_{\substack{\mathbf{m}\in\bbbz^t\\m_1+\dots+m_t=0\\\sigma\in S_t}}\sgn(\sigma)\prod_{i=1}^tT^{t\binom{m_i}{2}+(\sigma(i)-1)m_i}x_i^{tm_i+\sigma(i)-1}
\\ =\left(T;T\right)_\infty^{t-1}\Delta_A(\mathbf{x}) \prod_{1\le i<j\le t}\left(T(x_ix_j^{-1})^{\pm};T\right)_\infty,
\end{multline}
where $\Delta_A(\mathbf{x})$ is defined in \eqref{chap2:delta} where $n$ is replaced by $t$.
\item For type $B^{(1)}_{t}$ with $t\geq 3$
\begin{multline}\label{eq:macdoBstart}
\frac{1}{2}\sum_{\substack{\mathbf{m}\in\bbbz^t\\\sigma\in S_t}}\sgn(\sigma)\prod_{i=1}^tT^{(2t-1)\binom{m_i}{2}+(t-1)m_i}x_i^{(2t-1)m_i}
\left((T^{m_i}x_i)^{\sigma(i)-t}-(T^{m_i}x_i)^{t+1-\sigma(i)}\right)\\
=\left(T;T\right)_\infty^{t}\Delta_B(\mathbf{x}) K_T(t,\mathbf{x})\prod_{i=1}^t \left(Tx_i^{\pm};T\right)_\infty,
\end{multline}
where $\Delta_B(\mathbf{x})$ is defined in \eqref{def:deltab} where $n$ is replaced by $t$ and $K_T(t,\mathbf{x})$ in \eqref{eq:kt}.
\item For type $A^{(2)}_{2t-1}$ with $t\geq 3$
\begin{multline*}
\frac{1}{2}\sum_{\substack{\mathbf{m}\in\bbbz^t\\\sigma\in S_t}}\sgn(\sigma)\prod_{i=1}^tT^{2t\binom{m_i}{2}+tm_i}x_i^{2tm_i}
\left((T^{m_i}x_i)^{\sigma(i)-t-1}-(T^{m_i}x_i)^{t+1-\sigma(i)}\right)\\
=\left(T^2;T^2\right)_\infty\left(T;T\right)_\infty^{t-1}\Delta_C(\mathbf{x}) K_T(t,\mathbf{x})\prod_{i=1}^t x_i^{-1}\left(T^2x_i^{\pm 2};T^2\right)_\infty,
\end{multline*}
where $\Delta_C(\mathbf{x})$ is defined in \eqref{def:deltac} where $n$ is replaced by $t$.
\item For type $C^{(1)}_{t}$ with $t\geq 2$
\begin{multline}\label{eq:macdostart}
\sum_{\substack{\mathbf{m}\in\bbbz^t\\\sigma\in S_t}}\sgn(\sigma)\prod_{i=1}^tT^{(2t+2)\binom{m_i}{2}+(t+1)m_i}x_i^{(2t+2)m_i}
\left((T^{m_i}x_i)^{\sigma(i)-t-1}-(T^{m_i}x_i)^{t+1-\sigma(i)}\right)\\
=\left(T;T\right)_\infty^{t}\Delta_C(\mathbf{x}) K_T(t,\mathbf{x})\prod_{i=1}^t x_i^{-1}\left(Tx_i^{\pm 2};T\right)_\infty.
\end{multline}

\item For type $D^{(2)}_{t+1}$ with $t\geq 2$
\begin{multline*}
\sum_{\substack{\mathbf{m}\in\bbbz^t\\\sigma\in S_t}}\sgn(\sigma)\prod_{i=1}^tT^{2t\binom{m_i}{2}+(t-1/2)m_i}x_i^{2tm_i}\left((T^{m_i}x_i)^{\sigma(i)-t}-(T^{m_i}x_i)^{t+1-\sigma(i)}\right)\\=
\left(T^{1/2};T^{1/2}\right)_\infty\left(T;T\right)_\infty^{t-1}\Delta_B(\mathbf{x}) K_T(t,\mathbf{x})\prod_{i=1}^t \left(T^{1/2}x_i^{\pm};T^{1/2}\right)_\infty.
\end{multline*}

\item For type $A^{(2)}_{2t}$ with $t\geq 1$
\begin{multline}\label{eq:macdoBCstart}
\sum_{\substack{\mathbf{m}\in\bbbz^t\\\sigma\in S_t}}\sgn(\sigma)\prod_{i=1}^tT^{(2t+1)\binom{m_i}{2}+tm_i}x_i^{(2t+1)m_i}
\left((T^{m_i}x_i)^{\sigma(i)-t}-(T^{m_i}x_i)^{t+1-\sigma(i)}\right)\\=\left(T;T\right)_\infty^{t}\Delta_B(\mathbf{x}) K_T(t,\mathbf{x})\prod_{i=1}^t\left(Tx_i^{\pm};T\right)_\infty\left(Tx_i^{\pm 2};T^2\right)_\infty.
\end{multline}

\item For type $D^{(1)}_{t}$ with $t\geq 4$
\begin{multline*}
\frac{1}{2}\sum_{\substack{\mathbf{m}\in\bbbz^t\\\sigma\in S_t}}\sgn(\sigma)\prod_{i=1}^tT^{(2t-2)\binom{m_i}{2}+(t-1)m_i}x_i^{(2t-2)m_i}
\left((T^{m_i}x_i)^{\sigma(i)-t}+(T^{m_i}x_i)^{t-\sigma(i)}\right),
\\=\left(T;T\right)_\infty^{t}\Delta_D(\mathbf{x}) K_T(t,\mathbf{x}),
\end{multline*}
where $\Delta_D(\mathbf{x})$ is defined in \eqref{def:deltad} where $n$ is replaced by $t$.

\end{enumerate}

\end{prop}

Hence the Macdonald identities can all be stated as sums over sublattices of $\bbbz^t$. To motivate the combinatorics in Section \ref{sec:hook}, we will rewrite the results from Proposition \ref{prop:RS} as sums over sublattices of $\mathbb{Z}^t$ in $\mathbb{C}[x_1,\dots,x_t,x_1^{-1},\dots,x_t^{-1}][[T]]$ where the coefficients of the powers of $T$ can be written as sum of determinants. In this process, a quadratic form will appear to the exponent of $T$, which can be mapped to some subsets of integer partitions whose definitions can be made explicit through the Littlewood decomposition as detailed in Section \ref{chap3:quad}. Finally we divide both sides by the analogues of the Vandermonde determinant for each types, e.g. the terms $\Delta_X$ on the right-hand sides of equalities of Proposition \ref{prop:RS}.
%

\begin{prop}\label{prop:intermed_rewrite}
The Macdonald identities are
\begin{enumerate}
\item For type $A^{(1)}_{t-1}$ with $t\geq 2$
\begin{multline}\label{eq:macdoAfin}
\sum_{\substack{\mathbf{m}\in\bbbz^t\\m_1+\dots+m_t=0}}T^{t\lVert m\rVert^2/2+\sum_{i=1}^t(i-1)m_i}\frac{\underset{1\le i,j\le t}{\det}\left(x_i^{tm_j+j-1}\right)}{\underset{1\le i,j\le t}{\det}\left(x_i^{j-1}\right)} \\
=\left(T;T\right)_\infty^{t-1} \prod_{1\le i<j\le t}\left(T(x_ix_j^{-1})^{\pm};T\right)_\infty.
\end{multline}
\item For type $B^{(1)}_{t}$ with $t\geq 3$, with $K_T(t,\mathbf{x})$ as defined in \eqref{eq:kt}

\begin{multline}
\sum_{\mathbf{m}\in\mathbb{N}^*\times\bbbz^{t-1}}T^{(2t-1)\lVert m\rVert^2/2+\sum_{i=1}^tm_i(i-t-1/2)}\frac{\underset{1\le i,j\le t}{\det}\left(x_i^{(2t-1)m_j+j-t}-x_i^{-((2t-1)m_j+j-t-1)}\right)}{\underset{1\le i,j\le t}{\det}\left(x_i^{j-t}-x_i^{-(j-t)+1}\right)}\\
=\left(T;T\right)_\infty^{t} K_T(t,\mathbf{x})\prod_{i=1}^t \left(Tx_i^{\pm};T\right)_\infty .\label{eq:misignbterm}
\end{multline}

\item For type $A^{(2)}_{2t-1}$ with $t\geq 3$, with $K_T(t,\mathbf{x})$ as defined in \eqref{eq:kt}
\begin{multline}
\sum_{\mathbf{m}\in\mathbb{N}^*\times\bbbz^{t-1}}T^{(2t)\lVert m\rVert^2/2+\sum_{i=1}^tm_i(i-t-1)}\frac{\underset{1\le i,j\le t}{\det}\left(x_i^{2tm_j+j-t-1}-x_i^{-(2tm_j+j-t-1)}\right)}{\underset{1\le i,j\le n}{\det}\left(x_i^{j-t-1}-x_i^{-(j-t-1)}\right)}\\
=\left(T;T\right)_\infty^{t} K_T(t,\mathbf{x})\prod_{i=1}^t \left(Tx_i^{\pm};T\right)_\infty .\label{eq:misignbvterm}
\end{multline}

\item For type $C^{(1)}_{t}$ with $t\geq 2$,  with $K_T(t,\mathbf{x})$ as defined in \eqref{eq:kt}
\begin{multline}
\sum_{\mathbf{m}\in\bbbz^t}T^{(t+1)\lVert m\rVert^2+\sum_{i=1}^tm_i(i-t-1)}\frac{\underset{1\le i,j\le t}{\det}\left(x_i^{(2t+2)m_j+j-t-1}-x_i^{-((2t+2)m_j+j-t-1)}\right)}{\underset{1\le i,j\le t}{\det}(x_i^{j-t-1}-x_i^{-(j-t-1)})}\\
=\left(T;T\right)_\infty^t K_T(t,{\bf x})\prod_{i=1}^t \left(Tx_i^{\pm2};T\right)_\infty .\label{eq:misign}
\end{multline}

\item For type $D^{(2)}_{t+1}$ with $t\geq 2$,  with $K_T(t,\mathbf{x})$ as defined in \eqref{eq:kt}
\begin{multline}
\sum_{\mathbf{m}\in\bbbz^t}T^{t\lVert m\rVert^2+\sum_{i=1}^tm_i(i-t-1/2)}\frac{\underset{1\le i,j\le t}{\det}\left(x_i^{2tm_j+j-t}-x_i^{-(2tm_j+j-t-1)}\right)}{\underset{1\le i,j\le t}{\det}\left(x_i^{j-t}-x_i^{-(j-t)+1}\right)}\\
=\left(T^{1/2};T^{1/2}\right)_\infty\left(T;T\right)_\infty^{t-1}
K_T(t,{\bf x})\prod_{i=1}^t \left(T^{1/2}x_i^{\pm};T^{1/2}\right)_\infty .\label{eq:misigncv}
\end{multline}

\item For type $A^{(2)}_{2t}$ with $t\geq 1$,  with $K_T(t,\mathbf{x})$ as defined in \eqref{eq:kt}
\begin{multline}
\sum_{\mathbf{m}\in\bbbz^t}T^{(2t+1)/2\lVert m\rVert^2+\sum_{i=1}^tm_i(i-t-1/2)}
\frac{\underset{1\le i,j\le t}{\det}\left(x_i^{(2t+1)m_j+j-t}-x_i^{-((2t+1)m_j+j-t-1)}\right)}{\underset{1\le i,j\le t}{\det}\left(x_i^{j-t}-x_i^{-(j-t)+1}\right)}
\\=\left(T;T\right)_\infty^{t} K_T(t,\mathbf{x})\prod_{i=1}^t\left(Tx_i^{\pm};T\right)_\infty\left(Tx_i^{\pm 2};T^2\right)_\infty.\label{eq:misignbc}
\end{multline}


\item For type $D^{(1)}_{t}$ with $t\geq 4$,  with $K_T(t,\mathbf{x})$ as defined in \eqref{eq:kt}
\begin{multline}
\sum_{\substack{\mathbf{m}\in\bbbz^{t}\\m_1>0,m_t\geq 0}}T^{(2t-2)/2\lVert m\rVert^2+\sum_{i=1}^tm_i(i-t)}
\frac{2\underset{1\le i,j\le t}{\det}\left(x_i^{(2t-2)m_j+j-t}+x_i^{-((2t-2)m_j+j-t)}\right)}{(1+\delta_{m_t,0})\underset{1\le i,j\le t}{\det}\left(x_i^{j-t}+x_i^{-(j-t)}\right)}
\\=\left(T;T\right)_\infty^{t} K_T(t,\mathbf{x}).\label{eq:misignd}
\end{multline}

\end{enumerate}

\end{prop}

\begin{proof}

\noindent First we begin the rewriting of the Macdonald identity \eqref{eq:macdoAstart} for type $A^{(1)}_{t-1}$. Extracting  the powers of $T$ on the left-hand side of \eqref{eq:macdoAstart} yields
$$
\sum_{\substack{\mathbf{m}\in\bbbz^t\\m_1+\dots+m_t=0\\\sigma\in S_t}}\sgn(\sigma)T^{t/2 \sum_{i=1}^tm_i^2+\sum_{i=1}^t(\sigma(i)-1-t/2)m_i}\prod_{i=1}^tx_i^{tm_i+\sigma(i)-1}.$$

Substituting $m_i\mapsto m_{\sigma(i)}$ and setting $\lVert m\rVert^2:=\sum_{i=1}^tm_i^2$, this becomes

$$
\sum_{\substack{\mathbf{m}\in\bbbz^t\\m_1+\dots+m_t=0\\\sigma\in S_t}}\sgn(\sigma)T^{t\lVert m\rVert^2/2+\sum_{i=1}^t(i-1-t/2)m_i}\prod_{i=1}^tx_i^{tm_{\sigma(i)}+\sigma(i)-1}.$$
\noindent Hence, using the fact the sum of all $m_i$ is equal to $0$, one can rewrite \eqref{eq:macdoAstart} as \eqref{eq:macdoAfin}.

We proceed similarly for the Macdonald identity for type $C^{(1)}_{t}$. Expand the product on the left-hand side of \eqref{eq:macdostart} and denote by $\xi\in\lbrace \pm 1\rbrace^t$ the sign coming from the expansion of the terms $$\prod_{i=1}^t T^{(2t+2)\binom{m_i}{2}+(t+1)m_i}x_i^{(2t+2)m_i}\left((T^{m_i}x_i)^{\sigma(i)-t-1}-(x_iT^{m_i})^{t+1-\sigma(i)}\right).$$

\noindent Then setting $m_i\to \xi_i m_i$ and finally noting that $m_i^2$ is invariant by this transformation, the left-hand side of \eqref{eq:macdostart} can be rewritten as follows:

\begin{multline*}
\sum_{\sigma\in S_t}\sgn(\sigma) \sum_{\xi\in\lbrace \pm 1\rbrace^t}\prod_{i=1}^t \xi_i \sum_{\mathbf{m}\in\bbbz^t}\prod_{i=1}^t T^{(t+1)m_i^2+m_i \xi_i(\sigma(i)-t-1)} x_i^{(2t+2)m_i+\xi_i(\sigma(i)-t-1)}\\
=\sum_{\sigma\in S_t}\sgn(\sigma) \sum_{\xi\in\lbrace \pm 1\rbrace^t}\prod_{i=1}^t \xi_i\sum_{\mathbf{m}\in\bbbz^t}\prod_{i=1}^t  T^{(t+1)m_i^2+m_i(\sigma(i)-t-1)} x_i^{(2t+2)\xi_im_i+\xi_i(\sigma(i)-t-1)}.
\end{multline*}

\noindent By extracting the powers of the variable $T$, replacing $m_i$ with $m_{\sigma(i)}$ and inverting the sums, we get:
\begin{multline*}
\sum_{\sigma\in S_t}\sgn(\sigma) \sum_{\xi\in\lbrace \pm 1\rbrace^t}\prod_{i=1}^t \xi_i\sum_{\mathbf{m}\in\bbbz^t}T^{(t+1)\lVert \mathbf{m}\rVert^2+\sum_{i=1}^t m_i(i-t-1)}\prod_{i=1}^t   x_i^{(2t+2)\xi_im_{\sigma(i)}+\xi_i(\sigma(i)-t-1)}\\
=\sum_{\mathbf{m}\in\bbbz^t}T^{(t+1)\lVert \mathbf{m}\rVert^2+\sum_{i=1}^tm_i(i-t-1)}\underset{1\le i,j\le t}{\det}\left(x_i^{(2t+2)m_j+j-t-1}-x_i^{-((2t+2)m_j+j-t-1)}\right).
\end{multline*}

\noindent The right-hand side of \eqref{eq:macdostart} is
\begin{equation*}
\left(T;T\right)_\infty^t \Delta_C(\mathbf{x})K_T(t,{\bf x})\prod_{i=1}^t \left(Tx_i^{\pm 2};T\right)_\infty,
\end{equation*}
%
with $\Delta_C(\mathbf{x})$ as defined in \eqref{def:deltac}.
Hence \eqref{eq:macdostart} becomes the desired expression \eqref{eq:misign}.
%

Now for type $A_{2t}^{(2)}$, one can expand the left-hand side of the Macdonald identity \eqref{eq:macdoBCstart} and denote by $\xi\in\lbrace \pm 1\rbrace^t$ the sign coming from the expansion of the terms $$\prod_{i=1}^t T^{(2t+1)\binom{m_i}{2}+t m_i}x_i^{(2t+1)m_i}\left((T^{m_i}x_i)^{\sigma(i)-t}-(x_iT^{m_i})^{t+1-\sigma(i)}\right).$$

\noindent Then setting $m_i\to \xi_i m_i$ and finally noting that $m_i^2$ is invariant by this transformation, the left-hand side of \eqref{eq:macdoBCstart} can be rewritten as follows:
\begin{multline*}
\sum_{\sigma\in S_t}\sgn(\sigma) \sum_{\xi\in\lbrace \pm 1\rbrace^t}\prod_{i=1}^t \xi_i \sum_{\mathbf{m}\in\bbbz^t}\prod_{i=1}^t T^{(t+1/2)m_i^2+m_i \xi_i(\sigma(i)-t-1/2)} x_i^{(2t+1)m_i+\xi_i(\sigma(i)-t+(\xi_i-1)/2)}\\
=\sum_{\sigma\in S_t}\sgn(\sigma) \sum_{\xi\in\lbrace \pm 1\rbrace^t}\prod_{i=1}^t \xi_i\sum_{\mathbf{m}\in\bbbz^t}\prod_{i=1}^t  T^{(t+1/2)m_i^2+m_i(\sigma(i)-t-1/2)} x_i^{(2t+1)\xi_im_i+\xi_i(\sigma(i)-t+(\xi_i-1)/2))}.
\end{multline*}

By doing the same computations, one derives the modified Macdonald identity \eqref{eq:misigncv} for type $D_{t+1}^{(2)}$.

%


\noindent  Similarly for type $B_t^{(1)}$, one can expand the product on the left-hand side of \eqref{eq:macdoBstart} and denote by $\xi\in\lbrace \pm 1\rbrace^t$ the sign coming from the expansion of the terms $$\prod_{i=1}^t T^{(2t-1)\binom{m_i}{2}+(t-1)m_i}x_i^{(2t-1)m_i}\left((T^{m_i}x_i)^{\sigma(i)-t}-(x_iT^{m_i})^{t+1-\sigma(i)}\right).$$

\noindent Then setting $m_i\to \xi_i m_i$ and finally noting that $m_i^2$ is invariant by this transformation, the left-hand side of \eqref{eq:macdoBstart} can be rewritten as follows:
\begin{multline*}
\frac{1}{2}\sum_{\sigma\in S_t}\sgn(\sigma) \sum_{\xi\in\lbrace \pm 1\rbrace^t}\prod_{i=1}^t \xi_i \sum_{\mathbf{m}\in\bbbz^t}\prod_{i=1}^t T^{(t-1/2)m_i^2+m_i \xi_i(\sigma(i)-t-1/2)} x_i^{(2t-1)m_i+\xi_i(\sigma(i)-t+(\xi_i-1)/2)}\\
=\sum_{\sigma\in S_t}\sgn(\sigma) \sum_{\xi\in\lbrace \pm 1\rbrace^t}\prod_{i=1}^t \xi_i\sum_{\mathbf{m}\in\bbbz^t}\prod_{i=1}^t  T^{(t-1/2)m_i^2+m_i(\sigma(i)-t-1/2)} x_i^{(2t-1)\xi_im_i+\xi_i(\sigma(i)-t+(\xi_i-1)/2))}.
\end{multline*}

Hence \eqref{eq:macdoBstart} can be rewritten as
\begin{multline}
\frac{1}{2}\sum_{\mathbf{m}\in\bbbz^t}T^{(2t-1)\lVert m\rVert^2/2+\sum_{i=1}^tm_i(i-t-1/2)}\\
\times\frac{\underset{1\le i,j\le t}{\det}\left(x_i^{(2t-1)m_j+j-t}-x_i^{-((2t-1)m_j+j-t-1)}\right)}{\underset{1\le i,j\le t}{\det}\left(x_i^{j-t}-x_i^{-(j-t)+1}\right)}
=\left(T;T\right)_\infty^{t} K_T(t,\mathbf{x})\prod_{i=1}^t \left(Tx_i^{\pm};T\right)_\infty .\label{eq:misignb}
\end{multline}
We split the sum on the left-hand side of \eqref{eq:misignb} in two sums. The first is over $\mathbf{m}\in\bbbz^t$ such that $m_1\in\bbbn^*$ and the other is over $\mathbf{m}\in\bbbz^t$ such that $m_1\leq 0$. Moreover one can note that by setting $m'_1=1-m_1$ and $m'_i=m_i$:
\begin{multline*}
\sum_{(m_1,\dots,m_t)\in\mathbb{N}_{\leq 0}\times\bbbz^{t-1}}T^{(2t-1)\lVert m\rVert^2/2+\sum_{i=1}^tm_i(i-t-1/2)}\\\times\frac{\underset{1\le i,j\le t}{\det}\left(x_i^{(2t-1)m_j+j-t}-x_i^{-((2t-1)m_j+j-t-1)}\right)}{\underset{1\le i,j\le t}{\det}\left(x_i^{j-t}-x_i^{-(j-t)+1}\right)}
\\=\sum_{(m'_1,\dots,m'_t)\in\mathbb{N}^*\times\bbbz^{t-1}}T^{(2t-1)\lVert m'\rVert^2/2+\sum_{i=1}^tm'_i(i-t-1/2)}
\\\times \frac{\underset{1\le i,j\le t}{\det}\left(x_i^{(2t-1)m'_j+j-t}-x_i^{-((2t-1)m'_j+j-t-1)}\right)}{\underset{1\le i,j\le t}{\det}\left(x_i^{j-t}-x_i^{-(j-t)+1}\right)}.
\end{multline*}

\noindent Hence \eqref{eq:misignb} can be rewritten as \eqref{eq:misignbterm}.

\noindent One derives the rewriting of Macdonald identity in type $A_{2t-1}^{(2)}$ \eqref{eq:misignbvterm} following the same steps.

\noindent To complete the proof, it remains to investigate the case of type $D_t^{(1)}$. It follows the same steps as for type $B_t^{(1)}$ but in this case, the sum is split into $6$ sublattices of $\mathbb{Z}^t$: $\bbbn^*\times \bbbz^{t-2}\times \bbbn^*$, $\bbbn^*\times \bbbz^{t-2}\times \bbbn_{<0}$, $\bbbn_{\leq 0}\times \bbbz^{t-2}\times \bbbn^*$, $\bbbn_{\leq 0}\times \bbbz^{t-2}\times \bbbn_{<0}$ and 
$\bbbn^*\times \bbbz^{t-2}\times \lbrace 0 \rbrace$, $\bbbn_{\leq 0}\times \bbbz^{t-2}\times \lbrace 0 \rbrace$.
By performing the substitutions $m_1\mapsto 1-m_1$ and $m_t\mapsto -m_t$ when $m_1$,respectively $m_t$, is a nonpositive integer, respectively a negative integer, one derives \eqref{eq:misignd}.
\end{proof}

\section{Hook lengths product of $t$-core partitions}\label{sec:hook}
\subsection{Integer partitions and some quadratic forms}\label{chap3:quad}

Thanks to the properties of the Littlewood decomposition, one might see $t$-cores $\omega$ as partitions whose $t$-quotient in the Littlewood decomposition is empty: $\quot_t(\omega)=\left(\emptyset,\dots,\emptyset\right)$. This is equivalent to say that all subwords $\pmod{t}$ in $\psi(\omega)$ are of the form $\ldots 0011\ldots$, which is an infinite sequence of ``$0$'''s followed by an infinite sequence of ``$1$'''s. For any $i\in\lbrace 0,\dots,t-1\rbrace$ let us define $n_i:=\min\lbrace k \in\bbbz\mid c_{i+kt}=1\rbrace$. Each $n_i$ corresponds to the index of the first ``$1$'' in the subword of $\psi(\omega)$ whose index is congruent to $i \pmod t$. Recall from Section \ref{sec:lit} that the word $\psi(\omega)$ has as many ``$1$'''s of negative index as ``$0$'''s of positive index. That condition is equivalent to require $\sum_{i=0}^{t-1}n_i=0$.
Hence there is a natural bijective map 
$$\phi:\ccp_{(t)}\rightarrow\bbbz^t$$
 such that if we set $\phi(\omega):=\mathbf{n}$ then $\sum_{i=0}^{t-1}n_i=0$.

For example, if we take $\omega = (4,2) \text{ and } t=3$, then 
\begin{align*}
\begin{array}{rc|lc}
& &\psi\left(w_{0}\right)=\ldots \color{red} 000 &\color{black}|\color{red}\underbrace{001}_{n_0=2}1\color{black}\ldots,\\
 \psi\left(\omega\right)=\ldots \color{red}{0} \color{blue}{0} \color{green}{0} \color{red}{0} \color{blue}{1}\color{green}{1} \color{black}| \color{red}{0} \color{blue}{1} \color{green}{1} \color{red}{0} \color{blue}{1} \color{green}{1}\color{red}{1}\color{blue}{1} \color{green}{1}\color{black} \ldots& \longmapsto& \psi\left(w_{1}\right)=\ldots \color{blue} 00\underbrace{1}_{n_1=-1}&\color{black}| \color{blue}111\color{black}\ldots , \\
  & & \psi\left(w_{2}\right)=\ldots \color{green} 00\underbrace{1}_{n_2=-1} &\color{black}| \color{green}111\color{black}\ldots .
\end{array}
\end{align*}

Therefore in the above example, $\phi(\omega)=(2,-1,-1)\in\bbbz^3$.

In \cite{Johnson}, Johnson uses the fermionic description of partitions (which is equivalent to the definition of $\psi$) to prove that this bijection is the one defined by Garvan--Kim--Stanton in \cite[Bijection $2$]{GKS}. We reformulate what Johnson wrote in \cite[Section 2]{Johnson} in terms of indices of words. Let $\lambda$ be a partition and $t$ be a positive integer. The abaci correspond exactly to the $t$-subwords of $\psi(\lambda)$ with fixed residue $\pmod{t}$ and the $n_i$'s, as defined in Theorem \ref{thm:gk}, correspond to the charge of the $i$-th runner on the abaci.
\begin{thm}\cite[Theorem $2.10$]{Johnson}\cite[Bijection $2$]{GKS}\label{thm:gk}
Let $\omega$ be a $t$-core and $\psi(\omega)=\left(c_k\right)_{k\in\bbbz}$ be its corresponding word. The bijection $\phi$ satisfies $\phi(\omega)=\left(n_0,\dots,n_{t-1}\right)$ with $n_i=\min\lbrace k\in\bbbz \mid c_{kt+i}=1\rbrace$ and $\sum_{i=0}^{t-1} n_i=0$. Moreover, we have:
\begin{equation}\label{eq:gk}
\lvert \omega\rvert =\frac{t}{2}\sum_{i=0}^{t-1}n_i^2+\sum_{i=0}^{t-1}in_i.
\end{equation}

\end{thm}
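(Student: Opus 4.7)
The plan is to first pin down the bijection $\phi$ by unwinding the Littlewood decomposition, and then compute $\lvert\omega\rvert$ directly from the word $\psi(\omega)$.

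First I would verify the description of $\phi(\omega) = (n_0, \ldots, n_{t-1})$. Since $\omega$ is a $t$-core, Proposition \ref{Littlewood}\,$(P3)$ forces every component of $\quot_t(\omega)$ to be empty, so by Definition \ref{defphi} every subword $(c_{kt+i})_{k\in\bbbz}$ of $\psi(\omega)$ is the word $\psi(\varnothing)$, i.e.\ an infinite run of $0$'s followed by an infinite run of $1$'s. The index $n_i \in \bbbz$ of the first $1$ in this subword (in the $k$-indexing) is therefore well-defined, and the subword is determined by $n_i$. The median condition on $\psi(\omega)$---that the number of $1$'s at negative indices equals the number of $0$'s at nonnegative indices---translates into $\sum_i \max(0,-n_i) = \sum_i \max(0,n_i)$, hence $\sum_{i=0}^{t-1} n_i = 0$. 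Conversely any vector in $\bbbz^t$ satisfying this linear constraint prescribes a valid word of a $t$-core, so $\phi$ is a bijection onto $\{\mathbf{n}\in\bbbz^t : \sum n_i = 0\}$.

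For the size formula I would start from the general identity
\begin{equation*}
\lvert\lambda\rvert \;=\; \sum_{k \geq 0,\, c_k = 0} k \;-\; \sum_{k < 0,\, c_k = 1} k,
\end{equation*}
valid for every $\lambda \in \ccp$ with $\psi(\lambda)=(c_k)_{k\in\bbbz}$. This follows either from a Frobenius-coordinate reading of the word---if $-\alpha_d < \cdots < -\alpha_1 < 0$ are the negative indices carrying a $1$ and $0 \leq \beta_1 < \cdots < \beta_d$ the nonnegative indices carrying a $0$, then $(\beta_i\mid \alpha_i - 1)_{i=1}^d$ are the Frobenius coordinates of $\lambda$ by Lemma \ref{lem:indices}, so $\lvert\lambda\rvert = \sum_i(\alpha_i+\beta_i)$---or by induction on $\lvert\lambda\rvert$, since adding a removable cell toggles exactly one adjacent pair $c_k c_{k+1}$ from $10$ to $01$, and such a swap at index $k$ increases both sides by $+1$.

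It then remains to evaluate the two sums when $\omega$ is a $t$-core by partitioning them according to the residue modulo $t$ of the index. On residue class $i$, for $n_i>0$ the $0$'s at nonnegative indices occupy positions $kt+i$ for $0 \leq k < n_i$ and contribute $t\binom{n_i}{2} + i n_i$; for $n_i<0$ the $1$'s at negative indices occupy positions $kt+i$ for $n_i \leq k \leq -1$, and minus their sum yields the same expression $t\binom{n_i}{2} + i n_i$ after elementary arithmetic; the contribution is trivially $0$ when $n_i = 0$. Summing over $i$ and using $\sum_i n_i = 0$ to cancel the linear term gives
\begin{equation*}
\lvert\omega\rvert \;=\; \sum_{i=0}^{t-1}\left(t\binom{n_i}{2} + i n_i\right) \;=\; \frac{t}{2}\sum_{i=0}^{t-1} n_i^2 + \sum_{i=0}^{t-1} i n_i,
\end{equation*}
which is \eqref{eq:gk}. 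The main delicate point is the general identity above; the case-split in the final evaluation looks awkward but is harmless, since both sign cases yield the same polynomial $t\binom{n_i}{2} + i n_i$, leaving no residual case distinction in the closed form.
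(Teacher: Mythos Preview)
Your proof is correct. Note, however, that the paper does not give its own proof of this theorem: it is quoted as a known result from Garvan--Kim--Stanton and Johnson, and used throughout Section~\ref{chap3:quad} as a black box. So there is no ``paper's proof'' to compare against; you have supplied a clean self-contained argument where the paper simply cites the literature.

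Two minor remarks. First, in your inductive justification of the identity $\lvert\lambda\rvert=\sum_{k\ge 0,\,c_k=0}k-\sum_{k<0,\,c_k=1}k$, the phrase ``adding a removable cell toggles \dots\ from $10$ to $01$'' has its direction reversed: the swap $10\to 01$ corresponds to \emph{removing} a box and decreases both sides by~$1$ (equivalently, adding an addable box is the swap $01\to 10$ and increases both by~$1$). The induction is unaffected. Second, your Frobenius-coordinate reading and your induction argument are redundant; either one alone establishes the identity, so you could drop one in a final write-up.
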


As said before, when studying Macdonald identities for the $7$ infinite types of affine root systems, some quadratic forms arise. For instance, as pointed out by Han \cite{Ha} and Dehaye--Han \cite{HD}, the quadratic form \eqref{eq:gk} appears in the Macdonald identity for type $A^{(1)}_{t-1}$ to the exponent of $T$ in \eqref{eq:macdoAfin}. When it comes to the study of other types, different quadratic forms arise. The goal of this section is to introduce the subsets of partitions linked to these quadratic forms. We will see that all of them can be described through the Littlewood decomposition.


First, when studying the Macdonald identity for type $C^{(1)}_{t}$, the quadratic form to the exponent of $T$ in \eqref{eq:misign} corresponds to half the weight of elements of the subset $\ccdd_{(2t+2)}$.
A restriction of the Littlewood decomposition to $\ccdd_{(2t+2)}$ yields some additional conditions on the associated vector of integers already studied by Garvan--Kim--Stanton in \cite{GKS} but stated here in a slightly different way. Set $\omega\in \ccdd_{(2t+2)}$ and $\phi(\omega)=\mathbf{n}\in\bbbz^{2t+2}$, where $\phi$ is the bijection from Theorem \ref{thm:gk}. The equivalence \eqref{eq:motdd} ensures that the following additional conditions are satisfied:

\begin{equation}\label{eq:ndd2tplus2}
n_0=0, \text{ and }\forall i \in \lbrace 1,\dots,2t+1\rbrace,\, n_i=-n_{2t+2-i}.
\end{equation}

\noindent So in particular, the last condition implies $n_{t+1}=0$ and $\omega$ is bijectively associated with a vector of $t$ integers. Therefore we have the following proposition:

\begin{prop}\label{prop:dd2t+2}\cite[Bijection $4$]{GKS}
Let $\omega\in \ccdd_{(2t+2)}$. Set $\phi(\omega)=(n_i)_{0\leq i\leq 2t+1}$. Then the application mapping $\omega\in\ccdd_{(2t+2)}$ to $(n_i)_{1\leq i\leq t}\in\mathbb{Z}^t$ is a bijection. Moreover we have that:
\begin{align}
\lvert \omega \rvert &=(t+1)\sum_{i=0}^{2t+1}n_i^2+\sum_{i=0}^{2t+1}in_i=2\left((t+1)\sum_{i=1}^{t}n_i^2+\sum_{i=1}^{t}(i-t-1)n_i\right).\label{eq:gkddpair}
\end{align}

\end{prop}


As underlined in \cite{Johnson}, Figure \ref{fig:wordddcore} below illustrates how the word interpretation together with the computation of the Littlewood decomposition for a partition in $\ccdd_{(6)}$ when $t=2$, and $2t+2=6$ provide the vector of integers given in \cite{GKS}. The arrows are sorted in $2t+2=6$ different colors, each of them corresponding to a fixed residue $\pmod{6}$ of the index of the corresponding word of $\omega$.
\begin{figure}[h]
\centering
\begin{tikzpicture}
    [
        dot/.style={circle,draw=black, fill,inner sep=1pt},
    ]
\fill [red!30] (0,0) rectangle (11,-1);
\fill [blue!30] (0,-1) rectangle (1,-10);
\foreach \x in {0,...,1}{
    \node[dot] at (\x,-10){ };
}

\draw[->,very thick,blue!100] (0.2,-10) -- (0.2+.6,-10);
\draw[->,very thick,blue!100] (1.2,-5) -- (1.2+.6,-5);
\draw[->,very thick,blue!100] (4.2,-2) -- (4.2+.6,-2);
\draw[->,very thick,blue!100] (9.2,-1) -- (9.2+.6,-1);

\draw[->,very thick,brown] (1,-9.8) -- (1,-9.8+.6);
\draw[->,very thick,brown] (2,-4.8) -- (2,-4.8+.6);
\draw[->,very thick,brown] (5.2,-2) -- (5.2+.6,-2);
\draw[->,very thick,brown] (10.2,-1) -- (10.2+.6,-1);

\draw[->,very thick,blue!30] (1,-8.8) -- (1,-8.8+.6);
\draw[->,very thick,blue!30] (2,-3.8) -- (2,-3.8+.6);
\draw[->,very thick,blue!30] (6,-1.8) -- (6,-1.8+.6);
\draw[->,very thick,blue!30] (11,-0.8) -- (11,-0.8+.6);

\draw[->,very thick,purple] (1,-7.8) -- (1,-7.8+.6);
\draw[->,very thick,purple] (2.2,-3) -- (2.2+.6,-3);
\draw[->,very thick,purple] (6.2,-1) -- (6.2+.6,-1);

\draw[->,very thick,teal] (1,-6.8) -- (1,-6.8+.6);
\foreach \x in {3.2}
    \draw[->,very thick,teal] (\x,-3) -- (\x+.6,-3);
\draw[->,very thick,teal] (7.2,-1) -- (7.2+.6,-1);

\draw[->,very thick,purple!30] (1,-5.8) -- (1,-5.8+.6);
\draw[->,very thick,purple!30] (4,-2.8) -- (4,-2.8+.6);
\draw[->,very thick,purple!30] (8.2,-1) -- (8.2+.6,-1);

\foreach \y in {5,...,9}
	\node[dot] at (1,-\y){};
\node[dot] at (4,-3){};
\foreach \y in {3,...,5}
    \node[dot] at (2,-\y){};

\foreach \x in {4,...,6}
    \node[dot] at (\x,-2){};
\foreach \x in {6,...,11}
    \node[dot] at (\x,-1){};

\foreach \x in {1,...,11}
    \draw (\x,-.1) -- node[above,xshift=-0.4cm,yshift=1mm] {$\lambda_{\x}'$} (\x,+.1);

\node[above,xshift=0.5cm,yshift=1mm] at (12,0) {NE};
\node[above,xshift=-4mm,yshift=1mm] at (0,0) {NW};

\foreach \y in {1,...,10}
    \draw (.1,-\y) -- node[above,xshift=-4mm,yshift=0.2cm] {$\lambda_{\y}$} (-.1,-\y);
\node[below,xshift=-4mm] at (0,-11) {SW};
\node at (-0.5,-11+0.5){0};
\node at (11+0.5,0){1};

\foreach \y in {6,...,10}
	\node at (1,-\y+0.5){0};

\foreach \y in {4,...,5}
    \node at (2,-\y+0.5){0};

\node at (4,-3+0.5){0};
\node at (6,-2+0.5){0};
\node at (11,-0.5){0};

\node at (0.5,-10) {1};
\node at (1.5,-5) {1};

\foreach \x in {2,...,3}
    \node at (\x+0.5,-3){1};
\foreach \x in {4,...,5}
    \node at (\x+0.5,-2){1};
\foreach \x in {6,...,10}
    \node at (\x+0.5,-1){1};
\node at (0.5,-10.25){\textcolor{blue}{$\shortparallel$}};
\node at (0.5,-10.5){\textcolor{blue}{\phantom{o}$c_{-v_1+g}$}};

\node at (3.5,-3.25){\textcolor{teal}{$\shortparallel$}};
\node at (3.5,-3.5){\textcolor{teal}{$c_{0}$}};

\node at (11.75,-0.55){\textcolor{blue!30}{$=c_{v_1-g}$}};

\node[dot] at (6,-1){};  
\draw[->,thick,-latex] (0,-11) -- (0,-11);
\draw[thick] (0,-11) -- (0,1);
\draw[->,thick,-latex] (-1,0) -- (12,0);
\node[circle,draw=blue,fill=blue,inner sep=0pt,minimum size=5pt] at (3,-3){};

\foreach \y in {6,...,9}
	\draw[dotted,gray] (0,-\y)--(1,-\y);
\draw[dotted,gray] (0,-5)--(2,-5);
\draw[dotted,gray] (0,-4)--(3,-4);
\draw[dotted,gray] (0,-3)--(3,-3);
\draw[dotted,gray] (0,-2)--(5,-2);
\draw[dotted,gray] (0,-1)--(6,-1);

\draw[dotted,gray] (1,-6)--(1,0);
\draw[dotted,gray] (2,-5)--(2,0);
\draw[dotted,gray] (3,-3)--(3,0);
\draw[dotted,gray] (4,-3)--(4,0);
\draw[dotted,gray] (5,-2)--(5,0);
\draw[dotted,gray] (6,-1)--(6,0);
\foreach \x in {7,...,10}
	\draw[dotted,gray] (\x,-1)--(\x,0);



\draw[->,very thick, blue](1.5,-4.5)--(1.5,-2.5);
\draw[->,very thick, purple!30](1.5,-2.5)--(3.5,-2.5);

\end{tikzpicture}
\caption{$\omega=(11,6,4,2,2,1,1,1,1,1)\in\ccdd_{(6)}$ and its binary correspondence. Here $g=2t+2=6$ and $v_1$ is defined in Section \ref{chap3:hooks}.}
\label{fig:wordddcore}
\end{figure}
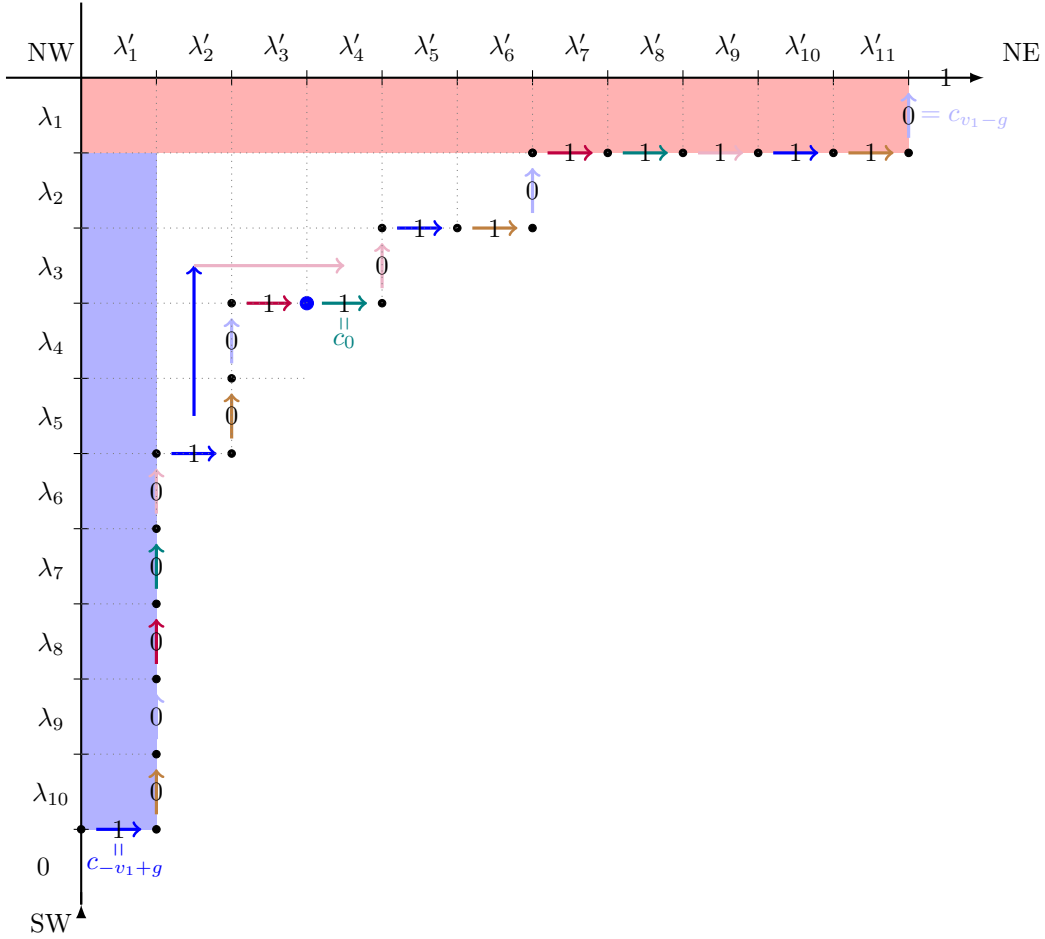
The word corresponding to $\omega$ writes as follows:
\begin{center}
$\psi(\omega)=\cdots \color{blue}{0} \color{brown}{0}  \color{blue!30}{0} \color{purple}{0} \color{teal}{0} \color{purple!30}{0}\color{blue}{1} \color{brown}{0}  \color{blue!30}{0} \color{purple}{0} \color{teal}{0} \color{purple!30}{0} \color{blue}{1} \color{brown}{0}  \color{blue!30}{0} \color{purple}{1} \color{black}|\color{teal}{1} \color{purple!30}{0} \color{blue}{1} \color{brown}{1}  \color{blue!30}{0} \color{purple}{1} \color{teal}{1} \color{purple!30}{1} \color{blue}{1} \color{brown}{1}  \color{blue!30}{0} \color{purple}{1} \color{teal}{1} \color{purple!30}{1}
\color{blue}{1} \color{brown}{1}  \color{blue!30}{1} \color{purple}{1} \color{teal}{1} \color{purple!30}{1}\color{black}\cdots $
\end{center}

By extracting the subwords of fixed residue $\pmod 6$, we obtain:
\begin{align*}
\psi\left(w_{0}\right)=\cdots \color{teal} 000 \color{black}| \color{teal}111\color{black}\cdots\\
\psi\left(w_{1}\right)=\cdots \color{purple!30} 000 \color{black}| \color{purple!30}011\color{black}\cdots\\
\psi\left(w_{2}\right)=\cdots \color{blue} 011 \color{black}| \color{blue}111\color{black}\cdots\\
\psi\left(w_{3}\right)=\cdots \color{brown} 000 \color{black}| \color{brown}111\color{black}\cdots\\
\psi\left(w_{4}\right)=\cdots \color{blue!30} 000 \color{black}| \color{blue!30}001\color{black}\cdots\\
\psi\left(w_{5}\right)=\cdots \color{purple} 001 \color{black}| \color{purple}111\color{black}\cdots
\end{align*}

so that
\begin{center}
$(\color{teal}{n_0},\color{purple!30}{n_1},\color{blue}{n_2},\color{brown}{n_3},\color{blue!30}{n_4},\color{purple}{n_5}\textcolor{black}{)=(}\color{teal}{0},\color{purple!30}{1},\color{blue}{-2},\color{brown}{0},\color{blue!30}{2},\color{purple}{-1} \textcolor{black}{)\in\bbbz^6.}$
\end{center}
The equivalence \eqref{eq:motdd} enables us to keep two components of $\mathbf{n}$:
\begin{center}
$ \omega \in\ccdd_{(6)}\longleftrightarrow (\color{purple!30}{1},\color{blue}{-2}\textcolor{black}{)\in\bbbz^2.}$
\end{center}
%

Similarly the subset $\ccdd_{(2t+1)}$ arises in the Macdonald identity for type $A^{(2)}_{2t}$. Set $\omega\in\ccdd_{(2t+1)}$ and $\phi(\omega)=\mathbf{n}\in\bbbz^{2t+1}$ its image by the bijection of Theorem \ref{thm:gk}. The equivalence \eqref{eq:motdd} yields in this case $n_0=0$ and for all $i\in\lbrace 1,\dots,2t\rbrace$, $n_i=-n_{2t+1-i}$. Once again, the subset of $\ccdd_{(2t+1)}$

\begin{prop}\label{prop:dd2t+1}\cite[Bijection $4$]{GKS}
Let $\omega\in \ccdd_{(2t+1)}$. Set $\phi(\omega)=(n_i)_{0\leq i\leq 2t}$. Then the application mapping $\omega\in\ccdd_{(2t+1)}$ to $(n_i)_{1\leq i\leq t}\in \mathbb{Z}^t$ is a bijection. Moreover we have that:

\begin{align}
\lvert \omega \rvert &=\frac{2t+1}{2}\sum_{i=0}^{2t}n_i^2+\sum_{i=0}^{2t}in_i=(2t+1)\sum_{i=1}^{t}n_i^2+\sum_{i=1}^{t}(2i-2t-1)n_i.\label{eq:gkddimpair}
\end{align}
\end{prop}

The subset of self-conjugate $2t$-cores arises in the Macdonald identity for $D^{(2)}_{t+1}$. A restriction of this previous results to $\ccsc_{(2t)}$ yields some additional conditions on the associated vector of integers. Let $\omega\in \ccsc_{(2t)}$ and $\phi(\omega)=\mathbf{n}\in\bbbz^{2t}$ by the bijection of Theorem \ref{thm:gk}. The equivalence \eqref{eq:motsc} ensures that for all $i \in \lbrace 0,\dots,2t-1\rbrace, n_i=-n_{2t-1-i}$.

\begin{prop}\label{prop:sc2t}
Let $\omega\in \ccsc_{(2t)}$. Set $\phi(\omega)=(n_i)_{0\leq i\leq 2t-1}$. Then the application mapping $\omega\in\ccsc_{(2t)}$ to $(n_i)_{0\leq i\leq t-1}\in\mathbb{Z}^t$ is a bijection. Moreover we have that:

\begin{align}
\lvert \omega \rvert &=\frac{2t}{2}\sum_{i=0}^{2t-1}n_i^2+\sum_{i=0}^{2t-1}in_i=2t\sum_{i=0}^{t-1}n_i^2+\sum_{i=0}^{t-1}(2i-2t+1)n_i.\label{eq:gkscpair}
\end{align}

\end{prop}


%
Regarding types $B^{(1)}_{t}$, $A^{(2)}_{2t-1}$, and $D^{(1)}_{t}$, the leading coefficients of the quadratic forms on vectors of $\bbbz^t$ appearing in the Macdonald identities as rewritten in Proposition \ref{prop:intermed_rewrite} are respectively $2t-1$, $2t$ and $2t-2$. We start by examining the properties of the restriction of the bijection $\phi$ to the set $\ccdd'_{(2t)}$ of partitions whose conjugate is a doubled distinct partition and that are $(2t)$-cores. Set $\omega\in \ccdd'_{(2t)}$ and $\phi(\omega)=\mathbf{n}\in\bbbz^{2t}$. From \eqref{eq:motddprime}, one has
\begin{itemize}
\item $n_{2t-1}=0$,
\item $\forall i \in \lbrace 0,\dots,2t-2\rbrace, n_i=-n_{2t-2-i}$.
\end{itemize}

\noindent So in particular, the last condition implies $n_{t-1}=0$ and $\omega$ is bijectively associated with a vector of $t-1$ integers.

Therefore one has the following characterization:

\begin{prop}\label{prop:ddprime2tcore}
Let $\omega\in \ccdd'_{(2t)}$. Set $\phi(\omega)=(n_i)_{0\leq i\leq 2t-1}$. Then the application mapping $\omega\in\ccdd'_{(2t)}$ to $(n_i)_{0\leq i\leq t-2}\in\mathbb{Z}^{t-1}$ is a bijection. Moreover we have that:
\begin{align}
\lvert \omega \rvert &=\frac{2t}{2}\sum_{i=0}^{2t-1}n_i^2+\sum_{i=0}^{2t-1}in_i=2\left(t\sum_{i=0}^{t-2}n_i^2+\sum_{i=0}^{t-2}(i-t+1)n_i\right).\label{eq:gkddprimepair}
\end{align}

\end{prop}

%

Following the exact same path, one derives the following proposition for $\ccdd'_{(2t-1)}$:
\begin{prop}\label{prop:ddprime2t-1core}
Let $\omega\in \ccdd'_{(2t-1)}$. Set $\phi(\omega)=(n_i)_{0\leq i\leq 2t-2}$. Then the application mapping $\omega\in\ccdd'_{(2t-1)}$ to $(n_i)_{0\leq i\leq t-2}\in\mathbb{Z}^{t-1}$ is a bijection. Moreover we have that:

\begin{align}
\lvert \omega \rvert &=\frac{2t-1}{2}\sum_{i=0}^{2t-3}n_i^2+\sum_{i=0}^{2t-3}in_i=2\left(\left(t-\frac{1}{2}\right)\sum_{i=0}^{t-2}n_i^2+\sum_{i=0}^{t-2}\left(i-t+1+\frac{1}{2}\right)n_i\right).\label{eq:gkddprimeimpair}
\end{align}

\end{prop}

%

 As remarked above, $\phi$ maps $\ccdd'_{(2t)}$ and $\ccdd'_{(2t-1)}$ to $\bbbz^{t-1}$ whereas it maps bijectively $\ccdd'_{(2t-2)}$ to $\bbbz^{t-2}$. Therefore these quadratic forms on vectors of $t$ integers cannot be interpreted as the weight of (conjugate or not) doubled distinct core partitions. Nevertheless they can be interpreted as weights of elements of subsets of partitions with an explicit Littlewood decomposition whose quotient is almost empty but for one or two components.

For instance, in the case of type $B^{(1)}_{t}$, let $m_1$ be a positive integer and let us introduce the set $\ccdd_{2t-1}^{'1}:=\lbrace \lambda\in\ccdd'\mid \Phi_{2t-1}(\lambda)=(\omega,\emptyset,\dots,\emptyset,\nu^{(2t-2)})\rbrace$ where $\nu^{(2t-2)}$ is the rectangular conjugate doubled distinct partition whose parts are all equal to $m_1-1$ and are repeated $m_1$ times.

 Hence $\lvert\nu^{(2t-2)}\rvert=m_1(m_1-1)=(\ell(\left(\nu^{(2t-2)}\right)')+1)\times \ell(\left(\nu^{(2t-2)}\right)')$. Note that its corresponding word is:
\begin{equation}\label{eq:nu0typeb}
\psi(\nu^{(2t-2)})=\dots 0\underbrace{1\dots 1}_{m_1-1}\underbrace{0\mid 0\dots  0}_{m_1}1\dots
\end{equation}

Moreover using \eqref{eq:gkddimpair} and Lemma \ref{lem:DDprimeLittlewood} $(DD'3)$, the subset of $\ccdd^{'1}_{(2t-1)}$ verifies the following characterization:
\begin{prop}\label{prop:ddprime2t-1}
Let $\lambda\in \ccdd^{'1}_{(2t-1)}$. Set $\Phi_{2t-1}(\lambda)=(\omega,\emptyset,\dots,\emptyset,\nu^{(2t-2)})$, $m_1=\ell(\left(\nu^{(2t-2)}\right)')+1$ and $\phi(\omega)=(n_i)_{0\leq i\leq 2t-2}$. Then the application mapping $\lambda\in\ccdd^{'1}_{(2t-1)}$ to $(n_0,\dots, n_{t-2},m_1)\in\mathbb{Z}^{t-1}\times\mathbb{N}^*$ is a bijection. Moreover we have that:

\begin{align}
\lvert \lambda \rvert &=(2t-1)\sum_{i=0}^{t-2}n_i^2+\sum_{i=0}^{t-2}(2(i-t)+3)n_i+(2t-1)(m_1^2-m_1)\notag\\
&=(2t-1)\left(m_1^2+\sum_{i=0}^{t-2}n_i^2\right)-(2t-1)m_1+\sum_{i=0}^{t-2}(2i-2t+3)n_i.\label{eq:gkddnuimpair}
\end{align}
\end{prop}


For type $A^{(2)}_{2t-1}$, let $m_1$ be a positive integer and let us introduce the set $\ccdd_{2t}^{'1}:=\lbrace \lambda\in\ccdd'\mid \Phi_{2t}(\lambda)=(\omega,\emptyset,\dots,\emptyset,\nu^{(2t-1)})\rbrace$ where $\nu^{(2t-1)}$ is the same rectangular partition as in type $B^{(1)}_{t}$ whose corresponding word is \eqref{eq:nu0typeb}. Now if we set $\phi(\omega)=(n_0,\dots,n_{t-2})$, $\lambda\in \ccdd_{2t}^{'1}$ is mapped bijectively to $(n_0,\dots,n_{t-2},m_1)\in\bbbz^{t-1}\times\bbbn^*$. 
Moreover using \eqref{eq:gkddprimepair} and Lemma \ref{lem:DDprimeLittlewood} $(DD'3)$, we get this time:

\begin{prop}\label{prop:ddprime2t}
Let $\lambda\in \ccdd^{'1}_{(2t)}$. Set $\Phi_{2t}(\lambda)=(\omega,\emptyset,\dots,\emptyset,\nu^{(2t-1)})$, $m_1=\ell(\left(\nu^{(2t-1)}\right)')+1$ and $\phi(\omega)=(n_i)_{0\leq i\leq 2t-1}$. Then the application mapping $\lambda\in\ccdd^{'1}_{(2t)}$ to $(n_0,\dots, n_{t-2},m_1)\in\mathbb{Z}^{t-1}\times\mathbb{N}^*$ is a bijection. Moreover we have that:
\begin{align}
\lvert \lambda \rvert &=2t\sum_{i=0}^{t-2}n_i^2+\sum_{i=0}^{t-2}2(i-t+1)n_i+2t(m_1^2-m_1)\notag\\
&=2t\left(m_1^2+\sum_{i=0}^{t-2}n_i^2\right)-2tm_1+2\sum_{i=0}^{t-2}(i-t+3)n_i.\label{eq:gkddnupairbvtquot}
\end{align}
\end{prop}
%

For type $D^{(1)}_{t}$, let $m_1$ be a positive integer and $m_t$ be a nonnegative integer and let us introduce the set $\ccdd_{2t-2}^{'2}:=\lbrace \lambda\in\ccdd'\mid \Phi_{2t-2}(\lambda)=(\omega,\emptyset,\dots,\emptyset,\nu^{(t-2)},\emptyset,\dots,\emptyset,\nu^{(2t-3)})\rbrace$ where $\nu^{(2t-3)}$ is the same rectangular partition as in types $B^{(1)}_{t}$ and $A^{(2)}_{2t-1}$, and $\nu^{(t-2)}$ is the $m_t\times m_t$ square self-conjugate partition. Hence $\lvert\nu^{(2t-3)}\rvert=m_1(m_1-1)$ and $\lvert\nu^{(t-2)}\rvert=m_t^2$. Note that the corresponding word is:
\begin{equation}\label{eq:nu0typed}
\psi(\nu^{(t-2)})=\dots 0\underbrace{1\dots 1}_{m_t}\mid\underbrace{0\dots 0}_{m_t}1\dots
\end{equation}

Moreover using \eqref{eq:gkddprimepair} and Lemma \ref{lem:DDprimeLittlewood} $(DD'3)$, we derive 

\begin{prop}\label{prop:ddprime2t-2}
Let $\lambda\in \ccdd^{'2}_{(2t-2)}$. Set $\Phi_{2t-2}(\lambda)=(\omega,\emptyset,\dots,\emptyset,\nu^{(t-2)},\emptyset,\dots,\emptyset,\nu^{(2t-3)})\rbrace$, $m_1=\ell(\left(\nu^{(2t-3)}\right)')+1$, $m_t=\ell(\nu^{(t-2)})$ and $\phi(\omega)=(n_i)_{0\leq i\leq 2t-3}$. Then the application mapping $\omega\in\ccdd^{'1}_{(2t)}$ to $(n_0,\dots, n_{t-2},m_1,m_t)\in\mathbb{Z}^{t-1}\times\mathbb{N}^*\times \mathbb{N}$ is a bijection. Moreover we have that:

\begin{align}
\lvert \lambda \rvert &=(2t-2)\sum_{i=0}^{t-3}n_i^2+\sum_{i=0}^{t-3}2(i-t+2)n_i+(2t-2)(m_1^2-m_1+m_t^2)\notag\\
&=(2t-2)\left(\sum_{i=0}^{t-3}n_i^2+m_1^2+m_t^2\right)-(2t-2)m_1+\sum_{i=0}^{t-3}2(i-t+2)n_i.\label{eq:gkddnupairtquot}
\end{align}

\end{prop}



\subsection{The enumeration of hook lengths products}\label{chap3:hooks}


The aim of this section is to show Theorems \ref{thm:HD} and \ref{thm:DDpair} and their analogues for the $5$ other families of partitions defined in Section \ref{chap3:quad}. However we will sketch the proof for $\ccp_{(t)}$ corresponding to type $A^{(1)}_{t-1}$ (Theorem \ref{thm:HD}) and give details for the more complicated case of $\ccdd_{(2t+2)}$ corresponding to type $C^{(1)}_{t}$ (Theorem \ref{thm:DDpair}). We will then only highlight the method for the $5$ other families. Proofs are made by induction on the length of the Durfee square of a partition.

A common property of all of the subsets of partitions introduced above is that their elements can be associated with a vector of $t$ integers. Given the properties of symmetry of these partitions, we will bijectively associate them with $t$ \emph{positive} integers through their word interpretations: it consists in the greatest $t$ indices of letters ``$0$'' in the corresponding word such that all of those $t$ indices have a different congruence class modulo an integer $g$ to which we add $g$.
To be more precise, first we introduce the notion of $V_{g,t}$-coding.
\begin{df}\label{def:vcoding}
Let $t$ and $g$ be two positive integers such that $t\leq g$. Set $\lambda \in \ccp$ and $\psi(\lambda)=(c_k)_{k\in\bbbz}$ its corresponding binary word. For $i\in\lbrace 0,\dots,g-1\rbrace$, define $\beta_i:=\max\lbrace (k+1)g+i\mid c_{kg+i}=0\rbrace$. Let $\sigma:\lbrace 1,\dots,g\rbrace\rightarrow \lbrace 0,\dots,g-1\rbrace$ be the unique bijection such that $\beta_{\sigma(1)}>\dots>\beta_{\sigma(g)}$. The vector $\mathbf{v}:=(\beta_{\sigma(1)},\dots,\beta_{\sigma(t)})$ is called the $V_{g,t}$-coding corresponding to $\lambda$.
\end{df}

\begin{rk}\label{vcodingtypeA}
When $g=t$, the map between $\omega\in\ccp_{(t)}$ and its $V_{t,t}$-coding is bijective. Indeed one can deduce $\sigma$ from $v_i\equiv \sigma(i)\pmod{t}$ and $0\in\lbrace \sigma(1),\dots,\sigma(t)\rbrace$. Moreover we have $n_{\overline{\sigma(i)}}=\lfloor v_i/t\rfloor$,  where $\overline{\sigma(i)}$ is the remainder of $\sigma(i)$ by $t$. Therefore
$$\omega=\phi^{-1}\left(\left\lfloor\frac{v_{\sigma^{-1}(1)}}{t}\right\rfloor,\dots, \left\lfloor\frac{v_{\sigma^{-1}(t)}}{t}\right\rfloor\right),$$
where $\phi$ is the bijection from Theorem \ref{thm:gk}.
\end{rk}

For any condition $C$, we will use the boolean notation
$$\mathds{1}_C:=\begin{cases}
1 \text{ if } C\text{ is true,}\\
0 \text{ otherwise.}\end{cases}$$
The following proposition bridges the gap between Definition \ref{def:vcoding} and all of the subsets of partitions introduced in Section \ref{chap3:quad}. Indeed it shows that the $V_{g,t}$-coding of a partition is equivalent to the vector of integers used to characterize these subsets of partitions.
\begin{prop}\label{prop:core_vcoding}
Let $t$ be a positive integer. Any $\lambda$ in one of the sets $\ccp_{(t)}$, $\ccdd_{(2t+2)}$,$\ccdd_{(2t+1)}$, $\ccsc_{(2t)},\ccdd_{2t}^{'1},\ccdd_{2t-1}^{'1}$, and $\ccdd_{2t-2}^{'2}$ is in bijective correspondence with its $V_{g,t}$-coding, where $g$ is the index of the corresponding set.
\end{prop}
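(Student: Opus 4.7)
The plan is to argue family by family, unified by the following observation: for a partition $\lambda$ with word $\psi(\lambda) = (c_k)_{k\in\bbbz}$ and a fixed integer $g$, the quantity $\beta_i$ from Definition \ref{def:vcoding} equals $g$ more than the index of the last ``$0$'' in the sub-word of residue $i \pmod{g}$. When $\lambda$ is a $g$-core, each sub-word has the shape $\dots 00\,011\dots$, so with the notation $n_i$ of Theorem \ref{thm:gk} one has $\beta_i = n_i g + i$. The $V_{g,t}$-coding $\mathbf{v}$ then consists of the top $t$ entries of $(\beta_0, \dots, \beta_{g-1})$ sorted in decreasing order.

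I would first handle the four core families ($\ccp_{(t)}$ with $g=t$, $\ccsc_{(2t)}$ with $g=2t$, $\ccdd_{(2t+1)}$ with $g=2t+1$, $\ccdd_{(2t+2)}$ with $g=2t+2$). For $\ccp_{(t)}$ the result is immediate from Remark \ref{vcodingtypeA}. For the other three, the symmetries of the word (equivalences \eqref{eq:motsc} and \eqref{eq:motdd} restricted to each sub-word) translate into a pairing of residues $i \leftrightarrow g-i$, with possibly distinguished fixed residues ($i=0$ or $i=g/2$), together with the relation $n_i = -n_{g-i}$. A direct computation shows that within each such pair, the maximum of $\beta_i$ and $\beta_{g-i}$ is bounded below by $g/2 + 1$ while the minimum is bounded above by $g/2 - 1$; each distinguished residue contributes a $\beta$-value equal to the residue itself. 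Hence the top $t$ entries are precisely the $t$ pair-maxima, their residues modulo $g$ entirely determine which side of each pair was chosen, and reading $n_i$ off from $v = \beta_i = n_i g + i$ (with a sign flip when $v = \beta_{g-i}$) yields the inverse map, proving the bijection.

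For the three non-core families ($\ccdd_{2t-1}^{'1}, \ccdd_{2t}^{'1}, \ccdd_{2t-2}^{'2}$), $\lambda$ is obtained from a core partition $\omega$ by attaching one or two rectangular quotient components whose words are written down explicitly in \eqref{eq:nu0typeb} and \eqref{eq:nu0typed}. Reconstructing $\psi(\lambda)$ from the Littlewood decomposition, the sub-words indexed by residues with an empty quotient component remain of pure $g$-core shape, while the sub-word carrying the rectangular component $\nu^{(g-1)}$ (resp.\ $\nu^{(t-2)}$) has an additional block of $m_1$ (resp.\ $m_t$) extra ``$0$''s inserted before the terminal run of ``$1$''s. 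The corresponding $\beta_i$ is then shifted upward by $m_1 g$ (resp.\ $m_t g$), forcing it into the top $t$ values. The inverse map reads off the residues of the entries of $\mathbf{v}$ to detect the positions carrying an extra block, extracts $m_1$ and $m_t$, and then recovers $\omega$ exactly as in the core case.

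The main obstacle is the bookkeeping in the non-core families: one must verify that each enlarged $\beta_i$ coming from a non-empty rectangular quotient component indeed belongs to the top $t$ values (no pair-maximum gets displaced out), and that no collision of residues modulo $g$ occurs between the core contribution and the extra-block contribution. This is a direct verification based on the explicit word forms \eqref{eq:nu0typeb}--\eqref{eq:nu0typed} combined with the relation $n_i = -n_{g-i-2}$ (in the $\ccdd'$ case, see \eqref{eq:motddprime}) and its type-specific variants; it is routine but must be carried out separately for each of the three sub-cases.
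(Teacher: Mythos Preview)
Your proposal is correct and follows essentially the same route as the paper's proof: both argue family by family, identify $\beta_i = n_ig+i$ on the core residues, exploit the word-symmetry to pair residues so that the top $t$ of the $\beta$'s are exactly the pair-maxima (plus the special residue(s) in the non-core families), and invert by reading residues modulo $g$. Two minor corrections to your write-up: the pairing is $i\leftrightarrow g-i$ only for $\ccdd$; it becomes $i\leftrightarrow g-1-i$ for $\ccsc$ and $i\leftrightarrow g-2-i$ for $\ccdd'$, so your uniform bounds ``max $\geq g/2+1$, min $\leq g/2-1$'' need to be adjusted case by case (the paper records this as the single inequality $-v_i+g\leq g/2\leq v_i$), and the shift on $\beta_{g-1}$ in the non-core families is $(m_1-1)g$ rather than $m_1g$.
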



\begin{proof}
We will show that the correspondence between the $V_{g,t}$-coding of $\lambda$ and the vector of integers $\phi(\lambda)$ is bijective, and therefore so is the correspondence between $\lambda$ and its $V_{g,t}$-coding.
Let $\lambda$ be a partition in one of the sets of Proposition \ref{prop:core_vcoding}. Set $\psi(\lambda)=\left(c_k\right)_{k\in\bbbz}$ the word corresponding to $\lambda$. Let $\mathbf{v}$ be its $V_{g,t}$-coding. 
The case $\lambda\in\ccp_{(t)}$ is already handled in Remark \ref{vcodingtypeA}.

If $\quot_{g}(\lambda)$ is empty, then $\lambda$ is a $g$-core. By definition $\beta_i-g$ is the last letter ``$0$'' in the subword of $\psi(\lambda)$ whose indices are congruent to $i\pmod{g}$. This implies that for all $i$, $\beta_i=\underset{k\in\bbbz}{\min}\lbrace kg+i\mid c_{kg+i}=1\rbrace$.
 Set $\phi(\lambda)=\mathbf{n}\in\bbbz^g$. Let $\sigma'$ be the unique permutation of $\lbrace 0,\dots,g-1\rbrace$ such that the sequence $(n_{\sigma'(i)},\sigma'(i))_i$ is strictly decreasing with respect to the lexicographic order. By definition of $\sigma'$, note that $\beta_{\sigma'(0)}>\dots>\beta_{\sigma'(g-1)}$. 
By definition of the $V_{g,t}$-coding, we define $\sigma$ from $\sigma'$ as $\sigma(i)=\sigma'(i-1)$ for all $i\in\lbrace 1,\dots,g\rbrace$ and $v_i=\beta_{\sigma(i)}$ for all $i\in\lbrace 1,\dots,t\rbrace$.
Moreover setting
 $$g':=\begin{cases} g-1 \text{ if }\lambda\in\ccsc,\\
g \text{ otherwise,}\end{cases}$$
recall that we have $n_{i-\mathds{1}_{\lambda\in\ccsc}}=-n_{\overline{-i}}$ for all $i\in\lbrace 1,\dots,t\rbrace$ where $\overline{-i}$ is the remainder of $-i$ by $g$. Hence for all $i\in\lbrace 0,\dots, t-1\rbrace$, $\beta_{\sigma'(i)}=-\beta_{\sigma'(g-1-i)}+g'$.
Hence knowing the $V_{g,t}$-coding of $\lambda$ is equivalent to knowing $\beta_{\sigma'(i)}$ for $i\in\lbrace 0,\dots,g-1\rbrace$.
Conversely, let $\mathbf{v}=(v_i)_{i\in\lbrace 1,\dots,t}$ be a $V_{g,t}$-coding.For any $1\leq i\leq t$, if $v_i\equiv k\pmod{g}$, then set $n_k=\lfloor v_i/g\rfloor$. Then define for any $k\in\lbrace v_i\pmod{g},1\leq i\leq t\rbrace$, define $n_{g'-k}=-n_k$. For $j\not\in \lbrace v_i\pmod{g},1\leq i\leq t\rbrace\cup \lbrace g'-v_i\pmod{g},1\leq i\leq t\rbrace$, set $n_j=0$ and $\lambda=\phi_g^{-1}((n_i)_{0\leq i \leq t-1})$.
 Whence the $V_{g,t}$-coding of $\lambda$ is in bijective correspondence with $\lambda$.

Moreover we have that for all $i\in\lbrace 1,\dots,t\rbrace$:
\begin{equation}\label{eq:vcoding}
-v_i+g\leq \frac{g}{2}\leq v_i.
\end{equation}

If $\quot_g(\lambda)$ is not empty, it implies that $\lambda$ belongs to one of the subsets $\ccdd_{2t}^{'1},\ccdd_{2t-1}^{'1}$ or $\ccdd_{2t-2}^{'2}$. We now have for all $i\in\lbrace 0,\dots,g-2\rbrace\setminus\lbrace g/2\rbrace$, $\beta_i=\underset{k\in\bbbz}{\min}\lbrace kg+i\mid c_{kg+i}=1\rbrace$. By Section \ref{chap3:quad}, note that the index congruent to $g-1\pmod{g}$ of the last letter ``$0$'' in $\psi(\lambda)$ is $(m_1-1)g-1$. Hence $\beta_{g-1}\geq g-1$. Similarly if $g$ is even, $\beta_{g/2}\geq g/2$. If $g$ is equal to $2t$ or $2t-1$ and if we set $\omega=\core_g(\lambda)$ and $\phi(\omega)=\mathbf{n'}\in\bbbz^{g}$, let $\sigma'$ be once again the unique permutation of $\lbrace 0,\dots,g-1\rbrace$ such that the sequence $(n'_{\sigma'(i)},\sigma'(i))_i$ is strictly decreasing with respect to the lexicographic order. By the word interpretation of elements in $\ccdd_{g}^{'1}$ given in Section \ref{chap3:quad} and following the exact reasoning as for elements of $\ccdd_{(g)}$, we derive the same inequalities as \eqref{eq:vcoding}. Hence there are exactly $t-1$ elements $k$ in $\lbrace 0,\dots,g-2\rbrace$ such that $\beta_k\geq g/2$ and exactly $t-1$ elements such that $\beta_k\leq g/2$.
Thus there exists $i_0\in\lbrace 0,\dots,t-1\rbrace$ such that $\beta_{\sigma'(i_0)}>\beta_{g-1}>\beta_{\sigma'(i_0-1)}$.
By setting $\sigma(i)=\sigma'(i-1)$ for any $1\leq i\leq i_0$, $\sigma(i_0)=g-1$ and $\sigma(i)=\sigma'(i-2)$ for $i_0+1\leq i\leq t-1$, we obtain $v_i=\beta_{\sigma(i)}$ for all $i\in\lbrace 1,\dots,t\rbrace$. By \eqref{eq:motddprime}, the $V_{g,t}$-coding associated with $\lambda$ is in bijective correspondence with $(n'_0,\dots,n'_{t-2},m_1)$.
Conversely let $\mathbf{v}=(v_i)_{i\in\lbrace 1,\dots,t}$ be a $V_{g,t}$-coding. For any $1\leq i\leq t$, if $v_i\equiv k\pmod{g}$, then set $m_1=\lfloor v_i/g\rfloor$ if $k=g-1$ and $n_k=\lfloor v_i/g\rfloor$ if $k\neq g-1$.
Then define for any $k\in\lbrace v_i\pmod{g},1\leq i\leq t\rbrace$, define $n_{g-k}=-n_k$. For $j\not\in \lbrace v_i\pmod{g},1\leq i\leq t\rbrace\cup \lbrace g-2-v_i\pmod{g},1\leq i\leq t\rbrace$, set $n_j=0$ and $\lambda=\Phi_{g}^{-1}\left(\phi_g^{-1}((n_i)_{0\leq i \leq t-1}),\emptyset,\dots,\emptyset,(m_1-0)^{m_1}\right)$.
 Hence as remarked in Section \ref{chap3:quad}, this is in bijective correspondence with $\lambda$.

The case $g=2t-2$ is exactly the same except that there exists $(k,l)\in\lbrace 1,\dots,t\rbrace^2$ such that $v_k=m_tg+t-2$ and $v_l=m_1g-1$.
\end{proof}
%

For example, the $V_{6,2}$-coding of the partition $\omega=(11,6,4,2,2,1,1,1,1,1)\in\ccdd_{(6)}$ is $(6\times 2+4,6\times 1+1)=(16,7)$ and $\sigma:\lbrace 1,\dots ,6\rbrace\rightarrow \lbrace 0,\dots, 5\rbrace$ such that $\sigma(1)=4, \,\sigma(2)=1,\,\sigma(3)=3,\,\sigma(4)=0,\,\sigma(5)=5$ and $\sigma(6)=2$.

Note that some components of vectors of integers in bijection with the subsets of partitions considered in this paper are always constant. Therefore the $V_{g,t}$-coding associated with those partitions verifies some additional restrictions. The following table summarizes for a given subset considered in this paper the congruences satisfied by the elements of the $V_{g,t}$-coding as well as the $\beta_i$ that are constant for all of the elements in the subset.

\begin{center}
\begin{tabular}{|c|c|c|}
\hline
type & constant $\beta_i$ & congruence set of the $V_{g,t}$-coding $\pmod{g}$  \\
\hline
\rule{0pt}{15pt}$A^{(1)}_{t-1}$ ($\tilde{A}_{t-1}$) &  none &  $\lbrace 0,\dots,t-1\rbrace$\\
\hline
\rule{0pt}{15pt}$B^{(1)}_{t}$ ($\tilde{B}_t$) &  none &  $\lbrace i \text{ or } 2t-3-i, 0\leq i\leq t-2\rbrace\cup\lbrace 2t-2\rbrace$\\
\hline
\rule{0pt}{15pt}$A^{(2)}_{2t-1}$ ($\tilde{B}_t^{\vee}$) &  $\beta_{t-1}=t-1$ &  $\lbrace i \text{ or } 2t-2-i, 0\leq i\leq t-2\rbrace\cup\lbrace 2t-1\rbrace$ \\
\hline
\rule{0pt}{15pt}$C^{(1)}_{t}$ ($\tilde{C}_t$)&  $\beta_{t+1}=t+1$, $\beta_0=0$ &  $\lbrace i \text{ or } 2t+2-i, 1\leq i\leq t\rbrace$\\
\hline
\rule{0pt}{15pt}$D^{(2)}_{t+1}$ ($\tilde{C}_{t}^{\vee}$)&  none &  $\lbrace i \text{ or } 2t-1-i, 0\leq i\leq t-1\rbrace$\\
\hline
\rule{0pt}{15pt}$A^{(2)}_{2t}$ ($\tilde{BC}_{t}$)&  $\beta_0=0$ &   $\lbrace i \text{ or } 2t+1-i, 1\leq i\leq t\rbrace$\\
\hline
\rule{0pt}{15pt}$D^{(1)}_{t}$ ($\tilde{D}_{t}$)&  none &  $\lbrace i \text{ or } 2t-4-i, 0\leq i\leq t-3\rbrace\cup\lbrace t-2,2t-3\rbrace$\\
\hline

\end{tabular}
\captionof{table}{Table of conditions verified by $V_{g,t}$-coding for all types.}\label{tableau_cond}
\end{center}

In the next subsections, we first state the lemmas corresponding to the indices of the boxes in either the largest part or the largest hook. Then we use these lemmas combined with the induction property to derive the theorems of hook lengths products. These hook lengths enumerations and more specifically their proofs are to be applied in the next section, both to rewrite Macdonald identities for all types and also to derive $q$-Nekrasov--Okounkov formulas.  As seen in Section \ref{chap3:quad}, quadratic forms appear in Macdonald identities and are related to our subsets of partitions. Each of the following subsections hence give the product of hook lengths on the subset of partitions defined in Section \ref{chap3:quad}. The case of type $C^{(1)}_{t}$ is fully detailed, whereas the other subsections contain only the main technical results for the other types.

Indeed, to derive the $q$-Nekrasov--Okounkov formula \eqref{Hande} from a specialization of the Macdonald identity \eqref{eq:macdoAstart} for type $A^{(1)}_{t-1}$, Dehaye--Han prove the following theorem, which we rephrase using the definition of $V_{t,t}$-coding (see Definition \ref{def:vcoding}).

\begin{thm}\label{thm:HD}
Set $t$ a positive integer. Let $\omega\in \ccp_{(t)}$ and $\mathbf{v}\in\bbbz^{t}$ its associated $V_{t,t}$-coding, and set $r_i=v_i$ for any $i\in\lbrace1,\dots,t\rbrace$. Then we have
\begin{equation}\label{eq:ppoids}
\lvert \omega\rvert = \frac{1}{2t}\sum_{i=1}^{t} r_i^2-\frac{(t-1)(2t-1)}{12}.
\end{equation}
Moreover, setting $\alpha_i(\omega):=\#\lbrace s\in\omega, h_s=t-i\rbrace$, for any function $\tau:\bbbz\rightarrow F^{\times}$, where $F$ is a field, we also have
\begin{align}\label{eq:thmp}
\prod_{s\in\omega}\frac{\tau(h_s-t)\tau(h_s+t)}{\tau(h_s)^2}=\prod_{i=1}^{t-1}\left(\frac{\tau(-i)}{\tau(i)}\right)^{\alpha_i(\omega)}
\prod_{1\leq i<j\leq t} \frac{\tau(r_i-r_j)}{\tau(j-i)}.
\end{align}

\end{thm}

In order to generalize the proof of Dehaye--Han, we need to prove results that are similar to Theorem \ref{thm:HD} for partitions arising in Section~\ref{chap3:quad}.
The following theorem deals for instance with the partitions in $\ccdd_{(2t+2)}$ appearing in type $C^{(1)}_{t}$.
\begin{thm}\label{thm:DDpair}
Set $t$ a positive integer and $g=2t+2$. Let $\omega\in \ccdd_{(g)}$ and $\mathbf{v}\in\bbbz^{t}$ its associated $V_{g,t}$-coding, and set $r_i=v_i-g/2$ for any $i\in\lbrace1,\dots,t\rbrace$. Then we have
\begin{equation}\label{eq:ddpoids}
\lvert \omega\rvert = \frac{1}{g}\sum_{i=1}^{t} r_i^2-\frac{(g/2-1)(g-1)}{12}.
\end{equation}
Moreover, setting $\alpha_i(\omega):=\#\lbrace s\in\omega, h_s=g-i, \varepsilon_s=1\rbrace$, for any function $\tau:\bbbz\rightarrow F^{\times}$, where $F$ is a field, we also have
\begin{align}\label{eq:thmdd}
\prod_{s\in\omega}\frac{\tau(h_s-\varepsilon_s g)}{\tau(h_s)}=\prod_{i=1}^{g-1}\left(\frac{\tau(-i)}{\tau(i)}\right)^{\alpha_i(\omega)}
\prod_{i=1}^{t}\frac{\tau(r_i)}{\tau(i)}\prod_{1\leq i<j\leq t} \frac{\tau(r_i-r_j)}{\tau(j-i)} \frac{\tau(r_i+r_j)}{\tau(g-i-j)},
\end{align}
and setting $\alpha'_i(\omega):=\#\lbrace s\in\omega, h_s=g-i, \varepsilon_s=-1\rbrace$
\begin{multline}\label{eq:thmddprime}
\prod_{s\in\omega}\frac{\tau(h_s+\varepsilon_s g)}{\tau(h_s)}=\prod_{i=1}^{g-1}\left(\frac{\tau(-i)}{\tau(i)}\right)^{\alpha'_i(\omega)}
\prod_{i=1}^{t}\frac{\tau(2r_i)}{\tau(2i)}\frac{\tau(r_i)\tau(r_i+t+1)\tau(r_i-t-1)}{\tau(i)\tau(i+t+1)\tau(t+1-i)}\\
\times\prod_{1\leq i<j\leq t} \frac{\tau(r_i-r_j)}{\tau(j-i)} \frac{\tau(r_i+r_j)}{\tau(g-i-j)}.
\end{multline}

\end{thm}

\medskip

We need to introduce the \textit{$g$-intervals} as $\I_{m,M}^{g,+}:=\lbrace k\in \bbbz \mid m\leq k<M, k\equiv m \pmod g\rbrace$, $\I_{m,M}^{1,g,+}:=\lbrace (k,M)\in \bbbz \mid m\leq k<M, k\equiv m \pmod g\rbrace$, $\I_{m,M}^{g,-}:=\lbrace l\in \bbbz \mid m<l\leq M, l\equiv M \pmod g\rbrace$ and
$\I_{m,M}^{1,g,-}:=\lbrace (m,l)\in \bbbz \mid m<l\leq M, l\equiv M \pmod g\rbrace$ for $m$, $M$ two integers. 

This notion is of particular interest in our case ever since we are trying to enumerate hook lengths with a fixed residue $\pmod{g}$. We will need the following lemma all along this section.

\begin{lm}\label{lem:telescop}
Let $g$ be a positive integer, let $\tau$ be a function defined over $\bbbz$, let $m$ and $M$ be two integers such that $m<M$. We have
\begin{equation}\label{telescoplus}
\prod_{k\in \I_{m,M}^{g,+}}\frac{\tau(M-k-g)}{\tau(M-k)}=\frac{\tau(M-\max(I_{m,M}^{g,+})-g)}{\tau(M-m)},
\end{equation}
and
\begin{equation}\label{telescopmoins}
\prod_{l\in \I_{m,M}^{g,-}}\frac{\tau(l-m+g)}{\tau(l-m)}=\frac{\tau(M-m+g)}{\tau(\min(I_{m,M}^{g,-})-m)}.
\end{equation}
\end{lm}

The proof is straightforward since all products are telescopic.

The proofs of Theorems \ref{thm:HD} and \ref{thm:DDpair}, as well as those of all theorems present in the next subsections, are done by induction on the maximal element of the $V_{g,t}$-coding of the partitions: it means that, given $\mathbf{v}$ a $V_{g,t}$-coding and $\lambda$ the corresponding partition, $v_1$ is the maximal element. By definition of $\mathbf{v}$, $v_1-g$ corresponds to the maximal index of ``$0$'' in $\psi(\lambda)$. To use the induction property, one needs to consider the partition associated with $\mathbf{v}'$ a $V_{g,t}$-coding such that there exists $i\in\lbrace 1,\dots,t\rbrace$ with $v'_i=v_1-g$.
 For generic $t$-core partitions (i.e. when $g=t$), in order to use the induction property, one needs to consider the $V_{t,t}$-coding corresponding to the partition stripped from its largest part (i.e. stripped from the colored part in Figure \ref{fig:illustrationpartitions}). Nevertheless for all the other subsets of partitions defined in Section \ref{chap3:quad}, given their property of symmetry along $\Delta$,  one needs to consider the partition stripped from its largest hook (i.e. stripped from the colored hook in Figure \ref{fig:wordddcore}) to use the induction property on the maximal element of their associated $V_{g,t}$-coding.

\subsubsection{Type $A^{(1)}_{t-1}$ and $\ccp_{(t)}$}

As mentioned above, the case of $t$-core partitions is a bit particular: it is the only subset of partitions studied in this section with no properties of symmetry along $\Delta$. For instance, when $t=6$, to prove Theorem \ref{thm:HD} in the example of $\omega=(7,3,3,2,2,1,1)\in\ccp_{(6)}$ (see Figure \ref{fig:illustrationpartitions}), the induction property is used on the partition $(3,3,2,2,1)$, which is still a $6$-core partition.

\begin{figure}[H]
\centering
\begin{ytableau}
\none[\omega_1] & *(red!60)13 &*(red!60)10 & *(red!60)7  &*(red!60)5 & *(red!60)4 &*(red!60)3&*(red!60)2 & *(red!60)1&\none[\textcolor{red!60}{\leftarrow}]&\none & \none[\textcolor{red!60}{c_{v_1-6}=0}]\\
\none[\omega_2] & *(blue!30)8 & 5& 2&\none \\
\none[\omega_3] & *(blue!30)7 &4 & 1  &\none \\
\none[\omega_4] & *(blue!30)5 &2 &\none &\none &\none & \none \\
\none[\omega_5] & *(blue!30)4 &1 &\none &\none &\none & \none \\
\none[\omega_6] & *(blue!30)2 &\none &\none &\none & \none \\
\none[\omega_7] & *(blue!30)1 &\none &\none &\none & \none \\
\none & \none[\textcolor{blue!60}{\uparrow}] \\
\none & \none[\textcolor{blue!60}{c_{v_6}=1}]
\end{ytableau}
\caption{$\omega\in \ccp_{(6)}$}
\label{fig:illustrationpartitions}
\end{figure}

\begin{lm}\label{lem:maxpart}
Let $t$ be a positive integer. Set $\omega\in \ccp_{(t)}$ and $(v_1,\dots,v_{t})\in\bbbz^t$ its associated $V_{t,t}$-coding. Then the largest part of $\omega$ corresponds to the collection of boxes of indices in $\cup_{i=2}^{t}\I_{v_i,v_1-t}^{t,+}$. Moreover the largest part of $\omega'$ deprived from the box on the main diagonal corresponds to the collection of boxes of indices in $\I_{v_t,v_1-t}^{t,-}\cup_{i=2}^{t-1}\I_{v_t,v_i-t}^{t,-}$
\end{lm}

The proof is straightforward from the fact that the boxes in $\omega_1$, respectively $\omega_1'$ are those whose index of the letter ``$0$'', respectively ``$1$" is $v_1-g$, respectively $v_t$, and the definition of $g$-intervals. 

Lemma \ref{lem:telescop} is necessary to prove Theorem \ref{thm:HD}, but given the terms of the product over hook lengths slightly differs from the ones in Theorem \ref{thm:DDpair}, one needs also the following results, which are proved as \eqref{telescoplus} and \eqref{telescopmoins} above.
Let $g$ be a positive integer, let $\tau$ be a function defined over $\bbbz$, let $m$ and $M$ be two integers such that $m<M$. We have
\begin{equation*}
\prod_{k\in \I_{m,M}^{g,+}}\frac{\tau(M-k+g)}{\tau(M-k)}=\frac{\tau(M-m+g)}{\tau(M-\max(I_{m,M}^{g,+}))},
\end{equation*}
and
\begin{equation*}
\prod_{l\in \I_{m,M}^{g,-}}\frac{\tau(l-m-g)}{\tau(l-m)}=\frac{\tau(\min(I_{m,M}^{g,-})-m-g)}{\tau(M-m)}.
\end{equation*}

The proof of Theorem \ref{thm:HD} follows from Lemmas \ref{lem:telescop} and \ref{lem:maxpart} and manipulations of products as (but simpler than) in the proof of Theorem \ref{thm:DDpair} detailed below.

\subsubsection{Type $C^{(1)}_{t}$ and $\ccdd_{(2t+2)}$}
 The following lemma is useful in order to decompose the product over hook lengths of doubled distinct $(2t+2)$-core as a product over $g$-intervals.

\begin{lm}\label{lem:maxhook}
Let $t$ be a positive integer and set $g=2t+2$. Set $\lambda\in \ccdd_{(g)}$ and $(v_1,\dots,v_{t})\in\bbbz^t$ its associated $V_{g,t}$-coding. Then the largest hook of $\lambda$, denoted by $H_1$ corresponds to the collection of boxes of indices in

\begin{align*}
H_1:=\cch_{1,+}\cup\cch_{1,-}
\end{align*}
where $\cch_{1,+}$ (respectively $\cch_{1,-}$) denotes the set of indices of boxes $s$ in the first hook such that $\varepsilon_s=1$ (respectively $\varepsilon_s=-1$) and
\begin{align*}
\cch_{1,+}&=\textcolor{red}{\I_{-v_1+g,v_1-g}^{1,g,+}}  \cup \textcolor{teal}{\I_{0,v_1-g}^{1,g,+}} \cup\textcolor{brown}{\I_{g/2,v_1-g}^{1,g,+}}
 \displaystyle\bigcup_{i=2}^{t}\left(\I_{v_i,v_1-g}^{1,g,+}\cup \I_{-v_i+g,v_1-g}^{1,g,+}\right),\\
\cch_{1,-}&= \textcolor{blue!30}{\I_{-v_1+g,v_1-2g}^{1,g,-}} \cup\textcolor{teal}{\I_{-v_1+g,-g}^{1,g,-}}\cup\textcolor{brown}{\I_{-v_1+g, -g/2}^{1,g,-}} \displaystyle\bigcup_{i=2}^{t}\left(\I_{-v_1+g,v_i-g}^{1,g,-}\cup \I_{-v_1+g,-v_i}^{1,g,-}\right).
\end{align*}

\end{lm}

\begin{proof}
Set $\lambda\in \ccdd_{(g)}$, $\psi(\lambda)=\left(c_k\right)_{k\in \bbbz}$, and set $(v_i)_{i\in\lbrace 1,\dots,t\rbrace}$ its associated $V_{g,t}$-coding and $\sigma$ as defined in Definition \ref{def:vcoding}.
By Lemma \ref{lem:indices}, the box with the largest hook length has the corresponding pair of indices $(i_{\min},j_{\max})$ where $i_{\min}:=\min\lbrace i \in \bbbz \mid c_i=1\rbrace$ and $j_{\max}:=\max\lbrace j \in \bbbz \mid c_j=0\rbrace$. Since $\lambda$ is a $g$-core, $v_1=\max\lbrace j \in \bbbz \mid c_{j-g}=0\rbrace$ by Proposition \ref{prop:core_vcoding}. Therefore $j_{\max}=v_1-g$. Moreover by \eqref{eq:motdd}, the maximal index of a letter ``$0$'' corresponds to the minimal index of a letter ``$1$'', whence $i_{\min}=-v_1+g$. Therefore the box of coordinates $(1,1)$ in the Ferrers diagram of $\lambda$ corresponds to word indices $(-v_1+g,v_1-g)$ as shown in the boxes shaded in Figure \ref{fig:wordddcore}.

Moreover, as illustrated in Figure \ref{fig:wordddcore}, each box $s\in\lambda$ is by Lemma \ref{lem:indices} in bijection with a pair of indices $(i_s,j_s)$ such that $i_s \not\equiv j_s \pmod g$, $c_{i_s}=1$, $c_{j_s}=0$, and $i_s<j_s$. By Proposition \ref{prop:core_vcoding} and illustrated in Table \ref{tableau_cond}, it implies that
\begin{align*}
\exists ! k \in\lbrace 1,\dots,t\rbrace\mid i_s\equiv \pm \sigma(k) \pmod g ,\\
\exists ! l \in\lbrace 1,\dots,t\rbrace \setminus\lbrace k\rbrace \mid j_s\equiv \pm \sigma(l) \pmod g .
\end{align*}

By Theorem \ref{thm:gk}, we have
\begin{equation}\label{eq:indices}
\begin{array}{lll}
&i_s \geq &\begin{cases}
v_k \text{ if } i_s\equiv \sigma(k) \pmod g,\\
-v_k+g \text{ if } i_s\equiv -\sigma(k) \pmod g,
\end{cases} 
\\
\text{and}& &
\\
&j_s \leq &\begin{cases}
v_l-g \text{ if } i_s\equiv \sigma(l) \pmod g,\\
-v_l \text{ if } i_s\equiv -\sigma(l) \pmod g.
\end{cases}
\end{array}
\end{equation}

Ultimately, and as illustrated in Figure \ref{fig:wordddcore} in the red and blue shaded areas, let $s$ be a box of $\lambda$ in the first hook. Then $\varepsilon_s=1$ (respectively $\varepsilon_s=-1$) implies $j_s=v_1-g$ (respectively $i_s=-v_1+g$) and $j_s<v_1-g$. 
\end{proof}

From the proof above, note that the largest hook length in the boxes of the largest hook of $\lambda$ is $2(v_1-g)=2(n_{\sigma(1)}-1)g+2\sigma(1)$ and that it corresponds to the hook length of a box on the main diagonal $\Delta$.
From this remark, we derive by induction on the length of $\Delta$
\begin{equation}\label{lem:Delta}
\lbrace h_s\mid s\in\Delta\rbrace=\bigcup_{i=1}^{t}\lbrace 2kg+2\sigma(i)\mid  0\leq k\leq n_{\sigma(i)}-1\rbrace.
\end{equation}

We now prove Theorem \ref{thm:DDpair}.

\begin{proof}[\textbf{Proof of Theorem \ref{thm:DDpair}}]
First we show \eqref{eq:ddpoids}. By Theorem \ref{thm:gk}, the weight of $\omega$ is:
\begin{equation*}
\lvert \omega\rvert=\frac{g}{2}\sum_{i=0}^{2t+1} n_i^2+\sum_{i=0}^{2t+1} in_i=\frac{1}{2g}\left(\sum_{i=0}^{2t+1} (gn_i)^2+\sum_{i=0}^{2t+1} 2gn_i(i-t-1)\right).
\end{equation*}
From \eqref{eq:ndd2tplus2} and the fact $\lbrace r_i\pmod{g}\rbrace=\lbrace 1,\dots,t\rbrace$, we derive \eqref{eq:ddpoids}.

To prove \eqref{eq:thmdd}, we proceed by induction on the size of the Durfee square by removing the hook with largest length (which is the largest hook on the main diagonal). When $\omega=\emptyset$, its corresponding $V_{g,t}$-coding is $\mathbf{v}\in\bbbz^t$ whose components are $v_i=g-i$. Hence the vector $\mathbf{r}$ as defined in Theorem \ref{thm:DDpair} has components $r_i=g-i-t-1=t+1-i$. The left-hand side of \eqref{eq:thmdd} is empty, thus equals $1$ by convention. So is the right-hand side. Hence the initialization.

We can rewrite the left-hand side of \eqref{eq:thmdd} as
\begin{multline}\label{eq:intermed}
\prod_{s\in\omega}\frac{\tau(h_s-\varepsilon_s g)}{\tau(h_s)}=\textcolor{blue!30}{\prod_{\substack{s\in\cch_{1,+}(\omega)\\h_s\in\cch(\omega)}}\frac{\tau(h_s-g)}{\tau(h_s)}}\textcolor{red!30}{\prod_{\substack{s\in\cch_{1,-}(\omega)\\h_s\in\cch(\omega)}}\frac{\tau(h_s+ g)}{\tau(h_s)}}
\prod_{\substack{s\in\omega\setminus H_1\\h_s\in\cch(\omega)}}\frac{\tau(h_s-\varepsilon_s g)}{\tau(h_s)}.
\end{multline}

By Lemma \ref{lem:maxhook}, the first product on the right-hand side of \eqref{eq:intermed} writes as
\begin{multline}\label{eq:telescoprod}
\textcolor{red}{\prod_{x \in\I_{-v_1+g,v_1-g}^{g,+}}\frac{\tau(v_1-g-x-g)}{\tau(v_1-g-x)}}\textcolor{teal}{\prod_{x \in\I_{0,v_1-g}^{g,+}}\frac{\tau(v_1-g-x-g)}{\tau(v_1-g-x)}}\\
\times\textcolor{brown}{\prod_{x \in\I_{g/2,v_1-g}^{g,+}}\frac{\tau(v_1-g-x-g)}{\tau(v_1-g-x)}}\\\times\prod_{i=2}^{t}\prod_{x \in\I_{v_i,v_1-g}^{g,+}}\frac{\tau(v_1-g-x-g)}{\tau(v_1-g-x)}\prod_{x \in\I_{-v_i+g,v_1-g}^{g,+}}\frac{\tau(v_1-g-x-g)}{\tau(v_1-g-x)}.
\end{multline}

By definition of $g$-intervals, if $\I_{m,M}^{g,+}\neq \emptyset$, then 
$$x\in\I_{m,M}^{g,+}\setminus\lbrace m\rbrace\implies x-g\in \I_{m,M}^{g,+}.$$
 Moreover by Lemma \ref{lem:maxhook} there exists a unique box $s$ in the Ferrers diagram of $\omega$ such that $h_s=v_1-g-\max(\I_{m,v_1-g}^{g,+})<g$. Products in \eqref{eq:telescoprod} are therefore telescopic and by Lemma \ref{lem:telescop} it can be rewritten as
%
\begin{multline}\label{eq:last_plus}
\textcolor{red}{\left(\frac{\tau(2\sigma(1)-g(1+\mathds{1}_{\sigma(1)>g/2}))}{\tau(2(v_1-g))}\right)^{\mathds{1}_{g<v_1}}}\textcolor{teal}{\left(\frac{\tau(\sigma(1)-g)}{\tau(v_1-g)}\right)^{\mathds{1}_{g<v_1}}}\\\times\textcolor{brown}{\left(\frac{\tau(\sigma(1)-g/2-\mathds{1}_{\sigma(1)>g/2}g)}{\tau(v_1-\frac{3g}{2})}\right)^{\mathds{1}_{3g/2<v_1}}}
\prod_{i=2}^{t}\left(\frac{\tau(\sigma(1)-\sigma(i)-\mathds{1}_{\sigma(1)>\sigma(i)}g)}{\tau(v_1-v_i-g)}\right)^{\mathds{1}_{g<v_1-v_i}}\\
\times\prod_{i=2}^{t}\left(\frac{\tau(\sigma(1)+\sigma(i)-g(1+\mathds{1}_{\sigma(1)+\sigma(i)>g}))}{\tau(v_1+v_i-2g)}\right)^{\mathds{1}_{2g<v_1+v_i}}.
\end{multline}

To complete the rewriting of the first hook product $H_1$, it remains to proceed the same way for the second product on the right-hand side of \eqref{eq:intermed}. Note that the latter can be written as follows
\begin{multline}\label{eq:telescoprodmoins}
\textcolor{red}{\prod_{y \in\I_{-v_1+g,v_1-2g}^{g,-}}\frac{\tau(v_1+y)}{\tau(v_1-g+y)}}\textcolor{teal}{\prod_{y \in\I_{-v_1+g,-g}^{g,-}}\frac{\tau(v_1+y)}{\tau(v_1-g+y)}}\textcolor{brown}{\prod_{y \in\I_{-v_1+g,-g/2}^{g,-}}\frac{\tau(v_1+y)}{\tau(v_1-g+y)}}\\
\times\prod_{i=2}^{t}\prod_{y \in\I_{-v_1+g,v_i-g}^{g,-}}\frac{\tau(v_1+y)}{\tau(v_1-g+y)}\prod_{y \in\I_{-v_1+g,-v_i}^{g,-}}\frac{\tau(v_1+y)}{\tau(v_1-g+y)}.
\end{multline}

Again the products in \eqref{eq:telescoprodmoins} are telescopic and the same computations as before yield
\begin{multline}\label{eq:last_avant_moins}
\textcolor{red}{\left(\frac{\tau(2(v_1-g))}{\tau(2\sigma(1)-g\mathds{1}_{\sigma(1)>g/2})}\right)^{\mathds{1}_{3g/2<v_1}}}\textcolor{teal}{\left(\frac{\tau(v_1-g)}{\tau(\sigma(1))}\right)^{\mathds{1}_{2g<v_1}}}\\
\times\textcolor{brown}{\left(\frac{\tau(v_1-g/2)}{\tau(\sigma(1)-g/2+\mathds{1}_{\sigma(1)<g/2}g)}\right)^{\mathds{1}_{3g/2<v_1}}}
\prod_{i=2}^{t}\left(\frac{\tau(v_1+v_i-g)}{\tau(\sigma(1)+\sigma(i)-g\mathds{1}_{\sigma(1)+\sigma(i)>g})}\right)^{\mathds{1}_{2g<v_1+v_i}}\\
\times\prod_{i=2}^{t}\left(\frac{\tau(v_1-v_i)}{\tau(\sigma(1)-\sigma(i)+\mathds{1}_{\sigma(1)<\sigma(i)}g)}\right)^{\mathds{1}_{g<v_1-v_i}}.
\end{multline}

First note that $g<v_1<3g/2$ is equivalent to $g<gn_{\sigma(1)}+\sigma(1)<3g/2$. This implies $n_{\sigma(1)}=1$ and $\sigma(1)<g/2$. Hence $2(v_1-g)=2\sigma(1)-g\mathds{1}_{\sigma(1)>g/2}$ and $v_1-g/2=\sigma(1)-g/2+g\mathds{1}_{\sigma(1)<g/2}$. Similarly if $g<v_1<2g$, then $v_1-g=\sigma(1)$. We can then rewrite \eqref{eq:last_avant_moins} as
\begin{multline}\label{eq:last_moins}
\textcolor{red}{\left(\frac{\tau(2(v_1-g))}{\tau(2\sigma(1)-g\mathds{1}_{\sigma(1)>g/2})}\right)^{\mathds{1}_{g<v_1}}}\textcolor{teal}{\left(\frac{\tau(v_1-g)}{\tau(\sigma(1))}\right)^{\mathds{1}_{g<v_1}}}\\
\times\textcolor{brown}{\left(\frac{\tau(v_1-g/2)}{\tau(\sigma(1)-g/2+\mathds{1}_{\sigma(1)<g/2}g)}\right)^{\mathds{1}_{g<v_1}}}\prod_{i=2}^{t}\left(\frac{\tau(v_1+v_i-g)}{\tau(\sigma(1)+\sigma(i)-g\mathds{1}_{\sigma(1)+\sigma(i)>g})}\right)^{\mathds{1}_{2g<v_1+v_i}}\\\times\prod_{i=2}^{t}\left(\frac{\tau(v_1-v_i)}{\tau(\sigma(1)-\sigma(i)+\mathds{1}_{\sigma(1)<\sigma(i)}g)}\right)^{\mathds{1}_{g<v_1-v_i}}.
\end{multline}

Hence using \eqref{eq:last_plus} and \eqref{eq:last_moins}, the red terms $\tau(2(v_1-g))$ cancel and \eqref{eq:intermed} can be rewritten as
\begin{multline}\label{eq:last_eq_ind}
\prod_{s\in\omega}\frac{\tau(h_s-\varepsilon_s g)}{\tau(h_s)}=\textcolor{red}{\left(\frac{\tau(2\sigma(1)-g(1+\mathds{1}_{\sigma(1)>g/2}))}{\tau(2\sigma(1)-g\mathds{1}_{\sigma(1)>g/2})}\right)^{\mathds{1}_{g<v_1}}}
\\\times \textcolor{teal}{\left(\frac{\tau(\sigma(1)-g)}{\tau(\sigma(1))}\right)^{\mathds{1}_{g<v_1}}}
\textcolor{brown}{\left(\frac{\tau(\sigma(1)-g/2-\mathds{1}_{\sigma(1)>g/2}g)}{\tau(\sigma(1)-g/2+\mathds{1}_{\sigma(1)<g/2}g)}\right)^{\mathds{1}_{g<v_1}}}
\\ 
\times\prod_{i=2}^{t}\left(\frac{\tau(\sigma(1)+\sigma(i)-g(1+\mathds{1}_{\sigma(1)+\sigma(i)>g}))}{\tau(\sigma(1)+\sigma(i)-g\mathds{1}_{\sigma(1)+\sigma(i)>g})}\right)^{\mathds{1}_{2g<v_1+v_i}}\\
\times\prod_{i=2}^{t}\left(\frac{\tau(\sigma(1)-\sigma(i)-\mathds{1}_{\sigma(1)>\sigma(i)}g)}{\tau(\sigma(1)-\sigma(i)+\mathds{1}_{\sigma(1)<\sigma(i)}g)}\right)^{\mathds{1}_{g<v_1-v_i}}\\
\times\textcolor{brown}{\left(\frac{\tau(v_1-\frac{g}{2})}{\tau(v_1-\frac{g}{2}-g)}\right)^{\mathds{1}_{g<v_1}}}\prod_{i=2}^{t}\left(\frac{\tau(v_1-v_i)}{\tau(v_1-v_i-g)}\right)^{\mathds{1}_{g<v_1-v_i}}\left(\frac{\tau(v_1+v_i-g)}{\tau(v_1+v_i-2g)}\right)^{\mathds{1}_{2g<v_1+v_i}}
\\ 
\times\prod_{\substack{s\in\omega\setminus H_1\\h_s\in\cch(\omega)}}\frac{\tau(h_s-\varepsilon_s g)}{\tau(h_s)}.
\end{multline}

To complete the induction step, we need to investigate the second largest hook of $\omega$, denoted $H_2$. There are two cases according to the coordinates of the box on the main diagonal of $H_2$ (that of largest hook length), which has coordinates
\begin{enumerate}
\item (first case) either $(-v_1+2g,v_1-2g)$, then for any $i\in\lbrace 2,\dots, t\rbrace$, $v_1-v_i>g$,

\item (second case) or $(-v_2+g,v_2-g)$, then in particular one has $v_1-v_2<g$. To study this case, let $l$ be the integer such that $l:=\max\lbrace i\in\lbrace 2,\dots, t\rbrace \mid v_1-v_i<g\rbrace$. 
\end{enumerate}

\noindent We handle the first case. Let $\omega\setminus H_1$ be the partition $\omega$ deprived from its largest hook. As $\cch(\omega\setminus H_1)\subset \cch(\omega)$, $\omega\setminus H_1$ still belongs to $\ccdd_{(g)}$. In this case, $r'_1=v_1-g-t-1$ and $r'_i=v_i-t-1$ for $2\leq i\leq t$. It also implies by \eqref{eq:vcoding} that $v_1>3g/2$. Moreover for all $2\leq i\leq t$, $v_1-v_i>g$ and $2g<v_1+v_i$. Hence all of the boolean conditions in \eqref{eq:last_eq_ind} are satisfied. 

Now we examine the residual terms depending on $\sigma$ in the product \eqref{eq:last_eq_ind}. Set $(i,j)\in \lbrace 1,\dots, t\rbrace$, and note that
\[\sigma(1)-\sigma(i)+\mathds{1}_{\sigma(1)<\sigma(i)}g= \sigma(1)-\sigma(j)+\mathds{1}_{\sigma(1)<\sigma(j)}g \implies \sigma(i) \equiv\sigma(j)\pmod g.\] 
\noindent By definition of $\sigma$ this cannot be true, so we have $\sigma(1)-\sigma(i)+\mathds{1}_{\sigma(1)<\sigma(i)}g\neq \sigma(1)-\sigma(j)+\mathds{1}_{\sigma(1)<\sigma(j)}g$, 
hence:
\begin{multline}\label{eq:residu}
\lbrace 1,\dots,g-1\rbrace=\lbrace \sigma(1), 2\sigma(1)-g\mathds{1}_{\sigma(1)>g/2},\sigma(1)-g/2+\mathds{1}_{\sigma(1)<g/2}g)\rbrace\\\bigcup_{i=2}^{t}\lbrace \sigma(1)-\sigma(i)+\mathds{1}_{\sigma(1)<\sigma(i)}g, \sigma(1)+\sigma(i)-g\mathds{1}_{\sigma(1)+\sigma(i)>g}\rbrace,
\end{multline}
and
\begin{multline}\label{eq:residuneg}
\lbrace -g+1,\dots,-1\rbrace=\lbrace \sigma(1)-g, 2\sigma(1)-g(1+\mathds{1}_{\sigma(1)>g/2})),\sigma(1)-g/2-\mathds{1}_{\sigma(1)>g/2}g)\rbrace\\\bigcup_{i=2}^{t}\lbrace \sigma(1)-\sigma(i)-\mathds{1}_{\sigma(1)<\sigma(i)}g, \sigma(1)+\sigma(i)-g(1+\mathds{1}_{\sigma(1)+\sigma(i)>g})\rbrace.
\end{multline}

Hence by \eqref{eq:residu} and \eqref{eq:residuneg}, the products depending on $\sigma$ in \eqref{eq:last_eq_ind} can be rewritten as
\begin{equation}\label{eq:prodtau}
\prod_{i=1}^{g-1}\frac{\tau(-i)}{\tau(i)}.
\end{equation}
By the induction property, we have:
\begin{multline}\label{eq:inductionhypfirst}
\prod_{\substack{s\in\omega\setminus H_1\\h_s\in\cch(\omega)}}\frac{\tau(h_s-\varepsilon_s g)}{\tau(h_s)}= \prod_{i=1}^{g-1}\left(\frac{\tau(-i)}{\tau(i)}\right)^{\alpha_i(\omega\setminus H_1)}\\
\times\frac{\tau(v_1-3g/2)}{\tau(1)} \prod_{j=1}^{t} \frac{\tau(v_1-g-v_j)}{\tau(j-1)}
\prod_{2\leq j\leq t}\frac{\tau(v_1+v_j-2g)}{\tau(1+j)}
\\
\times\prod_{i=2}^{t}
\frac{\tau(v_i-g/2)}{\tau(i)}\prod_{2\leq i<j\leq t}  \frac{\tau(v_i+v_j-g)}{\tau(i+j)} \prod_{2\leq i<j\leq t} \frac{\tau(v_i-v_j)}{\tau(j-i)}.
\end{multline}

Plugging \eqref{eq:prodtau} and \eqref{eq:inductionhypfirst} into \eqref{eq:last_eq_ind} concludes the proof of this first case.

\noindent 
The second case is yet more subtle. To handle the second point, let us first remark that $\omega\setminus H_1\in \ccdd_{(g)}$ and let $\mathbf{v'}$ be its associated $V_{g,t}$-coding.
 Let $l$ be the integer such that, if $g<v_1<3g/2$, then $v'_l=g-\sigma(1)=-v_1+2g$ and $\mathbf{v'}=(v_2,\dots,v_l,-v_1+2g,v_{l+1},\dots,v_{t})$ and $\mathbf{v'}=(v_2,\dots,v_l,v_1-g,v_{l+1},\dots,v_{t})$ otherwise.
We start with the case $v_1>3g/2$. 

 By the induction property, we have:
\begin{multline}\label{eq:inductionhyp}
\prod_{\substack{s\in\omega\setminus H_1\\h_s\in\cch(\omega)}}\frac{\tau(h_s-\varepsilon_s g)}{\tau(h_s)}= \prod_{i=1}^{g-1}\left(\frac{\tau(-i)}{\tau(i)}\right)^{\alpha_i(\omega\setminus H_1)}\\
\times\frac{\tau(v_1-3g/2)}{\tau(1)}\prod_{j=2}^l\frac{\tau(v_j+g-v_1)}{\tau(j-1)} \prod_{j=l+1}^{t} \frac{\tau(v_1-g-v_j)}{\tau(j-1)}
\prod_{2\leq j\leq t}\frac{\tau(v_1+v_j-2g)}{\tau(1+j)}
\\
\times\prod_{i=2}^{t}
\frac{\tau(v_i-g/2)}{\tau(i)}\prod_{2\leq i<j\leq t}  \frac{\tau(v_i+v_j-g)}{\tau(g-i-j)} \prod_{2\leq i<j\leq t} \frac{\tau(v_i-v_j)}{\tau(j-i)}.
\end{multline}

Hence \eqref{eq:last_eq_ind} and \eqref{eq:inductionhyp} lead to:
\begin{multline}\label{eq:last_hope}
\prod_{s\in\omega}\frac{\tau(h_s-\varepsilon_s g)}{\tau(h_s)}=\textcolor{red}{\left(\frac{\tau(2\sigma(1)-g(1+\mathds{1}_{\sigma(1)>g/2}))}{\tau(2\sigma(1)-g\mathds{1}_{\sigma(1)>g/2})}\right)^{\mathds{1}_{g<v_1}}}
\\\times \textcolor{teal}{\left(\frac{\tau(\sigma(1)-g)}{\tau(\sigma(1))}\right)^{\mathds{1}_{g<v_1}}}
\textcolor{brown}{\left(\frac{\tau(\sigma(1)-g/2-\mathds{1}_{\sigma(1)>g/2}g)}{\tau(\sigma(1)-g/2+\mathds{1}_{\sigma(1)<g/2}g)}\right)^{\mathds{1}_{g<v_1}}}
\\ \times
\prod_{i=2}^{t}\left(\frac{\tau(\sigma(1)+\sigma(i)-g(1+\mathds{1}_{\sigma(1)+\sigma(i)>g}))}{\tau(\sigma(1)+\sigma(i)-g\mathds{1}_{\sigma(1)+\sigma(i)>g})}\right)^{\mathds{1}_{2g<v_1+v_i}}\\
\times\prod_{i=2}^{t}\left(\frac{\tau(\sigma(1)-\sigma(i)-\mathds{1}_{\sigma(1)>\sigma(i)}g)}{\tau(\sigma(1)-\sigma(i)+\mathds{1}_{\sigma(1)<\sigma(i)}g)}\right)^{\mathds{1}_{g<v_1-v_i}}\\
\times\prod_{j=2}^l\frac{\tau(v_j+g-v_1)}{\tau(v_1-v_j)}\\ \times
\prod_{i=1}^{g-1}\left(\frac{\tau(-i)}{\tau(i)}\right)^{\alpha_i(\omega\setminus H_1)}\prod_{i=1}^{t}\frac{\tau(v_i-g/2)}{\tau(i)}\prod_{1\leq i<j\leq t} \frac{\tau(v_i-v_j)}{\tau(j-i)} \frac{\tau(v_i+v_j-g)}{\tau(i+j)}.
\end{multline}

Moreover we have:
\begin{align*}
\prod_{j=2}^l\frac{\tau(v_j+g-v_1)}{\tau(v_1-v_j)}&=\prod_{j=2}^l\frac{\tau(\sigma(j)+g(1-\mathds{1}_{\sigma(1)<\sigma(j)})-\sigma(1))}{\tau(\sigma(1)-\sigma(j)+\mathds{1}_{\sigma(1)<\sigma(j)}g)}\\
&=\prod_{j=2}^l\frac{\tau(\sigma(j)+\mathds{1}_{\sigma(1)>\sigma(j)}g-\sigma(1))}{\tau(\sigma(1)-\sigma(j)+\mathds{1}_{\sigma(1)<\sigma(j)}g)}.
\end{align*}


Let us introduce
\begin{multline*}
A:=\textcolor{red}{\left(\frac{\tau(2\sigma(1)-g(1+\mathds{1}_{\sigma(1)>g/2}))}{\tau(2\sigma(1)-g\mathds{1}_{\sigma(1)>g/2})}\right)^{\mathds{1}_{g<v_1}}} \textcolor{brown}{\left(\frac{\tau(\sigma(1)-g/2-\mathds{1}_{\sigma(1)>g/2}g)}{\tau(\sigma(1)-g/2+\mathds{1}_{\sigma(1)<g/2}g)}\right)^{\mathds{1}_{g<v_1}}}
\\
\times \textcolor{teal}{\left(\frac{\tau(\sigma(1)-g)}{\tau(\sigma(1))}\right)^{\mathds{1}_{g<v_1}}}
 \prod_{i=2}^{t}\left(\frac{\tau(\sigma(1)+\sigma(i)-g(1+\mathds{1}_{\sigma(1)+\sigma(i)>g}))}{\tau(\sigma(1)+\sigma(i)-g\mathds{1}_{\sigma(1)+\sigma(i)>g})}\right)^{\mathds{1}_{2g<v_1+v_i}}\\
 \times \prod_{i=2}^{t} \left(\frac{\tau(\sigma(1)-\sigma(i)-\mathds{1}_{\sigma(1)>\sigma(i)}g)}{\tau(\sigma(1)-\sigma(i)+\mathds{1}_{\sigma(1)<\sigma(i)}g)}\right)^{\mathds{1}_{g<v_1-v_i}} \prod_{j=2}^l\tau(v_j+g-v_1).
\end{multline*}

By \eqref{eq:residu}, we can rewrite $A$ as
\begin{multline}\label{eq:last_step_induc}
\frac{\prod_{i=1}^{g-1}\tau(-i)}{\prod_{i=1}^{g-1}\tau(i)} \prod_{j=2}^l\frac{\tau(v_1-v_j)}{\tau(v_1-v_j-g)} \prod_{j=2}^l\tau(v_j+g-v_1) \\=\prod_{i=1}^{g-1}\frac{\tau(-i)}{\tau(i)} \prod_{j=2}^l\frac{\tau(-(v_1-v_j-g))}{\tau(v_1-v_j-g)} \prod_{j=2}^l\tau(v_1-v_j)\\
=\prod_{i=1}^{g-1}\left(\frac{\tau(-i)}{\tau(i)}\right)^{\alpha_i(H_1)}\prod_{j=2}^l\tau(v_1-v_j),
\end{multline}
where $\alpha_i(H_1)=\begin{cases} 1 \text{ if } g-i \in H_{1,+},\\
0 \text{ otherwise.}
\end{cases}$

Plugging \eqref{eq:last_hope} in \eqref{eq:last_step_induc} completes the induction.

The case $g<v_1<3g/2$ is settled by the induction property and the remarks used to obtain \eqref{eq:last_eq_ind}.

The proof of \eqref{eq:thmddprime} follows the exact same steps except for the first terms of each product of the right-hand sides of \eqref{eq:last_avant_moins} and \eqref{eq:last_plus} where the products in red and in green are now:
\begin{multline}\label{eq:interprime}
\textcolor{red}{\left(\frac{\tau(2\sigma(1)-g(1+\mathds{1}_{\sigma(1)>g/2}))}{\tau(2(v_1-g)-g)}\right)^{\mathds{1}_{3/2g<v_1}}}
 \textcolor{red}{\left(\frac{\tau(2(v_1-g)+g)}{\tau(2\sigma(1)-g\mathds{1}_{\sigma(1)>g/2})}\right)^{\mathds{1}_{g<v_1}}}\\
 \times  \textcolor{teal}{\left(\frac{\tau(v_1)}{\tau(\sigma(1))}\right)^{\mathds{1}_{g<v_1}}}\textcolor{teal}{\left(\frac{\tau(\sigma(1)-g)}{\tau(v_1-2g)}\right)^{\mathds{1}_{2g<v_1}}}.
\end{multline}

To handle the boolean conditions, we use once again that if $g<v_1<3g/2$ then $2\sigma(1)-g(1+\mathds{1}_{\sigma(1)>g/2})=2v_1-3g$ and if $g<v_1<2g$ then $\sigma(1)-g=v_1-2g$ which concludes the proof.
\end{proof}

\subsubsection{Type $B^{(1)}_{t}$ and $\ccdd_{2t-1}^{'1}$}

As mentioned in Section \ref{chap3:quad}, interpreting the geometric conditions on lattices in Macdonald identities for types $B^{(1)}$, $A^{(2)}_{odd}$ and $D^{(1)}$ as $g$-cores of $\ccdd$ or $\ccsc$ is not possible but these can be interpreted as a subset of doubled distinct partitions with a specific $g$-quotient. In this subsection, we set $g=2t-1$.

If we take $\lambda\in\ccdd_{g}^{'1}$ such that $\nu^{(2t-2)}$ has its corresponding word equal to \eqref{eq:nu0typeb}, and $\psi(\lambda)=(c_k)_{k\in\bbbz}$, note that the set of indices congruent to $2t-2\pmod{g}$ corresponding to a letter ``$0$'' is $\lbrace kg-1 \mid k\in\bbbz, k\leq -m_1 \text{ or } 0\leq k\leq m_1-1\rbrace$. Using \eqref{eq:motddprime} and the same considerations as for the proof of Lemma \ref{lem:maxhook}, we have the following similar result.
\begin{lm}\label{lem:maxhookddprimeimpair}
Let $t$ be a positive integer and set $g=2t-1$. Set $\lambda\in \ccdd_{g}^{'1}$ and $\mathbf{v}\in\bbbz^{t}$ its associated $V_{g,t}$-coding. Let $i_0\in\lbrace 1,\dots,t\rbrace$ be such that $v_{i_0}=m_1g-1$.
Then the largest hook of $\lambda$, denoted by $H_1$, corresponds to the collection of boxes of indices 
\begin{align*}
H_1=\cch_{1,+}\cup\cch_{1,-}
\end{align*}
where $\cch_{1,+}$ (respectively $\cch_{1,-}$) denotes the set of indices of boxes $s$ in the first hook such that $\varepsilon_s=1$ (respectively $\varepsilon_s=-1$) and
\begin{multline*}
\cch_{1,+}= \displaystyle\bigcup_{\substack{i=1\\i\neq i_0}}^{t}\left(\I_{v_i,v_1-g}^{1,g,+}\cup \I_{-v_i+g-2,v_1-g}^{1,g,+}\right)\cup\textcolor{teal}{\I_{v_{i_0},v_1-g}^{1,g,+}\cup \left(\I_{-v_{i_0}+g-2,v_1-g}^{g,+}\setminus \I_{-1,v_1-g}^{1,g,+}\right)} ,
 \end{multline*}
 \begin{multline*}
\cch_{1,-}= \displaystyle\bigcup_{\substack{i=1\\i\neq i_0}}^{t}\left(\I_{-v_1+g-2,v_i-(1+\mathds{1}_{i=1})g}^{1,g,-}\cup \I_{-v_1+g-2,-v_i-2}^{1,g,-}\right)\cup\\ \textcolor{teal}{\I_{-v_1+g-2,-v_{i_0}-2}^{1,g,-}\cup\left(\I_{-v_1+g-2,v_{i_0}-(1+\mathds{1}_{i_0=1})g}^{1,g,-}\setminus \I_{-v_1+g-2,-g-1}^{1,g,-}\right)} .
\end{multline*}
\end{lm}

Note that we avoid the discussion on whether $i_0=1$ by remarking that $\I_{v_1,v_1-g}^{g,+}=\emptyset$.

As for \eqref{lem:Delta}, we derive similarly that for any partition in $\ccdd_{2t}^{'1}, \ccdd_{2t-1}^{'1}$ or $\ccdd_{2t-2}^{'2}$ where $g$ is the index of the corresponding set, we have:
\begin{equation}\label{lem:Deltaddprime}
\lbrace h_s\mid s\in\Delta\rbrace=\bigcup_{i=1}^{t}\lbrace 2kg+2(\sigma(i)+1)\mid  0\leq k\leq n_{\sigma(i)}-1\rbrace.
\end{equation}

The issues arising when using \eqref{eq:thmdd} is that if the $g$-quotient of a partition is not empty, we have at least one box $s$ such that $h_s=g$. As when we derive $q$-Nekrasov--Okounkov type formulas, we specialize $\tau(x)=1-q^x$ or $\tau=\Id$, the product can be equal to $0$. Therefore one needs the following result for type $B^{(1)}_{t}$.

\begin{thm}\label{thm:DDtquotimpair}
Set $t$ a positive integer and $g=2t-1$. Let $\lambda\in \ccdd_{g}^{'1}$ and $\mathbf{v}\in\bbbz^{t}$ the $V_{g,t}$-coding associated to $\lambda$, and set $r_i=v_i-g/2+1$ for any $i\in\lbrace1,\dots,t\rbrace$. Then we have
$$\lvert \lambda\rvert = \frac{1}{g}\sum_{i=1}^{t} r_i^2-\frac{(g+2)(\lfloor g/2\rfloor +1)}{12}.$$

Moreover setting $\alpha_i(\lambda)$ as in Theorem \ref{thm:DDpair}, for any function $\tau:\bbbz\rightarrow F^{\times}$, where $F$ is a field, we also have
\begin{equation}\label{eq:thmddprimepairtimpairquot}
\prod_{s\in\lambda}\frac{\tau(h_s-\varepsilon_s g)}{\tau(h_s)}=\prod_{i=1}^{g-1}\left(\frac{\tau(-i)}{\tau(i)}\right)^{\alpha_i(\lambda)}
\prod_{1\leq i<j\leq t} \frac{\tau(r_i-r_j)}{\tau(j-i)} \frac{\tau(r_i+r_j)}{\tau(g+2-i-j)}.
\end{equation}

\end{thm}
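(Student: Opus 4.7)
The plan is to adapt the induction strategy of Theorem \ref{thm:DDpair} to this new setting where, unlike for a pure $g$-core, the partition has a nonempty $g$-quotient component $\nu^{(2t-2)}$ encoded by the positive integer $m_1$. The $V_{g,t}$-coding now includes a distinguished index $i_0$ with $v_{i_0} = m_1 g - 1$, while the remaining $v_i$'s correspond to the $V_{g,t-1}$-coding of $\core_g(\lambda)$. The main structural difference from Theorem \ref{thm:DDpair} is that the hook decomposition of Lemma \ref{lem:maxhookddprimeimpair} contains \emph{shifted intervals} (with endpoints $-v_i + g - 2$ and $v_i + 2$ replacing $-v_i + g$ and $v_i$), and this is precisely what will produce the denominators $\tau(g+2-i-j)$ in the final formula.

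For the weight identity, I would start from expression \eqref{eq:gkddnuimpair}, which gives $|\lambda|$ in terms of $m_1$ and the $n_i$'s attached to $\core_g(\lambda)$. Substituting the relations $v_{i_0} = m_1 g - 1$ and the bijection between the remaining $v_i$'s and the $n_i$'s (as in Remark \ref{vcodingtypeA} and Proposition \ref{prop:core_vcoding}), then expanding $r_i^2 = (v_i - g/2 + 1)^2$ and summing, one recovers $|\lambda|$ plus the universal constant $(g+2)(\lfloor g/2\rfloor+1)/12$ by a direct algebraic verification.

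For the product identity \eqref{eq:thmddprimepairtimpairquot}, the proof proceeds by induction on $v_1$, i.e.\ by removing the largest hook $H_1$ of $\lambda$. Writing
\[
\prod_{s\in\lambda}\frac{\tau(h_s-\varepsilon_s g)}{\tau(h_s)}
=\prod_{s\in H_1}\frac{\tau(h_s-\varepsilon_s g)}{\tau(h_s)}\cdot\prod_{s\in\lambda\setminus H_1}\frac{\tau(h_s-\varepsilon_s g)}{\tau(h_s)},
\]
one applies Lemma \ref{lem:maxhookddprimeimpair} to expand $H_1$ as a union of $g$-intervals and uses Lemma \ref{lem:telescop} to collapse each telescopic product over those intervals to an endpoint ratio, in the same spirit as equations \eqref{eq:telescoprod}--\eqref{eq:last_moins}. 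The induction hypothesis is then applied to $\lambda\setminus H_1\in\ccdd_g^{'1}$, after verifying that removing $H_1$ leaves either $\nu^{(2t-2)}$ unchanged (when the next-largest hook is on the main diagonal of $\core_g$) or modifies $m_1$ by exactly one (when the next-largest hook passes through the $i_0$-th runner); both cases must be tracked as in the proof of Theorem \ref{thm:DDpair}.

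The main obstacle will be the correct bookkeeping of the index $i_0$: the shifted interval endpoints $-v_{i_0}+g-2$ and $v_{i_0}+2$ produce ``exceptional'' telescopic contributions, and one must verify that the collection of residues $\sigma(1)\pm\sigma(i)$ modulo $g$ (together with the $i_0$-adjusted residues) still partitions $\{1,\dots,g-1\}$ and $\{-g+1,\dots,-1\}$, yielding the analogue of \eqref{eq:residu}--\eqref{eq:residuneg}. Once this combinatorial identity on residues is established, the product $\prod_{i=1}^{g-1}(\tau(-i)/\tau(i))^{\alpha_i(H_1)}$ emerges as in \eqref{eq:last_step_induc}, and combining with the induction hypothesis for $\lambda\setminus H_1$ concludes the proof. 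The shift by $+1$ in the definition $r_i = v_i - g/2 + 1$ (compared to $r_i = v_i - g/2$ in Theorem \ref{thm:DDpair}) is precisely what absorbs the $-2$ offsets coming from the $\nu^{(2t-2)}$ component into the clean symmetric form of the right-hand side.
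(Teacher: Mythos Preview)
Your proposal is essentially correct and follows the same approach as the paper: induction on the largest hook, decomposition via Lemma~\ref{lem:maxhookddprimeimpair}, telescoping via Lemma~\ref{lem:telescop}, and a residue-partition argument analogous to \eqref{eq:residu}--\eqref{eq:residuneg}.

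Two points of comparison are worth noting. First, the paper emphasizes a structural reason for the absence of the factor $\prod_i \tau(r_i)/\tau(i)$ that appeared in \eqref{eq:thmdd}: since $g=2t-1$ is odd, there is no residue class $g/2$, so the ``brown'' intervals $\I_{g/2,v_1-g}^{g,+}$ and $\I_{-v_1+g,-g/2}^{g,-}$ from \eqref{eq:telescoprod}--\eqref{eq:telescoprodmoins} simply do not exist here. You allude to this only implicitly through the use of Lemma~\ref{lem:maxhookddprimeimpair}. Second, the paper's treatment of the distinguished index $i_0$ is organized slightly differently from your case split: rather than tracking whether $m_1$ changes, the paper separates $i_0=1$ from $i_0>1$ and shows directly that the contributions from hooks with both word-indices congruent to $g-1\pmod g$ (the only residue class carrying nontrivial quotient) telescope to
\[
\frac{\tau(2v_{i_0}-2g+2)}{\tau(g)}\cdot\frac{\tau(g)}{\tau(2v_{i_0}-2g+2)}=1,
\]
so these hooks in $\cch_g(\lambda)$ are harmless. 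Your sketch gestures at ``exceptional telescopic contributions'' from the $i_0$ intervals, which is the right intuition, but the explicit cancellation above is the cleanest way to see that the nonempty quotient does not obstruct the argument.
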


\begin{proof}
The proof is exactly the same as the one of Theorem \ref{thm:DDpair} but without the products in brown in \eqref{eq:telescoprod} and \eqref{eq:telescoprodmoins}. These products would have been over the sets $\I_{g/2-1,v_1-g}^{g,+}$  and $\I_{-v_1+g-2,-v_{i_0}-2}^{g,-}$ in this case. Note that the latter products are the ones which yield the terms $\tau(v_i-g/2)/\tau(i)$ in \eqref{eq:thmdd}. Hence there are no terms $\tau(r_i)/\tau(i)$ in \eqref{eq:thmddprimepairtimpairquot}. The other difference with \eqref{eq:thmdd} is that $\Phi_{2t-1}(\lambda)=(\omega,\emptyset,\dots,\emptyset,\nu^{(2t-2)})\rbrace$ where $\nu^{(2t-2)}$ has a corresponding word verifying \eqref{eq:nu0typeb}. Then there exists $i_0\in\lbrace 1,\dots,t\rbrace$ such that $\sigma(i_0)=g-1$ and $v_{i_0}=m_1g-1$. The product of hook lengths belonging to $\cch_{2t-1}(\lambda)$ corresponding to pairs of indices $(i,j)$ such that $i<j$ and $i\equiv j \pmod{g}\equiv g-1\pmod{g}$ are telescopic as well as the other products over hook lengths corresponding to pairs of indices $(i,j)$ with $i<j$ such that $i$ or $j$ is congruent to $g-1$ modulo $g$. Using Lemma \ref{lem:maxhookddprimeimpair}, the products give 
\begin{equation*}
\frac{\tau(2v_{i_0}-2g+2)}{\tau(g)}\frac{\tau(g)}{\tau(2v_{i_0}-2g+2)}=1.
\end{equation*}
If $i_0>1$, the products in green in \eqref{eq:interprime} are now
\begin{multline*}
\textcolor{teal}{\left(\frac{\tau(v_1+v_{i_0}-g+2)}{\tau(v_1+1-g)}\right)^{\mathds{1}_{g<v_1}}\left(\frac{\tau(v_1-v_{i_0})}{\tau(\sigma(1)+1)}\right)^{\mathds{1}_{g<v_1-v_{i_0}}}}\\
\times\textcolor{teal}{\left(\frac{\tau(\sigma(1)+1-g)}{\tau(v_1-v_{i_0}-g)}\right)^{\mathds{1}_{g<v_1-v_{i_0}}}\left(\frac{\tau(v_i+1-g)}{\tau(v_1+v_{i_0}-2g+2)}\right)^{\mathds{1}_{0<v_1}}}.
\end{multline*}
The case $i_0=1$ is handled in the same way and the proof is completed with the same computations as in the previous theorems.
\end{proof}

\subsubsection{Type $A^{(2)}_{2t-1}$ and $\ccdd_{2t}^{'1}$}
If we take $\lambda\in\ccdd_{2t}^{'1}$ such that $\nu^{(2t-1)}$ has its corresponding word equal to \eqref{eq:nu0typeb}, and $\psi(\lambda)=(c_k)_{k\in\bbbz}$, note that the set of indices congruent to $2t-1\pmod{g}$ corresponding to a letter ``$0$'' is $\lbrace kg-1 \mid k\in\bbbz, k\leq -m_1 \text{ or } 0\leq k\leq m_1-1\rbrace$. Using \eqref{eq:motddprime} and the same considerations as for the proof of Lemma \ref{lem:maxhook}, we have the following similar result.
\begin{lm}\label{lem:maxhookddprime}
Let $t$ be a positive integer and set $g=2t$. Set $\lambda\in \ccdd_{g}^{'1}$ and $\mathbf{v}\in\bbbz^{t}$ its associated $V_{g,t}$-coding. Let $i_0\in\lbrace 1,\dots,t\rbrace$ be such that $v_{i_0}=m_1g-1$.
Then the largest hook of $\lambda$, denoted by $H_1$, corresponds to the collection of boxes of indices 
\begin{align*}
H_1=\cch_{1,+}\cup\cch_{1,-}
\end{align*}
where $\cch_{1,+}$ (respectively $\cch_{1,-}$) denotes the set of indices of boxes $s$ in the first hook such that $\varepsilon_s=1$ (respectively $\varepsilon_s=-1$) and
\begin{multline*}
\cch_{1,+}= \textcolor{teal}{\I_{v_{i_0},v_1-g}^{1,g,+}\cup \left(\I_{-v_{i_0}+g-2,v_1-g}^{1,g,+}\setminus \I_{-1,v_1-g}^{1,g,+}\right)}\\
\bigcup_{\substack{i=1\\i\neq i_0}}^{t}\left(\I_{v_i,v_1-g}^{1,g,+}\cup \I_{-v_i+g-2,v_1-g}^{1,g,+}\right)
\cup\textcolor{brown}{\I_{g/2-1,v_1-g}^{1,g,+}}
 ,
 \end{multline*}
 \begin{multline*}
\cch_{1,-}=   \displaystyle\bigcup_{\substack{i=1\\i\neq i_0}}^{t}\left(\I_{-v_1+g-2,v_i-(1+\mathds{1}_{i=1})g}^{1,g,-}\cup \I_{-v_1+g-2,-v_i-2}^{1,g,-}\right)\\\cup\textcolor{teal}{\I_{-v_1+g-2,-v_{i_0}-2}^{1,g,-}\cup\left(\I_{-v_1+g-2,v_{i_0}-(1+\mathds{1}_{i_0=1})g}^{1,g,-}\setminus \I_{-v_1+g-2,-g-1}^{1,g,-}\right)}\cup\textcolor{brown}{\I_{-v_1+g-2, -g/2-1}^{1,g,-}}.
\end{multline*}
\end{lm}

Following the steps of the proof of Theorem \ref{thm:DDpair}, using Lemma \ref{lem:maxhookddprime}, we derive the following result which will be necessary to derive a Nekrasov--Okounkov formula for type $A^{(2)}_{2t-1}$.
\begin{thm}\label{thm:DDtquotpair}
Set $t$ a positive integer and $g=2t$. Set $\lambda\in \ccdd_{g}^{'1}$ and $\mathbf{v}\in\bbbz^{t}$ the $V_{g,t}$-coding associated to $\lambda$, and set $r_i=v_i-g/2+1$ for any $i\in\lbrace1,\dots,t\rbrace$. Then we have
$$\lvert \lambda\rvert = \frac{1}{g}\sum_{i=1}^{t} r_i^2-\frac{(g+1)(g/2 +1)}{12}.$$

Moreover setting $\alpha_i(\lambda)$ as in Theorem \ref{thm:DDpair}, for any function $\tau:\bbbz\rightarrow F^{\times}$, where $F$ is a field, we also have
\begin{multline}\label{eq:thmddprimepairtquot}
\prod_{s\in\lambda}\frac{\tau(h_s-\varepsilon_s g)}{\tau(h_s)}=\prod_{i=1}^{g-1}\left(\frac{\tau(-i)}{\tau(i)}\right)^{\alpha_i(\lambda)}
\prod_{i=1}^{t}\frac{\tau(r_i)}{\tau(i)}\prod_{1\leq i<j\leq t} \frac{\tau(r_i-r_j)}{\tau(j-i)} \frac{\tau(r_i+r_j)}{\tau(g+2-i-j)}.
\end{multline}

\end{thm}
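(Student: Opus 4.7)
The plan is to mirror the inductive argument used to prove Theorem \ref{thm:DDpair}, while incorporating the $\tau(g)$-cancellation trick introduced in the proof of Theorem \ref{thm:DDtquotimpair}. The difference between the three proofs is the parity of $g$ and whether the $g$-quotient of $\lambda$ is empty; here $g=2t$ is even and the quotient has exactly one nonempty component (the rectangle encoded by \eqref{eq:nu0typeb}), so both features must be accommodated simultaneously. A systematic notation is to fix the distinguished index $i_0\in\{1,\dots,t\}$ satisfying $\sigma(i_0)=g-1$, i.e. $v_{i_0}=m_1 g-1$; throughout the proof the role of $i_0$ differs from that of the other indices.

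For the size identity, I would start from \eqref{eq:gkddnupairbvtquot}, which expresses $\lvert\lambda\rvert$ as a quadratic form in $(n_0,\dots,n_{t-2},m_1)$. Using Proposition \ref{prop:core_vcoding} to translate those parameters into the $V_{g,t}$-coding $\mathbf{v}$, setting $r_i=v_i-g/2+1$ and completing the square yields the constant $-(g+1)(g/2+1)/12$ after collecting $\sum_{i=1}^t(g/2-i-1)^2/g$. This is a direct but slightly lengthy calculation.

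Next I would prove \eqref{eq:thmddprimepairtquot} by induction on $v_1$. Stripping the largest hook $H_1$ produces
\[
\prod_{s\in\lambda}\frac{\tau(h_s-\varepsilon_s g)}{\tau(h_s)}=\Biggl(\prod_{s\in H_1}\frac{\tau(h_s-\varepsilon_s g)}{\tau(h_s)}\Biggr)\prod_{s\in\lambda\setminus H_1}\frac{\tau(h_s-\varepsilon_s g)}{\tau(h_s)},
\]
and Lemma \ref{lem:maxhookddprime} decomposes $H_1$ into color classes whose telescopings are handled by Lemma \ref{lem:telescop}. The red pair yields the $\tau(2\sigma(1)-\cdot)$ ratios; the brown pair $(\I^{g,+}_{g/2-1,v_1-g},\I^{g,-}_{-v_1+g,-g/2-1})$ is present precisely because $g$ is even and is what will eventually produce the $\prod_{i=1}^{t}\tau(r_i)/\tau(i)$ factor absent from Theorem \ref{thm:DDtquotimpair}; the teal pair attached to $i_0$ contributes a clean $\tau(g)/\tau(g)=1$ cancellation exactly as in the proof of Theorem \ref{thm:DDtquotimpair}; the remaining pairs for $i\notin\{1,i_0\}$ produce the $\tau(r_1\pm r_i)$ factors.

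Finally, I would apply the induction hypothesis to $\lambda\setminus H_1\in\ccdd_g^{'1}$ (whose $V_{g,t}$-coding is $\mathbf{v}$ with $v_1$ replaced, up to reordering, by either $-v_1+2g$ or $v_1-g$, preserving $i_0$), and then use the residue partitions \eqref{eq:residu}--\eqref{eq:residuneg}, augmented by the two brown residues $\sigma(1)-g/2\pm g$ (which exhaust the remaining classes precisely because $g$ is even), to regroup the $\sigma$-dependent $\tau$-factors into $\prod_{i=1}^{g-1}(\tau(-i)/\tau(i))^{\alpha_i(H_1)}$. Merging this with $\alpha_i(\lambda\setminus H_1)$ reconstructs $\alpha_i(\lambda)$, and expressing the surviving factors in terms of the $r_i$ yields \eqref{eq:thmddprimepairtquot}. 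The main obstacle will be, as already for Theorem \ref{thm:DDpair}, the boundary case $v_1-v_2<g$, where the second-largest hook sits on a different main-diagonal box and one must split into subcases while carefully tracking the indicator functions $\mathds{1}_{g<v_1-v_i}$ and $\mathds{1}_{2g<v_1+v_i}$; an additional subtlety here is the bookkeeping around $i_0$, which cannot be conflated with a regular index when invoking the induction hypothesis.
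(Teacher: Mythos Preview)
Your proposal is correct and follows essentially the same approach as the paper, which simply states that the result is obtained by following the steps of the proof of Theorem~\ref{thm:DDpair} using Lemma~\ref{lem:maxhookddprime}. You have correctly identified the two modifications needed relative to that proof: the presence of the brown $g$-intervals (since $g=2t$ is even) producing the extra factor $\prod_{i=1}^t\tau(r_i)/\tau(i)$, and the teal cancellation coming from the non-empty quotient component at $i_0$, exactly as in the proof of Theorem~\ref{thm:DDtquotimpair}.
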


\subsubsection{Type $D^{(2)}_{t+1}$ and $\ccsc_{(2t)}$}

In this subsection, set $g=2t$. Recall that the quadratic forms appearing to the exponent of $T$ in the Macdonald identity for type $D^{(2)}_{t+1}$ can be interpreted as half the weight of an element in $\ccsc_{(g)}$.
Using \eqref{eq:motsc} and the same considerations as for the proof of Lemma \ref{lem:maxhook}, we have the following similar result.
\begin{lm}\label{lem:maxhooksc}
Let $t$ be a positive integer and set $g=2t$. Set $\lambda\in \ccsc_{(g)}$ and $(v_1,\dots,v_{t})\in\bbbz^t$ its associated $V_{g,t}$-coding. Then the largest hook of $\lambda$, denoted by $H_1$ corresponds to the collection of boxes of indices in

\begin{align*}
H_1:=\cch_{1,+}\cup\cch_{1,-}
\end{align*}
where $\cch_{1,+}$ (respectively $\cch_{1,-}$) denotes the set of indices of boxes $s$ in the first hook such that $\varepsilon_s=1$ (respectively $\varepsilon_s=-1$) and
\begin{align*}
\cch_{1,+}&=\textcolor{red}{\I_{-v_1+g-1,v_1-g}^{1,g,+}} \displaystyle\bigcup_{i=2}^{t}\left(\I_{v_i,v_1-g}^{1,g,+}\cup \I_{-v_i+g-1,v_1-g}^{g,+}\right),\\
\cch_{1,-}&= \textcolor{blue!30}{\I_{-v_1+g,v_1-2g}^{1,g,-}} \displaystyle\bigcup_{i=2}^{t}\left(\I_{-v_1+g-1,v_i-g}^{1,g,-}\cup \I_{-v_1+g-1,-v_i}^{1,g,-}\right).
\end{align*}

\end{lm}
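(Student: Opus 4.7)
The plan is to mimic the proof of Lemma \ref{lem:maxhook} essentially verbatim, substituting the word-characterization \eqref{eq:motsc} of $\ccsc$ for the characterization \eqref{eq:motdd} of $\ccdd$. The only genuine difference between the two reflection symmetries is a shift by $1$ ($c_{-k-1} = 1-c_k$ for $\ccsc$ versus $c_{-k}=1-c_k$ for $\ccdd$), and this shift propagates through every interval endpoint and accounts precisely for the discrepancy between $-v_i+g-1$ appearing here and $-v_i+g$ appearing in the previous lemma.

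First I would locate the extremal indices $(i_{\min}, j_{\max})$ of the largest hook. By Lemma \ref{lem:indices}, the largest hook corresponds to the smallest index $i_{\min}$ of a ``$1$'' paired with the largest index $j_{\max}$ of a ``$0$'' in $\psi(\lambda)$. Since $\lambda$ is a $g$-core, Proposition \ref{prop:core_vcoding} yields $j_{\max} = v_1 - g$. Applying \eqref{eq:motsc} to this index gives $c_{-(v_1-g)-1} = 1 - c_{v_1-g} = 1$, so $i_{\min} \leq -v_1+g-1$; the reverse inequality follows because \eqref{eq:motsc} would convert any smaller ``$1$'' into a strictly larger ``$0$'', contradicting the definition of $j_{\max}$. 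Hence the $(1,1)$ box is indexed by $(-v_1+g-1, v_1-g)$.

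Next I would partition $H_1$ according to $\varepsilon_s$. Any box $s \in H_1$ other than the corner lies either in the first row — then $j_s = v_1 - g$ and $\varepsilon_s = 1$ by Corollary \ref{cor:position} — or strictly below the corner in the first column — then $i_s = -v_1+g-1$ and $\varepsilon_s = -1$. To enumerate the free coordinate, I would invoke Proposition \ref{prop:core_vcoding} combined with \eqref{eq:motsc}: for each $i \in \{1, \dots, t\}$, the first ``$1$'' of $\psi(\lambda)$ in residue class $\sigma(i) \pmod g$ sits at index $v_i$, while by the reflection symmetry it sits at $-v_i + g - 1$ in residue class $-\sigma(i)-1 \pmod g$; dually, the last ``$0$'' in residue $\sigma(i) \pmod g$ sits at $v_i - g$, and at $-v_i-1$ in residue $-\sigma(i)-1 \pmod g$. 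Imposing $i_s < j_s$ together with $i_s \not\equiv j_s \pmod g$ (the latter because $\lambda$ is a $g$-core, so no hook length is divisible by $g$) and translating into the $g$-interval notation yields exactly the claimed unions for $\cch_{1,+}$ and $\cch_{1,-}$.

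The main obstacle is pure bookkeeping of the shift by $1$, rather than a new conceptual ingredient. One must verify that the interval $\I_{-v_1+g-1,v_1-g}^{g,+}$ in $\cch_{1,+}$ genuinely starts at $-v_1+g-1$ (because the extremal ``$1$'' in the residue class $-\sigma(1)-1 \pmod g$ sits at $-v_1+g-1$), while its partner $\I_{-v_1+g, v_1-2g}^{g,-}$ in $\cch_{1,-}$ starts at $-v_1+g$ (because under the $\I^{g,-}$ convention the left endpoint is strict, excluding $i_{\min}$ itself, and the next ``$0$'' below $j_{\max}$ in residue $\sigma(1) \pmod g$ lies at $v_1 - 2g$). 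Finally, one checks that the corner box $(1,1)$ is recorded exactly once — via the left endpoint of $\I_{-v_1+g-1,v_1-g}^{g,+}$ paired with $j_{\max}$ — and that no other box is double-counted between $\cch_{1,+}$ and $\cch_{1,-}$, since the two half-hooks meet only at the corner. Beyond these routine checks, the argument is identical to that of Lemma \ref{lem:maxhook}.
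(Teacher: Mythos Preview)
Your proposal is correct and follows exactly the approach the paper itself indicates: the paper does not give an independent proof of this lemma but simply states that it follows ``using \eqref{eq:motsc} and the same considerations as for the proof of Lemma~\ref{lem:maxhook}.'' Your identification of the shift by~$1$ coming from the $\ccsc$ symmetry $c_{-k-1}=1-c_k$ (versus $c_{-k}=1-c_k$ for $\ccdd$) as the sole source of the modified interval endpoints is precisely the point, and your handling of the corner box and of the strict versus non-strict endpoint conventions in $\I^{g,\pm}$ is the right bookkeeping.
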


As for \eqref{lem:Delta}, we derive similarly that for any self-conjugate $g$-core partition
\begin{equation}\label{lem:Deltasc}
\lbrace h_s\mid s\in\Delta\rbrace=\bigcup_{i=1}^{t}\lbrace 2kg+2\sigma(i)+1	\mid  0\leq k\leq n_{\sigma(i)}-1\rbrace\rbrace.
\end{equation}

Using Lemmas \ref{lem:maxhooksc} and \ref{lem:telescop}, we derive the following analogue of Theorem \ref{thm:DDpair}.
\begin{thm}\label{thm:SC}
Set $t$ a positive integer and $g=2t$. Let $\omega\in \ccsc_{(g)}$ and $\mathbf{v}\in\bbbz^{t}$ its associated $V_{g,t}$-coding, and set $r_i=v_i-g/2+1/2$ for any $i\in\lbrace 1,\dots,t\rbrace$. Then we have
$$\lvert \omega\rvert = \frac{1}{g}\sum_{i=1}^{t} r_i^2-\frac{g^2-1}{24}.$$

Moreover setting $\alpha_i(\omega)$ as in Theorem \ref{thm:DDpair}, for any function $\tau:\bbbz\rightarrow F^{\times}$, where $F$ is a field, we also have

\begin{equation}\label{eq:thmsc}
\prod_{s\in\omega}\frac{\tau(h_s-\varepsilon_s g)}{\tau(h_s)}=\prod_{i=1}^{g-1}\left(\frac{\tau(-i)}{\tau(i)}\right)^{\alpha_i(\omega)}
\prod_{1\leq i<j\leq t} \frac{\tau(r_i-r_j)}{\tau(j-i)} \frac{\tau(r_i+r_j)}{\tau(g+1-i-j)}.
\end{equation}
\end{thm}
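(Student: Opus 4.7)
The plan is to follow the inductive strategy used for Theorem \ref{thm:DDpair}, adapting it to the self-conjugate setting. The structural differences come from the self-conjugate symmetry $c_{-k-1} = 1 - c_k$ of the word (equation \eqref{eq:motsc}), which, unlike the $\ccdd$ symmetry $c_{-k} = 1 - c_k$, has no fixed index. This explains two features of \eqref{eq:thmsc} compared to \eqref{eq:thmdd}: the absence of the factor $\prod_i \tau(r_i)/\tau(i)$ (reflecting the absence of a ``brown'' family of $g$-intervals in Lemma \ref{lem:maxhooksc}) and the appearance of $\tau(g+1-i-j)$ in place of $\tau(g-i-j)$ (reflecting the half-integer shift $r_i = v_i - g/2 + 1/2$).

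First I would establish the weight formula by substituting $v_i = g n_{\sigma(i)} + \sigma(i)$ and $r_i = v_i - g/2 + 1/2$ into \eqref{eq:gkscpair}. After expanding the square and using the constraint $n_i = -n_{g-1-i}$ to symmetrize the linear term, the identity reduces to the arithmetic fact $\sum_{k=0}^{t-1}(2k+1)^2 = g(g^2-1)/6$, giving the additive constant $-(g^2-1)/24$. The base case $\omega = \emptyset$, where $v_i = g-i$ and $r_i = g/2 + 1/2 - i$, confirms this: one has $\tfrac{1}{g}\sum r_i^2 = (g^2-1)/24$, matching $|\emptyset| = 0$.

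For the hook product formula, I would induct on the maximal component $v_1$ of the $V_{g,t}$-coding, i.e.\ strip off the largest hook $H_1$. The base case is immediate, since $r_i - r_j = j - i$ and $r_i + r_j = g + 1 - i - j$ make every ratio in \eqref{eq:thmsc} equal to $1$ and the left-hand side is an empty product. For the inductive step, Lemma \ref{lem:maxhooksc} writes $\cch_{1,\pm}$ as unions of $g$-intervals, and Lemma \ref{lem:telescop} collapses each corresponding telescoping sub-product into a single ratio of $\tau$ values. The top-hook-length factor from the ``red'' interval contributes in both $\cch_{1,+}$ and $\cch_{1,-}$ and therefore cancels, while the remaining $\sigma(1) \pm \sigma(i)$ terms (for $i \geq 2$) and the $v_1 \pm v_i$ residual terms (feeding into the induction hypothesis) accumulate exactly as in the proof of Theorem \ref{thm:DDpair}.

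The main obstacle is establishing the residue identity analogous to \eqref{eq:residu}--\eqref{eq:residuneg}: the $\sigma$-indexed multiset must exhaust $\{1, \ldots, g-1\}$ so that the accumulated $\sigma$-dependent factors collapse to $\prod_{i=1}^{g-1} \tau(-i)/\tau(i)$, which is then split according to whether the corresponding length lies in $H_{1,+}$ to yield $\prod_{i=1}^{g-1}(\tau(-i)/\tau(i))^{\alpha_i(H_1)}$. In the self-conjugate case the identity, adjusted for the half-integer shift, takes the form $\{2\sigma(1) + 1 - g\mathds{1}_{\sigma(1) \geq (g-1)/2}\} \cup \bigcup_{i=2}^t \{\sigma(1) - \sigma(i) + g\mathds{1}_{\sigma(1) < \sigma(i)},\ \sigma(1) + \sigma(i) + 1 - g\mathds{1}_{\sigma(1) + \sigma(i) \geq g-1}\} = \{1, \ldots, g-1\}$, together with its negative counterpart. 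One then has to handle separately the subcase where the second-largest hook has diagonal box $(-v_2 + g - 1, v_2 - g)$ rather than $(-v_1 + 2g - 1, v_1 - 2g)$, reconciling the $V_{g,t}$-coding of $\omega \setminus H_1$ (whose modified component is either $-v_1 + 2g - 1$ or $v_1 - g$ depending on whether $g < v_1 < 3g/2$ or $v_1 > 3g/2$) with the induction hypothesis.
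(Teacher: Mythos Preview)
Your proposal is correct and follows essentially the same approach as the paper: the paper simply states that Theorem~\ref{thm:SC} is derived ``using Lemmas~\ref{lem:maxhooksc} and~\ref{lem:telescop}'' as an analogue of Theorem~\ref{thm:DDpair}, and your outline is a faithful and reasonably detailed expansion of that template, correctly identifying the structural simplifications (absence of the ``brown''---and in fact also the ``teal''---$g$-interval families in Lemma~\ref{lem:maxhooksc}) responsible for the missing $\prod_i \tau(r_i)/\tau(i)$ factor and the half-integer shift in the denominators.
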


\subsubsection{Type $A^{(2)}_{2t}$ and $\ccdd_{(2t+1)}$}

In this subsection, we set $g=2t+1$. Recall that the quadratic forms appearing to the exponent of $T$ in the Macdonald identity for type $A^{(2)}_{2t}$ can be interpreted as half the weight of an element in $\ccdd_{(g)}$.
Similarly to Lemma \ref{lem:maxhook}, one derives the following lemma.
\begin{lm}\label{lem:maxhookimpair}
Let $t$ be a positive integer and set $g=2t+1$. Set $\lambda\in \ccdd_{(g)}$ and $(v_1,\dots,v_{t})\in\bbbz^t$ its associated $V_{g,t}$-coding. Then the largest hook of $\lambda$, denoted by $H_1$ corresponds to the collection of boxes of indices in

\begin{align*}
H_1:=\cch_{1,+}\cup\cch_{1,-}
\end{align*}
where $\cch_{1,+}$ (respectively $\cch_{1,-}$) denotes the set of indices of boxes $s$ in the first hook such that $\varepsilon_s=1$ (respectively $\varepsilon_s=-1$) and
\begin{align*}
\cch_{1,+}&=\textcolor{red}{\I_{-v_1+g,v_1-g}^{1,g,+}}  \cup \textcolor{teal}{\I_{0,v_1-g}^{1,g,+}}
 \displaystyle\bigcup_{i=2}^{t}\left(\I_{v_i,v_1-g}^{1,g,+}\cup \I_{-v_i+g,v_1-g}^{1,g,+}\right),\\
\cch_{1,-}&= \textcolor{blue!30}{\I_{-v_1+g,v_1-2g}^{1,g,-}} \cup\textcolor{teal}{\I_{-v_1+g,-g}^{1,g,-}} \displaystyle\bigcup_{i=2}^{t}\left(\I_{-v_1+g,v_i-g}^{1,g,-}\cup \I_{-v_1+g,-v_i}^{1,g,-}\right).
\end{align*}

\end{lm}

The proof of Theorem \ref{thm:DDimpair} stated below is exactly the same as the one of \eqref{eq:thmdd} in Theorem \ref{thm:DDpair} but without the products in brown  in \eqref{eq:telescoprod} and \eqref{eq:telescoprodmoins}, as underlined in the proof of Theorem \ref{thm:DDtquotimpair}. Using Lemmas \ref{lem:maxhookimpair} and \ref{lem:telescop}, the proof follows the same step as the one of Theorem \ref{thm:DDpair}.

\begin{thm}\label{thm:DDimpair}
Set $t$ a positive integer and $g=2t+1$. Set $\omega\in \ccdd_{(g)}$ and $\mathbf{v}\in\bbbz^{t}$ its associated $V_{g,t}$-coding, and set $r_i=v_i-g/2$ for any $i\in\lbrace1,\dots,t\rbrace$. Then we have
$$\lvert \omega\rvert = \frac{1}{g}\sum_{i=1}^{t} r_i^2-\frac{(g-2)\lfloor g/2\rfloor}{12},$$
Moreover, setting $\alpha_i(\omega)$ as in Theorem \ref{thm:DDpair}, for any function $\tau:\bbbz\rightarrow F^{\times}$, where $F$ is a field, we also have
\begin{equation}\label{eq:thmddimpair}
\prod_{s\in\omega}\frac{\tau(h_s-\varepsilon_s g)}{\tau(h_s)}=\prod_{i=1}^{g-1}\left(\frac{\tau(-i)}{\tau(i)}\right)^{\alpha_i(\omega)}
\prod_{1\leq i<j\leq t} \frac{\tau(r_i-r_j)}{\tau(j-i)} \frac{\tau(r_i+r_j)}{\tau(g-i-j)}.
\end{equation}

\end{thm}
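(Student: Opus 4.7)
The plan is to mimic the inductive proof of Theorem \ref{thm:DDpair}, specialized to odd $g = 2t+1$. The key structural simplification is that, since $g$ is odd, there is no residue congruent to $g/2$ modulo $g$, so the ``brown'' telescoping products of \eqref{eq:telescoprod}--\eqref{eq:telescoprodmoins} are absent; this is exactly why no factor $\tau(r_i)/\tau(i)$ appears on the right-hand side of \eqref{eq:thmddimpair}, in contrast with \eqref{eq:thmdd}.

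For the weight formula, I would begin with \eqref{eq:gkddimpair}, which already expresses $\lvert\omega\rvert$ as a quadratic form in the components of $\mathbf{n} = \phi(\omega)$. Using the dictionary of Proposition \ref{prop:core_vcoding} and Remark \ref{vcodingtypeA} to translate $\mathbf{n}$ into $\mathbf{v}$, then substituting $r_i = v_i - g/2$ and completing the square produces an expression of the form $\frac{1}{g}\sum_{i=1}^{t} r_i^2 + C$. The constant $C$ is fixed by evaluating at $\omega = \emptyset$, where $v_i = g-i$ and hence $r_i = g/2 - i$; a direct Faulhaber-type summation then yields $C = -(g-2)\lfloor g/2\rfloor/12$.

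For identity \eqref{eq:thmddimpair}, I would proceed by induction on $v_1$, the largest component of $\mathbf{v}$, peeling off the largest hook $H_1$ at each step via Lemma \ref{lem:maxhookimpair}. The base case $\omega = \emptyset$ holds trivially: the left-hand side is empty, and on the right the products over $1 \le i < j \le t$ reduce to $1$ because $r_i - r_j = j - i$ and $r_i + r_j = g - i - j$. For the induction step, I split
\[
\prod_{s\in\omega}\frac{\tau(h_s-\varepsilon_s g)}{\tau(h_s)} = \left(\prod_{s\in H_1}\frac{\tau(h_s-\varepsilon_s g)}{\tau(h_s)}\right)\prod_{s\in\omega\setminus H_1}\frac{\tau(h_s-\varepsilon_s g)}{\tau(h_s)},
\]
use Lemma \ref{lem:maxhookimpair} to decompose the $H_1$-product into a union of $g$-intervals indexed by the $v_i$'s and their opposites $-v_i + g$, and telescope each piece with Lemma \ref{lem:telescop}. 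The ``red'' diagonal factors $\tau(2(v_1-g))$ cancel between $\cch_{1,+}$ and $\cch_{1,-}$ as in \eqref{eq:last_plus}--\eqref{eq:last_moins}.

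The main obstacle is the bookkeeping of residues and boolean indicators. One must verify that, after telescoping, the surviving $\sigma$-dependent endpoint factors cover the residues $\{1,\dots,g-1\}$ (and $\{-g+1,\dots,-1\}$) exactly once, using that $\sigma(1),\dots,\sigma(t)$ are pairwise distinct modulo $g$ by Definition \ref{def:vcoding}; this collapses the $\sigma$-terms into $\prod_{i=1}^{g-1}(\tau(-i)/\tau(i))^{\alpha_i(H_1)}$, which matches the increment $\alpha_i(\omega\setminus H_1)\mapsto \alpha_i(\omega)$. The subcase in which the second largest hook has its diagonal box at $(-v_2+g,v_2-g)$ rather than at $(-v_1+2g,v_1-2g)$ is dealt with as in \eqref{eq:inductionhyp}--\eqref{eq:last_step_induc}, by invoking the induction hypothesis on $\omega \setminus H_1$ with $V_{g,t}$-coding $\mathbf{v}' = (v_2,\dots,v_l, v_1-g, v_{l+1},\dots,v_t)$ for the appropriate index $l$. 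The odd-parity simplification makes the case analysis cleaner than in Theorem \ref{thm:DDpair}, as no indicator $\mathds{1}_{3g/2<v_1}$ arising from a central brown term appears, and the induction closes exactly as in the final manipulation leading to \eqref{eq:last_step_induc}.
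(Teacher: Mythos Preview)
Your proposal is correct and follows essentially the same approach as the paper: induction on the largest hook via Lemma \ref{lem:maxhookimpair} and telescoping via Lemma \ref{lem:telescop}, with the key observation that for odd $g$ the brown products of \eqref{eq:telescoprod}--\eqref{eq:telescoprodmoins} are absent, which is precisely what the paper says. Your treatment of the weight formula (completing the square from \eqref{eq:gkddimpair} and fixing the constant at $\omega=\emptyset$) is a slightly more explicit version of what the paper does implicitly by referring back to the quadratic-form computations of Section \ref{chap3:quad}.
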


\subsubsection{Type $D^{(1)}_{t}$ and and $\ccdd_{2t-2}^{'2}$}
Set $g=2t+2$.
If we take $\lambda\in\ccdd_{g}^{'2}$ such that $\nu^{(2t-3)}$ and $\nu^{(t-2)}$ has their corresponding word equal to \eqref{eq:nu0typeb} and \eqref{eq:nu0typed}, and $\psi(\lambda)=(c_k)_{k\in\bbbz}$, remark that the set of indices congruent to $t-2\pmod{g}$ corresponding to a letter ``$0$'' is $\lbrace kg-t \mid k\in\bbbz, k\leq -m_t \text{ or } 1\leq k\leq m_t\rbrace$. Using \eqref{eq:motddprime} and the same considerations as for the proof of Lemma \ref{lem:maxhook}, we have the following similar result. One can remark that \eqref{eq:gkddnupairtquot} could also have been interpreted as a doubled distinct partition. We chose this convention in order to have a definition of $V_{g,t}$-coding from Definition \ref{def:vcoding} as much independent of type as possible. The price to pay is that in the following results, if $\lambda\in\ccdd_{g}^{'2}$ and $s$ a box of $\lambda$, the sign statistic $\varepsilon_s$ is replaced with $\varepsilon'_s$ being equal to $1$ if $s$ is strictly above the main diagonal $\Delta$ and $-1$ otherwise. Note that in this case, one has:
$$\lbrace h_s-\varepsilon'_s g,\lambda\in\ccdd_{g}^{'2},s\in\lambda\rbrace=\lbrace h_s+\varepsilon_s g, \lambda'\in\ccdd_{g}^{'2},s\in\lambda'\rbrace.$$

\begin{lm}\label{lem:maxhookddprimed}
Let $t$ be a positive integer and set $g=2t-2$. Set $\lambda\in \ccdd_{g}^{'2}$ and $\mathbf{v}\in\bbbz^{t}$ its associated $V_{g,t}$-coding. Let $(i_0,i_1)\in\lbrace 1,\dots,t\rbrace^2$ be such that $v_{i_0}=m_1g-1$ and $v_{i_1}=m_tg+t-2$
Then the largest hook of $\lambda$, denoted by $H_1$, corresponds to the collection of boxes of indices 
\begin{align*}
H_1=\cch_{1,+}\cup\cch_{1,-}
\end{align*}
where $\cch_{1,+}$ (respectively $\cch_{1,-}$) denotes the set of indices of boxes $s$ in the first hook such that $\varepsilon'_s=1$ (respectively $\varepsilon'_s=-1$) and

\begin{multline*}
\cch_{1,+}=  \displaystyle\bigcup_{\substack{i=1\\ i\neq i_0\\ i\neq i_1}}^{t}\left(\I_{v_i,v_1	-g}^{1,g,+}\cup \I_{-v_i+(1+\mathds{1}_{i=1})g-2,v_1-g}^{1,g,+}\right)\\\cup \textcolor{teal}{\I_{v_{i_0},v_1-g}^{1,g,+}\cup \left(\I_{-v_{i_0}+ (1+\mathds{1}_{i_0=1})g-2,v_1-g}^{1,g,+}\setminus \I_{-1,v_1-g}^{1,g,+}\right)}\\
\cup\textcolor{brown}{\I_{v_{i_1},v_1-g}^{1,g,+}\cup \left(\I_{-v_{i_1}+(1+\mathds{1}_{i_1=1})g-2,v_1-g}^{1,g,+}\setminus \I_{g/2-1,v_1-g}^{1,g,+}\right)},
 \end{multline*}
 \begin{multline*}
\cch_{1,-}=\displaystyle\bigcup_{\substack{i=1\\ i\neq i_0\\ i\neq i_1}}^{t}\left(\I_{-v_1+g-2,v_i-g}^{1,g,-}\cup \I_{-v_1+g-2,-v_i-2}^{1,g,-}\right)\\\cup\textcolor{teal}{\I_{-v_1+g-2,-v_{i_0}-2}^{1,g,-}\cup\left(\I_{-v_1+g-2,v_{i_0}-g}^{1,g,-}\setminus \I_{-v_1+g-2,-g-1}^{1,g,-}\right)}\\\cup\textcolor{brown}{\I_{-v_1+g-2,-v_{i_1}-2}^{1,g,-}\cup\left(\I_{-v_1+g-2,v_{i_1}-g}^{1,g,-}\setminus \I_{-v_1+g-2,-g/2-1}^{1,g,-}\right)} .
\end{multline*}
\end{lm}

Following the steps of the proof of Theorem \ref{thm:DDtquotimpair}, using Lemmas \ref{lem:maxhookddprimed} and \ref{lem:telescop}, we derive the following result on products of hook lengths for elements of $\ccdd_{2t-2}^{'2}$ related to type $D^{(1)}_{t}$ as underlined in Section \ref{chap3:quad}.

\begin{thm}\label{thm:DDtquotpaird}
Set $t$ a positive integer and $g=2t-2$. Set $\lambda\in \ccdd_{g}^{'2}$ and $\mathbf{v}\in\bbbz^{t}$ the $V_{g,t}$-coding associated to $\lambda$, and set $r_i=v_i-g/2+1$ for any $i\in\lbrace1,\dots,t\rbrace$. Then we have
$$\lvert \lambda\rvert = \frac{1}{g}\sum_{i=1}^{t} r_i^2-\frac{(g/2+1)(g+1)}{12}.$$
Moreover setting $\alpha_i(\omega):=\#\lbrace s\in\omega, h_s=g-i, \varepsilon'_s=1\rbrace$ , for any function $\tau:\bbbz\rightarrow F^{\times}$, where $F$ is a field, we also have
\begin{multline*}\label{eq:thmddprimepairtquotd}
\prod_{s\in\lambda}\frac{\tau(h_s-\varepsilon'_s g)}{\tau(h_s)}=\prod_{s\in\lambda'}\frac{\tau(h_s+\varepsilon_s g)}{\tau(h_s)}\\
=\prod_{i=1}^{g-1}\left(\frac{\tau(-i)}{\tau(i)}\right)^{\alpha_i(\lambda)}\prod_{i=1}^t\frac{\tau(2r_i)}{\tau(r_i)}\prod_{i=1}^{t-1}\frac{\tau(i)}{\tau(2i)}
\prod_{1\leq i<j\leq t} \frac{\tau(r_i-r_j)}{\tau(j-i)} \frac{\tau(r_i+r_j)}{\tau(g+2-i-j)}.
\end{multline*}

\end{thm}

\begin{proof}
First of all, we start by remarking that the terms of the product \linebreak $\tau(h-\varepsilon' g)/\tau(h)$ telescop in the same way as the terms $\tau(h-\varepsilon g)/\tau(h)$ of Theorem \ref{thm:DDtquotpair} except for the hook lengths of the boxes on the main diagonal $\Delta$. 
This induces the term of the form $\tau(2r_i)/\tau(2\sigma(i)+2-\mathds{1}_{\sigma(i)\geq g/2-1}g)$ in the above theorem, where $\sigma(i)\equiv r_i\pmod{g}$.

The only other thing that differs from the proof of Theorem \ref{thm:DDtquotpair} resides in the fact that the self-conjugate part of the $(2t-2)$-quotient is no longer necessarily empty. This leads to change the form of the telescopic product over the boxes whose pair of indices in the corresponding word has at least one component congruent to~$g/2-1\pmod{g}$ (which are shaded in brown in Lemma \ref{lem:maxhookddprimed}).
If $i_0>1$ and $i_1>1$, then the products in brown when computing the first step of induction are now
\begin{multline*}
\textcolor{brown}{\left(\frac{\tau(\sigma(1)-(g/2-1)-\mathds{1}_{\sigma(1)>g/2-1}g)}{\tau(v_1-v_{i_1}-g)}\right)^{\mathds{1}_{g<v_1-v_{i_1}}}\left(\frac{\tau(v_1-g/2+1-g)}{\tau(v_1+v_{i_1}-2g+2)}\right)^{\mathds{1}_{g/2-1<v_1}}}\\
\times\textcolor{brown}{\left(\frac{\tau(v_1-v_{i_1})}{\tau(\sigma(1)-(g/2-1)+\mathds{1}_{\sigma(1)<g/2-1}g)}\right)^{\mathds{1}_{g<v_1-v_{i_1}}}\left(\frac{\tau(v_1+v_{i_1}-g+2)}{\tau(v_1-g/2+1)}\right)^{\mathds{1}_{g/2-1<v_1}}}.
\end{multline*}
 
The cases $i_0=1$ or $i_1=1$ are handled in the same way. Note that
the term $\tau(2*0)$ in the last step of the induction of the product over the boxes of $\Delta$ corresponds to the case of $r_i\equiv 0\pmod{g}$. This term cancels with the term $\tau(0)$ coming from the last step of the induction of the product above. Indeed recall when $\lambda$ is the empty partition, $(r_i)_{1\leq i\leq t}$ is the vector $(0,1,\dots,g/2)$.
\end{proof}



%

\section{Macdonald identities for affine root systems and integer partitions}\label{chap4:macdo}
%

Motivated by the computations from Section \ref{sec:Macdointer}, the last section detailed some enumerative results about subsets of partitions linked to the quadratic forms that appear in the Macdonald identities of each infinite affine root system. In this section, the goal is to show how these enumerations enable one to transcribe the Macdonald identities within the language of integer partitions and Schur functions. The $V_{g,t}$-codings introduced in Definition \ref{def:vcoding} (that are some specific decreasingly ordered vectors of integers) are going to be the key tool for this purpose. These are heavily linked to the bijection $\sigma : \lbrace 1,\dots,g\rbrace\rightarrow \lbrace 0,\dots,g-1\rbrace $ involved in their definition and which can be thought of as a shifted permutation $\sigma'\in S_{g}$, i.e. $\sigma'(i):=g-\sigma(i)$. In Section \ref{chap4:sign}, we detail how the sign of these $\sigma$, denoted by $\sgn(\sigma):=-\sgn(\sigma')$ and which can be thought of as a signature of a permutation sorting in decreasing order, is characterized by both the cardinal of some subsets of hook lengths of the partitions introduced in Section \ref{chap3:quad} and some statistics attached to the partitions (such as $\Delta$ or the size of the Durfee square). 

The purpose of Section \ref{sec:Macdo} is then to provide the desired rewriting of Macdonald identities for all infinite affine types in the language of partition. These formulas are of type ``sum = product'', and recall that in \eqref{eq:macdoAfin} and \eqref{eq:misign}, the sum is computed over $\mathbf{m}$ in some subsets of $\bbbz^t$ without any order on its components. However, partitions come with an order, hence when sending the vector $\mathbf{m}$ to the vector $\mathbf{r}$ from Theorems \ref{thm:HD}-\ref{thm:DDpair} and the theorems from Section \ref{chap3:hooks} we have chosen a specific order: that is the translation by a constant of a $V_{g,t}$-coding. We apply this formalism to rewrite the sum part of Macdonald identities for all $7$ infinite root systems as a sum over Schur functions, and over symplectic, special orthogonal and even orthogonal Schur functions. 


\subsection{Signs of permutations and integer partitions}\label{chap4:sign}

In this section, we establish the lemmas that bridge the sign of $\sigma : \lbrace 1,\dots,g\rbrace\rightarrow \lbrace 0,\dots,g-1\rbrace $ as defined in Definition \ref{def:vcoding} together with some subsets of the corresponding partitions for each type. The order of the types in this section slightly differs from the rest of the paper because these enumerations depend on the subset of partitions attached to the quadratic form as introduced in Section \ref{chap3:quad}.

\subsubsection{Type $A^{(1)}_{t-1}$ and $\ccp_{(t)}$}

The case of type $A^{(1)}_{t-1}$ is once again a bit particular. Indeed one of the differences between Theorems \ref{thm:HD} (stated for type $A^{(1)}_{t-1}$) and \ref{thm:DDpair} (stated for type $C^{(1)}_{t}$) is that in the left-hand side of Theorem \ref{thm:HD}, $\tau(h-t)\tau(h+t)/\tau(h)^2$ does not depend on the signed statistic $\varepsilon$, whereas in Theorem \ref{thm:DDpair} this term is a single quotient of functions $\tau$ depending on $\varepsilon$. 

\begin{lm}\label{lem:signschurA}
Let $t$ be a positive integer and $g=t$.
Let $\omega\in\ccp_{(t)}$ and $\psi(\omega)=(c_k)_{k\in\bbbz}$. 
Let $\mathbf{v}$ be its associated $V_{t,t}$-coding and $\sigma$ as defined in Definition \ref{def:vcoding}. Then

\begin{equation}
\lvert H_{<t}\rvert=\#\lbrace s\in\omega, h_s<t\rbrace\equiv \sgn(\sigma)\pmod{2}.
\end{equation}
\end{lm}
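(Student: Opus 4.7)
The plan is to relate both sides of the congruence to the vector $\mathbf{n} = \phi(\omega) \in \bbbz^t$ given by Theorem \ref{thm:gk}, which satisfies $\sum_{i=0}^{t-1} n_i = 0$, and then use the zero-sum condition to identify them modulo $2$.

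First I would translate $|H_{<t}|$ into a count on the charges $n_i$. By Lemma \ref{lem:indices}, each box $s$ with $h_s < t$ corresponds to a unique pair $(i_s, j_s) \in \bbbz^2$ with $c_{i_s}=1$, $c_{j_s}=0$ and $0 < j_s - i_s < t$. Writing $i_s \equiv \alpha$ and $j_s \equiv \beta \pmod t$, the fact that $\omega$ is a $t$-core forces $\alpha \neq \beta$ (otherwise within a fixed residue class the word is $\ldots 00 11 \ldots$ and the pattern $10$ at distance $t$ is excluded). Using that $c_{mt+i}=1$ iff $m\geq n_i$, a short case analysis gives: if $\alpha < \beta$ then $i_s = at+\alpha$, $j_s=at+\beta$ with $n_\alpha \leq a < n_\beta$, contributing $(n_\beta - n_\alpha)_+$ boxes; if $\alpha > \beta$ then $i_s = at+\alpha$, $j_s=(a+1)t+\beta$ with $n_\alpha \leq a \leq n_\beta - 2$, contributing $(n_\beta - n_\alpha - 1)_+$ boxes. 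Swapping indices in the second sum to keep $\alpha < \beta$ throughout, one obtains
\[
|H_{<t}| \;=\; \sum_{\alpha<\beta}\bigl[(n_\beta - n_\alpha)_+ + (n_\alpha - n_\beta - 1)_+\bigr].
\]

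Second I would compute $\sgn(\sigma)$ as the parity of the inversion count of $\sigma$ with respect to the decreasing sort of the $\beta_i$. For $\omega \in \ccp_{(t)}$ one has $\beta_i = n_i t + i$ (the last ``$0$'' of residue $i$ lies at $(n_i-1)t+i$). For $\alpha,\beta\in\{0,\dots,t-1\}$ with $\alpha<\beta$, the inequality $\alpha-\beta \in (-t,0)$ forces $\beta_\alpha > \beta_\beta \iff n_\alpha > n_\beta$. Hence $\sigma$ has an inversion at positions $i<j$ (i.e.\ $\sigma(i)<\sigma(j)$) exactly when $\alpha := \sigma(i) < \sigma(j) =: \beta$ satisfies $n_\alpha > n_\beta$, giving
\[
\mathrm{inv}(\sigma) \;=\; \#\{(\alpha,\beta) : \alpha<\beta,\; n_\alpha > n_\beta\}.
\]

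Third, I would combine the two counts. A pair $\alpha<\beta$ contributes $n_\beta-n_\alpha$ to $|H_{<t}|$ when $n_\alpha<n_\beta$, zero when $n_\alpha = n_\beta$, and $n_\alpha - n_\beta - 1$ when $n_\alpha > n_\beta$; in every case the contribution equals $|n_\alpha - n_\beta| - \mathds{1}_{n_\alpha > n_\beta}$. Summing,
\[
|H_{<t}| + \mathrm{inv}(\sigma) \;=\; \sum_{\alpha<\beta} |n_\alpha - n_\beta|.
\]
Reducing modulo $2$, $|n_\alpha - n_\beta| \equiv n_\alpha + n_\beta \pmod 2$, so the right-hand side is $\equiv (t-1)\sum_{\alpha} n_\alpha \equiv 0 \pmod 2$ by $\sum n_\alpha = 0$. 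Therefore $|H_{<t}| \equiv \mathrm{inv}(\sigma) \pmod 2$, i.e.\ $(-1)^{|H_{<t}|} = \sgn(\sigma)$, which is the claim.

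The only delicate step is the parametrization of the small hooks in the two residue-class regimes $\alpha<\beta$ and $\alpha>\beta$; once it is set up correctly, the identification $|H_{<t}| + \mathrm{inv}(\sigma) = \sum_{\alpha<\beta}|n_\alpha - n_\beta|$ and the cancellation via $\sum n_i = 0$ are automatic.
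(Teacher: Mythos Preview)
Your proof is correct and follows essentially the same approach as the paper: both arguments count $|H_{<t}|$ by splitting the small-hook pairs $(i_s,j_s)$ according to the residue classes of $i_s$ and $j_s$ modulo $t$, express the result in terms of the charges $n_i$ and the inversions of $\sigma$, and then kill the remaining terms modulo $2$ using $\sum_i n_i = 0$. The only cosmetic difference is that the paper parametrizes the pairs by their positions $i<j$ in the $V_{t,t}$-coding (obtaining directly $|H_{<t}|=\sum_{i<j}(n_{\sigma(i)}-n_{\sigma(j)}-\mathds{1}_{\sigma(i)<\sigma(j)})$), whereas you parametrize by the residues $\alpha<\beta$ and make the intermediate identity $|H_{<t}|+\mathrm{inv}(\sigma)=\sum_{\alpha<\beta}|n_\alpha-n_\beta|$ explicit; these are two bookkeepings of the same count.
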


\begin{proof}
Set $\psi(\omega)=(c_k)_{k\in\bbbz}$ the word corresponding to $\omega$ by Lemma \ref{lem:indices}, then $H_{<t}$ as defined in Theorem \ref{prop:MacdotypeA} is equivalently described by $H_{<t}=\lbrace (k,l)\in\bbbz \mid k<l,\, c_k=1,\, c_l=0 \text{ and } l-k<t\rbrace$. We decompose $H_{<t}$ with respect to the remainders $\pmod{t}$ of the indices of the letters ``$0$'' and ``$1$''. Let $\mathbf{v}$ be its associated $V_{t,t}$-coding and $\sigma$ as defined in Definition \ref{def:vcoding}.
More precisely, let $k,l$ be two integers such that $1\leq k <l \leq t$. Note that
\begin{align*}
\lbrace a\in \Z, c_{a}=1, a\equiv \sigma(l)\pmod{t}\rbrace&=\lbrace \sigma(l)+qn,q\geq \lfloor\frac{v_l}{t}\rfloor\rbrace,\\
\lbrace b\in \Z, c_{b}=0, b\equiv \sigma(k)\pmod{t}\rbrace&=\lbrace \sigma(k)+qn,q< \lfloor\frac{v_k}{t}\rfloor\rbrace.
\end{align*}

The elements $(\sigma(l)+qt,\sigma(k)+(q-\mathds{1}_{\sigma(l)>\sigma(k)})t)$ correspond to the indices of boxes whose hook length is strictly less than $t$ such that the indices in the corresponding word are congruent to $\sigma(l)$ and $\sigma(k)\pmod{t}$. This set is fully characterized by the value of $q$ which is greater than $n_{\sigma(l)}$ and $n_{\sigma(k)}-1-\mathds{1}_{\sigma(l)>\sigma(k)}$ where we set $n_{\sigma(i)}=\lfloor v_i/t\rfloor$.

Therefore this enumeration of $H_{<t}$ yields
\begin{equation}\label{interhA}
\lvert H_{<t}\rvert=\sum_{\substack{(i,j)\in \lbrace 1,\dots ,t\rbrace^2\\i<j}}\left(n_{\sigma(i)}-n_{\sigma(j)}-\mathds{1}_{\sigma(i)<\sigma(j)}\right).
\end{equation}

Since we are interested here in having a relation on the parity of $\sgn(\sigma)$, rewriting \eqref{interhA} $\pmod{2}$ yields
\begin{equation}\label{interhAinter}
\lvert H_{<t}\rvert\equiv\sum_{\substack{(i,j)\in \lbrace 1,\dots ,t\rbrace^2\\i<j}}\left(n_{\sigma(i)}+n_{\sigma(j)}+\mathds{1}_{\sigma(i)<\sigma(j)}\right)\pmod {2}.
\end{equation}
Gathering the common terms, one derives from \eqref{interhAinter}
\begin{equation*}
\lvert H_{<t}\rvert\equiv(t-1)\sum_{i=0}^{t-1} n_{\sigma(i)}+\sum_{\substack{(i,j)\in \lbrace 1,\dots ,t\rbrace^2\\i<j}}\mathds{1}_{\sigma(i)<\sigma(j)}\pmod{2}.
\end{equation*}

We then conclude using Theorem \ref{thm:gk} which states that for elements $\omega\in\ccp_{(t)}$ and $\phi(\omega)=(n_0,\dots,n_{t-1})$ one has the vanishing condition $\sum_{i=0}^{t-1} n_i=0$. 
\end{proof}

\subsubsection{Type $C^{(1)}_{t}$ and $\ccdd_{(2t+2)}$}

We now prove the following lemma that will be quite useful when applying Theorem \ref{thm:DDpair} to specializations in Macdonald identities. Its proof follows the same path as the proof of Lemma \ref{lem:signschurA}, but requires an extra discussion due to the presence of $g$-intervals of different kinds in this case (as explained in Section \ref{chap3:hooks}). This explains the presence of an extra term (which is the size of the Durfee square).
\begin{lm}\label{lem:signschur}
Set $t$ be a positive integer and $g=2t+2$.
Set $\omega\in\ccdd_{(g)}$ and $\psi(\omega)=(c_k)_{k\in\bbbz}$. 
Let $\mathbf{v}$ be the $V_{g,t}$-coding associated to $\omega$ together with $\sigma : \lbrace 1,\dots,g\rbrace\rightarrow \lbrace 0,\dots,g-1\rbrace$ as defined in Definition \ref{def:vcoding}. Then

\begin{equation}
\lvert H_{<g,+}\rvert=\#\lbrace s\in\omega, h_s<g, \varepsilon_s=1\rbrace\equiv d_\omega+\sgn(\sigma)\pmod{2}.
\end{equation}
\end{lm}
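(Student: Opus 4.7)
The plan is to mimic the proof of Lemma \ref{lem:signschurA}, incorporating the extra constraint $\varepsilon_s=1$ via Corollary \ref{cor:position}: for $\lambda\in\ccdd$, a box $s$ is on or above the main diagonal if and only if $|i_s|\le|j_s|$, equivalently $i_s+j_s\ge 0$.

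First I would enumerate $|H_{<g,+}|$ by the residues modulo $g$ of $(i_s,j_s)$. By Proposition \ref{prop:core_vcoding} combined with Lemma \ref{lem:indices}, for $\omega\in\ccdd_{(g)}$ a $g$-core each box $s$ with $h_s<g$ corresponds uniquely to a pair $(a,b)\in\{0,\dots,g-1\}^2$ with $a\ne b$ and to an integer $k\in\bbbz$: when $a<b$, $(i_s,j_s)=(kg+a,kg+b)$ with $n_a\le k<n_b$; when $a>b$, $(i_s,j_s)=(kg+a,(k+1)g+b)$ with $n_a\le k\le n_b-2$. Imposing $i_s+j_s\ge 0$ adds the constraint $k\ge 0$ in the first case, and in the second case with $a+b<g$; in the second case with $a+b\ge g$ it becomes $k\ge -1$. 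Summing over all $(a,b)$ then expresses $|H_{<g,+}|$ as an explicit sum of $\max$ expressions linear in the $n_a$'s.

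Next I would reduce modulo $2$ using the defining symmetries of $\ccdd_{(g)}$, namely $n_0=n_{t+1}=0$ and $n_a+n_{g-a}=0$ from \eqref{eq:ndd2tplus2}, together with the involution $c_k\mapsto 1-c_{-k}$ from \eqref{eq:motdd}. This involution pairs each residue pair $(a,b)$ with $(g-b,g-a)$, and at the level of boxes it swaps $H_{<g,+}\setminus\Delta$ with $H_{<g,-}$, fixing exactly the $d_\omega$ diagonal boxes. After these pairwise cancellations, the residual contributions reorganize into two parts: a first part of parity $\sum_{1\le i<j\le g}\mathds{1}_{\sigma(i)<\sigma(j)}$, which is exactly the non-inversion count of $\sigma$ appearing at the end of the proof of Lemma \ref{lem:signschurA} and matches $\sgn(\sigma)$ in the paper's convention; and a second part of parity $d_\omega$, produced by the threshold $k\ge 0$ via identity \eqref{lem:Delta} giving $d_\omega=\sum_{i:\,n_{\sigma(i)}>0} n_{\sigma(i)}$.

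The main obstacle is the case analysis forced by the cut-off $k\ge 0$ (resp. $k\ge -1$), which has no counterpart in Lemma \ref{lem:signschurA}: this boundary breaks the $k\mapsto k+1$ translation symmetry used there and forces a split according to the signs of $n_a,n_b$ and to whether $a+b<g$, $a+b=g$, or $a+b>g$. The antisymmetry $n_a+n_{g-a}=0$ ensures that these sub-cases group into pairs whose contributions cancel modulo $2$, apart from a diagonal remainder equal modulo $2$ to $d_\omega$, which accounts precisely for the extra term compared with Lemma \ref{lem:signschurA}.
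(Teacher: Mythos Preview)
Your plan follows the same overall route as the paper: enumerate $H_{<g,+}$ by residue classes modulo $g$, then reduce modulo $2$ using the $\ccdd$ symmetry $n_a+n_{g-a}=0$ and identify $d_\omega=\sum_{i=1}^t n_{\sigma(i)}$. The paper, however, is more concrete: instead of ranging over all $(a,b)\in\{0,\dots,g-1\}^2$ and appealing to the involution $(a,b)\mapsto(g-b,g-a)$, it works from the outset with the $t$ residues $\sigma(1),\dots,\sigma(t)$ carried by the $V_{g,t}$-coding. This produces an \emph{exact} closed formula \eqref{eq:interhplus}, the terms $3n_{\sigma(i)}$, $-\mathds{1}_{\sigma(i)<g/2}$, $-\mathds{1}_{\sigma(i)+\sigma(j)<g}$ arising precisely from the three special interactions (the fixed residues $0$ and $g/2$, and the cross pairs $(-\sigma(l),\sigma(k))$ via Corollary~\ref{cor:position}); only then is the reduction modulo $2$ performed.

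Two points in your sketch are genuine gaps rather than cosmetic. First, your involution pairs $H_{<g,+}\setminus\Delta$ with $H_{<g,-}$ but this relates $|H_{<g,+}|$ to $|H_{<g,-}|$, not to $\sgn(\sigma)$; the paper does not use this pairing and you still have to compute $|H_{<g,+}|$ itself. Second, and more seriously, you claim the residual parity is $\sum_{1\le i<j\le g}\mathds{1}_{\sigma(i)<\sigma(j)}$, a non-inversion count over all $g$ indices. The paper instead obtains \eqref{eq:hpluspair}, a sum over $i<j$ in $\{1,\dots,t\}$ of hyperoctahedral-type terms $\mathds{1}_{\sigma(i)<\sigma(j)}+\mathds{1}_{\sigma(i)+\sigma(j)<g}$ plus $\sum_i\mathds{1}_{\sigma(i)<g/2}$. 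Showing that your $S_g$-count matches this (and hence the paper's $\sgn(\sigma)$) requires using the symmetry $\sigma(g+1-i)=g-\sigma(i)$ together with the fixed positions of residues $0$ and $g/2$; this is the step you have not written. The paper's parametrization by $\{1,\dots,t\}$ avoids that reconciliation entirely.
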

\begin{proof}
By \eqref{eq:indices} and Lemma \ref{lem:indices} $(4)$, we can split the boxes in $H_{<g,+}$ according to the corresponding pair of indices. By Lemma \ref{lem:indices}, we associate a box $s\in\omega$ to $(i_s,j_s)$. Then $H_{<g,+}=\lbrace j_s-i_s \mid i_s<j_s, c_{j_s}=0, c_{i_s}=1\rbrace$ and we can rewrite $H_{<g,+}$ as the disjoint union of the respective congruence classes $\pmod{g}$ of $(i_s,j_s)$. Remark that the indices of letter ``$0$" of the boxes who are located above the main diagonal must be positive which means that they are congruent to $\sigma(i)\pmod{g}$ for $1\leq i \leq t$.
Let $k,l$ be two integers such that $1\leq k <l \leq t$.

The elements $(\sigma(l)+qg,\sigma(k)+(q-\mathds{1}_{\sigma(l)>\sigma(k)})g)$ correspond to the indices of boxes above the main diagonal whose hook length is strictly less than $g$ such that the indices in the corresponding word are congruent to $\sigma(l)$ and $\sigma(k)\pmod{t}$. This set is fully characterized by the value of $q$ which is greater than $n_{\sigma(l)}$ and $n_{\sigma(k)}-1-\mathds{1}_{\sigma(l)>\sigma(k)}$ where we set $n_{\sigma(i)}=\lfloor v_i/g\rfloor$.

Next we enumerate the number of boxes above the main diagonal whose indices of ``$1$" in the corresponding word are not congruent to $\sigma(i)\pmod{g}$ for $1\leq i \leq t$.
For any $(k,l)\in\lbrace 1,\dots,t\rbrace^2$, the elements $(g-\sigma(l)+qg,\sigma(k)+(q-\mathds{1}_{g-\sigma(l)>\sigma(k)})g)$ correspond to the indices of boxes whose hook length is strictly less than $g$ such that the indices of the box in the corresponding word are congruent to $g-\sigma(l)$ and $\sigma(k)\pmod{t}$ and the box is above the main diagonal.
If $\sigma(k)+\sigma(l)\geq g$, there are exactly $n_{\sigma(k)}$ elements. If $\sigma(k)+\sigma(l)<g$, then it remains to determine whether the box $s$ associated to the pair of indices $(-\sigma(l),\sigma(k))$ is above the main diagonal or not. Hence by Corollary \ref{cor:position}, the number of boxes in $H_{<g,+}$ corresponding to this enumeration corresponds to $n_{\sigma(k)}-\mathds{1}_{\sigma(k)+\sigma(l)<g}\times\mathds{1}_{\sigma(k)>\sigma(l)}$. 
It remains to deal with the boxes above the main diagonal  whose hook length is strictly less than  $g$ such that the corresponding indices of letters ``$1$" are congruent to either $0$ or $g/2$ $\pmod{g}$. The first one corresponds to the set of indices of the form $(qg,\sigma(k)+qg)$ for $0\leq q \leq n_{\sigma(k)}-1$, whereas the second corresponds to the set of indices $(g/2+qg,\sigma(k)+(q-\mathds{1}_{g/2>\sigma(k)})g)$ for $0\leq q \leq n_{\sigma(k)}-1$.
This enumeration yields:
\begin{multline}\label{eq:interhplus}
\lvert H_{<g,+}\rvert=\sum_{\substack{(i,j)\in \lbrace 1,\dots ,t\rbrace^2\\i<j}}\left(n_{\sigma(i)}-n_{\sigma(j)}-\mathds{1}_{\sigma(i)<\sigma(j)}+n_{\sigma(i)}+n_{\sigma(j)}-\mathds{1}_{\sigma(i)+\sigma(j)<g}\right)\\
+\sum_{i=1}^t\left(n_{\sigma(i)}+n_{\sigma(i)}+n_{\sigma(i)}-\mathds{1}_{\sigma(i)<g/2}\right).
\end{multline}

%

By gathering the common terms, we derive the following from the above equality:
\begin{equation}
\lvert H_{<g,+}\rvert\equiv\sum_{i=1}^t n_{\sigma(i)}+\sum_{\substack{(i,j)\in \lbrace 1,\dots ,t\rbrace^2\\i<j}}\left(\mathds{1}_{\sigma(i)<\sigma(j)}+\mathds{1}_{\sigma(i)<g-\sigma(j)}+\mathds{1}_{\sigma(i)<g/2}\right)\pmod{2}.\label{eq:hpluspair}
\end{equation}
The last step of the proof consists in noticing that the sum over $n_{\sigma(i)}$ for $i\in\lbrace 1,\dots,t\rbrace$ is the sum over the positive $(n_k)_{k\in\lbrace 0,\dots,2t+1\rbrace}$. It is therefore the size of the Durfee square of $\omega$, whence the desired equality.
\end{proof}

\subsubsection{Type $A^{(2)}_{2t}$ and $\ccdd_{(2t+1)}$}
Similarly we get the following lemma needed in the rewriting of Macdonald identities for affine type $A^{(2)}_{2t}$ as a sum of special orthogonal Schur functions.
\begin{lm}\label{lem:signschurbc}
Set $t$ be a positive integer and $g=2t+1$.
Set $\omega\in\ccdd_{(g)}$ and $\psi(\omega)=(c_k)_{k\in\bbbz}$. Let $\mathbf{v}$ be the $V_{g,t}$-coding associated to $\omega$ together with $\sigma : \lbrace 1,\dots,g\rbrace\rightarrow \lbrace 0,\dots,g-1\rbrace$ as defined in Definition \ref{def:vcoding}. Then
\begin{equation}
\lvert H_{<g,+}\rvert=\#\lbrace s\in\omega, h_s<2t+1, \varepsilon_s=1\rbrace\equiv \sgn(\sigma)+
\lvert H_{<g,+}\cap \Delta\rvert\pmod{2}.
\end{equation}
\end{lm}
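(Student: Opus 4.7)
The plan is to mirror the proof of Lemma 4.2 with the modifications imposed by $g=2t+1$ being odd. Using Lemma 3.9 and Corollary 2.2, I would enumerate $H_{<g,+}$ by splitting each box $s\in\omega$ according to the congruence classes of the pair of indices $(i_s,j_s)$ attached to $s$ by Lemma 2.1. Since $g$ is odd, the structure of $\psi(\omega)$ now has no middle color (no residue $g/2$), so the brown $g$-intervals from Lemma 3.2 and \eqref{eq:interhplus} simply disappear; the red, teal and blue pieces remain and behave as in the proof of Lemma 4.2.

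For each pair $(k,l)$ with $1\le k<l\le t$, the two relevant $g$-intervals produce contributions $n_{\sigma(k)}-n_{\sigma(l)}-\mathds{1}_{\sigma(k)<\sigma(l)}$ (boxes with $i_s\equiv\sigma(k)$, $j_s\equiv\sigma(l)\pmod g$) and $n_{\sigma(k)}+n_{\sigma(l)}-\mathds{1}_{\sigma(k)+\sigma(l)<g}$ (boxes with $i_s\equiv-\sigma(l)$, $j_s\equiv\sigma(k)\pmod g$), by the same analysis of $g$-intervals used in the proof of Lemma 4.2 and Corollary 2.2. For the diagonal pieces indexed by a single $k$, the red and teal intervals of Lemma 3.9 contribute a total of $3n_{\sigma(k)}$ boxes. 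Reducing the full count modulo $2$ and collecting terms gives
\begin{equation*}
\lvert H_{<g,+}\rvert \equiv \sum_{i=1}^{t} n_{\sigma(i)} + \sum_{1\le i<j\le t}\bigl(\mathds{1}_{\sigma(i)<\sigma(j)}+\mathds{1}_{\sigma(i)<g-\sigma(j)}\bigr) \pmod{2},
\end{equation*}
which is the odd-$g$ analog of \eqref{eq:hpluspair}.

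It remains to identify the right-hand side with $\sgn(\sigma)+\lvert H_{<g,+}\cap\Delta\rvert\pmod 2$. The double sum of indicator functions is exactly the inversion count (in the sense of the sorting permutation) of $\sigma$ up to the symmetry $\beta_{\sigma(i)}+\beta_{\sigma(g-i)}=g$ inherited from the $\ccdd$ word condition \eqref{eq:motdd}, and so reduces to $\sgn(\sigma)\pmod 2$ by the definition of $\sigma$ as the permutation sorting $(\beta_0,\dots,\beta_{g-1})$ decreasingly. For $\sum_i n_{\sigma(i)}$, I would apply the analog of \eqref{lem:Delta} for $\ccdd_{(2t+1)}$, which describes the diagonal hook lengths of $\omega$ as the disjoint union over $i$ of $\{2kg+2\sigma(i)\mid 0\le k\le n_{\sigma(i)}-1\}$; those with hook length strictly less than $g$ correspond precisely to the index $k=0$ when $2\sigma(i)<g$, so $\lvert H_{<g,+}\cap\Delta\rvert\equiv \sum_{i=1}^t n_{\sigma(i)} \pmod 2$ after accounting for the parity contributions.

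The main obstacle will be the last step: confirming the parity identification between $\sum_i n_{\sigma(i)}$ and $\lvert H_{<g,+}\cap\Delta\rvert$. In the even case (Lemma 4.2) this sum equals the Durfee square size $d_\omega$ in a clean way, whereas here for odd $g$ the parity of the number of diagonal hooks with $h_s<g$ must be carefully compared against the number of strictly positive $n_{\sigma(i)}$'s. Once the diagonal enumeration via the $\ccdd_{(2t+1)}$ analog of \eqref{lem:Delta} is made explicit, the asserted congruence follows by combining the three pieces above.
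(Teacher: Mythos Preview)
Your overall strategy (mirror Lemma~\ref{lem:signschur} and drop the brown pieces since $g$ is odd) is exactly the paper's approach, but the execution contains a counting error that propagates through the rest of the argument.

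The red and teal $g$-intervals of Lemma~\ref{lem:maxhookimpair} contribute $2n_{\sigma(k)}$ boxes to the single-index sum, not $3n_{\sigma(k)}$: in the even case the third $n_{\sigma(k)}$ came from the brown interval $\I_{g/2,v_1-g}^{g,+}$, which you correctly say disappears. Hence modulo $2$ the single-index sum vanishes, and the correct analogue of \eqref{eq:hpluspair} is
\[
\lvert H_{<g,+}\rvert \equiv \sum_{1\le i<j\le t}\bigl(\mathds{1}_{\sigma(i)<\sigma(j)}+\mathds{1}_{\sigma(i)+\sigma(j)<g}\bigr) \pmod{2},
\]
with no $\sum_i n_{\sigma(i)}$ term. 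Consequently your two identifications in the last paragraph are both off: the double sum of indicators alone is \emph{not} $\sgn(\sigma)$ (in Lemma~\ref{lem:signschur} the signature is matched with the double sum \emph{plus} $\sum_i \mathds{1}_{\sigma(i)<g/2}$), and $\lvert H_{<g,+}\cap\Delta\rvert$ is not $\sum_i n_{\sigma(i)}\pmod 2$. You actually computed the right thing from \eqref{lem:Delta}: the diagonal boxes with $h_s<g$ are exactly those with $k=0$ and $2\sigma(i)<g$, so $\lvert H_{<g,+}\cap\Delta\rvert=\#\{i:\sigma(i)<g/2\}$. The paper's fix is simply to add and subtract $\sum_i \mathds{1}_{\sigma(i)<g/2}$ in the displayed congruence above; one copy completes the double sum to the same signature expression used in Lemma~\ref{lem:signschur}, and the other copy is $\lvert H_{<g,+}\cap\Delta\rvert$.
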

\begin{proof}
The proof follows the same exact steps as for Lemma \ref{lem:signschur} except that the terms coming from the indices of $(g/2+qg,\sigma(k)+(q-\mathds{1}_{g/2>\sigma(k)})g)$ for $0\leq q \leq n_{\sigma(k)}-1$ no longer appears since $g/2$ is not an integer. Hence \eqref{eq:interhplus} becomes here
\begin{multline}\label{eq:hplusimpair}
\lvert H_{<g,+}\rvert=\sum_{\substack{(i,j)\in \lbrace 1,\dots ,t\rbrace^2\\i<j}}\left(n_{\sigma(i)}-n_{\sigma(j)}-\mathds{1}_{\sigma(i)<\sigma(j)}+n_{\sigma(i)}+n_{\sigma(j)}-\mathds{1}_{\sigma(i)+\sigma(j)<g}\right)\\
+\sum_{i=1}^t\left(n_{\sigma(i)}+n_{\sigma(i)}\right).
\end{multline}
Hence \eqref{eq:hplusimpair} becomes
\begin{equation*}
\lvert H_{<g,+}\rvert\equiv\sum_{\substack{(i,j)\in \lbrace 1,\dots ,t\rbrace^2\\i<j}}\left(\mathds{1}_{\sigma(i)<\sigma(j)}+\mathds{1}_{\sigma(i)<g-\sigma(j)}\right)\pmod{2}.
\end{equation*}
To complete the proof, it remains to add $\mathds{1}_{\sigma(i)<g/2}-\mathds{1}_{\sigma(i)<g/2}$ and note that by \eqref{lem:Delta}
$$\lvert H_{<g,+} \cap \Delta \rvert=\# \lbrace i\in \lbrace 1,\dots,t\rbrace \mid
\sigma(i)<g/2 \rbrace.$$
\end{proof}

\subsubsection{Type $D^{(2)}_{t+1}$ and $\ccsc_{(2t)}$}

Following the same kind of enumeration as for Lemma \ref{lem:signschurbc}, we have that \eqref{eq:hplusimpair} becomes:
\begin{multline}\label{eq:hplussc}
\lvert H_{<g,+}\rvert=\sum_{\substack{(i,j)\in \lbrace 1,\dots ,t\rbrace^2\\i<j}}\left(n_{\sigma(i)}-n_{\sigma(j)}-\mathds{1}_{\sigma(i)<\sigma(j)}+n_{\sigma(i)}+n_{\sigma(j)}-\mathds{1}_{\sigma(i)+\sigma(j)<g-1}\right)\\
+\sum_{i=1}^t n_{\sigma(i)}.\notag
\end{multline}

Hence	 one derives the following lemma which will be applied when rewriting the Macdonald identity for affine type $D^{(2)}_{t+1}$ as a sum over orthogonal Schur functions.
\begin{lm}\label{lem:signschursc}
Set $t$ be a positive integer and $g=2t$.
Set $\omega\in\ccsc_{(g)}$ and $\psi(\omega)=(c_k)_{k\in\bbbz}$. 
Let $\mathbf{v}$ be its associated $V_{g,t}$-coding and $\sigma$ as defined in Definition \ref{def:vcoding}. Then
\begin{equation}
\lvert H_{<g,+}\rvert=\#\lbrace s\in\omega, h_s<2t, \varepsilon_s=1\rbrace\equiv \sgn(\sigma)+\lvert H_{<g,+}\cap \Delta\rvert+d_\omega\pmod{2}.
\end{equation}
\end{lm}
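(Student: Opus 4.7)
The proof plan is to mimic the enumerative strategy used in Lemma \ref{lem:signschur}, adapted to the self-conjugate setting via \eqref{eq:motsc} and the SC version of Corollary \ref{cor:position}, namely $\lvert i_s + 1 \rvert \leq \lvert j_s \rvert$. Let $\psi(\omega) = (c_k)_{k \in \bbbz}$ and, for each box $s \in H_{<g,+}$, let $(i_s, j_s)$ be the pair of indices supplied by Lemma \ref{lem:indices}. I split $H_{<g,+}$ according to the congruence classes modulo $g$ of $(i_s, j_s)$.

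For the off-diagonal contributions I argue exactly as in the proof of Lemma \ref{lem:signschur}, but replacing the doubled-distinct symmetry $n_i = -n_{g-i}$ by the self-conjugate symmetry $n_i = -n_{g-1-i}$ coming from \eqref{eq:motsc}. For $1 \leq k < l \leq t$, the boxes with $i_s \equiv \sigma(k) \pmod g$ and $j_s \equiv \sigma(l) \pmod g$ contribute $n_{\sigma(k)} - n_{\sigma(l)} - \mathds{1}_{\sigma(k) < \sigma(l)}$, while the cross contribution with $i_s \equiv -\sigma(l)-1 \pmod g$ and $j_s \equiv \sigma(k) \pmod g$ (and symmetrically) contributes $n_{\sigma(k)} + n_{\sigma(l)} - \mathds{1}_{\sigma(k) + \sigma(l) < g-1}$ after applying the SC position criterion. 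This yields an analogue of \eqref{eq:interhplus}.

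For the diagonal contributions, I use \eqref{lem:Deltasc}: the hook lengths of boxes on $\Delta$ are of the form $2kg + 2\sigma(i) + 1$, and those strictly less than $g = 2t$ correspond to $k = 0$ and $\sigma(i) \leq t-1$. This set is exactly $H_{<g,+} \cap \Delta$, which I leave as an isolated term rather than absorbing it (the even-DD case of Lemma \ref{lem:signschur} was able to absorb the analogous $\mathds{1}_{\sigma(i) < g/2}$ via an identification with $\sgn(\sigma)$; the shift-by-one in \eqref{eq:motsc} obstructs this identification here).

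Summing and reducing modulo $2$, the coefficient-$3$ terms $3 n_{\sigma(i)}$ collapse to $n_{\sigma(i)}$, and $\sum_{i=1}^t n_{\sigma(i)}$ equals the number of positive components of $\phi(\omega)$, hence equals $d_\omega$. The remaining indicator sums $\sum_{i<j}\left(\mathds{1}_{\sigma(i) < \sigma(j)} + \mathds{1}_{\sigma(i) + \sigma(j) < g-1}\right)$ are identified with $\sgn(\sigma) \pmod 2$ by the same parity argument as in the proof of Lemma \ref{lem:signschurbc}. Combining the three contributions produces the announced congruence $\lvert H_{<g,+}\rvert \equiv \sgn(\sigma) + \lvert H_{<g,+} \cap \Delta\rvert + d_\omega \pmod 2$. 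The main technical obstacle is bookkeeping the shift-by-one in \eqref{eq:motsc}, which modifies the $g$-interval boundaries and produces odd diagonal hook lengths; this is precisely what forces both $d_\omega$ and $\lvert H_{<g,+} \cap \Delta\rvert$ to appear, unlike the purely even case of Lemma \ref{lem:signschur} or the purely odd case of Lemma \ref{lem:signschurbc}.
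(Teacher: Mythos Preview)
Your proposal is correct and follows essentially the same approach as the paper, which itself only says ``Following the same kind of enumeration as for Lemma \ref{lem:signschur}'' without giving further details. You in fact supply more of the bookkeeping than the paper does: the shift-by-one in the SC symmetry \eqref{eq:motsc} replacing $\mathds{1}_{\sigma(i)+\sigma(j)<g}$ by $\mathds{1}_{\sigma(i)+\sigma(j)<g-1}$, the identification $\sum_{i=1}^t n_{\sigma(i)}=d_\omega$, and the use of \eqref{lem:Deltasc} to isolate $\lvert H_{<g,+}\cap\Delta\rvert$ are exactly the modifications needed, and your explanation of why both $d_\omega$ and $\lvert H_{<g,+}\cap\Delta\rvert$ must appear (unlike in Lemmas~\ref{lem:signschur} and~\ref{lem:signschurbc}) is the right diagnosis.
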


\subsubsection{Type $A^{(2)}_{2t-1}$ and $\ccdd_{2t}^{'1}$}

Adapting the proof of Lemma \ref{lem:signschur}, we derive the following lemma.
\begin{lm}\label{lem:signschurtypebv}
Set $t$ be a positive integer and $g=2t$.
Set $\lambda\in\ccdd_{g}^{'1}$ and $\psi(\lambda)=(c_k)_{k\in\bbbz}$. 
Let $\mathbf{v}$ be the $V_{g,t}$-coding associated to $\lambda$ together with $\sigma : \lbrace 1,\dots,g\rbrace\rightarrow \lbrace 0,\dots,g-1\rbrace$ as defined in Definition \ref{def:vcoding}. Then
\begin{equation}
\lvert H_{<g,+}\rvert=\#\lbrace s\in\omega, h_s<2t, \varepsilon_s=1\rbrace\equiv d_\lambda+\sgn(\sigma)\pmod{2}.
\end{equation}
\end{lm}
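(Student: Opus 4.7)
My plan is to adapt the enumeration used in the proof of Lemma~\ref{lem:signschur}, now based on the hook description for $\ccdd_{g}^{'1}$ provided by Lemma~\ref{lem:maxhookddprime}. Using Lemma~\ref{lem:indices} together with Corollary~\ref{cor:position}, a box $s \in \lambda$ with $\varepsilon_s = 1$ is in bijection with a pair of indices $(i_s, j_s)$ satisfying $i_s < j_s$, $c_{i_s} = 1$, $c_{j_s} = 0$, and $\lvert i_s + 2 \rvert \leq \lvert j_s \rvert$; I would partition $H_{<g,+}$ by the residues of $i_s$ and $j_s$ modulo $g$ and count contributions residue class by residue class, exactly as was done in \eqref{eq:interhplus}.

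For each pair $1 \leq k < l \leq t$, the contribution splits into two pieces: one of size $n_{\sigma(k)} - n_{\sigma(l)} - \mathds{1}_{\sigma(k) < \sigma(l)}$ from ``column-to-column'' hooks indexed in $\I_{v_l + mg, v_k - g - mg}^{g,+}$, plus one involving $n_{\sigma(k)} + n_{\sigma(l)}$ from ``column-to-mirror-column'' hooks (with a diagonal correction given by Corollary~\ref{cor:position}). The new feature compared with Lemma~\ref{lem:signschur} is the distinguished index $i_0$ with $v_{i_0} = m_1 g - 1$ and $\sigma(i_0) = g - 1$, corresponding to the rectangular quotient component $\nu^{(2t-1)}$ whose word is given by \eqref{eq:nu0typeb}. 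The extra hooks involving this column, shaded green in Lemma~\ref{lem:maxhookddprime}, need to be enumerated separately; however, because $\psi(\nu^{(2t-1)})$ is symmetric around the median in a way that mirrors the generic pattern, these extra contributions pair off and vanish modulo~$2$, leaving the same aggregate parity as a generic pair $(k, i_0)$.

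After gathering common terms and reducing modulo $2$, the computation should collapse to
\begin{equation*}
\lvert H_{<g,+}\rvert \equiv \sum_{i=1}^{t} n_{\sigma(i)} + \sum_{\substack{(i,j) \in \lbrace 1, \dots, t\rbrace^2 \\ i<j}} \left( \mathds{1}_{\sigma(i) < \sigma(j)} + \mathds{1}_{\sigma(i) + \sigma(j) < g} \right) \pmod{2},
\end{equation*}
in complete analogy with \eqref{eq:hpluspair}. The second sum is congruent to $\sgn(\sigma) \pmod 2$ by counting inversions once one accounts for the symmetry $v \mapsto g - v$ in the $V_{g,t}$-coding.

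The main obstacle will be to identify $\sum_{i=1}^{t} n_{\sigma(i)}$ with $d_\lambda$ rather than with $d_\omega$, where $\omega = \core_g(\lambda)$. Unlike the pure core case of Lemma~\ref{lem:signschur}, the Durfee square of $\lambda$ absorbs an extra $m_1$ units coming from the rectangular quotient $\nu^{(2t-1)}$ via Lemma~\ref{lem:DDprimeLittlewood}$(DD'3)$. The bookkeeping should work out because the distinguished index $i_0$ in the $V_{g,t}$-coding contributes $v_{i_0}/g \approx m_1$ precisely to the positive-index side of Theorem~\ref{thm:gk}, so that the sum $\sum_{i=1}^t n_{\sigma(i)}$, which counts the ``$1$''s with negative index in $\psi(\lambda)$, naturally absorbs the $m_1$ new Durfee rows produced by $\nu^{(2t-1)}$. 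Verifying this carefully is the technical heart of the argument; once established, the congruence then follows by combining it with the parity identity above.
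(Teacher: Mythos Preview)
Your overall strategy matches the paper's: adapt the residue-by-residue enumeration of Lemma~\ref{lem:signschur}, isolate the distinguished residue $\sigma(i_0)=g-1$ carrying the rectangular quotient, and recover $d_\lambda$ from the positive-index count. Two points, however, need correction.

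First, the threshold in your displayed congruence is off. Because $\lambda\in\ccdd'$, the word symmetry is $c_{-k-2}=1-c_k$ (equation~\eqref{eq:motddprime}), not $c_{-k}=1-c_k$; this shifts every ``mirror'' index by~$2$. Consequently the indicator $\mathds{1}_{\sigma(i)<g-\sigma(j)}$ from \eqref{eq:hpluspair} must be replaced by $\mathds{1}_{\sigma(i)<g-2-\sigma(j)}$, exactly as the paper notes at the end of its proof. With the unshifted threshold your inversion count will not reduce to $\sgn(\sigma)$ in general.

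Second, the contributions from the $i_0$-column do not simply ``pair off and vanish modulo~$2$''. The paper enumerates them explicitly: for $i<i_0$ one finds $n_{\sigma(i)}-m_1$ hooks with $c_l=0$, $l\equiv\sigma(i)$, $k\equiv g-1$, and $m_1-1$ hooks in the reverse direction; for $i>i_0$ the total is $2(m_1-1)-n_{\sigma(i)}$. These counts are what make the bookkeeping close, and they feed directly into the correct Durfee identity $d_\lambda=m_1-1+\sum_{i\neq i_0}n_{\sigma(i)}$ (not $\sum_{i=1}^t n_{\sigma(i)}$, since $n_{g-1}=0$ on the core and the quotient contributes $m_1-1$, not $m_1$, to the Durfee square). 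Your heuristic ``$v_{i_0}/g\approx m_1$'' points in the right direction but lands on the wrong integer; the exact value $\lfloor v_{i_0}/g\rfloor=m_1-1$ is what is needed.
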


\begin{proof}
The only difference with the proof of Lemma \ref{lem:signschur} resides in the fact that $\lambda$ has a $(2t)$-quotient not necessarily empty. Let $m_1$ be a positive integer as in Proposition \ref{prop:ddprime2t}. Set $i_0\in\lbrace 1,\dots,t\rbrace$ such that $\sigma(i_0)=g-1$ and $v_{i_0}=m_1g-1$. Set $i\in\lbrace 1,\dots,t\rbrace$. If $i<i_0$, then $\sigma(i)<\sigma(i_0)$ and there are exactly $n_{\sigma(i)}-m_1$ pairs of indices $(k,l)$ such that $l-k\in H_{<g,+}$ and $l\equiv\sigma(i)\pmod{g}$ with $c_l=0$ and $k\equiv g-1\pmod{g}$ and $c_k=1$.
Similarly there are $m_1-1$ pair of indices $(k,l)$ such that $l-k\in H_{<g,+}$ and $l\equiv g-1\pmod{g}$ such that $c_l=0$ and $k\equiv \sigma(i)\pmod{g}$ such that $c_k=1$.
Similarly if $i>i_0$, this number becomes $2(m_1-1)-n_{\sigma(i)}$.
Noting that $d_\lambda=m_1-1+\sum_{\substack{i=1\\i\neq i_0}}^t n_{\sigma(i)}$ and following the same steps as for Lemma \ref{lem:signschur} where $\mathds{1}_{\sigma(i)<g-\sigma(j)}$ is replaced by $\mathds{1}_{\sigma(i)<g-2-\sigma(j)} $ concludes the proof.
\end{proof}

\subsubsection{Type $B^{(1)}_{t}$ and $\ccdd_{2t-1}^{'1}$}

Following the same path as for Lemma \ref{lem:signschurtypebv} yields the following lemma.
\begin{lm}\label{lem:signschurtypeb}
Set $t$ be a positive integer and $g=2t-1$.
Set $\lambda\in\ccdd_{g}^{'1}$ and $\psi(\lambda)=(c_k)_{k\in\bbbz}$.
Let $\mathbf{v}$ be the $V_{g,t}$-coding associated to $\lambda$ together with $\sigma : \lbrace 1,\dots,g\rbrace\rightarrow \lbrace 0,\dots,g-1\rbrace$ as defined in Definition \ref{def:vcoding}. Then
\begin{equation}
\lvert H_{<g,+}\rvert=\#\lbrace s\in\omega, h_s<2t-1, \varepsilon_s=1\rbrace\equiv \sgn(\sigma)+\lvert H_{<g,+}\cap \Delta\rvert\pmod{2}.
\end{equation}
\end{lm}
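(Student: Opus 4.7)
The plan is to essentially repeat the enumeration carried out in the proof of Lemma \ref{lem:signschurtypebv}, but accounting for the fact that $g=2t-1$ is now odd rather than even, which will remove the $d_\lambda$ summand and replace it with $\lvert H_{<g,+}\cap\Delta\rvert$, in parallel with how Lemma \ref{lem:signschurbc} differs from Lemma \ref{lem:signschur}.

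First I would describe $H_{<g,+}$ through the word $\psi(\lambda)$ via Lemma \ref{lem:indices} and Corollary \ref{cor:position}, and decompose it according to the residues $\pmod g$ of the indices $(i_s,j_s)$ attached to each box $s$. For each ordered pair $(k,l)\in\{1,\dots,t\}^2$ with $k<l$ and both distinct from the exceptional index $i_0$ satisfying $\sigma(i_0)=g-1$, the counting contributed by $g$-intervals of type $\I_{v_l+mg,v_k-g-mg}^{g,+}$ yields $n_{\sigma(k)}-n_{\sigma(l)}-\mathds{1}_{\sigma(k)<\sigma(l)}$ boxes, while the $g$-intervals involving opposite-sign residues yield $n_{\sigma(k)}+n_{\sigma(l)}-\mathds{1}_{\sigma(k)+\sigma(l)<g-2}$ boxes (the shift by $-2$ instead of $-0$ coming from Lemma \ref{lem:maxhookddprimeimpair}, exactly as in the $\ccdd'$ case of Lemma \ref{lem:signschurtypebv}). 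For $i=i_0$ and each $i\neq i_0$, following the bookkeeping in the proof of Lemma \ref{lem:signschurtypebv}, the pairs involving the exceptional residue give respectively $n_{\sigma(i)}-m_1$ or $2(m_1-1)-n_{\sigma(i)}$ contributions, so summed they contribute an even count (modulo $2$) thanks to the identity $d_\lambda=m_1-1+\sum_{i\neq i_0}n_{\sigma(i)}$.

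Next, reduce the whole count modulo $2$. Because $g$ is odd, no index $i$ satisfies $\sigma(i)=g/2$, so there is no ``middle column" term $\mathds{1}_{\sigma(i)<g/2}$ analogous to the one that produced $d_\lambda$ in Lemma \ref{lem:signschurtypebv}. What remains, after cancelling the $n_{\sigma(i)}$ contributions in pairs, is
\begin{equation*}
\lvert H_{<g,+}\rvert \equiv \sum_{\substack{(i,j)\in\{1,\dots,t\}^2\\ i<j}}\bigl(\mathds{1}_{\sigma(i)<\sigma(j)}+\mathds{1}_{\sigma(i)<g-2-\sigma(j)}\bigr)\pmod 2,
\end{equation*}
which is the $\ccdd'$ counterpart of \eqref{eq:hplusimpair}.

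Finally I would identify the right-hand side with $\sgn(\sigma)+\lvert H_{<g,+}\cap\Delta\rvert$. Adding and subtracting $\mathds{1}_{\sigma(i)+1<g/2}$ as in the end of the proof of Lemma \ref{lem:signschurbc} turns the double sum into $\sgn(\sigma)$ plus the cardinality of $\{i\in\{1,\dots,t\}:\sigma(i)+1<g/2\}$, and \eqref{lem:Deltaddprime} identifies this last quantity with $\lvert H_{<g,+}\cap\Delta\rvert$. The main obstacle is the bookkeeping around the exceptional index $i_0$ with $\sigma(i_0)=g-1$, where the non-empty component $\nu^{(2t-2)}$ of the $g$-quotient forces us to split the count differently from the $g$-core situation and to verify that these special terms collapse modulo $2$; once that is handled, everything else is a parity-preserving repackaging of the enumeration from the earlier lemmas.
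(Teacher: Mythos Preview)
Your proposal is correct and follows essentially the same approach as the paper, which itself gives no detailed proof but simply states that one follows the same path as in Lemma~\ref{lem:signschurtypebv}. You have correctly identified that the argument is the superposition of two modifications of Lemma~\ref{lem:signschur}: the handling of the exceptional index $i_0$ coming from the non-empty quotient component (as in Lemma~\ref{lem:signschurtypebv}), and the odd-$g$ parity trick of adding and subtracting the diagonal indicator (as in Lemma~\ref{lem:signschurbc}), with the $\ccdd'$ shift $\sigma(i)+1$ replacing $\sigma(i)$ via \eqref{lem:Deltaddprime}.
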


\subsubsection{Type $D^{(1)}_{t}$ and and $\ccdd_{2t-2}^{'2}$}

Following the same path as for Lemma \ref{lem:signschurtypebv} yields the following lemma.
\begin{lm}\label{lem:signschurtyped}
Set $t$ be a positive integer and $g=2t-2$.
Set $\lambda\in\ccdd_{g}^{'2}$ and $\psi(\lambda)=(c_k)_{k\in\bbbz}$. Let $\mathbf{v}$ be the $V_{g,t}$-coding associated to $\lambda$ together with $\sigma : \lbrace 1,\dots,g\rbrace\rightarrow \lbrace 0,\dots,g-1\rbrace$ as defined in Definition \ref{def:vcoding}. Then
\begin{equation}
\lvert H_{<g,+}\rvert=\#\lbrace s\in\omega, h_s<2t-2, \varepsilon'_s=1\rbrace\equiv d_\lambda+\sgn(\sigma)\pmod{2}.
\end{equation}
\end{lm}

\begin{proof}
The proof is essentially the same as the one of Lemma \ref{lem:signschurtypebv} with the difference that there are $n_{\sigma(i)}-\mathds{1}_{\sigma(i)<g/2-1}$ for $i\in\lbrace
1,\dots,t\rbrace \setminus \lbrace i_0,i_1\rbrace$  elements coming from the letters whose indices of ``$0"$, respectively ``$1$", in the corresponding word are congruent to $\sigma(i)\pmod{g}$, respectively $g-2-\sigma(i)\pmod{g}$. The term $\mathds{1}_{\sigma(i)<g/2-1}$ is compensated by the elements whose indices of ``$0$" is congruent to $g/2-1\pmod{g}$.
\end{proof}

\subsection{Rewriting of the Macdonald identities for affine root systems}\label{sec:Macdo}
%
Recall that Macdonald identities of Proposition \ref{prop:intermed_rewrite} involve power series over sublattices of $\bbbz^t$ of a formal variable $T$ whose coefficients are fractions of two determinants. The proofs in this section all follow the same steps:
\begin{enumerate}
\item interpret the exponent of the formal variable $T$ as one of the quadratic forms from Section \ref{chap3:quad},
\item then use the correspondence between vectors of integers and partitions studied in Section \ref{chap3:quad} together with the $V_{g,t}$-coding. Note that the convention used by Rosengren and Schlosser for the product of classical roots, e.g. $\Delta_A, \Delta_B, \Delta_C$ and $\Delta_D$ does not correspond to the product of classical roots in the denominator of characters of classical types, one needs to compose the $\sigma$ from Definition \ref{def:vcoding} with the permutation $j\mapsto t-j$ in order to reorder the fraction of determinants so that it corresponds to the classical definition of the irreducible character,
\item finally the signature coming from rearrangement of the columns of the determinants to the numerator is reinterpreted as some statistics associated with the corresponding partition thanks to the Lemmas from Section \ref{chap4:sign}.
\end{enumerate}

\subsubsection{Type $A^{(1)}_{t-1}$}

Here we prove Theorem \ref{prop:MacdotypeA}. First note that the right-hand side of \eqref{eq:macdoAfin} is the desired right-hand side of \eqref{eq:macdoa}. Recall the left-hand side of \eqref{eq:macdoAfin}
\begin{equation}\label{eq:macdoAlhs}
\sum_{\substack{\mathbf{m}\in\bbbz^t\\m_1+\dots+m_t=0}}T^{t\lVert m\rVert^2/2+\sum_{i=1}^t(i-1)m_i}\frac{\underset{1\le i,j\le t}{\det}\left(x_i^{tm_j+j-1}\right)}{\underset{1\le i,j\le t}{\det}\left(x_i^{j-1}\right)}.
\end{equation}

Set $\omega\in\ccp_{(t)}$ such that $\phi(\omega)=(m_1,\dots,m_t)$, where $\phi$ is the bijection from Theorem \ref{thm:gk} and $\mathbf{v}$ its associated $V_{t,t}$-coding. By definition of the $V_{t,t}$-coding, we have that \eqref{eq:ppoids} corresponds to \eqref{eq:gk}. Hence the exponent of $T$ is the weight of $\omega$. It remains to deal with the quotient of determinants.

Note that the exponents $tm_j+j-1$ of the $x_i$'s in \eqref{eq:macdoAlhs} can be negative, whereas in the definition of a Schur polynomial in \eqref{gldef} the exponents of the $x_i$'s in the determinant are positive.
To prove Theorem \ref{prop:MacdotypeA}, we will first reorder the columns of the matrix $\left(x_i^{tm_j+j-1}\right)_{1\le i,j\le t}$ so that the sequence $(tm_{\sigma(j)}+\sigma(j)-1)$ is in decreasing order. Set $\sigma:\lbrace 1,\dots,t\rbrace\mapsto\lbrace 0,\dots,t-1\rbrace$ as defined in Definition \ref{def:vcoding}. Take $\sigma'\in S_t$ such that $\sigma'(i):=1+\sigma(t-i)$ and $\sigma'^{-1}$ the inverse of $\sigma'$. Therefore for all $j\in\lbrace 1,\dots,t\rbrace$, $tm_j+j-1=v_{(\sigma')^{-1}(t-j)}$ and then reorder the columns to both the denominator and the numerator with the permutation that sends $j$ to $t-j$.

Hence, using Lemma \ref{lem:signschurA}, \eqref{eq:macdoAlhs} becomes
\begin{equation}\label{eq:macdoAlhsfin}
\sum_{\omega\in\ccp_{(t)}}(-1)^{\lvert H_{< t} \rvert}T^{\lvert \omega\rvert}\frac{\underset{1\le i,j\le t}{\det}\left(x_i^{v_j}\right)}{\underset{1\le i,j\le t}{\det}\left(x_i^{t-j}\right)},
\end{equation}
where $\mathbf{v}$ is the $V_{t,t}$-coding associated bijectively with $\omega$, as underlined in Remark \ref{vcodingtypeA}.
The final step of the proof to rewrite the quotient of determinants as a Schur polynomial is to transform \eqref{eq:macdoAlhsfin} so that all of the exponents of the $x_i$'s become nonnegative. By definition of $\mathbf{v}$, $v_t=\min(v_1,\dots,v_t)$. Therefore $v_i-v_t\geq 0$ for any $i\in\lbrace 1,\dots,t \rbrace$ and factorizing by $\prod_{i=1}^t x_i^{v_t}$, one has

$$\underset{1\le i,j\le t}{\det}\left(x_i^{v_j}\right)=\underset{1\le i,j\le t}{\det}\left(x_i^{v_j-v_t}\right)\prod_{i=1}^t x_i^{v_t}.$$

To complete the proof of Theorem \ref{prop:MacdotypeA}, it remains to prove that $v_t=-\ell(\omega)$ where $\ell(\omega)$  is the number of parts of $\omega$. Let $\psi(\omega)=(c_k)_{k\in\bbbz}$ be the word corresponding to $\omega$. Then by Definition \ref{def:vcoding}, $v_t=\min\lbrace k\in\bbbz \mid c_k=1\rbrace$. The number of parts of $\omega$ then corresponds to the number of letters ``$0$'' whose indices are greater than $v_t$. Therefore
\begin{align*}
\ell(\omega)&=\sum_{i=1}^{t-1} (m_{\sigma(i)}-m_{\sigma(t)}-\mathds{1}_{\sigma(i)<\sigma(t)})\\
&=\sum_{i=1}^{t-1} m_{\sigma(i)}-(t-1)m_{\sigma(t)}-\sigma(t).
\end{align*}

By using $\sum_{i=1}^t m_{\sigma(i)}=0$, one derives
$\ell(\omega)=-v_t$. By setting $\mu\in\ccp$ such that $\mu_i=v_i-v_t+i-t$, we conclude the proof.

\subsubsection{Type $C^{(1)}_{t}$}

\begin{proof}[\textbf{Proof of Theorem \ref{prop:MacdotypeC}}]
First note that the right-hand side of \eqref{eq:misign} is the desired right-hand of \eqref{eq:macdoc}. We recall the left-hand side of \eqref{eq:misign} here:
\begin{multline}
\sum_{\mathbf{m}\in\bbbz^t}T^{(t+1)\lVert m\rVert^2+\sum_{i=1}^tm_i(i-t-1)}\frac{\underset{1\le i,j\le t}{\det}\left(x_i^{(2t+2)m_j+j-t-1}-x_i^{-((2t+2)m_j+j-t-1)}\right)}{\underset{1\le i,j\le t}{\det}(x_i^{j-t-1}-x_i^{-(j-t-1)})}.\label{eq:misignlhs}
\end{multline}

It remains to order the $m_j$'s to derive a sum of symplectic Schur functions $\sp$ (defined in \eqref{spdef}). Set $\omega\in\ccdd_{(g)}$ such that $\phi(\omega)=(0,m_1,\dots,m_t,0,-m_t,\dots,-m_1)$, where $\phi$ is the bijection from Theorem \ref{thm:gk}, $\mathbf{v}$ its $V_{g,t}$-coding with $g=2t+2$ as in Definition \ref{def:vcoding} and $\mathbf{r}=(v_1-g/2,\dots,v_t-g/2)$ the vector defined in Theorem \ref{thm:DDpair}. Let $\sigma'\in S_{g}$ be such that the sequence $(\phi(\omega)_{\sigma'(i)-1},\sigma'(i))_{i\in\lbrace 1,\dots,g\rbrace}$ is increasing with respect to the lexicographic order. Note that by definition of $\sigma$ in Definition \ref{def:vcoding}, for any $i\in\lbrace 1,\dots,t\rbrace$, $\sigma'(g-i)=\sigma(g-i)+1$. Hence for any $i\in\lbrace 1,\dots,t\rbrace$

$$gm_i+i-g/2=\begin{cases} r_{(\sigma')^{-1}(i)} \text{ if } gm_i+i-g/2<-(gm_i+i-g/2),\\
 -r_{(\sigma')^{-1}(i)} \text{ otherwise}.
 \end{cases}
$$
 Moreover the exponent of $T$ in \eqref{eq:misign} corresponds to the quadratic form \eqref{eq:gkddpair}. Recall that in Theorem \ref{thm:DDpair} the vector $\mathbf{r}$ is in bijective correspondence with $\mathbf{v}$ and so with $\omega\in\ccdd_{(g)}$. Hence the left-hand side of \eqref{eq:misign} becomes

\begin{equation*}
\sum_{\mathbf{r}\in\bbbz^t}\sgn(\sigma)T^{\lvert\omega\rvert/2}\frac{\det\left(x_i^{r_j}-x_i^{-r_j}\right)}{\det(x_i^{t+1-j}-x_i^{j-t-1})}.
\end{equation*}

Set $\mu_j:=r_j-g/2+j$ for all $j\in\lbrace 1,\dots,t\rbrace$. By definition of the $V_{g,t}$-coding and \eqref{eq:vcoding}, we have $\mu_1\geq \dots\geq \mu_t\geq 0$. By Lemma \ref{lem:signschur} $\sgn(\sigma)$ is equal to $(-1)^{d_{\omega}+\lvert H_{<g,+}\rvert}$ which concludes the proof.
\end{proof}

\subsubsection{Type $B^{(1)}_{t}$}

Following the same path as before, one rewrites the Macdonald identity for type $B^{(1)}_{t}$ as follows.
\begin{thm}\label{prop:Macdotypeb}
Set $t\in\mathbb{N}^*$ such that $t\geq 3$ and $g=2t-1$. The Macdonald identity for type $B^{(1)}_{t}$ can be rewritten as follows:
\begin{equation}\label{eq:macdob}
\sum_{\lambda\in\ccdd_{g}^{'1}} (-1)^{\lvert H_{<g,+}\rvert+\lvert H_{<g,+}\cap \Delta\rvert}T^{\lvert\lambda\rvert/2}\oo_{\mu}({\bf x})=\left(T;T\right)_\infty^{t} K_T(t,\mathbf{x})\prod_{i=1}^t \left(Tx_i^{\pm};T\right)_\infty,
\end{equation}
where $\mathbf{v}$ is the $V_{g,t}$-coding corresponding to $\lambda$ (see Definition \ref{def:vcoding}), $\Delta$ is the main diagonal of $\lambda$, $H_{<g,+}:=\lbrace s\in\omega, h_s<g, \varepsilon_s=1\rbrace$ and $\mu\in\ccp$ is such that
$\mu_i:=\left(v_i+i-g\right)$ for all $1\leq i \leq t$.
\end{thm}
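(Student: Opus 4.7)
The plan is to mirror the proofs of Theorems \ref{prop:MacdotypeA} and \ref{prop:MacdotypeC}, starting from the type $\tilde{B}_t$ identity in Proposition \ref{prop:RS}(2). First I would expand the factor $\prod_i\bigl((T^{m_i}x_i)^{\sigma(i)-t}-(T^{m_i}x_i)^{t+1-\sigma(i)}\bigr)$ by introducing $\xi\in\{\pm1\}^t$, encoding which of the two terms is chosen in each factor, with a sign $\prod_i\xi_i$. After the substitution $m_i\mapsto\xi_im_i$, which preserves $m_i^2$, and then relabeling $m_i\mapsto m_{\sigma(i)}$ to pull the $\sgn(\sigma)$ into a determinant, the left-hand side becomes, up to collecting powers of $T$:
\begin{equation*}
\sum_{\mathbf{m}\in\bbbz^t} T^{\frac{2t-1}{2}\lVert\mathbf{m}\rVert^2+\sum_i(i-t-1/2)m_i}\,\underset{1\le i,j\le t}{\det}\!\bigl(x_i^{(2t-1)m_j+j-t}-x_i^{-((2t-1)m_j+j-t-1)+1}\bigr),
\end{equation*}
divided by $\underset{1\le i,j\le t}{\det}(x_i^{j-t}-x_i^{t-j+1})=\Delta_B(\mathbf{x})$ (up to an overall sign absorbed by column reversal). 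The exponent of $T$ is then precisely the weight formula \eqref{eq:gkddnuimpair} attached to elements of $\ccdd_{2t-1}^{'1}$, where the extra coordinate $m_1$ of that formula is absorbed by one of the $m_i$ above.

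Next I would invoke Proposition \ref{prop:core_vcoding} for $g=2t-1$ to bijectively reparametrize the sum over $\mathbf{m}\in\bbbz^t$ as a sum over $\lambda\in\ccdd_{2t-1}^{'1}$ through the $V_{g,t}$-coding $\mathbf{v}$, so that $T^{(\cdot)}=T^{\lvert\lambda\rvert/2}$ by the weight identity in Theorem \ref{thm:DDtquotimpair}. The reordering of coordinates needed to make the entries decrease imposes a sorting permutation, whose sign by Lemma \ref{lem:signschurtypeb} is exactly $(-1)^{\lvert H_{<g,+}\rvert+\lvert H_{<g,+}\cap\Delta\rvert}$. This produces the prefactor on the left-hand side of \eqref{eq:macdob}.

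The remaining step is to recognize the quotient of determinants as the odd orthogonal character. Setting $\mu_i:=v_i+i-g$, inequality \eqref{eq:vcoding} gives $\mu_1\ge\cdots\ge\mu_t\ge 0$, and the matrix entries $x_i^{(2t-1)m_j+j-t}-x_i^{-((2t-1)m_j+j-t-1)+1}$ translate via $\mathbf{v}$ and the shift $\mu_i=v_i+i-g$ to the form $x_i^{\mu_j+t-j+1}-x_i^{-(\mu_j+t-j)}$ appearing in the second line of \eqref{oodef}, so the quotient is $\oo_\mu(\mathbf{x})$. Combined with the right-hand side of Proposition \ref{prop:RS}(2), after $\Delta_B(\mathbf{x})$ cancels against the denominator coming from the odd orthogonal character, this yields exactly \eqref{eq:macdob}.

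The main obstacle is bookkeeping: matching the sign produced by converting between the Rosengren--Schlosser determinant $\det(x_i^{j-t}-x_i^{-(j-t)+1})$ in \eqref{def:deltab} and the denominator of $\oo_\mu$ in \eqref{oodef} (differing by a column reversal and overall sign $(-1)^t$), tracking how the $(2t-1)$-quotient component $\nu^{(2t-2)}$ associated with $m_1$ interacts with the $V_{g,t}$-coding so that \eqref{eq:gkddnuimpair} and the quadratic form extracted from the Macdonald identity coincide, and verifying that the boundary case $\mu_t\ge 0$ holds. These are the same technical issues as in the proof of Theorem \ref{prop:MacdotypeC}, but now with $g$ odd, so that the "brown" central slot present in type $\tilde{C}_t$ is absent, matching the absence of $\tau(r_i)/\tau(i)$ factors in \eqref{eq:thmddprimepairtimpairquot}.
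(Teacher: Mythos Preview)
Your outline captures the broad strategy but misses the one genuinely new ingredient that distinguishes type $\tilde{B}_t$ from type $\tilde{C}_t$: the factor $\tfrac{1}{2}$ in front of the sum in Proposition~\ref{prop:RS}(2) and the fact that $\ccdd_{2t-1}^{'1}$ is in bijection only with $\bbbz^{t-1}\times\bbbn^*$, \emph{not} with all of $\bbbz^t$. You cannot ``bijectively reparametrize the sum over $\mathbf{m}\in\bbbz^t$ as a sum over $\lambda\in\ccdd_{2t-1}^{'1}$'' as you claim; only the vectors with $m_1\ge 1$ correspond to such partitions (see \eqref{eq:gkddnuimpair}). The paper's proof handles this by splitting the sum over $\bbbz^t$ into the pieces $m_1\ge 1$ and $m_1\le 0$, and then observing that the quadratic form in the exponent of $T$ is invariant under $m_1\mapsto 1-m_1$, so the second piece equals the first. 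This is precisely what absorbs the prefactor $\tfrac{1}{2}$, and without it your argument does not close.

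A second, more minor issue: the $\xi$-trick with the substitution $m_i\mapsto \xi_i m_i$ that works cleanly in type $\tilde{C}_t$ relies on the two exponents $\sigma(i)-t-1$ and $t+1-\sigma(i)$ being negatives of each other. In type $\tilde{B}_t$ the exponents are $\sigma(i)-t$ and $t+1-\sigma(i)$, which sum to $1$ rather than $0$, so that substitution does not transport the determinant as directly as you suggest. The paper bypasses this by going straight to the determinantal form \eqref{eq:misignb} and then performing the $m_1$-split described above; your sketch should do the same rather than invoking the symplectic-style $\xi$ manipulation.
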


\begin{proof}

Recall from Proposition \ref{prop:intermed_rewrite} the rewriting of the Macdonald identity in type $B_t^{(1)}$ \eqref{eq:misignbterm}
\begin{multline*}
\sum_{\mathbf{m}\in\mathbb{N}^*\times\bbbz^{t-1}}T^{(2t-1)\lVert m\rVert^2/2-m_1(2t-1)/2+\sum_{i=2}^tm_i(i-t-1/2)}\\\times\frac{\underset{1\le i,j\le t}{\det}\left(x_i^{(2t-1)m_j+j-t}-x_i^{-((2t-1)m_j+j-t-1)}\right)}{\underset{1\le i,j\le t}{\det}\left(x_i^{j-t}-x_i^{-(j-t)+1}\right)}
=\left(T;T\right)_\infty^{t} K_T(t,\mathbf{x})\prod_{i=1}^t \left(Tx_i^{\pm};T\right)_\infty ,
\end{multline*}
where $K_T(t,\mathbf{x})$ as defined in \eqref{eq:kt}.

By Proposition \ref{prop:ddprime2t-1core}, the exponent of $T$ is the quadratic form that corresponds to half the weight of $\lambda\in\ccdd_{2t-1}^{'1}$ such that 
\begin{itemize}
\item setting $\core_{2t-1}(\lambda)=\omega$ then $\phi(\omega)=(m_2,\dots,m_{t-1},-m_{t-1},\dots,-m_2,0)$,
\item  and $\quot_{2t-1}(\lambda)=\nu^{(2t-2)}$ with $\lvert\nu^{(2t-2)}\rvert=m_1(m_1-1)$.
\end{itemize}

%
%
As in the case of type $C_t^{(1)}$, let $\sigma$ as defined Definition \ref{def:vcoding}, and define $\sigma'\in S_g$ such that for any $i\in\lbrace 1,\dots,t\rbrace$, $\sigma'(g-i)=\sigma(g-i)+1$. By applications of $\sigma'$ to the columns of $\underset{1\le i,j\le t}{\det}\left(x_i^{(2t-1)m_j+j-t}-x_i^{-((2t-1)m_j+j-t-1)}\right)$ and Lemma \ref{lem:signschurtypeb}, one derives Theorem \ref{prop:Macdotypeb}.
\end{proof}

\subsubsection{Type $A^{(2)}_{2t-1}$}

From \eqref{eq:misignbvterm} of Proposition \ref{prop:intermed_rewrite}, by Proposition \ref{prop:ddprime2t}, using Lemma \ref{lem:signschurtypebv} and the same computations, one derives the following result.
\begin{thm}\label{prop:Macdotypebv}
Set $t\in\mathbb{N}^*$ such that $t\geq 3$ and $g=2t$. The Macdonald identity for type $A^{(2)}_{2t-1}$ can be rewritten as follows:
\begin{equation}\label{eq:macdobv}
\sum_{\lambda\in\ccdd_{g}^{'1}} (-1)^{d_{\lambda}+\lvert H_{<g,+}\rvert}T^{\lvert\lambda\rvert/2}\sp_{\mu}({\bf x})=\left(T;T\right)_\infty^{t} K_T(t,\mathbf{x})\prod_{i=1}^t \left(T^2x_i^{\pm 2};T^2\right)_\infty,
\end{equation}
where $\mathbf{v}$ is the $V_{g,t}$-coding corresponding to $\lambda$ (see Definition \ref{def:vcoding}), $d_\lambda$ is the length of the main diagonal of $\lambda$, $H_{<g,+}:=\lbrace s\in\omega, h_s<g, \varepsilon_s=1\rbrace$ and $\mu\in\ccp$ is such that
$\mu_i:=v_i+i-g$ for all $1\leq i \leq t$.
\end{thm}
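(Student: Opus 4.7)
The plan is to adapt verbatim the strategy employed in the proofs of Theorems \ref{prop:MacdotypeC} and \ref{prop:Macdotypeb}, replacing the parameter $g$ by $2t$ and using the combinatorial input from Lemma \ref{lem:signschurtypebv} together with the quadratic form \eqref{eq:gkddnupairbvtquot}. First, starting from Proposition \ref{prop:RS}(3) (the Macdonald identity for $\tilde{B}_t^\vee$), I would perform the standard manipulations of Section \ref{sec:Macdointer}: expand each factor $\bigl((T^{m_i}x_i)^{\sigma(i)-t-1}-(T^{m_i}x_i)^{t+1-\sigma(i)}\bigr)$ by introducing signs $\xi\in\{\pm 1\}^t$, substitute $m_i\mapsto \xi_i m_i$ to absorb these signs, swap $m_i\mapsto m_{\sigma(i)}$, extract the powers of $T$, and finally re-sum the $\xi$ and $\sigma$ variables to recover a determinant. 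Up to the factor $\tfrac{1}{2}$, the left-hand side becomes
\begin{equation*}
\frac{1}{2}\sum_{\mathbf{m}\in\bbbz^t} T^{t\lVert\mathbf{m}\rVert^2+\sum_{i=1}^t m_i(i-t-1)}\,\frac{\underset{1\le i,j\le t}{\det}\bigl(x_i^{2tm_j+j-t-1}-x_i^{-(2tm_j+j-t-1)}\bigr)}{\underset{1\le i,j\le t}{\det}\bigl(x_i^{j-t-1}-x_i^{-(j-t-1)}\bigr)},
\end{equation*}
the denominator appearing after cancellation of $\Delta_C(\mathbf{x})$ against the eventual Schur denominator.

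Next, I would split the sum over $\mathbf{m}$ into the two pieces $m_1\in\bbbn^*$ and $m_1\le 0$. On the first piece, Proposition \ref{prop:core_vcoding} and \eqref{eq:gkddnupairbvtquot} show that the exponent of $T$ equals $|\lambda|/2$ for a unique $\lambda\in\ccdd_{2t}^{'1}$, whose core $\omega$ is determined by $(m_2,\dots,m_t)$ through the $\ccdd'$-symmetry (so that $\phi(\omega)$ is an antisymmetric vector of length $2t$ built from these entries) and whose quotient component $\nu^{(2t-1)}$ is the rectangular partition of size $m_1(m_1-1)$. On the second piece, the involution $m_1\mapsto 1-m_1$ fixes the quadratic form (since $m_1(m_1-1)$ is invariant) and bijects $m_1\le 0$ with $m_1\in\bbbn^*$; the minus sign it produces on the first column of the numerator determinant is exactly absorbed by the corresponding change of $\sgn(\sigma)$ dictated by Lemma \ref{lem:signschurtypebv}. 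Consequently the two halves of the sum are equal and together cancel the prefactor~$\tfrac{1}{2}$.

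Having re-indexed the sum by $\lambda\in\ccdd_{2t}^{'1}$, I would reorder the columns of the numerator determinant using the bijection $\sigma\colon\{1,\dots,g\}\to\{0,\dots,g-1\}$ from Definition \ref{def:vcoding} associated with the $V_{g,t}$-coding $\mathbf{v}$ of $\lambda$. This produces a factor $\sgn(\sigma)$ and turns the entries into $x_i^{v_j}-x_i^{-v_j}$ (after factoring common monomials from each row). Setting $\mu_i:=v_i+i-g$, the inequalities \eqref{eq:vcoding} guarantee $\mu_1\ge\cdots\ge\mu_t\ge 0$, so the determinantal quotient coincides with the symplectic Schur function $\sp_\mu(\mathbf{x})$ from \eqref{spdef}. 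Lemma \ref{lem:signschurtypebv} converts $\sgn(\sigma)$ into $(-1)^{d_\lambda+|H_{<g,+}|}$, yielding the claimed sign prefactor in \eqref{eq:macdobv}. The product side of Proposition \ref{prop:RS}(3) matches the right-hand side of \eqref{eq:macdobv} after cancelling $\Delta_C(\mathbf{x})$ against the denominator of $\sp_\mu$ and simplifying the remaining $T$-Pochhammer factors.

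The main technical obstacle is the careful bookkeeping of signs in the splitting step: one has to verify that the sign flip produced on the first column of the numerator determinant by the involution $m_1\mapsto 1-m_1$ is precisely compensated by the change in $\sgn(\sigma)$ coming from the new $V_{g,t}$-coding, which itself is driven by the $(-1)^{d_\lambda}$ contribution in Lemma \ref{lem:signschurtypebv}. Beyond this, the argument is a routine adaptation of the proof of Theorem \ref{prop:Macdotypeb}, with $g=2t-1$ replaced by $g=2t$ everywhere.
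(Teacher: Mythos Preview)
Your proposal is correct and follows essentially the same approach as the paper: the paper's proof consists of the single sentence ``Using Lemma \ref{lem:signschurtypebv} and the same computations, one derives the following result,'' i.e.\ it defers entirely to the argument of Theorem \ref{prop:Macdotypeb} with $g=2t-1$ replaced by $g=2t$, and that is precisely the scheme you spell out (expand--substitute--determinant, split on the sign of $m_1$, apply the involution $m_1\mapsto 1-m_1$, reorder via the $V_{g,t}$-coding, invoke Lemma \ref{lem:signschurtypebv}). One minor slip: after reordering, the determinant entries are $x_i^{r_j}-x_i^{-r_j}$ with $r_j=v_j-t+1$ (so that $\mu_j+t-j+1=r_j$ matches \eqref{spdef}), not $x_i^{v_j}-x_i^{-v_j}$; no monomial factorisation from the rows is needed here.
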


\subsubsection{Type $D^{(2)}_{t+1}$}

Following the same steps and using Lemma \ref{lem:signschursc} one can also rewrite the Macdonald identities for type $D^{(2)}_{t+1}$ as follows.
\begin{thm}\label{prop:MacdotypeCv}
Set $t\in\mathbb{N}^*$ such that $t\geq 2$ and $g=2t$. The Macdonald identity for type $D^{(2)}_{t+1}$ can be rewritten as follows:
\begin{multline}\label{eq:macdocv}
\sum_{\omega\in\ccsc_{(g)}} (-1)^{\lvert H_{<g,+}\rvert+\lvert H_{<g,+}\cap \Delta\rvert+d_\omega}T^{\lvert\omega\rvert/2}\oo_{\mu}({\bf x})\\
=\left(T^{1/2};T^{1/2}\right)_\infty\left(T;T\right)_\infty^{t-1}
K_T(t,{\bf x})\prod_{i=1}^t \left(T^{1/2}x_i^{\pm};T^{1/2}\right)_\infty,
\end{multline}
where $\mathbf{v}$ is the $V_{g,t}$-coding corresponding to $\omega$ (see Definition \ref{def:vcoding}), $\Delta$ is the main diagonal of $\omega$, $H_{<g,+}:=\lbrace s\in\omega, h_s<g, \varepsilon_s=1\rbrace$ and $\mu\in\ccp$ is such that
$\mu_i:=v_i+i-g$ for all $1\leq i \leq t$.
\end{thm}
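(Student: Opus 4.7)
The plan is to adapt the strategy already used for types $\tilde{A}_{t-1}$ (Theorem \ref{prop:MacdotypeA}) and $\tilde{C}_t$ (Theorem \ref{prop:MacdotypeC}), starting from item $(5)$ of Proposition \ref{prop:RS} for type $\tilde{C}_t^{\vee}$. First, I would expand the product inside the summand in the same way as was done to pass from \eqref{eq:macdostart} to \eqref{eq:misign}: namely, I introduce signs $\xi \in \lbrace \pm 1\rbrace^t$ coming from the expansion of the two-term factors, perform the substitution $m_i \to \xi_i m_i$ (which leaves $m_i^2$ invariant and hence leaves the quadratic form at the exponent of $T$ unchanged), substitute $m_i \to m_{\sigma(i)}$ to decouple the permutation sum from the lattice sum, and gather the inner alternating sum back into a determinant. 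This produces
\begin{equation*}
\sum_{\mathbf{m}\in\bbbz^t} T^{\,t\lVert \mathbf{m}\rVert^2+\sum_{i=1}^t m_i(i-t-1/2)} \frac{\underset{1\le i,j\le t}{\det}\!\left(x_i^{2tm_j+j-t}-x_i^{-(2tm_j+j-t-1)}\right)}{\underset{1\le i,j\le t}{\det}\!\left(x_i^{j-t}-x_i^{-(j-t-1)}\right)}
\end{equation*}
multiplied by the desired right-hand side of \eqref{eq:macdocv}, where the denominator equals $\Delta_B(\mathbf{x})$ by \eqref{def:deltab}.

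Next, I would identify the exponent of $T$ with half the weight of a self-conjugate $(2t)$-core. Setting $g=2t$ and $\omega \in \ccsc_{(g)}$ corresponding through Theorem \ref{thm:gk} to the vector $\phi(\omega)=(m_0,\dots,m_{t-1},-m_{t-1},\dots,-m_0)$ (using the symmetry $n_i = -n_{g-1-i}$ valid for self-conjugate $g$-cores, see \eqref{eq:gkscpair}), the quadratic form $t\lVert \mathbf{m}\rVert^2+\sum m_i(i-t-1/2)$ becomes exactly $|\omega|/2$ via \eqref{eq:gkscpair}. Using the $V_{g,t}$-coding $\mathbf{v}$ associated to $\omega$ and the vector $\mathbf{r}=(v_i-g/2+1/2)_{1\le i\le t}$ from Theorem \ref{thm:SC}, the sum becomes a sum over $\omega \in \ccsc_{(g)}$.

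Then I would reorder the columns of the determinant so that the exponents $2tm_j+j-t$ appear in decreasing order. The reordering is performed by the permutation encoded by the map $\sigma$ of Definition \ref{def:vcoding}: writing $\sigma'\in S_g$ with $\sigma'(i):=g-\sigma(i)$, one has $2tm_j+j-t = v_{(\sigma')^{-1}(j)}+1-g/2$ or its opposite depending on the sign of the exponent, exactly as in the treatment of type $\tilde{C}_t$. The resulting sign is $\sgn(\sigma)$, which by Lemma \ref{lem:signschursc} equals $(-1)^{d_\omega+\lvert H_{<g,+}\rvert+\lvert H_{<g,+}\cap \Delta\rvert}$. After reordering, the determinant becomes $\det(x_i^{v_j+1-g/2}-x_i^{-(v_j-g/2)})=\det(x_i^{\mu_j+t-j+1/2}-x_i^{-(\mu_j+t-j+1/2)})$ with $\mu_j := v_j+j-g$, which is precisely the numerator of the odd orthogonal character $\oo_\mu(\mathbf{x})$ as defined in \eqref{oodef}. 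Dividing by the Weyl denominator $\Delta_B(\mathbf{x})$ in the same form gives $\oo_\mu(\mathbf{x})$.

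The main obstacle is the careful bookkeeping of half-integer shifts. Unlike type $\tilde{C}_t$ where the two exponents in the summand are symmetric ($(T^{m_i}x_i)^{\sigma(i)-t-1}$ and $(T^{m_i}x_i)^{t+1-\sigma(i)}$), in type $\tilde{C}_t^{\vee}$ they are $(T^{m_i}x_i)^{\sigma(i)-t}$ and $(T^{m_i}x_i)^{t+1-\sigma(i)}$; this asymmetry is exactly what produces the odd orthogonal (type~B) character rather than a symplectic (type~C) one, and one must check that the ``$+1/2$'' shift is distributed correctly between numerator and denominator to match the second form of \eqref{oodef}. Once this is verified, the non-trivial ordering $\mu_1\ge\mu_2\ge\cdots\ge \mu_t\ge 0$ required for $\mu$ to be a partition follows from the strict decrease $v_1>\cdots>v_t$ in the $V_{g,t}$-coding together with inequality \eqref{eq:vcoding}, exactly as in the proof of Theorem \ref{prop:MacdotypeC}.
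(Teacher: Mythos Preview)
Your proposal is correct and follows essentially the same route as the paper's own proof: start from item~(5) of Proposition~\ref{prop:RS}, expand and reorganize into a determinant quotient over $\Delta_B(\mathbf{x})$, identify the exponent of $T$ with $|\omega|/2$ for $\omega\in\ccsc_{(2t)}$ via \eqref{eq:gkscpair}, reorder the columns using the $V_{g,t}$-coding, and invoke Lemma~\ref{lem:signschursc} for the sign. The paper's proof is terser but identical in substance; your explicit attention to the half-integer asymmetry (producing the second form of $\oo_\mu$ in \eqref{oodef} rather than $\sp_\mu$) is exactly the point the paper leaves implicit.
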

\begin{proof}

The exponent of the formal variable $T$ on the left-hand side of \eqref{eq:misigncv} from Proposition \ref{prop:intermed_rewrite} is the same quadratic form as  \eqref{eq:gkscpair} from Proposition \ref{prop:sc2t} when taking $\omega\in\ccsc_{(2t)}$ such that $\phi(\omega)=(m_1,\dots,m_t,-m_t,\dots,-m_1)$. Using the correspondence between $\omega$ and its $V_{g,t}$-coding and $\bf{r}$ as defined in Theorem \ref{thm:SC}, note that for any $i\in\lbrace 1,\dots,t\rbrace$ there exists $k\in\lbrace 1,\dots,t\rbrace$ such that $r_k=\max(gm_i+i-t,-gm_i-i+1+t)$.
By reordering the columns in a decreasing order, one derives \eqref{eq:macdocv}.
\end{proof}

\subsubsection{Type $A^{(2)}_{2t}$}
Following the same steps as before starting from \eqref{eq:misignbc} from Proposition \ref{prop:intermed_rewrite}, then using Proposition \ref{prop:dd2t+1} and Lemma \ref{lem:signschurbc}, one derives the following results
\begin{thm}\label{prop:Macdotypebc}
Set $t\in\mathbb{N}^*$ and $g=2t+1$. The Macdonald identity for type $A^{(2)}_{2t}$ can be rewritten as follows:
\begin{multline}\label{eq:macdobc}
\sum_{\omega\in\ccdd_{(g)}} (-1)^{\lvert H_{<g,+}\rvert+\lvert H_{<g,+}\cap \Delta\rvert}T^{\lvert\omega\rvert/2}\oo_{\mu}({\bf x})\\=\left(T;T\right)_\infty^tK_T(t,{\bf x})
\prod_{i=1}^t \left(Tx_i^{\pm};T\right)_\infty \left(Tx_i^{\pm 2};T^2\right)_\infty,
\end{multline}
where $\mathbf{v}$ is the $V_{g,t}$-coding corresponding to $\omega$ (see Definition \ref{def:vcoding}), $\Delta$ is the main diagonal of $\omega$, $H_{<g,+}:=\lbrace s\in\omega, h_s<g, \varepsilon_s=1\rbrace$ and $\mu\in\ccp$ is such that
$\mu_i:=v_i+i-g$ for all $1\leq i \leq t$.
\end{thm}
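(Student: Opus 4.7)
The plan is to follow the exact same template as in the proofs of Theorems \ref{prop:MacdotypeC}, \ref{prop:Macdotypebv} and \ref{prop:MacdotypeCv}, replacing the relevant quadratic form, $V_{g,t}$-coding, and sign lemma by their $\tilde{BC}_t$-counterparts. First I would rewrite the sum side of the Macdonald identity from Proposition \ref{prop:RS}(6): expand the product $\prod_{i=1}^t((T^{m_i}x_i)^{\sigma(i)-t}-(T^{m_i}x_i)^{t+1-\sigma(i)})$, introduce the sign vector $\xi\in\{\pm 1\}^t$ coming from each binomial, perform the substitution $m_i\mapsto\xi_im_i$ (which leaves $m_i^2$ unchanged), extract the powers of $T$, and then substitute $m_i\mapsto m_{\sigma(i)}$ and invert the order of summation. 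As in the passage from \eqref{eq:macdostart} to \eqref{eq:misign}, this turns the left-hand side of Proposition \ref{prop:RS}(6) divided by $\Delta_B(\mathbf{x})$ into
\begin{equation*}
\sum_{\mathbf{m}\in\bbbz^t} T^{(2t+1)\lVert\mathbf{m}\rVert^2/2+\sum_{i=1}^t m_i(i-t-1/2)}\;\frac{\underset{1\le i,j\le t}{\det}\left(x_i^{(2t+1)m_j+j-t}-x_i^{-((2t+1)m_j+j-t-1)}\right)}{\Delta_B(\mathbf{x})}.
\end{equation*}

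Next I would identify the exponent of $T$ with $\lvert\omega\rvert/2$ for $\omega\in\ccdd_{(g)}$ with $g=2t+1$. Indeed, \eqref{eq:gkddimpair} shows that if one sets $\phi(\omega)=(0,m_1,\dots,m_t,-m_t,\dots,-m_1)\in\bbbz^{g}$, then $\lvert\omega\rvert$ equals (up to a constant absorbed into the convention) the quadratic form above multiplied by $2$. By Proposition \ref{prop:core_vcoding}, $\omega$ is in bijection with its $V_{g,t}$-coding $\mathbf{v}\in\bbbz^t$, and the vector $\mathbf{r}$ from Theorem \ref{thm:DDimpair} satisfies $r_i=v_i-g/2$, so that for each $j\in\{1,\dots,t\}$ the pair $\{(2t+1)m_j+j-t,\ -(2t+1)m_j-j+t+1\}$ coincides (up to a shift by $g/2$) with $\{\pm r_{(\sigma')^{-1}(j)}\}$ for the permutation $\sigma'(i):=g-\sigma(i)$ of $\{1,\dots,t\}$ deduced from $\sigma$ of Definition \ref{def:vcoding}.

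Then I would reorder the columns of the determinant so the exponents are strictly decreasing; this produces a factor $\sgn(\sigma)$. Applying Lemma \ref{lem:signschurbc},
\begin{equation*}
\sgn(\sigma)\equiv \lvert H_{<g,+}\rvert+\lvert H_{<g,+}\cap\Delta\rvert \pmod{2}.
\end{equation*}
Setting $\mu_i:=v_i+i-g$, inequality \eqref{eq:vcoding} guarantees $\mu_1\geq\cdots\geq\mu_t\geq 0$, so $\mu\in\ccp$, and the quotient
\begin{equation*}
\frac{\underset{1\le i,j\le t}{\det}\left(x_i^{\mu_j+t-j+1}-x_i^{-(\mu_j+t-j)}\right)}{\Delta_B(\mathbf{x})}
\end{equation*}
is precisely $\oo_\mu(\mathbf{x})$ by \eqref{oodef}. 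Combining these ingredients with the Macdonald product side from Proposition \ref{prop:RS}(6) yields \eqref{eq:macdobc}.

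The main obstacle is step three: verifying that the signs coming from the expansion vector $\xi$, from the two determinantal entries in each column, and from the reordering $\sigma$ all conspire to produce exactly $(-1)^{\lvert H_{<g,+}\rvert+\lvert H_{<g,+}\cap\Delta\rvert}$. This is however settled by the careful $g$-interval enumeration already performed in the proof of Lemma \ref{lem:signschurbc}, which is precisely the $g=2t+1$ analogue of Lemma \ref{lem:signschur} used in the $\tilde{C}_t$ case. Once this bookkeeping is done, the proof parallels that of Theorem \ref{prop:MacdotypeC} line by line.
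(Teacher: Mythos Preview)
Your proposal is correct and follows exactly the approach the paper itself takes: it merely states ``Following the same steps as before using Theorem \ref{thm:DDimpair} and Lemma \ref{lem:signschurbc}'', and you have spelled out those steps, mirroring the template of the proofs of Theorems \ref{prop:MacdotypeC} and \ref{prop:MacdotypeCv} with the correct $\tilde{BC}_t$ ingredients (the quadratic form \eqref{eq:gkddimpair}, the $V_{2t+1,t}$-coding of Proposition \ref{prop:core_vcoding}, and the sign computation of Lemma \ref{lem:signschurbc}).
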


\subsubsection{Type $D^{(1)}_{t}$}
Similarly, using Lemma \ref{lem:signschurtyped}, Theorem \ref{thm:DDtquotpaird} and the same computations, one derives the rewriting of Macdonald identity for type $D^{(1)}_{t}$.
\begin{thm}\label{prop:Macdotyped}
Set $t\in\mathbb{N}^*$ such that $t\geq 4$ and $g=2t-2$. The Macdonald identity for type $D^{(1)}_{t}$ can be rewritten as follows:
\begin{equation}\label{eq:macdod}
\sum_{\lambda\in\ccdd_{g}^{'2}} (-1)^{d_{\lambda}+\lvert H_{<g,+}\rvert}T^{\lvert\lambda\rvert/2}\oe_{\mu}({\bf x})=
\left(T;T\right)_\infty^{t} K_T(t,\mathbf{x}),
\end{equation}
where $\mathbf{v}$ is the $V_{g,t}$-coding corresponding to $\lambda$ (see Definition \ref{def:vcoding}), $d_\lambda$ the length of the main diagonal of $\omega$, $H_{<g,+}:=\lbrace s\in\omega, h_s<g, \varepsilon'_s=1\rbrace$ and $\mu\in\ccp$ is such that
$\mu_i:=\left(v_i+i-g\right)$ for all $1\leq i \leq t$.
\end{thm}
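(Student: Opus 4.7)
The plan follows the now-standard template established in the proofs of Theorems \ref{prop:MacdotypeC}, \ref{prop:Macdotypeb}, \ref{prop:Macdotypebv} and \ref{prop:MacdotypeCv}. Starting from Proposition \ref{prop:RS}(7), I would first expand the inner product, substitute $m_i\mapsto m_{\sigma(i)}$, extract the powers of $T$ and reassemble the signed permutation sum as a determinant. Setting $g=2t-2$, this rewrites the left-hand side of the Macdonald identity as
\begin{equation*}
\frac{1}{4\,\Delta_D(\mathbf{x})}\sum_{\mathbf{m}\in\bbbz^t}T^{(t-1)\lVert\mathbf{m}\rVert^2+\sum_{i=1}^t(i-t)m_i}\underset{1\le i,j\le t}{\det}\!\left(x_i^{gm_j+j-t}+x_i^{-(gm_j+j-t)}\right).
\end{equation*}
The crucial point is that the $+$ sign inside the determinant is what will eventually produce a type $D$ character via \eqref{oedef}, rather than the symplectic or odd orthogonal characters encountered in the previous types.

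Next, the quadratic form $(t-1)\lVert\mathbf{m}\rVert^2+\sum_i(i-t)m_i$ should be identified with $|\lambda|/2$ for $\lambda\in\ccdd_{g}^{'2}$ via the formula \eqref{eq:gkddnupairtquot}. Recall from Section \ref{chap3:quad} that elements of $\ccdd_{g}^{'2}$ are parametrised by a $(2t-2)$-core together with two parameters $m_1\in\bbbn^*$ and $m_t\in\bbbn$ coming from the two non-empty quotient components. I would then split the sum over $\mathbf{m}\in\bbbz^t$ according to the signs of $m_1$ and $m_t$ and exploit the two affine symmetries $m_1\mapsto 1-m_1$ and $m_t\mapsto -m_t$, both of which leave the quadratic form \eqref{eq:gkddnupairtquot} invariant. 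Folding these symmetries collapses the full sum onto the ``generic'' quadrant $\{m_1\ge 1,\,m_t\ge 0\}$, but with a doubled weight on the stratum $m_t=0$ (which the second symmetry fixes pointwise). This asymmetry is precisely what will be compensated by the $(1+\delta_{\mu_t,0})$ factor appearing in the denominator of the even orthogonal character \eqref{oedef}.

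After this folding, for each $\lambda\in\ccdd_{g}^{'2}$ I would use the $V_{g,t}$-coding $\mathbf{v}$ of Definition \ref{def:vcoding} together with its associated permutation $\sigma$ to reorder the columns of the resulting determinant so that the exponents become decreasing; this introduces a factor $\sgn(\sigma)$, which Lemma \ref{lem:signschurtyped} converts into $(-1)^{d_\lambda+|H_{<g,+}|}$. Setting $\mu_i:=v_i+i-g$, the inequalities \eqref{eq:vcoding} give $\mu_1\ge\cdots\ge\mu_t\ge 0$, and the resulting quotient of determinants coincides with $\oe_\mu(\mathbf{x})$ by \eqref{oedef}. The final prefactor $\tfrac{1}{2}$ on the right-hand side of \eqref{eq:macdod} arises by combining the $\tfrac{1}{4}$ from the Macdonald identity, the $2$ inside $\Delta_D(\mathbf{x})$ appearing in \eqref{def:deltad}, the $2$ in the numerator of \eqref{oedef}, and the doubling on the $m_t=0$ stratum.

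The principal technical obstacle is exactly this last bookkeeping: two symmetries $m_1\mapsto 1-m_1$ and $m_t\mapsto -m_t$ must be tracked simultaneously, and the boundary stratum $m_t=0$ needs to be matched precisely with the Kronecker delta in the denominator of \eqref{oedef}, so that both the $\mu_t>0$ and $\mu_t=0$ contributions yield the same coefficient in front of $\oe_\mu$. This is the genuinely new phenomenon compared to type $\tilde{B}_t$, where only the single symmetry $m_1\mapsto 1-m_1$ was needed and no analogue of the $(1+\delta_{\mu_t,0})$ factor appeared.
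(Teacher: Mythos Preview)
Your proposal is correct and follows essentially the same approach as the paper: the paper's proof sketch says to split the sum according to the positivity of $m_1$ and the sign of $m_t$, identifies the case $m_t=0$ with $\mu_t=0$ so that the definition of $\oe_\mu$ (with its $(1+\delta_{\mu_t,0})$ denominator) absorbs the boundary stratum, and invokes Lemma~\ref{lem:signschurtyped} for the sign; your writeup simply spells out the bookkeeping of the two folds and the final $\tfrac12$ more explicitly than the paper does.
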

\begin{proof}

Starting from \eqref{eq:misignd} of Proposition \ref{prop:intermed_rewrite}, the proof is done using Proposition \ref{prop:ddprime2t-2} to map the lattice to integer partitions, then reorder columns so that the corresponding $V_{g,t}$-coding appears in decreasing order and Lemma \ref{lem:signschurtyped} to convert the signature of the permutation into a partition statistics. It remains only to verify that the sum of fraction of determinants appearing as coefficients of formal variable $T$ actually match with the  Definition \ref{oedef} of even orthogonal character. This is ensured by the fact that $m_t=0$ is equivalent to $\mu_t=0$, which concludes the proof of \eqref{eq:macdod}.
\end{proof}

\medskip
The following table summarizes the results of the previous subsection for each type with the notations of Macdonald \cite{Mac} in parenthesis. Remark that the the classical type of Schur function appearing for each affine type can actually be seen on the affine Dynkin diagram: it is the one associated with type of the Dynkin diagram obtained when removing the node $0$ (see \cite{LW} for more details):
\begin{center}
\begin{tabular}{|l|c|c|}
\hline
type & family & Schur function \\
\hline
\rule{0pt}{15pt}$A^{(1)}_{t-1}$ ($\tilde{A}_{t-1}$) &  $\ccp_{(t)}$ &  Classical\\
\hline
\rule{0pt}{15pt}$B^{(1)}_{t}$ ($\tilde{B}_t$) &  $\ccdd^{'1}_{2t-1}$ &  Special orthogonal\\
\hline
\rule{0pt}{15pt}$A^{(2)}_{2t-1}$ ($\tilde{B}_t^{\vee}$) &  $\ccdd^{'1}_{2t}$ &  Symplectic \\
\hline
\rule{0pt}{15pt}$C^{(1)}_{t}$ ($\tilde{C}_t$)&  $\ccdd_{(2t+2)}$ &  Symplectic\\
\hline
\rule{0pt}{15pt}$D^{(2)}_{t+1}$ ($\tilde{C}_{t}^{\vee}$)&  $\ccsc_{(2t)}$ &  Special orthogonal\\
\hline
\rule{0pt}{15pt}$A^{(2)}_{2t}$ ($\tilde{BC}_{t}$)&  $\ccdd_{(2t+1)}$ &  Special orthogonal\\
\hline
\rule{0pt}{15pt}$D^{(1)}_{t}$ ($\tilde{D}_{t}$)&  $\ccdd^{'2}_{2t-2}$ &  Even orthogonal\\
\hline

\end{tabular}
\captionof{table}{Table of Macdonald identities with their corresponding partition family and their corresponding associated character.}\label{tableau}

\end{center}

\section{$q$-Nekrasov--Okounkov formulas and confluences}\label{sec:NO}

\subsection{$q$-Nekrasov--Okounkov formulas}\label{subsecc:qnp}
In this section, we combine the results obtained in Sections \ref{chap3:hooks} and \ref{sec:Macdo} to derive $q$-Nekrasov--Okounkov type formulas for all infinite types except for the type $A^{(1)}_{t-1}$ already given in \eqref{Hande}. The proofs of all theorems below follow the same steps:
\begin{enumerate}
\item Specialize all the Macdonald identities stated in Section \ref{sec:Macdo}  either taking $x_i=q^i$ or $x_i=q^{2i-1}$. Then
\begin{enumerate}
\item reinterpret the specialized symplectic, special orthogonal Schur functions as a product over hook lengths using one of the theorems in Section \ref{chap3:hooks} with an appropriate choice of the function $\tau$ on the sum part,
\item and rewrite the products with some technical lemmas.
\end{enumerate}  
\item Then setting $u=q^g$ or $u=q^{g/2}$, verify that the coefficients in the formal variable $T$ on both sides of the equalities derived at the end of the previous step can be seen as Laurent polynomials in $u$.
\end{enumerate}



For instance, to derive Theorem \ref{NOC}, we will use a polynomiality argument by remarking that for any positive integer $n$, the coefficients of $T^n$ on both sides of \eqref{eq:noc} can be seen as Laurent polynomials in $u$ with coefficient in $\mathbb{C}(q)$. To see that, note that for any $\lambda\in\ccdd$ we have:
\begin{multline*}
\prod_{s\in\lambda}\frac{1-u^{-2\varepsilon_s}q^{h_s}}{1-q^{h_s}}\prod_{s\in\Delta}\frac{1+uq^{h_s/2}}{1+u^{-1}q^{h_s/2}}=\prod_{\substack{s\in\lambda\\s\notin \Delta}}\frac{1-u^{-2\varepsilon_s}q^{h_s}}{1-q^{h_s}}\prod_{s\in\Delta}\frac{(1+uq^{h_s/2})(1+u^{-1}q^{h_s/2})}{1-q^{h_s}},
\end{multline*}
therefore the left-hand side belongs to $\mathbb{C}(q)[u;u^{-1}]$. By expanding the quotient on the right-hand side of \eqref{eq:noc} and extracting the coefficients in $T^n$, we obtain the Laurent polynomiality in $u$ of the coefficients. Hence it is enough to prove that both sides coincide at $u=q^t$ for infinitely many positive integers $t$.

The following Lemma, whose proof can be found in the Appendix \ref{ap:sec1}, is the key ingredient in step $(1).(b)$.
\begin{lm}\label{lem:lemtech}
Let $x$ be a formal variable and $t$ a positive integer. Then we have:
\begin{equation}\label{techsimple}
\prod_{i=1}^t\left(1-q^ix\right)=\prod_{r\geq 1}\frac{1-q^{r}x}{1-q^{r+t}x},
\end{equation}
\begin{equation}\label{techsimpleneg}
\prod_{i=1}^t\left(1-q^{-i}x\right)=\prod_{r\geq 1}\frac{1-q^{r-1-t}x}{1-q^{r-1}x},
\end{equation}

\begin{align}\label{techstrictplus}
\prod_{1\leq i<j\leq t}\left(1-q^{i+j}x\right)=\prod_{r\geq 1}\frac{\left(1-q^{r+2}x\right)^{\lfloor (r+1)/2\rfloor}\left(1-q^{r+1+2t}x\right)^{\lceil r/2 \rceil}}{\left(1-q^{r+1+t}x\right)^r},
\end{align}

\begin{align}\label{techstrictmoins}
\prod_{1\leq i<j\leq t}\left(1-q^{-i-j}x\right)=\prod_{r\geq 1}\frac{\left(1-q^{r-1}x\right)^{\lfloor (r+1)/2\rfloor}\left(1-q^{r-2t}x\right)^{\lceil r/2 \rceil}}{\left(1-q^{r-1-t}x\right)^r},
\end{align}

and
\begin{equation}\label{techtypea}
\prod_{1\leq i<j\leq t}\left(1-q^{j-i}x\right)\left(1-q^{i-j}x\right)=\frac{1}{\left(1-x\right)^{t-1}}\prod_{r\geq 1}\frac{\left(1-q^{r-t}x\right)^r\left(1-q^{r+t}x\right)^r}{\left(1-q^{r+1}x\right)^r\left(1-q^{r-1}x\right)^r}.
\end{equation}


\end{lm}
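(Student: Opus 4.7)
The plan is to prove each identity by matching the multiplicity with which the factor $(1-q^k x)$ appears on both sides, for each integer $k$.

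For \eqref{techsimple}, the right-hand side telescopes: it equals $\prod_{r\geq 1}(1-q^r x)\big/\prod_{r\geq 1}(1-q^{r+t}x) = \prod_{r=1}^t(1-q^r x)$. Identity \eqref{techsimpleneg} follows by the same telescoping argument, or by applying \eqref{techsimple} with $x$ replaced by $q^{-t-1}x$ and a reindexing.

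For \eqref{techstrictplus}, I would enumerate on the LHS the number of pairs $(i,j)$ with $1\leq i<j\leq t$ and $i+j=k$; an elementary count shows this equals $\lfloor(k-1)/2\rfloor$ for $3\leq k\leq t+1$, equals $\lfloor(k-1)/2\rfloor-(k-t-1)$ for $t+1<k\leq 2t-1$, and vanishes otherwise. On the RHS, the three factor families contribute, whenever the implicit $r\geq 1$ condition is met, respectively $+\lfloor(k-1)/2\rfloor$ (taking $r=k-2$), $-(k-1-t)$ (taking $r=k-1-t$) and $+\lceil(k-1-2t)/2\rceil$ (taking $r=k-1-2t$). A case split on the range of $k$, together with simplification of the floor/ceiling expressions, matches these with the LHS. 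Identity \eqref{techstrictmoins} then follows by the substitution $q\mapsto q^{-1}$ in \eqref{techstrictplus} (both sides being stable under this change after reindexing $r$), or by an analogous direct enumeration.

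For \eqref{techtypea}, the LHS expands as $\prod_{d=1}^{t-1}(1-q^d x)^{t-d}(1-q^{-d}x)^{t-d}$ since the number of pairs $(i,j)$ with $j-i=d$ and $1\leq i<j\leq t$ equals $t-d$. On the RHS, counting the multiplicity of $(1-q^k x)$ for $k\in\bbbz$, the four families contribute $+(k+t)$ (when $k\geq 1-t$), $+(k-t)$ (when $k\geq t+1$), $-(k-1)$ (when $k\geq 2$) and $-(k+1)$ (when $k\geq 0$); one must also add the correction $-(t-1)$ at $k=0$ coming from $(1-x)^{-(t-1)}$. Summing these contributions on each range yields exactly $t-|k|$ for $1\leq |k|\leq t-1$ and $0$ elsewhere, matching the LHS. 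The role of the prefactor $(1-x)^{-(t-1)}$ is precisely to cancel the spurious multiplicity $t-1$ appearing at $k=0$. The main obstacle throughout is the careful bookkeeping of floor/ceiling functions and the piecewise ranges of $k$, but each case reduces to a straightforward simplification once the multiplicities are tabulated by factor family.
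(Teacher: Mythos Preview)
Your proposal is correct and follows essentially the same approach as the paper: both arguments proceed by computing, for each integer $k$, the multiplicity of the factor $(1-q^k x)$ on either side. The paper phrases step~(2) for \eqref{techstrictplus} as an algebraic transformation of the finite product into a quotient of infinite products, whereas you read off the RHS multiplicities directly and match them to the LHS; this is a minor presentational difference rather than a different route, and your derivation of \eqref{techstrictmoins} via $q\mapsto q^{-1}$ is a slight shortcut over the paper's ``proved similarly''.
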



\subsubsection{Type $A^{(1)}_{t-1}$}

This corresponds to the result \eqref{Hande}, obtained in parallel by Dehaye--Han \cite{HD} and Iqbal--Nazir--Raza--Saleem \cite{INRS} and is derived using Theorem \ref{thm:HD} and Theorem \ref{prop:MacdotypeA} setting $x_i=q^i$ and $\tau(x)=1-q^x$ and Lemma \ref{lem:lemtech} \eqref{techtypea} to rewrite the product in the right-hand side of \eqref{eq:macdoa} as the right-hand side of \eqref{Hande}.

\subsubsection{Enumeration of hook lengths on the main diagonal}

For all other types than type $A^{(1)}_{t-1}$, when completing the step $1.(a)$ detailed at the beginning of Section \ref{subsecc:qnp}, the specialization of the symplectic and special orthogonal Schur functions appearing in the rewritings of the Macdonald identity correspond to some specialization of the corresponding Weyl denominator formula. Some of the terms coming from these specializations of the Weyl denominator formula are lacking in the theorems from Section \ref{chap3:hooks}. For instance in Theorem \ref{thm:DDpair} in type $C_{t}^{(1)}$, the product over hook lengths is lacking a term of the form $\prod_{i=1}^t\tau(2r_i)/\tau(2i)$ to have the analogue of the term coming from the roots $2\varepsilon_i$ for $1\leq i \leq t$. In order to convert the specialization of symplectic and special orthogonal Schur functions into a hook length formula, one can note that the products corresponding the roots of the $2\varepsilon_i$ correspond to the box over hook lengths on the main diagonal $\Delta$ of the corresponding partition, as depicted in the following lemma:

\begin{lm}\label{lm:lemdiag}
Let $\lambda$ be a partition in one of the sets $\ccp_{(t)}$, $\ccdd_{(2t+2)}$,$\ccdd_{(2t+1)}$, $\ccsc_{(2t)},\ccdd_{2t}^{'1}$ and $\ccdd_{2t-1}^{'1}$. Let $\mathbf{v}$ its corresponding $V_{g,t}$-coding. Set $g'=g/2-1/2\mathds{1}_{\lambda\in \ccsc}-\mathds{1}_{\lambda\in \ccdd'}$ and as in the Theorems from Section \ref{chap3:hooks} set $\mathbf{r}=\mathbf{v}-g'\mathbf{1}$. We have:
$$\prod_{i=1}^t\frac{1-q^{2r_i}}{1-q^{2i}}=\prod_{\substack{s\in \Delta\\h_s<g}}(-q^{h_s-g})\prod_{s\in\Delta}
\frac{1-q^{h_s+g}}{1-q^{h_s-g}},$$
and
$$\prod_{i=1}^t\frac{1+q^{2r_i}}{1+q^{2i}}=\prod_{\substack{s\in \Delta\\h_s<g}}(q^{h_s-g})\prod_{s\in\Delta}
\frac{1+q^{h_s+g}}{1+q^{h_s-g}}.$$
\end{lm}

The proof follows after remarking from Lemmas from Section \ref{chap3:hooks} that the set of hook lengths of the boxes $s$ on the main diagonal $\Delta$ is actually the set $\lbrace
2r_i-2kg,1\leq i \leq t, 0\leq k\leq \lfloor (r_i-i)/g\rfloor\rbrace$. For a box $s$ such that $h_s<g$, we use the following relation for any $X$ (where $X$ is either $q^{h_s-g}$ or $-q^{h_s-g}$):
$$\frac{1+X^{-1}}{1+X}=X^{-1}.$$


\subsubsection{Type $C^{(1)}_{t}$}

 The left-hand side of \eqref{eq:noc} can be obtained by setting $\tau(x)=1-q^x$ in Theorem \ref{thm:DDpair} and multiplying the resulting expression by the product of hook lengths on the main diagonal $\Delta$. We prove the following result.
\begin{lm}\label{lem:schurinter}
Let $t$ be a positive integer, set $\lambda\in\ccdd_{(2t+2)}$, $g=2t+2$ and $\mathbf{v}\in\bbbz^t$ its associated $V_{2t+2,t}$-coding. Set $H_{<g,+}=\#\lbrace s\in\lambda h_s<g, \varepsilon_s=1\rbrace$.
 Set $\mu=(v_1-2t-1,v_2-2t,\dots,v_t-t-2)\in\ccp$ and . Then
 \begin{equation}\label{eq:schursp}
\sp_{\mu}(q,q^2,\dots,q^t)=(-1)^{\lvert H+\rvert} q^{(t+1)d_\lambda}\prod_{s\in\lambda}\frac{1-q^{h_s-(2t+2)\varepsilon_s}}{1-q^{h_s}}\prod_{s\in\Delta}\frac{1+q^{t+1+h_s/2}}{1+q^{-t-1+h_s/2}}.
 \end{equation}
 \end{lm}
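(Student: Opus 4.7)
The plan is to evaluate $\sp_\mu(q, q^2, \ldots, q^t)$ directly from the determinantal definition \eqref{spdef} and match the resulting expression, term by term, with the right-hand side of the lemma via Theorem \ref{thm:DDpair}. Setting $r_j := \mu_j + t - j + 1 = v_j - (t+1)$, both the numerator and the denominator of $\sp_\mu(q, q^2, \ldots, q^t)$ take the form $\det(y_j^i - y_j^{-i})_{1 \le i,j \le t}$: the numerator with $y_j = q^{r_j}$ and the denominator with $y_j = q^{t-j+1}$. This ``symplectic Vandermonde'' determinant factors as
\begin{equation*}
\det(y_j^i - y_j^{-i})_{i,j} = \prod_j(y_j - y_j^{-1})\prod_{i<j}(y_j - y_i)(1 - y_i^{-1}y_j^{-1}),
\end{equation*}
which I would justify by antisymmetry in the $y_j$'s and a leading-term comparison (equivalently, by viewing the identity as the Weyl denominator for type $C$ in the variables $y_1, \ldots, y_t$).

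Once this factorization is in hand, I would take the ratio, pull out all powers of $q$ using $q^a - q^{-a} = -q^{-a}(1-q^{2a})$, and split every $1 - q^{2a}$ as $(1-q^a)(1+q^a)$. The ``minus'' piece
\begin{equation*}
\prod_j \frac{1-q^{r_j}}{1-q^j}\prod_{i<j}\frac{1-q^{r_i - r_j}}{1-q^{j-i}}\frac{1-q^{r_i + r_j}}{1-q^{g-i-j}}
\end{equation*}
is precisely the right-hand side of \eqref{eq:thmdd} with $\tau(x) = 1-q^x$, divided by the $\alpha_i$-factor $\prod_i\left(\frac{1-q^{-i}}{1-q^i}\right)^{\alpha_i(\lambda)}$. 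Since $(1-q^{-i})/(1-q^i) = -q^{-i}$, Theorem \ref{thm:DDpair} identifies this piece with $(-1)^{|H_+|}\, q^{\sum_i i\,\alpha_i(\lambda)}\prod_{s\in\lambda}\frac{1-q^{h_s - g\varepsilon_s}}{1-q^{h_s}}$, which already produces the correct sign and the bulk of the hook-length product.

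For the ``plus'' piece $\prod_j(1+q^{r_j})/(1+q^j)$, I would use the explicit description \eqref{lem:Delta} of the hooks on $\Delta$ to telescope
\begin{equation*}
\prod_{s\in\Delta}\frac{1+q^{t+1+h_s/2}}{1+q^{-t-1+h_s/2}} = \prod_{i=1}^t\prod_{k=0}^{n_{\sigma(i)}-1}\frac{1+q^{(k+1)g + \sigma(i) - t - 1}}{1+q^{kg + \sigma(i) - t - 1}} = \prod_i\frac{1+q^{r_i}}{1+q^{\sigma(i) - t - 1}}.
\end{equation*}
A short symmetry argument on the $V_{g,t}$-coding of doubled distinct $g$-cores (for each $k \in \{1,\ldots,t\}$ exactly one of $\{k, g-k\}$ appears among the $\sigma(i)$'s, since the ``filler'' $\beta$'s $\le g/2$ must come from $\{t+2,\ldots,g-1\}$) shows that the multiset $\{|\sigma(i) - (t+1)|\}$ equals $\{1,2,\ldots,t\}$. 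Combined with $1+q^{-a} = q^{-a}(1+q^a)$, this lets me replace $\prod_i(1+q^{\sigma(i)-t-1})$ by $\prod_{j=1}^t(1+q^j)$ up to an explicit power of $q$, thereby matching the $\Delta$-product of the lemma.

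The main obstacle is the final bookkeeping of the power of $q$: I expect the several contributions (from the factorized symplectic Vandermonde ratio, from the $\alpha_i$-simplification in Theorem \ref{thm:DDpair}, and from the symmetry conversion in the $\Delta$-factor) to collapse exactly to $(t+1)d_\lambda$, using $r_j = n_{\sigma(j)}g + \sigma(j) - (t+1)$ together with $d_\lambda = \sum_j n_{\sigma(j)}$ for doubled distinct $g$-cores. Verifying this clean cancellation, along with confirming that no extra sign is picked up (since the $r_j$ were chosen to be decreasing so the column-ordering is trivial, leaving $(-1)^{|H_+|}$ as the only sign), is the delicate part of the argument.
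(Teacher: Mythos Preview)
Your approach is correct and genuinely different from the paper's. The paper proves the lemma by induction on $d_\lambda$, removing the largest hook $H_1$ at each step and computing $\sp_\mu/\sp_{\tilde\mu}$ from \eqref{def:deltac}; this mirrors, almost verbatim, the induction carried out in the proof of Theorem~\ref{thm:DDpair}, and the factor $q^{t+1}$ emerges one step at a time. You instead evaluate $\sp_\mu(q,\dots,q^t)$ once and for all via the type-$C$ Vandermonde factorisation, then invoke Theorem~\ref{thm:DDpair} as a black box for the ``minus'' piece and \eqref{lem:Delta} for the ``plus'' piece. This is cleaner conceptually: it explains \emph{why} the hook product and the $\Delta$-product appear, rather than verifying an inductive coincidence.

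The one point where your argument is incomplete is exactly the one you flag. After your reductions, the identity to be proved becomes a pure exponent identity
\[
\underbrace{\sum_j (j-t-1)r_j+\sum_{k=1}^t k^2}_{\text{from the Vandermonde ratio}}
\;+\;\underbrace{\sum_{i=1}^{g-1} i\,\alpha_i(\lambda)}_{\text{from }(\tau(i)/\tau(-i))^{\alpha_i}}
\;+\;\underbrace{\sum_{j:\,\sigma(j)\le t}(\sigma(j)-t-1)}_{\text{from the }\Delta\text{-conversion}}
\;=\;(t+1)\,d_\lambda.
\]
The middle term $\sum_i i\,\alpha_i(\lambda)=\sum_{s\in H_{<g,+}}(g-h_s)$ is not a simple polynomial in the $n_{\sigma(j)}$'s; it genuinely requires the explicit enumeration of $H_{<g,+}$ worked out in the proof of Lemma~\ref{lem:signschur} (equation~\eqref{eq:interhplus} there gives the cardinality, but here you need a weighted refinement). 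This is doable but is roughly the same amount of work as the paper's induction, so the two approaches end up comparable in length. An alternative shortcut is to observe that once signs and rational factors are matched, the discrepancy is a monomial $q^{E(\lambda)}$; checking $E(\emptyset)=0$ and $E(\lambda)-E(\lambda\setminus H_1)=t+1$ (a single-hook computation) then finishes the argument with much less bookkeeping than the full direct sum, and reconnects your global approach to a minimal inductive step.
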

 
\begin{proof}
The proof of \eqref{eq:schursp} is done by induction on the length of the main diagonal $\Delta$ of $\lambda$. We show that it follows the same induction as for Theorem \ref{thm:DDpair} with $\tau(x)=1-q^x$. 
When $\lambda$ is the empty partition, $\mathbf{v}=(2t+1,\dots,t+2)$ then $\mu$ is the empty partition. This concludes the initialization step.
Set $\lambda\in\ccdd_{(2t+2)}$, $\mathbf{v}$ its associated $V_{2t+2,t}$-coding, then $\mu=(v_1-2t-1,\dots,v_t-t-2)\in\ccp$ and let $H_1$ be the largest hook of $\lambda$. Then define $\tilde{\lambda}=\lambda\setminus H_1\in\ccdd_{(2t+2)}$ the partition whose Ferrers diagram is the one of $\lambda$ where $H_1$ as been removed. Let $\mathbf{v'}$ be its associated $V_{2t+2,t}$-coding and $\tilde{\mu}=(v'_1-2t-1,\dots,v'_t-t-2)\in\ccp$. Setting $r_i=v_i-t-1$ and $r'_i=v'_i-t-1$, we have
\begin{equation}\label{eq:intersp}
\frac{\sp(\mu)}{\sp(\tilde{\mu})}=\frac{\underset{1\le i,j\le t}{\det}\left(q^{i(v_j-t-1)}-q^{-i(v_j-t-1)}\right)}{\underset{1\le i,j\le t}{\det}\left(q^{i(v'_j-t-1)}-q^{-i(v'_j-t-1)}\right)}=\frac{\underset{1\le i,j\le t}{\det}\left(q^{(i-t-1)r_j}-q^{-(i-t-1)r_j}\right)}{\underset{1\le i,j\le t}{\det}\left(q^{(i-t-1)r'_j}-q^{-(i-t-1)r'_j}\right)}.
\end{equation}
Set $g=2t+2$. We first handle the case $g<v_1<3g/2$. In this case, $v_1=g+\sigma(1)$ and $\sigma(1)<g/2$, with $\sigma$ as defined in Definition \ref{def:vcoding}. Then $\sigma(1)<g-\sigma(1)$, $\lambda$ has for largest hook length $h_{(1,1)}=2\sigma(1)\in\Delta$ and $h_{(1,1)}/2-t-1=\sigma(1)-g/2<0$. There exists $2\leq l\leq t$ such that $r'_l=g/2-\sigma(1)=-r_1+g$, and for any $i\in\lbrace 1,\dots,l-1\rbrace$ we have $r'_i=r_{i+1}$, and $r'_i=r_i$ otherwise. Moreover if we take $\psi(\lambda)=(c_k)_{k\in\bbbz}$ the word corresponding to $\lambda$, then $c_{-\sigma(1)}=1$. For $1\leq i\leq l$, we have $g-\sigma(1)<v_i<g+\sigma(1)$. This implies that $-\sigma(1)<v_i$ and recall that $c_{v_i-g}=0$. Similarly for $l+1\leq i\leq t$, $v_i-g<-\sigma(1)$. Hence $\cch_{1,+}=\lbrace 2\sigma(1),\sigma(1), r_1+r_i-g \text{ for } 1\leq i\leq l\rbrace$ with $\cch_{1,+}$ as defined in Lemma \ref{lem:maxhook}.

By \eqref{def:deltac}, \eqref{eq:intersp} becomes
\begin{multline}\label{eq:premiercas}
\prod_{i=1}^t\frac{q^{-r_i}(1-q^{2r_i})}{q^{-r'_i}(1-q^{2r'_i})}\prod_{i=1}^{t-1}\frac{q^{-(t-i)r_i}}{q^{-(t-i)r'_i}}\prod_{j=i+1}^{t}\frac{(1-q^{r_i-r_j})(1-q^{r_i+r_j})}{(1-q^{r'_i-r'_j})(1-q^{r'_i+r'_j})}\\
=q^{-2r_1+g+\sum_{i=2}^l r_i-r_1(2t-l-1)+g(t-l)}\frac{(1-q^{2r_1})}{1-q^{2(g-r_1)}}\\
\times\prod_{j=2}^{l-1}\frac{(1-q^{r_1-r_j})(1-q^{r_1+r_j})}{(1-q^{r_j+r_1-g})(1-q^{r_j-r_1+g})}\prod_{j=l+1}^{t}\frac{(1-q^{r_1-r_j})(1-q^{r_1+r_j})}{(1-q^{g-r_j-r_1})(1-q^{r_j-r_1+g})}.
\end{multline}

Observe that, using $tg=-g/2+\sum_{i=1}^{2t+1}i$, the first term in the right-hand side of \eqref{eq:premiercas} can be rewritten as
\begin{multline*}
q^{-tg+2(g-r_1)+\sum_{j=2}^t(r_j-r_1+g)-\sum_{j=l+1}^t(r_1+r_i-g)}\\
=q^{-g/2++\sum_{i=1}^{2t+1}i+2(g-r_1)+\sum_{j=2}^t(r_j-r_1+g)-\sum_{j=l+1}^t(r_1+r_i-g)}.
\end{multline*}
 Then using that $q^{x}=-(1-q^{x})/(1-q^{-x})$, for any $x\in\mathbb{C}^*$, this term becomes
\begin{multline*}
q^{t+1}\left(-\frac{(1-q^{g-r_1})(1+q^{g-r_1})}{(1-q^{r_1-g})(1+q^{r_1-g})}\right)\\ \times
\prod_{i=1}^{2t+1}\left(-\frac{1-q^{-i}}{1-q^i}\right)\prod_{j=2}^t\left(-\frac{1-q^{r_j-r_1+g}}{1-q^{r_1-g-r_j}}\right)
\prod_{j=l+1}^t\left(-\frac{1-q^{g-r_1-r_j}}{1-q^{r_1+r_j-g}}\right).
\end{multline*}
This can be rewritten as
$$\frac{1+q^{t+1+h_{(1,1)}/2}}{1+q^{-t-1+h_{(1,1)}/2}}\prod_{i=1}^{2t+1}\left(-\frac{1-q^{-i}}{1-q^{i}}\right)^{\alpha_i(H_{1})}.$$

Using the induction property, it concludes the proof of the case $g<v_1<3g/2$.
As in the proof of Theorem \ref{thm:DDpair}, there are two cases left to deal with to perform the induction:
\begin{enumerate}
\item either $v'_1=v_1-2t-2$ and $v'_i=v_i$ for $2\leq i\leq t$,
\item or there exists $1\leq l\leq t$ such that for any $1\leq i<l$ $v'_i=v_{i+1}$, $v'_l=v_1-2t-2$ and for any $l+1\leq i\leq t$, $v'_i=v_i$.
\end{enumerate}
Moreover by \eqref{lem:Delta}, note that in both cases $h\in H_1\cap\Delta$ implies $h=2r_1-2t-2$.

We handle these cases by using Lemma \ref{lm:lemdiag} in which $q$ is replaced by $q^{1/2}$, $1-x^2=(1-x)(1+x)$, $q^{-\sum_{i=1}^{2t+1}i}=\prod_{i=1}^{2t+1}\left(-\frac{1-q^{-i}}{1-q^{i}}\right)$ and the same reasoning as for the proof of Theorem \ref{thm:DDpair}.
\end{proof} 

%


When $u=q^{t+1}$, the product over hook lengths on the left-hand side of \eqref{eq:noc} is zero unless $\lambda$ has no hook length equal to $2t+2$, i.e. $\lambda\in\ccdd_{(2t+2)}$. By Lemmas \ref{lem:schurinter} and \ref{lem:lemtech} \eqref{techsimple}--\eqref{techtypea}, \eqref{eq:noc} coincides with the Macdonald identity \eqref{eq:macdoc} for type $C^{(1)}_{t}$ in which $x_i=q^i$. Using the remark that both sides of the equality of Theorem \ref{NOC} seen as a power series in $T$ whose coefficients belong to $\mathbb{C}[[q]][u,u^{-1}]$, this completes the proof of Theorem \ref{NOC}.

\subsubsection{Type $B^{(1)}_{t}$}

Following the same steps as for Lemma \ref{lem:schurocv}, using $\sum_{i=1}^{2t-2}i=(t-1)(2t-1)$ and Lemma \ref{lm:lemdiag}, we derive the following from Theorem \ref{thm:DDtquotimpair} with $\tau(x)=1-q^{2x}$.
\begin{lm}\label{lem:schurob}
Let $t$ be a positive integer, set $\lambda\in\ccdd_{2t-1}^{'1}$, $g=2t-1$ and $\mathbf{v}\in\bbbz^t$ its associated $V_{2t-1,t}$-coding. Set $H_{<g,+}=\#\lbrace s\in\lambda h_s<g, \varepsilon_s=1\rbrace$.
 Set $\mu=(v_1-2t+2,v_2-2t+3,\dots,v_t-t+1)\in\ccp$. Then
 \begin{multline*}\label{eq:schursob}
\oo_{\mu}(q,q^3,\dots,q^{2t-1})=(-1)^{\lvert H_{<g,+}\rvert+\lvert H_{<g,+}\cap \Delta\rvert} q^{-(2t-1)d_\lambda}\\\times\prod_{s\in\lambda}\frac{1-q^{2h_s-2(2t-1)\varepsilon_s}}{1-q^{2h_s}}\prod_{s\in\Delta}\frac{1-q^{2t-1+h_s}}{1-q^{-2t+1+h_s}}.
 \end{multline*}
 \end{lm}

Following the same steps, using Theorem \ref{prop:Macdotypeb} and Lemma \ref{lem:schurob}, rewriting the products with Lemma \ref{lem:lemtech}, setting $x_i=q^{2i-1}$, $u=q^{2t-1}$, one derives the next result.
\begin{thm}[A $q$-Nekrasov--Okounkov formula for type $B^{(1)}_{t}$]\label{NOB}
For formal variables $T$, $q$ and $u$, we have:
\begin{multline*}
\sum_{\lambda\in\ccdd'} u^{-d_\lambda}T^{\lvert \lambda\rvert/2}\prod_{s\in\lambda}\frac{1-u^{-2\varepsilon_s}q^{2h_s}}{1-q^{2h_s}}\prod_{s\in\Delta}\frac{1-uq^{h_s}}{1-u^{-1}q^{h_s}}\\=
\prod_{m,r\geq 1}\frac{1-u^{-1}q^{2(r-1)}T^{m}}{1-uq^{2r}T^{m}}\frac{\left(1-u^{-2}q^{2r}T^m\right)^{\lceil r/2 \rceil}\left(1-u^2q^{2(r+1)}T^m\right)^{\lceil r/2 \rceil}}{\left(1-q^{2r}T^m\right)^{\lceil (r+1)/2 \rceil}\left(1-q^{2(r+2)}T^m\right)^{\lceil r/2 \rceil}}.
\end{multline*}
\end{thm}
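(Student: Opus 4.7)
The plan is to mirror the proof of Theorem \ref{NOC} for type $\tilde{C}_t$, replacing each ingredient by its type $\tilde{B}_t$ analogue: Theorem \ref{prop:Macdotypeb} for the rewritten Macdonald identity, Lemma \ref{lem:schurob} for the translation of specialized special orthogonal Schur functions into hook-length products on $\ccdd^{'1}_{2t-1}$, and Lemma \ref{lem:lemtech} for the manipulation of the product side. The first step is to show that both sides of Theorem \ref{NOB} are Laurent polynomials in $u$, so that it suffices to check the identity at the infinitely many values $u=q^{2t-1}$ for $t\geq 3$. On the left-hand side, for each $s\in\Delta$ (where $\varepsilon_s=1$), the factor $(1-u^{-2}q^{2h_s})/(1-q^{2h_s})$ combines with the diagonal factor via $(1-u^{-2}q^{2h_s})=(1-u^{-1}q^{h_s})(1+u^{-1}q^{h_s})$, cancelling the denominator $1-u^{-1}q^{h_s}$ to produce $(1+u^{-1}q^{h_s})(1-uq^{h_s})/(1-q^{2h_s})$; the remaining factors (for $s\notin\Delta$) are manifestly Laurent in $u$. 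Coefficient-wise in $T$, the right-hand side also reduces to finite Laurent polynomials in $u$.

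Next, fix $t\geq 3$ and set $u=q^{2t-1}$. For $\varepsilon_s=1$, the factor $1-u^{-2}q^{2h_s}=1-q^{2(h_s-(2t-1))}$ vanishes precisely when $h_s=2t-1$. A careful inspection via the $V_{2t-1,t}$-coding and Lemma \ref{lem:DDprimeLittlewood} shows that the surviving sum is exactly over $\ccdd^{'1}_{2t-1}$: hooks of length $2t-1$ in $\lambda\in\ccdd'$ come via the Littlewood decomposition from entries of hook length $1$ in the $(2t-1)$-quotient, and among $\ccdd'$ partitions only those of $\ccdd^{'1}_{2t-1}$ place all such boxes on the $\varepsilon=-1$ side of $\Delta$. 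On this restricted sum, Lemma \ref{lem:schurob} rewrites the hook-length product together with $u^{-d_\lambda}=q^{-(2t-1)d_\lambda}$ as $(-1)^{\lvert H_{<g,+}\rvert+\lvert H_{<g,+}\cap\Delta\rvert}\oo_\mu(q,q^3,\dots,q^{2t-1})$, matching term by term the left-hand side of \eqref{eq:macdob} evaluated at $x_i=q^{2i-1}$.

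It remains to transform the right-hand side of \eqref{eq:macdob} at $x_i=q^{2i-1}$ into the right-hand side of Theorem \ref{NOB} at $u=q^{2t-1}$. Using the definition \eqref{eq:kt}, expanding $K_T(t,(q,q^3,\dots,q^{2t-1}))$ produces factors $(Tq^{2(j-i)},Tq^{-2(j-i)},Tq^{2(i+j)-2},Tq^{-2(i+j)+2};T)_\infty$ indexed by $1\le i<j\le t$, while the remaining contribution is $\prod_{i=1}^t(Tq^{2i-1},Tq^{-(2i-1)};T)_\infty$. Expanding each infinite $T$-Pochhammer symbol as a product over $m\geq 1$ and regrouping by $T^m$, the formulas \eqref{techsimple}--\eqref{techtypea} of Lemma \ref{lem:lemtech} (applied with $q\mapsto q^2$ and with appropriate shifts of the auxiliary variable $x$) repackage these finite products into the infinite product side of Theorem \ref{NOB} specialized at $u=q^{2t-1}$. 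The polynomiality argument then promotes the identity from $u=q^{2t-1}$ to all $u$.

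The main obstacle is step two: verifying that the vanishing of the $u=q^{2t-1}$ specialization on $\ccdd'$ selects exactly $\ccdd^{'1}_{2t-1}$, and that the sign arising from Lemma \ref{lem:schurob} cancels the sign $(-1)^{\lvert H_{<g,+}\rvert+\lvert H_{<g,+}\cap\Delta\rvert}$ coming from Theorem \ref{prop:Macdotypeb}. A secondary bookkeeping hurdle is the exponent matching in step three, where the substitution $q\mapsto q^2$ in Lemma \ref{lem:lemtech} together with the simultaneous presence of both $u$ and $u^{-1}$ factors in the target product must align precisely with the floor expressions $r-\lfloor r/2\rfloor$ and $r+1-\lfloor(r+1)/2\rfloor$ displayed on the right-hand side of Theorem \ref{NOB}.
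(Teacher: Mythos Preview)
Your proposal is correct and follows essentially the same approach as the paper: establish Laurent polynomiality in $u$ on both sides, specialize $u=q^{2t-1}$ and $x_i=q^{2i-1}$, invoke Lemma~\ref{lem:schurob} to identify the surviving hook-length product with $(-1)^{\lvert H_{<g,+}\rvert+\lvert H_{<g,+}\cap\Delta\rvert}\oo_\mu$, and use Lemma~\ref{lem:lemtech} to match the product side with that of Theorem~\ref{prop:Macdotypeb}. Your discussion of why the specialization singles out exactly $\ccdd^{'1}_{2t-1}$ (namely, that any nonempty $\nu^{(j)}$ with $j\le 2t-3$ or any non-rectangular $\nu^{(2t-2)}$ forces a hook of length $2t-1$ with $\varepsilon_s=1$) is in fact more explicit than the paper's own treatment, which simply asserts that \eqref{eq:macdob} and Theorem~\ref{NOB} coincide under this specialization.
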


Like in the case of Theorem \ref{NOC}, if we set $u=q^{2t-1}$ and $x_i=q^{2i-1}$ in Theorem \ref{prop:Macdotypeb}, \eqref{eq:macdob} coincides with the formula of Theorem \ref{NOB}. We conclude by remarking that on both sides the coefficients of the powers of $T$ belong to $\mathbb{C}[[q]][u,u^{-1}]$.

\subsubsection{Type $A^{(2)}_{2t-1}$}
%

Following the same steps of the proof of Lemma \ref{lem:schurinter}, showing that the induction steps are the same as the ones of Theorem \ref{thm:DDtquotpair} with $\tau(x)=1-q^{x}$, using \eqref{lem:Deltaddprime}, Lemma \ref{lm:lemdiag} in which $q$ is replaced by $q^{1/2}$ and $t(2t-1)=\sum_{i=1}^{2t-1}i$, we derive the following result.
\begin{lm}\label{lem:schurbv}
Let $t$ be a positive integer, set $g=2t$, $\lambda\in\ccdd_{g}^{'1}$ and $\mathbf{v}\in\bbbz^t$ its associated $V_{2t,t}$-coding.
Set $H_{<g,+}=\#\lbrace s\in\lambda h_s<g, \varepsilon_s=1\rbrace$. Set $\mu=(v_1-2t+1,v_2-2t+2,\dots,v_t-t)\in\ccp$. Then
 \begin{equation*}
\sp_{\mu}(q,q^2,\dots,q^t)=(-1)^{\lvert H+\rvert} q^{-td_\lambda}\prod_{s\in\lambda}\frac{1-q^{h_s-2t\varepsilon_s}}{1-q^{h_s}}\prod_{s\in\Delta}\frac{1+q^{t+h_s/2}}{1+q^{-t+h_s/2}}.
 \end{equation*}
 \end{lm}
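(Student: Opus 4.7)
The plan is to proceed by induction on the length $d_\lambda$ of the main diagonal, mimicking the structure used in Lemma \ref{lem:schurinter} (for $\ccdd_{(2t+2)}$) but accounting for the fact that every $\lambda\in\ccdd_{2t}^{'1}$ carries the extra rectangular quotient component $\nu^{(2t-1)}$ described in Section \ref{chap3:quad}, which manifests through the distinguished index $i_0\in\lbrace 1,\dots,t\rbrace$ characterized by $\sigma(i_0)=g-1$ and $v_{i_0}=m_1 g-1$. For the base case $\lambda=\emptyset$, Definition \ref{def:vcoding} gives $\mathbf{v}=(2t-1,2t-2,\dots,t)$, hence $\mu=(v_1-2t+1,\dots,v_t-t)=\emptyset$ and both sides equal $1$.

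For the induction step, let $H_1$ denote the largest hook of $\lambda$ and set $\tilde{\lambda}:=\lambda\setminus H_1\in\ccdd_{2t}^{'1}$; its main diagonal has length $d_\lambda-1$. Let $\mathbf{v'}$ be its associated $V_{2t,t}$-coding and $\tilde{\mu}$ the corresponding partition. Using the Weyl character formula \eqref{spdef} with $x_i=q^i$, one writes
\[
\frac{\sp_\mu(q,\dots,q^t)}{\sp_{\tilde\mu}(q,\dots,q^t)}=\frac{\underset{1\le i,j\le t}{\det}\left(q^{(i-t-1)r_j}-q^{-(i-t-1)r_j}\right)}{\underset{1\le i,j\le t}{\det}\left(q^{(i-t-1)r'_j}-q^{-(i-t-1)r'_j}\right)},
\]
with $r_i:=v_i-t$ and $r'_i:=v'_i-t$. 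Factoring type $C$ determinants exactly as between \eqref{eq:intersp} and \eqref{eq:premiercas}, the quotient splits into a prefactor $q^{-td_\lambda}\cdot q^{\cdots}$ times telescoping products of differences $1-q^{r_1\pm r_j\mp g}$, $1-q^{2r_1}$ and $1-q^{r_1\pm g}$. The two cases $g<v_1<3g/2$ and $v_1>3g/2$ must be handled separately, just as in Lemma \ref{lem:schurinter}, and in each case the resulting telescoping exactly mirrors the product manipulations carried out in the proof of Theorem \ref{thm:DDtquotpair} with $\tau(x)=1-q^x$. Matching Lemma \ref{lem:maxhookddprime} with Lemma \ref{lem:Deltaddprime}, the term coming from the box on $\Delta\cap H_1$ of hook length $2(r_1-g)+2=h_{(1,1)}$ produces the factor $(1+q^{t+h_{(1,1)}/2})/(1+q^{-t+h_{(1,1)}/2})$ via the identity $q^x=-(1-q^x)/(1-q^{-x})$, while the remaining $q$-powers collapse using $\sum_{i=1}^{2t-1}i=t(2t-1)$ to give $q^{-td_\lambda}$ after accounting for the $d_\lambda$ contributions accumulated along the full diagonal.

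The main obstacle, compared with Lemma \ref{lem:schurinter}, will be bookkeeping around the index $i_0$: Lemma \ref{lem:maxhookddprime} introduces two extra $g$-intervals in $\cch_{1,+}\cup\cch_{1,-}$ corresponding to the residue class $g-1\pmod g$, and these intervals must be matched column-by-column in the determinantal ratio against the position occupied by $v_{i_0}=m_1 g-1$. The sub-case split $g<v_1<3g/2$ vs.\ $v_1>3g/2$ has to be further refined according to whether the permutation moving $v_1-g$ into its correct decreasingly-ordered slot crosses the position $i_0$, with the shifted value $-v_{i_0}+g-2$ and the excluded subinterval $\I_{-1,v_1-g}^{g,+}$ of Lemma \ref{lem:maxhookddprime} entering the cancellation. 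This is the same casework used to prove Theorem \ref{thm:DDtquotimpair} (resp.\ \ref{thm:DDtquotpair}) via a non-empty quotient component, and combining it with Lemma \ref{lem:signschurtypebv} produces the global sign $(-1)^{|H_{<g,+}|}$ on the right-hand side, completing the induction.
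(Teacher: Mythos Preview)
Your proposal is correct and follows essentially the same approach as the paper: induction on $d_\lambda$ via removal of the maximal hook, comparison of the ratio $\sp_\mu/\sp_{\tilde\mu}$ through the type $C$ Weyl denominator factorization, identification of the telescoping with the induction of Theorem \ref{thm:DDtquotpair} at $\tau(x)=1-q^x$, and use of \eqref{lem:Deltaddprime} together with $\sum_{i=1}^{2t-1}i=t(2t-1)$ to extract the $q^{-td_\lambda}$ prefactor. One cosmetic slip: your shift should read $r_i=v_i-g/2+1=v_i-t+1$ (as in Theorem \ref{thm:DDtquotpair}), not $v_i-t$, so that the exponent in the numerator of $\sp_\mu$ equals $\mu_j+t-j+1=r_j$; this is harmless for the argument.
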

 
 Following the same steps, using Theorem \ref{prop:Macdotypebv} and Lemma \ref{lem:schurbv}, rewriting the products with Lemma \ref{lem:lemtech}, setting $x_i=q^{i}$, $u=q^{t}$, one derives the following $q$-Nekrasov--Okounkov formula.
\begin{thm}[A $q$-Nekrasov--Okounkov formula for type $A^{(2)}_{2t-1}$]\label{NOBV}
For formal variables $T$, $q$ and $u$, we have:
\begin{multline*}
\sum_{\lambda\in\ccdd'} (-u)^{-d_\lambda}T^{\lvert \lambda\rvert/2}\prod_{s\in\lambda}\frac{1-u^{-2\varepsilon_s}q^{h_s}}{1-q^{h_s}}\prod_{s\in\Delta}\frac{1+uq^{h_s/2}}{1+u^{-1}q^{h_s/2}}\\=
\prod_{m\geq 1}\left(1+u^{-1}T^{m}\right)\prod_{r\geq 1}\frac{\left(1+u^{-1}q^rT^{m}\right)}{\left(1+u q^{r}T^{m}\right)}\frac{ \left(1-q^{r+1}u^{2}T^m\right)^{\lceil r/2 \rceil}\left(1-q^{r}u^{-2}T^m\right)^{\lceil r/2 \rceil}}{\left(1-q^{r}T^m\right)^{\lceil r/2 \rceil}\left(1-q^{r+1}T^m\right)^{\lceil r/2 \rceil}}.
%
\end{multline*}
\end{thm}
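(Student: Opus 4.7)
The plan is to follow the polynomiality-and-specialization template used in the proofs of Theorems \ref{NOC} and \ref{NOB}. For fixed $n \geq 1$, both sides of the claimed identity are Laurent polynomials in $u$ with coefficients in $\mathbb{C}(q)$: on the left only finitely many $\lambda \in \ccdd'$ contribute to the $T^n$ coefficient, and on the right extracting $[T^n]$ involves only finitely many factors of the infinite product. It therefore suffices to prove equality at $u = q^t$ for infinitely many positive integers $t$.

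Fix such a $t$ and set $g = 2t$. At $u = q^t$ the left-hand side factor $\prod_s (1 - u^{-2\varepsilon_s} q^{h_s})$ vanishes whenever $\lambda$ has a box with $\varepsilon_s = 1$ and $h_s = 2t$. By the Littlewood decomposition (Lemma \ref{lem:DDprimeLittlewood}) together with the symmetry of $\ccdd'$ partitions, the surviving contributions come exactly from $\lambda \in \ccdd_{2t}^{'1}$: the $(2t)$-quotient must have all components empty except for the last one, which is forced to be the rectangular partition of Section \ref{chap3:quad} indexed by some $m_1 \geq 1$. I then plan to apply Lemma \ref{lem:schurbv} with $x_i = q^i$ to rewrite the restricted sum: the hook-length product, up to $(-1)^{\lvert H_{<g,+}\rvert}$ and $q^{-t d_\lambda}$, equals $\sp_\mu(q, q^2, \dots, q^t)$ for the partition $\mu$ attached to the $V_{g,t}$-coding of $\lambda$. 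Combining this with Lemma \ref{lem:signschurtypebv} to rewrite $(-u)^{-d_\lambda} = (-1)^{d_\lambda} q^{-t d_\lambda}$ and reconcile signs, the $u = q^t$ specialization of the left-hand side becomes exactly the left-hand side of the rewritten Macdonald identity \eqref{eq:macdobv} at $x_i = q^i$.

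For the right-hand side, Theorem \ref{prop:Macdotypebv} evaluates the specialization of \eqref{eq:macdobv} at $x_i = q^i$ to
\[
(T;T)_\infty^t \, K_T(t, q, q^2, \dots, q^t) \, \prod_{i=1}^t (T^2 q^{\pm 2i}; T^2)_\infty .
\]
The plan is then to apply the identities \eqref{techsimple}--\eqref{techtypea} of Lemma \ref{lem:lemtech}, choosing $x$ appropriately in each, to convert every finite product $\prod_{i=1}^t$ or $\prod_{1 \leq i < j \leq t}$ into an infinite product in $r$ with the correct multiplicities. After this rearrangement and the substitution $u = q^t$, this should match the specialization at $u = q^t$ of the right-hand side of Theorem \ref{NOBV} factor by factor.

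The main obstacle is the careful matching at this last step: one must organize the expansion so that the characteristic factor $\prod_{m \geq 1}(1 + u^{-1} T^m) \prod_{r \geq 1} (1 + u^{-1} q^r T^m)/(1 + u q^r T^m)$ emerges from the $\tilde{B}_t^\vee$-specific factor $(T^2 x_i^{\pm 2}; T^2)_\infty$ together with the residual pieces of $K_T(t, \mathbf{x})$, and that the exponents $r - \lfloor r/2 \rfloor$ on the $u^{\pm 2}$ terms come out correctly from \eqref{techstrictplus} and \eqref{techstrictmoins}. A secondary subtlety is verifying that the parameter $m_1$ of the rectangular quotient block $\nu^{(2t-1)}$ does not survive as an extra variable in the final product; this is guaranteed by the bijection in Proposition \ref{prop:core_vcoding} between $\lambda \in \ccdd_{2t}^{'1}$ and its $V_{g,t}$-coding, which repackages $m_1$ as a single coordinate of $\mathbf{v}$ so that summing over $\lambda$ already encompasses summing over $m_1$.
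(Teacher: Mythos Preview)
Your proposal is correct and follows essentially the same route as the paper: Laurent polynomiality in $u$, specialization at $u=q^t$ with $x_i=q^i$, identification of the surviving sum with the left-hand side of \eqref{eq:macdobv} via Lemma~\ref{lem:schurbv}, and rewriting the product side through Lemma~\ref{lem:lemtech}. In fact you supply more detail than the paper does on why the surviving $\lambda\in\ccdd'$ at $u=q^t$ are exactly those in $\ccdd_{2t}^{'1}$ (the paper merely writes ``Like in the case of Theorem~\ref{NOC}''); your appeal to Lemma~\ref{lem:DDprimeLittlewood} and the $\ccdd'$ symmetry is the right justification, and your remark that $m_1$ is absorbed into the $V_{g,t}$-coding via Proposition~\ref{prop:core_vcoding} is exactly the point. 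One small redundancy: the sign reconciliation you attribute to Lemma~\ref{lem:signschurtypebv} is already packaged inside Lemma~\ref{lem:schurbv}, so invoking it separately is harmless but unnecessary.
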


Like in the case of Theorem \ref{NOC}, if we set $u=q^{t}$ and $x_i=q^{i}$ in Theorem \ref{prop:Macdotypebv}, \eqref{eq:macdobv} coincides with the formula of Theorem \ref{NOBV}. We conclude by remarking that both sides are Laurent polynomials in $u$.

\subsubsection{Type $D^{(2)}_{t+1}$}

The following lemma corresponds to the specialization of odd orthogonal Schur functions at $x_i=q^{2i-1}$.

\begin{lm}\label{lem:schurocv}
Let $t$ be a positive integer, set $\lambda\in\ccsc_{(2t)}$, $g=2t$ and $\mathbf{v}\in\bbbz^t$ its associated $V_{2t,t}$-coding. Set $H_{<g,+}=\#\lbrace s\in\lambda h_s<g, \varepsilon_s=1\rbrace$. Set $\mu=(v_1-2t+1,v_2-2t+2,\dots,v_t-t)\in\ccp$. Then
 \begin{equation}\label{eq:schursocv}
\oo_{\mu}(q,q^3,\dots,q^{2t-1})=(-1)^{\lvert H_{<g,+}\rvert+\lvert H_{<g,+}\cap \Delta\rvert} \prod_{s\in\lambda}\frac{1-q^{2h_s-4t\varepsilon_s}}{1-q^{2h_s}}\prod_{s\in\Delta}\frac{1-q^{2t+h_s}}{1-q^{-2t+h_s}}.
 \end{equation}
 \end{lm}

The proof of Lemma \ref{lem:schurocv} follows the exact same path as the one of Lemma \ref{lem:schurinter} with the difference that in Theorem \ref{thm:SC} with $\tau(x)=1-q^{2x}$, the product $\prod_{i=1}^t (1-q^{2r_i})/(1-q^{2i})$ is missing and instead of using \eqref{def:deltac}, we use \eqref{def:deltab}. As in the proof of Lemma \ref{lem:schurinter}, this product comes from the product over the terms belonging to the main diagonal of $\lambda\in\ccsc_{(2t)}$. As explained in the proof of Lemma \ref{lm:lemdiag}, using $(1-q^{-i})/(1-q^i)=-q^{-i}$, this product induces a term $(-1)^{\lvert H_{<g,+}\cap\Delta\rvert}$ in \eqref{eq:schursocv}. Whereas in all other lemmas, there is a factor $q$ to some power, this factor is missing in \eqref{eq:schursocv} because, at each step of the induction, this power of $q$ yielding the dependency on $d_\lambda$ for the other types, is equal to $2t(2t-1)-2\sum_{i=1}^{2t-1}i=0$.

Following the same steps, using Theorem \ref{prop:MacdotypeCv} and Lemma \ref{lem:schurocv}, rewriting the products with Lemma \ref{lem:lemtech}, setting $x_i=q^{2i-1}$, $u=q^{2t}$ and a Laurent polynomiality argument, one derives the following $q$-Nekrasov--Okounkov type formula.
\begin{thm}[A $q$-Nekrasov--Okounkov formula for type $D^{(2)}_{t+1}$]\label{NOCV}
For formal variables $T$, $q$ and $u$, we have:
\begin{multline*}
\sum_{\lambda\in\ccsc} (-1)^{d_\lambda}T^{\lvert \lambda\rvert/2}\prod_{s\in\lambda}\frac{1-u^{-2\varepsilon_s}q^{2h_s}}{1-q^{2h_s}}\prod_{s\in\Delta}\frac{1-uq^{h_s}}{1-u^{-1}q^{h_s}}\\=
\prod_{m\geq 1}\frac{1}{1+T^{m/2}}\prod_{r\geq 1}\frac{1-u^{-1}q^{2r-1}T^{m/2}}{1-uq^{2r-1}T^{m/2}}\frac{\left(1-u^{-2}q^{2(r+1)}T^m\right)^{\lceil r/2 \rceil}\left(1-u^2q^{2r}T^m\right)^{\lceil r/2 \rceil}}{\left(1-q^{2r}T^m\right)^{\lceil (r+1)/2 \rceil}\left(1-q^{2(r+1)}T^m\right)^{\lceil r/2 \rceil}}.
\end{multline*}
\end{thm}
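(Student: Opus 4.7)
The plan is to follow the template laid out for Theorem \ref{NOC} (type $\tilde{C}_t$), adapting it to self-conjugate partitions and the $\tilde{C}_t^{\vee}$ Macdonald identity. First I would check that, for each fixed power of $T$, the coefficients on both sides of the proposed identity are Laurent polynomials in $u$ over $\mathbb{C}(q)$. For the left-hand side, for any fixed half-integer $n$, only finitely many $\lambda\in\ccsc$ satisfy $|\lambda|/2=n$, and for each such $\lambda$ the finite product over boxes of $\lambda$ (after pairing off-diagonal boxes $s$ with $\varepsilon_s=1$ and $\varepsilon_s=-1$ via the self-conjugate symmetry, and handling $\Delta$-boxes whose hook lengths are odd) is manifestly a Laurent polynomial in $u$. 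For the right-hand side, expanding each $(1-u^{\pm}q^{\bullet}T^{\bullet})^{\pm 1}$ as a geometric series yields at each order in $T$ a Laurent polynomial in $u$. It therefore suffices to establish the identity at $u=q^{2t}$ for infinitely many positive integers $t$.

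Next, at $u=q^{2t}$ the left-hand side restricts to partitions $\lambda\in\ccsc_{(2t)}$: indeed the factor $1-u^{-2\varepsilon_s}q^{2h_s}$ vanishes whenever there exists a box $s$ with $\varepsilon_s=1$ and $h_s=2t$, and by the self-conjugate symmetry this forces the product to vanish on any $\lambda\in\ccsc\setminus\ccsc_{(2t)}$ (note the $\Delta$-correction factor does not introduce a pole since hook lengths on the diagonal of a self-conjugate partition are odd). Once restricted, I would apply Lemma \ref{lem:schurocv} to recognise
\[
(-1)^{\lvert H_{<g,+}\rvert+\lvert H_{<g,+}\cap\Delta\rvert}\prod_{s\in\lambda}\frac{1-q^{2h_s-4t\varepsilon_s}}{1-q^{2h_s}}\prod_{s\in\Delta}\frac{1-q^{2t+h_s}}{1-q^{-2t+h_s}}
\]
as $\oo_{\mu}(q,q^3,\dots,q^{2t-1})$ where $\mu_i=v_i-2t+1+i-t$ is read off the $V_{2t,t}$-coding of $\lambda$. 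This identifies the left-hand side at $u=q^{2t}$ with the sum appearing on the left-hand side of the rewritten Macdonald identity \eqref{eq:macdocv} after specialising $x_i=q^{2i-1}$.

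It then remains to match the product side. For this I would take Theorem \ref{prop:MacdotypeCv}, set $x_i=q^{2i-1}$, and process each of the three infinite products using Lemma \ref{lem:lemtech}: the factor $K_T(t,\mathbf{x})$ evaluated at $x_i=q^{2i-1}$ produces, via \eqref{techstrictplus} and \eqref{techstrictmoins} (applied with $q\mapsto q^2$ and appropriate shifts), the factors carrying $u^{\pm 2}=q^{\pm 4t}$ together with the denominators in $(1-q^{2r}T^m)$ and $(1-q^{2(r+1)}T^m)$; the factors $(T^{1/2}x_i^{\pm};T^{1/2})_\infty$ become, via the telescoping identities \eqref{techsimple}--\eqref{techsimpleneg}, the quotient $(1-u^{-1}q^{2r-1}T^{m/2})/(1-uq^{2r-1}T^{m/2})$; and $(T^{1/2};T^{1/2})_\infty(T;T)_\infty^{t-1}$ combined with the remaining telescoping contributions yields the $\prod_{m\geq 1}1/(1+T^{m/2})$ prefactor after simplifying $(T^{1/2};T^{1/2})_\infty=(T;T)_\infty\prod_{m\geq 1}(1+T^{m/2})^{-1}\cdot\prod(1-T^{m/2})$ — more precisely, using $(T^{1/2};T^{1/2})_\infty/(T;T)_\infty$ to isolate the half-integer $T$-exponents.

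The main technical obstacle will be step three: carefully reorganising the three infinite products on the right-hand side of \eqref{eq:macdocv} so that, after the substitution $x_i=q^{2i-1}$ and $u=q^{2t}$, the exponents of the resulting Pochhammer-type factors line up exactly with the floor-function exponents $r-\lfloor r/2\rfloor$ and $r+1-\lfloor(r+1)/2\rfloor$ on the right-hand side of Theorem \ref{NOCV}. The other potential subtlety is in step two, verifying that the product vanishes precisely on $\ccsc\setminus\ccsc_{(2t)}$ and not for any partition in $\ccsc_{(2t)}$ (so that no cancellation is lost); this reduces to checking that for $\lambda\in\ccsc_{(2t)}$ none of the factors $1-q^{-2t+h_s}$ in the $\Delta$-product vanish, which is immediate since diagonal hook lengths are odd and hence never equal $2t$.
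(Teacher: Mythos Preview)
Your proposal is correct and follows essentially the same route as the paper: Laurent polynomiality in $u$ to reduce to $u=q^{2t}$, vanishing outside $\ccsc_{(2t)}$, Lemma~\ref{lem:schurocv} to identify the surviving hook product as $\oo_\mu(q,q^3,\dots,q^{2t-1})$, then Theorem~\ref{prop:MacdotypeCv} specialized at $x_i=q^{2i-1}$ together with Lemma~\ref{lem:lemtech} for the product side. One small slip: your formula $\mu_i=v_i-2t+1+i-t$ should read $\mu_i=v_i+i-2t$ (matching both Lemma~\ref{lem:schurocv} and Theorem~\ref{prop:MacdotypeCv}), but this does not affect the argument.
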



\subsubsection{Type $A^{(2)}_{2t}$}
Following the same steps as in the proof of Lemma \ref{lem:schurocv}, using Lemma \ref{lm:lemdiag}, one derives the following from Theorem \ref{thm:DDimpair} with $\tau(x)=1-q^{2x}$.
\begin{lm}\label{lem:schurobc}
Let $t$ be a positive integer, set $\lambda\in\ccdd_{(2t+1)}$, $g=2t+1$ and $\mathbf{v}\in\bbbz^t$ its associated $V_{2t+1,t}$-coding. Set $H_{<g,+}=\#\lbrace s\in\lambda h_s<g, \varepsilon_s=1\rbrace$. Set $\mu=(v_1-2t,v_2-2t+1\dots,v_t-t-1)\in\ccp$. Then
 \begin{multline*}\label{eq:schursobc}
\oo_{\mu}(q,q^3,\dots,q^{2t-1})=(-1)^{\lvert H_{<g,+}\rvert+\lvert H_{<g,+}\cap \Delta\rvert} q^{(2t+1) d_\lambda}\\
\times\prod_{s\in\lambda}\frac{1-q^{2h_s-2(2t+1)\varepsilon_s}}{1-q^{2h_s}}\prod_{s\in\Delta}\frac{1-q^{2t+1+h_s}}{1-q^{-2t-1+h_s}}.
 \end{multline*}
 \end{lm}
 
Using Theorem \ref{prop:Macdotypebc} and Lemma \ref{lem:schurobc}, rewriting the products with Lemma \ref{lem:lemtech}, setting $x_i=q^{2i-1}$, $u=q^{2t+1}$, one derives the next result.
\begin{thm}[A $q$-Nekrasov--Okounkov formula for type $A^{(2)}_{2t}$]\label{NOBC}
For formal variables $T$, $q$ and $u$, we have:
\begin{multline*}
\sum_{\lambda\in\ccdd} u^{d_\lambda}T^{\lvert \lambda\rvert/2}\prod_{s\in\lambda}\frac{1-u^{-2\varepsilon_s}q^{2h_s}}{1-q^{2h_s}}\prod_{s\in\Delta}\frac{1-uq^{h_s}}{1-u^{-1}q^{h_s}}\\=
\prod_{m,r\geq 1}\frac{1-u^{-1}q^{2r}T^{m}}{1-uq^{2r}T^{m}}\frac{1-u^{-2}q^{4r}T^{2m+1}}{1-u^2q^{4r}T^{2m+1}}\frac{\left(1-u^{-2}q^{2(r+1)}T^m\right)^{\lceil r/2 \rceil}\left(1-u^2q^{2r}T^m\right)^{\lceil r/2 \rceil}}{\left(1-q^{2r}T^m\right)^{\lceil (r+1)/2 \rceil}\left(1-q^{2(r+1)}T^m\right)^{\lceil r/2 \rceil}}.
\end{multline*}
\end{thm}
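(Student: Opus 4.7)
The plan is to mimic the proofs of Theorems \ref{NOC}, \ref{NOB}, \ref{NOBV}, \ref{NOCV} by combining the $\tilde{BC}_t$ rewriting of the Macdonald identity (Theorem \ref{prop:Macdotypebc}) with the hook-length enumerative formula of Theorem \ref{thm:DDimpair} applied to $\tau(x)=1-q^{2x}$. Concretely, I first specialize $x_i=q^{2i-1}$ in \eqref{eq:macdobc}. On the sum side, Lemma \ref{lem:schurobc} expresses $\oo_\mu(q,q^3,\dots,q^{2t-1})$ in terms of a product over hook lengths of $\lambda\in\ccdd_{(2t+1)}$, and substituting gives, after cancellation of the $(-1)^{\lvert H_{<g,+}\rvert+\lvert H_{<g,+}\cap \Delta\rvert}$ sign, the desired summand with $u=q^{2t+1}$ and the extra diagonal factor $\prod_{s\in\Delta}(1-uq^{h_s})/(1-u^{-1}q^{h_s})$ coming out naturally.

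Next I would handle the product side of \eqref{eq:macdobc} under $x_i=q^{2i-1}$. The factor $\prod_{i=1}^t (Tx_i^{\pm};T)_\infty$ becomes $\prod_{i,m\geq 1}(1-q^{\pm(2i-1)}T^m)$, and the factor $\prod_{i=1}^t (Tx_i^{\pm 2};T^2)_\infty$ becomes $\prod_{i,m\geq 1}(1-q^{\pm 2(2i-1)}T^{2m-1})$; both of these are telescopic in $i$ once $u$ is inserted via \eqref{techsimple}--\eqref{techsimpleneg} of Lemma \ref{lem:lemtech}, producing exactly the first two single-fraction factors of the right-hand side. The factor $K_T(t,\mathbf{x})$ contributes $(1-q^{\pm(2j-1)\pm(2i-1)}T^m)$-products, which after change of variables $j-i, i+j \leftrightarrow 2r, 2r$, and applications of \eqref{techstrictplus}--\eqref{techtypea} rearrange into the two displayed product-of-powers $(r-\lfloor r/2\rfloor)$-exponent factors; the Dedekind factor $(T;T)_\infty^t$ absorbs the remaining $(1-q^0T^m)^t$ terms that arise when closing the telescoping into infinite products.

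Finally, for the extension from $\ccdd_{(2t+1)}$ to all of $\ccdd$, the argument is the same polynomiality reduction as used for Theorem \ref{NOC}: at $u=q^{2t+1}$ the summand factor $(1-u^{-2\varepsilon_s}q^{2h_s})$ with $\varepsilon_s=1$ and $h_s=2t+1$ vanishes, so the sum over $\ccdd$ collapses to $\ccdd_{(2t+1)}$; and by expanding the infinite products on both sides, the coefficient of $T^n$ is a Laurent polynomial in $u$ with coefficients in $\mathbb{C}(q)$. Since the identity holds for the infinite family $u=q^{2t+1}$ for $t=1,2,\dots$, it extends to arbitrary complex $u$.

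The main obstacle I anticipate is bookkeeping in the product-side rewriting: the $\tilde{BC}_t$ case is the only one whose right-hand side carries both a $\left(Tx_i^\pm;T\right)_\infty$ and a $\left(Tx_i^{\pm 2};T^2\right)_\infty$ factor, producing the slightly unusual double-product term $(1-u^{-2}q^{4r}T^{2m+1})/(1-u^2q^{4r}T^{2m+1})$ keyed to the odd $T$-exponent $2m+1$. Tracing this contribution through the shift $x_i\mapsto q^{2i-1}$ and reconciling the odd indexing with the telescoping identity \eqref{techsimple} (which must be applied at the base $q^2$ with base $T^2$, not $T$) will require the most care; everything else is a direct transcription of the arguments previously detailed for types $\tilde{C}_t$, $\tilde{C}_t^\vee$, and $\tilde{B}_t$.
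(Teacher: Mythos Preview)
Your proposal is correct and follows essentially the same route as the paper: specialize $x_i=q^{2i-1}$ in Theorem~\ref{prop:Macdotypebc}, invoke Lemma~\ref{lem:schurobc} (which is Theorem~\ref{thm:DDimpair} with $\tau(x)=1-q^{2x}$) on the sum side, apply Lemma~\ref{lem:lemtech} on the product side, and conclude by the Laurent-polynomiality argument at $u=q^{2t+1}$. Your identification of the extra $(Tx_i^{\pm 2};T^2)_\infty$ factor as the source of the odd-$T$-exponent term is exactly the point the paper leaves implicit.
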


Like in the case of Theorem \ref{NOC}, if we set $u=q^{2t+1}$ and $x_i=q^{2i-1}$ in Theorem \ref{prop:Macdotypebc}, \eqref{eq:macdobc} coincides with the formula of Theorem \ref{NOBC}. We conclude by remarking that both sides are Laurent polynomials in $u$.

\subsubsection{Type $D^{(1)}_{t}$}
Recall from \cite[eqn $(2.6)$]{Kradet1} that:
\begin{equation}\label{eq:detkrat}
\underset{1\le i,j\le t}{\det}\left(x_i^{j-1/2}+x_i^{-(j-1/2)}\right)=\prod_{i=1}^t x_i^{-t+1/2}\prod_{1\leq i<j\leq t} \left((x_i-x_j)(1-x_ix_j)\right)\prod_{i=1}^t(1+x_i). 
\end{equation}

Similarly, by application of Theorem \ref{thm:DDtquotpaird} with $\tau(x)=1-q^{2x}$, and using the fact that $\tau(2x)/\tau(x)=1+q^x$ and \eqref{eq:detkrat}, one derives the next lemma: 

\begin{lm}\label{lem:schurd}
Let $t$ be a positive integer, set $\lambda\in\ccdd_{2t-2}^{'2}$, $g=2t-2$ and $\mathbf{v}\in\bbbz^t$ its associated $V_{2t-2,t}$-coding.Set $H_{<g,+}=\#\lbrace s\in\lambda h_s<g, \varepsilon'_s=1\rbrace$. Set $\mu=(v_1-2t+3,v_2-2t+4\dots,v_t-t+2)\in\ccp$. Then
 \begin{equation*}\label{eq:schuroe}
\oe_{\mu}(q,q^{3},\dots,q^{2t-1})=(-1)^{\lvert H_{<g,+}\rvert} q^{-4(t-1)d_\lambda}\prod_{s\in\lambda}\frac{1-q^{2h_s-2(2t-2)\varepsilon'_s}}{1-q^{2h_s}}.
 \end{equation*}
 \end{lm}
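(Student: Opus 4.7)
The plan is to mirror, almost step-for-step, the proof of Lemma \ref{lem:schurinter}, replacing Theorem \ref{thm:DDpair} by Theorem \ref{thm:DDtquotpaird} (with $g=2t-2$) and specializing $\tau$ at $\tau(x)=1-q^{2x}$, exactly as was done for Lemma \ref{lem:schurocv} in type $\tilde{C}^\vee$. I set $r_i = v_i - t + 2$ so that $\mu_i + t - i = r_i$; this matches the notational convention of Theorem \ref{thm:DDtquotpaird}. The argument proceeds by induction on the length $d_\lambda$ of the main diagonal.

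For the base case, I take $\lambda$ to be the unique element of $\ccdd_{2t-2}^{'2}$ whose $V_{g,t}$-coding gives $\mu=\emptyset$, and check by inspection that both sides reduce to $1$ once the normalization $2/(1+\delta_{\mu_t,0})$ of \eqref{oedef} is taken into account. For the inductive step, I let $H_1$ denote the largest hook of $\lambda$, set $\tilde{\lambda}=\lambda\setminus H_1\in\ccdd_{2t-2}^{'2}$, and let $\tilde{\mathbf{v}}, \tilde{\mathbf{r}}, \tilde{\mu}$ denote the corresponding data. As in Lemma \ref{lem:schurinter} and the proof of Theorem \ref{thm:DDtquotpaird}, there are two cases: either $\tilde{v}_1 = v_1 - g$ (other coordinates unchanged), or there exists $2\le l\le t$ such that the removal of $H_1$ acts by the transposition $(1,l)$ combined with the reflection $r_l \mapsto g-r_1$. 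In either case, using \eqref{oedef} together with the Weyl denominator $\Delta_D$ of \eqref{def:deltad}, the ratio $\oe_\mu(q,q^3,\dots,q^{2t-1})/\oe_{\tilde{\mu}}(q,q^3,\dots,q^{2t-1})$ unfolds into a product over $1\le j\le t$ of factors of the form $(1-q^{\pm 2(r_1 \pm r_j)})^{\pm 1}$ times an explicit power of $q$; collecting these factors via Lemma \ref{lem:maxhookddprimed} and applying the identity $(1-q^{-2i})/(1-q^{2i}) = -q^{-2i}$ on the elementary contributions of boxes in $H_1$ produces exactly the induction step of the right-hand side of Theorem \ref{thm:DDtquotpaird} with $\tau(x)=1-q^{2x}$.

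The main subtlety, and the key point to verify with care, is the absence of any $\prod_{s\in\Delta}$ factor in the statement: unlike all six other types, the contribution from the diagonal boxes is entirely absorbed by the normalizing factor $2/(1+\delta_{\mu_t,0})$ in \eqref{oedef} together with the extra factor of $2$ in $\Delta_D$. Tracking this through the inductive step requires a case split on whether $\mu_t = 0$ holds in $\lambda$ and/or in $\tilde{\lambda}$; when removing $H_1$ toggles the indicator $\delta_{\mu_t,0}$, one must show that the resulting factor of $2$ is precisely compensated by a corresponding hook-length contribution from a diagonal box added or removed by $H_1$, together with the specialized telescoping identities \eqref{techsimple}--\eqref{techtypea} from Lemma \ref{lem:lemtech}. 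Once this bookkeeping is done, the prefactor $q^{-4(t-1)d_\lambda}$ emerges from the telescoping identity $4(t-1)=2g$ applied at every inductive step, analogous to the power $q^{(t+1)d_\lambda}$ in Lemma \ref{lem:schurinter} and the power $q^{-td_\lambda}$ in Lemma \ref{lem:schurbv}; the sign $(-1)^{|H_+|}$ arises, as in the previous lemmas, from the collective $-q^{-2i}$ contributions together with Lemma \ref{lem:signschurtyped}.
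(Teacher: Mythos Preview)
Your inductive strategy is exactly the paper's: induct on $d_\lambda$, strip the largest hook $H_1$, and match the ratio $\oe_\mu/\oe_{\tilde\mu}$ against the induction step of Theorem \ref{thm:DDtquotpaird} with $\tau(x)=1-q^{2x}$, just as Lemma \ref{lem:schurinter} matches the induction step of Theorem \ref{thm:DDpair}. That core is correct.

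Your diagnosis of \emph{why} no $\prod_{s\in\Delta}$ factor appears is off, though, and following it would send you chasing a compensation that does not exist. The factor $2/(1+\delta_{\mu_t,0})$ in \eqref{oedef} and the leading $2$ in $\Delta_D$ have nothing to do with diagonal hook lengths. The actual mechanism is structural: compare \eqref{def:deltad} with \eqref{def:deltab}--\eqref{def:deltac}. The denominators $\Delta_B,\Delta_C$ carry a factor $\prod_i(1-x_i)$ or $\prod_i(1-x_i^2)$; in the proofs of Lemmas \ref{lem:schurinter}, \ref{lem:schurbv}, \ref{lem:schurocv}, \ref{lem:schurobc}, \ref{lem:schurob} it is precisely this factor, combined with \eqref{lem:Delta} or \eqref{lem:Deltaddprime}, that produces the $\prod_{s\in\Delta}$ correction. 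The type-$D$ denominator $\Delta_D$ has no such single-index factor, and correspondingly Theorem \ref{thm:DDtquotpaird} has no $\prod_{i=1}^t \tau(r_i)/\tau(i)$ term (contrast Theorems \ref{thm:DDpair} and \ref{thm:DDtquotpair}). These two absences match exactly, so the induction closes with no diagonal residue. The $2/(1+\delta_{\mu_t,0})$ is a separate, essentially trivial bookkeeping issue about the column $x_i^0+x_i^0=2$ when $r_t=0$; it is not ``compensated by a hook-length contribution from a diagonal box''. Also, Lemma \ref{lem:lemtech} plays no role here; those product rewritings enter only later, in the passage from the specialized Macdonald identity to Theorem \ref{NOd}.
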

 
 
 Following the same steps, using Theorem \ref{prop:Macdotyped} and Lemma \ref{lem:schurd}, rewriting the products with Lemma \ref{lem:lemtech}, setting $x_i=q^{2i-1}$, $u=q^{2t-2}$, one derives our last result of this subsection.
\begin{thm}[A $q$-Nekrasov--Okounkov formula for type $D^{(1)}_{t}$]\label{NOd}
For formal variables $T$, $q$ and $u$, we have:
\begin{multline*}
\sum_{\lambda\in\ccdd'} (-u^{-2})^{d_\lambda}T^{\lvert \lambda\rvert/2}\prod_{s\in\lambda}\frac{1-u^{-2\varepsilon'_s}q^{2h_s}}{1-q^{2h_s}}
\\=
\prod_{m,r\geq 1}\frac{ \left(1-q^{2(r+2)}u^{2}T^m\right)^{\lceil r/2 \rceil}\left(1-q^{2(r-1)}u^{-2}T^m\right)^{\lceil r/2 \rceil}}{\left(1-q^{2(r+1)}T^m\right)^{\lceil r/2 \rceil}\left(1-q^{2(r+2)}T^m\right)^{\lceil r/2 \rceil}}.
%
\end{multline*}
\end{thm}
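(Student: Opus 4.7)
The plan is to follow the template established for the six other infinite affine types treated above. Starting from the rewriting of the Macdonald identity \eqref{eq:macdod} in Theorem \ref{prop:Macdotyped}, specialize $x_i = q^{2i-1}$ for $i \in \lbrace 1, \dots, t\rbrace$ and set $u = q^{2t-2} = q^g$. The sum side is then transformed via Lemma \ref{lem:schurd} into a sum of hook-length products, the product side is rearranged via Lemma \ref{lem:lemtech}, and the resulting identity is extended from a sum over $\ccdd_{2t-2}^{'2}$ to a sum over all of $\ccdd'$ by a Laurent polynomiality argument in $u$.

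For the sum side, applying Lemma \ref{lem:schurd} to $\oe_\mu(q, q^3, \dots, q^{2t-1})$ with $u = q^{2t-2}$ turns the prefactor $q^{-4(t-1)d_\lambda}$ into $u^{-2d_\lambda}$. The signs $(-1)^{\lvert H_{<g,+}\rvert}$ appearing in Lemma \ref{lem:schurd} cancel with those of \eqref{eq:macdod}, and the remaining $(-1)^{d_\lambda}$ regroups with $u^{-2d_\lambda}$ into the prefactor $(-u^{-2})^{d_\lambda}$ of Theorem \ref{NOd}. For the product side, the specialization of $K_T(t, \mathbf{x})$ at $x_i = q^{2i-1}$ decomposes into contributions of the form $(Tq^{2(j-i)}, Tq^{2(i-j)}; T)_\infty$ and $(Tq^{2(i+j-1)}, Tq^{-2(i+j-1)}; T)_\infty$, to which \eqref{techtypea} and the pair \eqref{techstrictplus}--\eqref{techstrictmoins} respectively apply (after substituting $T^m$ for $x$ and interchanging the products over $m$ and over pairs $(i,j)$). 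The factor $\tfrac{1}{2}$ in \eqref{eq:macdod} is absorbed by the normalization $(1 + \delta_{\mu_t,0})^{-1}$ built into $\oe_\mu$ from \eqref{oedef}, exactly as in the proof of Theorem \ref{prop:Macdotyped}.

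The third step extends the equality from $\ccdd_{2t-2}^{'2}$ to $\ccdd'$. At $u = q^g$, the factor $1 - u^{-2\varepsilon_s}q^{2h_s}$ vanishes precisely when $h_s = g$ and $\varepsilon_s = 1$. One needs to show that any $\lambda \in \ccdd' \setminus \ccdd_{2t-2}^{'2}$ contains at least one such box: indeed the Littlewood decomposition $\Phi_g(\lambda)$ of such a $\lambda$ contains a quotient component outside the prescribed rectangular/square pattern of Lemma \ref{lem:DDprimeLittlewood}, and via the word characterisation \eqref{eq:motddprime} together with Corollary \ref{cor:position} this forces a hook of length $g$ in $\lambda$ located on or above the main diagonal, for which $\varepsilon_s = 1$. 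Conversely, for $\lambda \in \ccdd_{2t-2}^{'2}$ the boxes of $\lambda$ with $h_s = g$, which arise only from the unique hook of length one in each of $\nu^{(2t-3)}$ and $\nu^{(t-2)}$, can be tracked via Corollary \ref{cor:position} and are shown to satisfy $\varepsilon_s = -1$, keeping the summand finite and nonzero. Finally, for each fixed power of $T$, both sides of Theorem \ref{NOd} are Laurent polynomials in $u$ over $\mathbb{C}(q)$, so coincidence at the infinite family $\lbrace q^{2t-2} \rbrace_{t \geq 4}$ upgrades to an equality of Laurent polynomials, closing the argument.

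The main technical obstacle lies in Step 3: justifying that the positions of the $g$-hooks in partitions of $\ccdd_{2t-2}^{'2}$ force $\varepsilon_s = -1$, and conversely that any other quotient configuration must produce a $g$-hook with $\varepsilon_s = 1$. This requires a careful position-tracking argument of quotient hooks through the Littlewood decomposition, notably for the self-conjugate square $\nu^{(t-2)}$ sitting in the middle position and the conjugate-doubled-distinct rectangle $\nu^{(2t-3)}$ sitting at the last position of $\quot_g(\lambda)$. The product-side manipulation in Step 2 is otherwise routine but demands careful bookkeeping of the shifts in Lemma \ref{lem:lemtech}, particularly for the mixed indices $i + j - 1$ where both \eqref{techstrictplus} and \eqref{techstrictmoins} enter simultaneously.
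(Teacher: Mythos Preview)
Your approach is exactly the paper's: specialize Theorem~\ref{prop:Macdotyped} at $x_i=q^{2i-1}$, convert the sum side via Lemma~\ref{lem:schurd}, rewrite the product side via Lemma~\ref{lem:lemtech}, and close with Laurent polynomiality in $u$.

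One of your added details is incorrect, though it does not break the proof. For $\lambda\in\ccdd_{2t-2}^{'2}$ with $m_t\ge 1$, the unique $g$-hook coming from the corner of the square $\nu^{(t-2)}$ has word indices $(i_s,j_s)=(-t,\,t-2)$ (using that $n_{t-2}=0$ for any $\ccdd'$ core, by the symmetry $n_i=-n_{g-2-i}$), so $|i_s+2|=t-2=|j_s|$ and by Corollary~\ref{cor:position} this box sits on the main diagonal of $\lambda$, whence $\varepsilon_s=+1$, not $-1$. Thus your claim that the summand stays ``nonzero'' for such $\lambda$ is false. This is harmless: Lemma~\ref{lem:schurd} then simply reads $0=0$ (both $\oe_\mu(q,\dots,q^{2t-1})$ and the hook product vanish), and the extension from $\ccdd_{2t-2}^{'2}$ to all of $\ccdd'$ needs only your forward implication --- that every $\lambda\in\ccdd'\setminus\ccdd_{2t-2}^{'2}$ carries a $g$-hook with $\varepsilon_s=+1$ --- which your mirror-pairing via \eqref{eq:motddprime} and Corollary~\ref{cor:position} does correctly establish. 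So the ``main technical obstacle'' you flag is not the one you describe; drop the converse claim and the argument goes through.
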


As for Theorem \ref{NOC}, if we set $u=q^{2t-2}$ and $x_i=q^{2i-1}$ in Theorem \ref{prop:Macdotyped}, then \eqref{eq:macdod} coincides with the formula of Theorem \ref{NOd}. We conclude by noting that both sides are Laurent polynomials in $u$.

\subsection{Confluences and connections with the Appendix $1$ of Macdonald's paper}\label{sec:macident}

In this subsection, we consider all of the specializations for infinite root systems in \cite[Appendix $1$]{Mac}. These specializations yield some Nekrasov--Okounkov type formulas that are already known and some others which are new. To clarify how Macdonald's specializations are to be transcribed in terms of the set of variables $\mathbf{x}$, recall that if we let $(e_i)_{1\leq i\leq t}$ be a basis for $\mathbb{R}^t$ and write $k+\varepsilon_i$ for the affine function $e_j\to k+\delta_{i,j}$. Then Rosengren--Schlosser set $T=e^{-1}$ and $x_i=T^{-\varepsilon_i}$ in their rewritings of Macdonald identities for all infinite affine types.

As for technical aspects and in order to avoid repetitions, we remind the reader of the following limit, which will be used in all of the specializations introduced in \cite[Appendix $1$]{Mac} to get new Nekrasov--Okounkov formulas. For any $(x,y)\in\bbbr^{*2}$, we have:
\begin{center}
$$\underset{q\rightarrow 1}{\lim}\frac{1-q^x}{1-q^y}=\frac{x}{y}.$$
\end{center}

Recall from Section \ref{subsecc:qnp} that one can specialize $u$ to a integer power of $q^m$, one can find a specialization of the Macdonald identity for the corresponding type. Now if we let $q$ go to $\pm 1$, both sides of the identity belong to $\mathbb{Q}[m][[T]]$, as already performed in type $A$ by Han in \cite{Han08}. Therefore one can lift this identity to any complex number $m$.

\subsubsection{Type $A^{(1)}_{t-1}$}

Both specializations in \cite[Appendix $1$]{Mac} give the same Nekrasov--Okounkov formula \eqref{NOdebut}.

\subsubsection{Type $B^{(1)}_{t}$}
Let $t$ be an integer greater than $3$.
In this case, in \cite{Mac}, $a_0=1-\varepsilon_1-\varepsilon_2$, and $a_i=\varepsilon_i-\varepsilon_{i+1}$ for $1\leq i\leq t-1$ and $a_t=\varepsilon_t$.  In all this section, we set $u=q^{2t-1}$.

\textbf{(a)} The first specialization in \cite{Mac} corresponds to $e^{a_i}\to 1$ for $1\leq i\leq t$. This is equivalent to taking $x_i=1$ in Theorem \ref{prop:Macdotypeb}. By letting $q\rightarrow 1$ in Theorem \ref{NOB}, using this time the fact that both sides are polynomials in $t$, one derives 
\begin{equation}\label{eq:nospecb}
\sum_{\lambda\in\ccdd'}T^{\lvert \lambda\rvert/2}\prod_{s\in\lambda}\left(1-\frac{(2z-1)\varepsilon_s}{h_s}\right)\prod_{s\in\Delta}\frac{2z-1+h_s}{h_s-2z+1}=\left(T;T\right)_{\infty}^{2z^2+z},
\end{equation}
where $z$ is any complex number. With this formulation, the issue is that it is not immediate that the product on the left-hand side of \eqref{eq:nospecb} is well-defined because of the term $h_s-2z+1$ at the denominator. Moreover this formula was stated in a different way in \cite[Theorem $3.20$]{MP}: the sum part was over the set of doubled distinct partitions and the product over hook lengths slightly differs. Recall that the set $\varepsilon\cch(\lambda):=\lbrace \varepsilon_s h_s, s\in \lambda\rbrace$ and $\varepsilon\cch(\lambda')$ only differ for the boxes shaded in yellow in Figure \ref{fig:vardd} and Figure \ref{fig:varddprime}: a horizontal (respectively vertical) strip of length $d_\lambda$ with sign $\varepsilon_s=1$ (respectively $\varepsilon_s=-1$) for $\lambda\in\ccdd'$ (respectively $\ccdd$). Therefore for $\lambda\in\ccdd'$, the left-hand side of \eqref{eq:nospecb} becomes
$$\prod_{s\in\lambda\setminus \Delta}\left(1-\frac{(2z-1)\varepsilon_s}{h_s}\right)\prod_{s\in\Delta}(h_s+2z-1)=(-1)^{d_{\lambda'}}T^{\lvert \lambda'\rvert/2}\prod_{s\in\lambda'}\left(1+\frac{(2z-1)\varepsilon_s}{h_s}\right).$$
Hence one can write \eqref{eq:nospecb} as \cite[Theorem $3.20$]{MP}
\begin{equation}\label{eq:nospecbmatthias}
\sum_{\lambda\in\ccdd}(-1)^{d_\lambda}T^{\lvert \lambda\rvert/2}\prod_{s\in\lambda}\left(1+\frac{(2z-1)\varepsilon_s}{h_s}\right)=\left(T;T\right)_{\infty}^{2z^2+z},
\end{equation}
where $z$ is any complex number.

\textbf{(b)}The second specialization is $e^{a_i}\to 1$ for $1\leq i\leq t-1$ and $e^{a_t}\to -1$.
This specialization is equivalent to taking $x_i=-1$ in Theorem \ref{prop:Macdotypeb}. It is equivalent to letting $q\to -1$ in Theorem \ref{NOB}. This implies that $u=q^{2t-1}\to -1$. Moreover recall that if $\lambda\in\ccdd'$, all the hook lengths of the boxes of $\Delta$ are even. Hence for any complex number $z$, we get
\begin{equation}\label{eq:nospecbmoinsun}
\sum_{\lambda\in\ccdd'}T^{\lvert \lambda\rvert/2}\prod_{s\in\lambda}\left(1-\frac{(2z-1)\varepsilon_s}{h_s}\right)=\left(\left(T;T\right)_{\infty}^{2z-3}\left(T^2;T^2\right)_{\infty}^{2}\right)^z,
\end{equation}
which is new.

\textbf{(c)} The third specialization is $e^{a_i}\to 1$ for $0\leq i\leq t-1$. This specialization is equivalent to taking $x_i\to T^{-1/2}$ in Theorem \ref{prop:Macdotypeb}. In this case, one cannot derive a formula with the $q$-analogues written in this paper. Nevertheless there is a $q$-Nekrasov--Okounkov formula that could be obtained using the same techniques as the ones we developed. To do so, one should first replace $x_i\to x_iT^{-1/2}$ in the Macdonald identity for type $B^{(1)}_{t}$ from Proposition \ref{prop:RS}. In this case, the quadratic form to the exponent of $T$ changes. The subset of partitions linked to the latter is another subset of partitions. Namely it is the subset $\ccsc_{2t-1}^{1}$ of self-conjugate partitions whose $(2t-1)$-quotient is empty except for the self-conjugate element of the quotient which is a square. Analogues of Theorem \ref{thm:SC} and Lemma \ref{lem:signschursc} can be obtained using the same methods but no details are given here. 
The Nekrasov--Okounkov formula corresponding to this specialization is the following:
 \begin{equation}\label{eq:nospecbweird}
\sum_{\lambda\in\ccsc}(-1)^{d_\lambda}T^{\lvert \lambda\rvert/2}\prod_{s\in\lambda}\left(1+\frac{(2z-1)\varepsilon_s}{h_s}\right)=\left(\left(T^{1/2};T^{1/2}\right)_{\infty}^{2}\left(T;T\right)_{\infty}^{2z+3}\right)^{z},
\end{equation}
where $z$ is a complex number.
Actually \eqref{eq:nospecbweird} can be obtained from the Nekrasov--Okounkov formula \eqref{eq:nospeccvmoins1} for type $D^{(2)}_{t+1}$ below, as already noted in \cite[Theorem $3.33$]{MP}. The proof of \eqref{eq:nospeccvmoins1} is detailed in the subsection dedicated to the type $D^{(2)}_{t+1}$: it suffices to substitute $T\to T^{1/2}$ and $z\to -z+1/2$ in \eqref{eq:nospeccvmoins1} to derive \eqref{eq:nospecbweird}.

%
%


%

\subsubsection{Type $A^{(2)}_{2t-1}$}
Let $t$ be an integer greater than $3$.
In this case, in \cite{Mac}, $a_0=1-\varepsilon_1-\varepsilon_2$, and $a_i=\varepsilon_i-\varepsilon_{i+1}$ for $1\leq i\leq t-1$ and $a_t=2\varepsilon_t$.  In all this section, we set $u=q^{t}$.

\textbf{(a)} The first specialization is $e^{a_i}\to 1$ for $1\leq i\leq t$. This is equivalent to taking $x_i=1$ in Theorem \ref{prop:Macdotypebv}. By letting $q\rightarrow 1$ in Theorem \ref{NOBV}, using this time the fact that both sides are polynomials in $t$, one derives the following new Nekrasov--Okounkov type formula
\begin{equation}\label{eq:nospecbv}
\sum_{\lambda\in\ccdd'}(-1)^{d_\lambda}T^{\lvert \lambda\rvert/2}\prod_{s\in\lambda}\left(1-\frac{2z\varepsilon_s}{h_s}\right)=\left(\left(T;T\right)_{\infty}^{z-1}\left(T^2;T^2\right)_{\infty}\right)^{2z+1},
\end{equation}
where $z$ is any complex number.

\textbf{(b)} The second specialization is $e^{a_i}\to 1$ for $0\leq i\leq t-1$. Once again, this specialization is equivalent to taking $x_i\to T^{-1/2}$ in Theorem \ref{prop:Macdotypebv} and one cannot derive a formula with the $q$-analogues written in this paper. Once again a $q$ analogue could be obtained using the same techniques but one still can derive a new Nekrasov--Okounkov type formula here by shifting $z\to -z+1/2$ in \eqref{eq:nospecbmoinsun}:
\begin{equation}\label{eq:nospecbvweirg}
\sum_{\lambda\in\ccdd'}T^{\lvert \lambda\rvert/2}\prod_{s\in\lambda}\left(1+\frac{2z\varepsilon_s}{h_s}\right)=\left(\left(T;T\right)_{\infty}^{z+1}\left(T^2;T^2\right)_{\infty}^{-1}\right)^{2z-1},
\end{equation}
where $z$ is a complex number.

\subsubsection{Type $C^{(1)}_{t}$}
Let $t$ be an integer greater than $2$.
In this case, in \cite{Mac}, $a_0=1-2\varepsilon_1$, and $a_i=\varepsilon_i-\varepsilon_{i+1}$ for $1\leq i\leq t-1$ and $a_t=2\varepsilon_t$. We set $u=q^{2t+2}$.

The specialization  is $e^{a_i}\to 1$ for $1\leq i\leq t$. This is equivalent to taking $x_i=1$ in Theorem \ref{prop:MacdotypeC}. By letting $q\rightarrow 1$ in Theorem \ref{NOB}, using the fact both sides are polynomials in $t$, one derives as in \cite[Theorem $3.1$]{MP} the formula \eqref{eq:nospecc}.
%

\subsubsection{Type $D^{(2)}_{t+1}$}
Let $t$ be an integer greater than $2$.
In this case, in \cite{Mac}, $a_0=1/2-\varepsilon_1$, and $a_i=\varepsilon_i-\varepsilon_{i+1}$ for $1\leq i\leq t-1$ and $a_t=\varepsilon_t$. In this section, we set $u=q^{2t}$.


\textbf{(a)} The first specialization is $e^{a_i}\to 1$ for $1\leq i\leq t$. This is equivalent to taking $x_i=1$ in Theorem \ref{prop:MacdotypeCv}. Moreover in \cite[Appendix $1$]{Mac}, $T$ is replaced with $T^2$. By letting $q\rightarrow 1$ in Theorem \ref{NOCV}, using this time the fact that both sides are polynomials in $t$, one derives the new Nekrasov--Okounkov type formula
\begin{equation}\label{eq:nospeccv}
\sum_{\lambda\in\ccsc}(-1)^{d_\lambda}T^{\lvert \lambda\rvert}\prod_{s\in\lambda\setminus\Delta}\left(1-\frac{2z\varepsilon_s}{h_s}\right)\prod_{s\in\Delta}\left(1+\frac{2z}{h_s}\right)=\left(\left(T;T\right)_{\infty}\left(T^2;T^2\right)_{\infty}^{z-1}\right)^{2z+1},
\end{equation}
where $z$ is any complex number.

\textbf{(b)} The second specialization is $e^{a_i}\to 1$ for $1\leq i\leq t-1$ and $e^{a_t}\to -1$.
 This is equivalent to taking $x_i=-1$ in Theorem \ref{prop:MacdotypeCv}. Moreover in \cite[Appendix $1$]{Mac}, $T$ is replaced with $T^2$. It is equivalent to letting $q\to -1$ in Theorem \ref{NOCV}. This implies that $u\to 1$. Moreover if $s$ is a box on the main diagonal $\Delta$ of a self-conjugate partition, then $h_s$ is odd. Therefore we derive the following result already proved in \cite[Theorem $3.24$]{MP}:
 \begin{equation}\label{eq:nospeccvmoins1}
\sum_{\lambda\in\ccsc}(-1)^{d_\lambda}T^{\lvert \lambda\rvert}\prod_{s\in\lambda}\left(1-\frac{2z\varepsilon_s}{h_s}\right)=\left(\frac{\left(T^2;T^2\right)_{\infty}^{z+1}}{\left(T;T\right)_{\infty}}\right)^{2z-1},
\end{equation}
where $z$ is any complex number.

\subsubsection{Type $A^{(2)}_{2t}$}
Let $t$ be an integer greater than $1$.
In this case, in \cite{Mac}, $a_0=1-2\varepsilon_1$, and $a_i=\varepsilon_i-\varepsilon_{i+1}$ for $1\leq i\leq t-1$ and $a_t=\varepsilon_t$. In this section, we set $u=q^{2t+1}$.

\textbf{(a)} The first specialization is $e^{a_i}\to 1$ for $1\leq i\leq t$. This is equivalent to taking $x_i=1$ in Theorem \ref{prop:Macdotypebc}.
By letting $q\rightarrow 1$ in Theorem \ref{NOBC}, using this time the fact that both sides are polynomials in $t$, one derives the new Nekrasov--Okounkov type formula
\begin{equation}\label{eq:nospecbc}
\sum_{\lambda\in\ccdd}T^{\lvert \lambda\rvert/2}\prod_{s\in\lambda\setminus\Delta}\left(1-\frac{(2z+1)\varepsilon_s}{h_s}\right)\prod_{s\in\Delta}\left(1+\frac{2z+1}{h_s}\right)=\left(\frac{\left(T;T\right)_{\infty}^{2z+3}}{\left(T^2;T^2\right)_{\infty}^{2}}\right)^{z},
\end{equation}
where $z$ is any complex number.

\textbf{(b)} The second specialization is $e^{a_i}\to 1$ for $0\leq i\leq t-1$.  This specialization is equivalent to taking $x_i\to T^{-1/2}$ in Theorem \ref{prop:Macdotypebc}. As for specializations \textbf{(c)} of type $B^{(1)}_{t}$ and \textbf{(b)} of type $A^{(2)}_{2t-1}$, one can derive a $q$-Nekrasov--Okounkov formula by substituting $x_i$ by $x_iT^{-1/2}$ in the Macdonald identity for type $A^{(2)}_{2t}$ of Proposition \ref{prop:RS}. The quadratic form to the exponent of $T$ changes and the subset of partitions naturally arising is $\ccsc_{(2t+1)}$. Using the same techniques, one gets analogues of Theorem \ref{thm:SC} and Lemma \ref{lem:signschursc} and one derives the following new Nekrasov--Okounkov type formula:
\begin{equation}\label{eq:nospecbcweird}
\sum_{\lambda\in\ccsc}T^{\lvert \lambda\rvert/2}\prod_{s\in\lambda}\left(1-\frac{(2z+1)\varepsilon_s}{h_s}\right)=\left(\left(T^{1/2};T^{1/2}\right)_{\infty}^{2}\left(T;T\right)_{\infty}^{2z-3}\left(T^{2};T^{2}\right)_{\infty}^{2}\right)^{z},
\end{equation}
where $z$ is any complex number.

\textbf{(c)} The third specialization is $e^{a_i}\to 1$ for $1\leq i\leq t-1$ and $e^{a_t}\to -1$. This is equivalent to $q\to-1$ and $u\to-1$ in Theorem \ref{NOBC}, and we derive the following new Nekrasov--Okounkov type formula:
\begin{equation}\label{eq:nospecbcmoins}
\sum_{\lambda\in\ccdd}(-1)^{d_\lambda}T^{\lvert \lambda\rvert/2}\prod_{s\in\lambda}\left(1-\frac{(2z+1)\varepsilon_s}{h_s}\right)=\left(T;T\right)_{\infty}^{2z^2-z},
\end{equation}
where $z$ is any complex number.
 
\textbf{(d)} The fourth specialization is $e^{a_0/2}\to -1$ and $e^{a_i}\to 1$ for $0\leq i\leq t-1$.  This specialization is equivalent to taking $x_i\to-T^{-1/2}$ in Theorem \ref{prop:Macdotypebc}. One could obtain this specialization by letting $q$ tend to $-1$ in the $q$-Nekrasov--Okounkov formula derived from the specialization \textbf{(b)} mentioned above. Nevertheless this specialization can also be obtained as remarked by Pétréolle \cite[Theorem $3.33$]{MP} by the shift $z\to z+1/2$ and $T\to T^{1/2}$ in \eqref{eq:nospeccvmoins1} and one gets the following Nekrasov--Okounkov type formula:
\begin{equation}\label{eq:nospecbcweirdmoins1}
\sum_{\lambda\in\ccsc}(-1)^{d_\lambda}T^{\lvert \lambda\rvert/2}\prod_{s\in\lambda}\left(1-\frac{(2z+1)\varepsilon_s}{h_s}\right)=\left(\frac{\left(T;T\right)_{\infty}^{2z+3}}{\left(T^{1/2};T^{1/2}\right)_{\infty}^{2}}\right)^{z},
\end{equation}
where $z$ is any complex number.

\subsubsection{Type $D^{(1)}_{t}$}
Let $t$ be an integer greater than $4$.
In this case, in \cite{Mac}, $a_0=1-\varepsilon_1-\varepsilon_2$, and $a_i=\varepsilon_i-\varepsilon_{i+1}$ for $1\leq i\leq t-1$ and $a_t=\varepsilon_{t-1}+\varepsilon_t$. We set $u=q^{2t-2}$.

The specialization  is $e^{a_i}\to 1$ for $1\leq i\leq t$. This is equivalent to taking $x_i=1$ in Theorem \ref{prop:Macdotyped}. By letting $q\rightarrow 1$ in Theorem \ref{NOd}, using this time the fact that both sides are polynomials in $t$, we have the new Nekrasov--Okounkov type formula given as \eqref{eq:nospecd} in the introduction.

\subsection*{Acknowledgements}
The author would like to thank the referees for extremely valuable feedback and suggestions, which have tremendously enhanced the quality of the paper. The author would also like to thank Jehanne Dousse, Benjamin Dupont, Marion Jeannin, Cédric Lecouvey, Philippe Nadeau, Nicolas Ressayre and Ole Warnaar for their help and the fruitful conversations they had. Any critical remark must be exclusively addressed to the author of this paper. A sagemath code to check the results of Section \ref{chap3:hooks} and Section \ref{subsecc:qnp} can be found on \href{https://www.idpoisson.fr/wahiche/indexEN.html}{the author's webpage} or can be asked by mail.

\appendix
\section{Proof of Lemma \ref{lem:lemtech}}\label{ap:sec1}

This section is devoted to prove the technical results of Lemma \ref{lem:lemtech}.
Equalities \eqref{techsimple} and \eqref{techsimpleneg} are immediately derived noting that their right-hand sides are telescopic products: indeed as pointed out by one of the reviewers, they correspond to the identity $(a;q)_t=(a;q)_\infty/(aq^{t};q)_\infty$ for $a=xq$, respectively $a=xq^{-t}$.
In order to prove \eqref{techstrictplus}, we will need to gather $i+j$ by values. Hence if we set $3\leq k\leq 2t-1$, we need to count how many $(i,j)\in\lbrace 1,\dots,t\rbrace^2$ such that $i<j$ and $i+j=k$ there are. For $i+j=k$, we have:
\begin{equation*}
\begin{array}{ccc}
\begin{cases}
1\leq i\leq t-1\\
i+1\leq j\leq t
\end{cases}
&\iff &
\begin{cases}
1\leq i\leq t-1\\
2i+1\leq k\leq i+t.
\end{cases}
\end{array}
\end{equation*}
Hence it is equivalent to
\begin{equation*}
\begin{array}{ccc}
\begin{cases}
t+1\leq k\leq 2t-1\\
k-t\leq i\leq \lfloor\frac{k-1}{2}\rfloor
\end{cases}
&\text{ or } &
\begin{cases}
3\leq k\leq t\\
1\leq i\leq \lfloor\frac{k-1}{2}\rfloor.
\end{cases}
\end{array}
\end{equation*}

\noindent Therefore gathering by values of $k$ in the left-hand side of \eqref{techstrictplus} yields:
\begin{multline*}
\prod_{k=3}^{t} \left(1-q^{k}x\right)^{\lfloor(k-1)/2\rfloor}\prod_{k=t+1}^{2t-1} \left(1-q^{k}x\right)^{\lfloor(k-1)/2\rfloor-k+t+1}\\=\prod_{k=1}^{2t-1} \left(1-q^{k}x\right)^{\lfloor(k-1)/2\rfloor}\prod_{k=t+1}^{2t-1} \left(1-q^{k}x\right)^{t+1-k}
\end{multline*}

\noindent Shifting $k\to k+t$ in the second product, the above equation is equivalent to
\begin{multline*}
\prod_{k=1}^{2t-1} \left(1-q^{k}x\right)^{\lfloor(k-1)/2\rfloor} \prod_{k=1}^{t-1} \left(1-q^{k+t}x\right)^{1-k}=\frac{\prod_{r\geq 1}\left(1-q^{r}x\right)^{\lfloor(r-1)/2\rfloor}}{\prod_{r\geq 2t}\left(1-q^{r}x\right)^{\lfloor(r-1)/2\rfloor}}\frac{\prod_{r\geq t} \left(1-q^{r+t}x\right)^{r-1}}{\prod_{r\geq 1} \left(1-q^{r+t}x\right)^{r-1}}\\
=\prod_{r\geq 1}\frac{\left(1-q^{r}x\right)^{\lfloor(r-1)/2\rfloor}}{\left(1-q^{r-1+2t}x\right)^{t-1+\lfloor r/2\rfloor}}\frac{ \left(1-q^{r-1+2t}x\right)^{t+r-2}}{\left(1-q^{r+t}x\right)^{r-1}}\\
=\prod_{r\geq 1} \frac{\left(1-q^{r}x\right)^{\lfloor(r-1)/2\rfloor}\left(1-q^{r-1+2t}x\right)^{r-1-\lfloor r/2\rfloor}}{\left(1-q^{r+t}x\right)^{r-1}},
\end{multline*}
which is the desired result by shifting $r\to r+2$ at the numerator and $r\to r+1$ at the denominator and using $\lfloor n\rfloor + _lceil n\rceil=n$ for any integer $n$.
%
%
The other equalities \eqref{techstrictmoins} and \eqref{techtypea} are proved similarly, which concludes the proof.





%
\bibliographystyle{amsalpha}
\bibliography{bilbio_init}
%

\end{document}